\DeclareSymbolFont{timesoperators}{T1}{ptm}{m}{n}
\renewcommand{\operator@font}{\mathgroup\symtimesoperators}
\DeclareFontFamily{U}{BOONDOX-calo}{\skewchar\font=45 }
\DeclareFontShape{U}{BOONDOX-calo}{m}{n}{
  <-> s*[1.05] BOONDOX-r-calo}{}
\DeclareFontShape{U}{BOONDOX-calo}{b}{n}{
  <-> s*[1.05] BOONDOX-b-calo}{}
\DeclareMathAlphabet{\mcb}{U}{BOONDOX-calo}{m}{n}
\SetMathAlphabet{\mcb}{bold}{U}{BOONDOX-calo}{b}{n}
\setlist[enumerate]{itemsep=-1ex}
\renewcommand\vec[1]{\overset{{}_\shortrightarrow}{#1}}
\newtheorem{example}[lemma]{Example}
\newtheorem{assumption}{Assumption}
\def\BPHZ{\text{\upshape\tiny{BPHZ}}}
\def\KPZ{\text{\upshape\tiny{KPZ}}}
\def\sol{\text{\upshape{sol}}}
\def\qEW{\text{\upshape\tiny{qEW}}}
\def\ren{\ell_{\text{\upshape\tiny{BPHZ}}}}
\def\bXi{\boldsymbol{\Xi}}
\def\hXi{\hat{\Xi}}
\def\bxi{\boldsymbol{\xi}}
\def\bp{\mathbf{p}}
\def\SC{\mathscr{C}}
\def\SW{\mathscr{W}}
\def\SL{\mathscr{L}}
\def\SF{\mathscr{F}}
\def\SU{\mathscr{U}}
\def\SS{\mathscr{S}}
\def\Eps{\mcb{E}}
\def\Nodes{\mcb{N}}
\def\Rule{\mathrm{R}}
\def\CE{\mathcal{E}}
\def\CO{\mcb{O}}
\def\Proj{\mathrm{Proj}}
\def\ex{{\mathrm{ex}}}
\def\btau{\bar{\tau}}
\def\one{\mathbf{1}}
\def\1{\mathbf{1}}
\def\X{\mathbf{X}}
\renewcommand\fX{\mcb{X}}
\def\Wick{\scriptscriptstyle\mathrm{Wick}}
\def\CWick{\CWick}
\def\CWick{C^{\Wick}}
\def\${|\!|\!|}
\def\mcbP{\mcb{P}}
\def\ZZ{\mathscr{Z}}
\def\M{\mathscr{M}}
\def\w{\mathbf{w}}
\def\bu{\mathbf{u}}
\def\ba{\mathbf{a}}
\def\bv{\mathbf{v}}
\def\bh{\mathbf{h}}
\def\bUpsilon{\boldsymbol{\Upsilon}}
\def\graft_#1^#2{\mathbin{\curvearrowright_{#1}^{#2}}}
\def\up{\uparrow}
\def\0{\mathbf{0}}
\def\bo{\bar{o}}
\def\reg{\mathop{\mathrm{reg}}}
\def\init{{\mathrm{init}}}
\def\func{{\mathrm{func}}}
\def\sol{\mathrm{sol}}
\def\fo{\mathfrak{o}}
\def\fd{\mathfrak{d}}
\def\bfd{\bar{\mathfrak{d}}}
\def\fl{\mathfrak{l}}
\def\fD{\mathfrak{D}}
\def\ft{\mathfrak{t}}
\def\fb{\mathfrak{b}}
\def\ff{\mathfrak{f}}
\def\fm{\mathfrak{m}}
\def\J{\mathscr{I}}
\def\TT{\mcb{T}}
\def\T{\symb T}
\def\BF{\mathbf{F}}
\def\BG{\mathbf{G}}
\def\Deltap{\Delta^{\!+}}
\def\Deltam{\Delta^{\!-}}
\def\PPi{\boldsymbol{\Pi}}
\def\hPPi{\rlap{$\boldsymbol{\hat\Pi}$}\phantom{\boldsymbol{\Pi}}}
\def\bbUpsilon{\rlap{$\boldsymbol{\bar\Upsilon}$}\phantom{\boldsymbol{\Upsilon}}}
\def\id{\mathrm{id}}
\def\Poly{{\mathrm{Poly}}}
\def\loc{{\mathrm{loc}}}
\def\SG{\mathscr{G}}
\def\SB{\mathscr{B}}
\def\SM{\mathscr{M}}
\def\Ev{\mathrm{Ev}}
\def\SP{\mathscr{P}}
\def\SQ{\mcb{Q}}
\def\SH{\mathscr{H}}
\def\SD{\mathscr{D}}
\def\s{{\mathfrak s}}
\def\TStruc{\texttt{TStruc}}
\def\Vec{\texttt{Vec}}
\def\restr{\mathord{\upharpoonright}}
\def\emptyset{\varnothing}
\newcommand{\globalcolor}[1]{%
	\color{#1}\global\let\default@color\current@color
}
\newif\ifdark
\definecolor{darkred}{rgb}{0.9,0.2,0.2}
\definecolor{darkblue}{rgb}{0.7,0.3,1}
\definecolor{darkgreen}{rgb}{0.1,0.9,0.1}
\definecolor{pagebackground}{rgb}{.15,.21,.18}
\definecolor{pageforeground}{rgb}{.84,.84,.85}
\definecolor{symbols}{rgb}{0,0.7,1}
\colorlet{connection}{red!80!black}
\colorlet{boxcolor}{blue!50}
\definecolor{darkred}{rgb}{0.7,0.1,0.1}
\definecolor{darkblue}{rgb}{0.4,0.1,0.8}
\definecolor{darkgreen}{rgb}{0,0.5,0}
\definecolor{pagebackground}{rgb}{1,1,1}
\definecolor{pageforeground}{rgb}{0,0,0}
\colorlet{symbols}{blue!90!black}
\colorlet{connection}{red!30!black}
\colorlet{boxcolor}{blue!50!black}
\def\hbar{\overline{h}}
\def\Image{\mathop{\mathrm{Image}}}
\newcommand*{\bigcdot}{}
\DeclareRobustCommand*{\bigcdot}{%
  \mathbin{\mathpalette\bigcdot@{}}%
}
\newcommand*{\bigcdot@scalefactor}{.5}
\newcommand*{\bigcdot@widthfactor}{1.15}
\newcommand*{\bigcdot@}[2]{%
  \sbox0{$#1\vcenter{}$}
  \sbox2{$#1\cdot\m@th$}%
  \hbox to \bigcdot@widthfactor\wd2{%
    \hfil
    \raise\ht0\hbox{%
      \scalebox{\bigcdot@scalefactor}{%
        \lower\ht0\hbox{$#1\bullet\m@th$}%
      }%
    }%
    \hfil
  }%
}
\def\slash{\leavevmode\unskip\kern0.18em/\penalty\exhyphenpenalty\kern0.18em}
		\pgfmathsetlength{\pgf@xb}{\pgfkeysvalueof{/pgf/outer xsep}}%
		\pgfmathsetlength{\pgf@yb}{\pgfkeysvalueof{/pgf/outer ysep}}%
\tikzstyle{densely dotted}= [dash pattern=on \pgflinewidth off \pgflinewidth]
\tikzset{
	dot/.style={circle,fill=pageforeground,inner sep=0pt, minimum size=1mm},
	root/.style={circle,fill=black,inner sep=0pt, minimum size=1.2mm},
	kernel/.style={semithick,shorten >=2pt,shorten <=2pt},
	kernels/.style={snake=zigzag,shorten >=2pt,shorten <=2pt,segment amplitude=1pt,segment length=4pt,line before snake=2pt,line after snake=5pt,},
	xi/.style={very thin,solid,circle,draw=symbols,fill=symbols!10!pagebackground,inner sep=0pt,minimum size=1.2mm},
	xi1/.style={very thin,solid,regular polygon,regular polygon sides=5, draw = darkgreen ,fill=darkgreen!10!pagebackground,inner sep=0pt,minimum size=1.2mm},
	xi2/.style={very thin,solid,star, star points=6, draw = red ,fill=red!10!pagebackground,inner sep=0pt,minimum size=1.4mm},
	xin/.style={very thin,solid,circle, draw = darkgreen ,fill=darkgreen!10!pagebackground,inner sep=0pt,minimum size=1.3mm},
	xib/.style={thin,solid,circle,fill=symbols!10!pagebackground,draw=symbols,inner sep=0pt,minimum size=1.6mm},
	xib1/.style={thin,solid,regular polygon,regular polygon sides=5,fill=darkgreen!10!pagebackground,draw=darkgreen,inner sep=0pt,minimum size=1.7mm},
	xib2/.style={thin,solid,star, star points=6,fill=red!10!pagebackground,draw=red,inner sep=0pt,minimum size=1.8mm},
	xibn/.style={thin,solid,circle,fill=darkgreen!10!pagebackground,draw=darkgreen,inner sep=0pt,minimum size=1.7mm},
	noise/.style={snake=zigzag,segment amplitude=1pt,segment length=3pt, draw=symbols},
	bnoise/.style={snake=zigzag,segment amplitude=1pt,segment length=3pt, draw=darkgreen},
	bbnoise/.style={snake=zigzag,segment amplitude=1pt,segment length=3pt, draw=red},
	kernels0/.style={thick,draw=symbols,densely dotted,segment length= 35pt},
	kernels01/.style={thick,draw=darkgreen,densely dotted,segment length= 35pt},
	kernels1/.style={thick,draw=symbols,segment length= 35pt},
	kernels2/.style={double,draw=symbols,segment length= 35pt},
	bkernels1/.style={thick,draw=darkgreen,segment length= 35pt},
	bkernels2/.style={double,draw=darkgreen,segment length= 35pt},
	bbkernels2/.style={double,draw=red,segment length= 35pt},
	not/.style={very thin,solid,circle,draw=black,fill=black,inner sep=0pt,minimum size=0.5mm},
	not0/.style={very thin,solid,circle,draw=symbols,fill=symbols,inner sep=0pt,minimum size=0.6mm},
	not1/.style={thin,solid,regular polygon,regular polygon sides=5,draw=darkgreen,fill=darkgreen,inner sep=0pt,minimum size=0.7mm},
	not2/.style={thin,solid,star, star points=6,draw=red,fill=red,inner sep=0pt,minimum size=0.7mm},
}
\def\DeclareSymbol#1#2#3{%
	\expandafter\gdef\csname MH@symb@#1\endcsname{\tikzsetnextfilename{symbol#1}%
		\tikz[baseline=#2,scale=0.15,draw=symbols,line join=round]{#3}}%
	\expandafter\gdef\csname MH@symb@#1s\endcsname{\scalebox{0.75}{\tikzsetnextfilename{symbol#1}%
			\tikz[baseline=#2,scale=0.15,draw=symbols,line join=round]{#3}}}%
	\expandafter\gdef\csname MH@symb@#1ss\endcsname{\scalebox{0.65}{\tikzsetnextfilename{symbol#1}%
			\tikz[baseline=#2,scale=0.15,draw=symbols,line join=round]{#3}}}%
}
\def\<#1>{\ifthenelse{\boolean{mmode}}{\mathchoice{\csname MH@symb@#1\endcsname}{\csname MH@symb@#1\endcsname}{\csname MH@symb@#1s\endcsname}{\csname MH@symb@#1ss\endcsname}}{\csname MH@symb@#1\endcsname}}
\let\f\frac
\def\downop{\mathop{\downarrow}}
\def\dash{\leavevmode\unskip\kern0.18em--\penalty\exhyphenpenalty\kern0.18em}
\def\slash{\leavevmode\unskip\kern0.15em/\penalty\exhyphenpenalty\kern0.15em}
\begin{document}

\date{\today}
\title{Directed mean curvature flow in noisy environment}
\author{A.~Gerasimovi\v cs$^1$, M.~Hairer$^2$ and K.~Matetski$^3$}
\institute{University of Bath, \email{ag2616@bath.ac.uk} \and EPFL and Imperial College London, \email{martin.hairer@epfl.ch} \and Michigan State University, \email{matetski@msu.edu}}
\date{\today}
\titleindent=0.65cm

\maketitle

\begin{abstract}
We consider the directed mean curvature flow on the plane in a weak  Gaussian random 
environment. We prove that, when started from a sufficiently flat initial condition, a 
rescaled and 
recentred solution converges to the Cole--Hopf solution of the KPZ equation. This result follows from the
analysis of a more general system of nonlinear SPDEs driven by inhomogeneous noises, using the 
theory of regularity structures. However, due to inhomogeneity of the noise, the ``black box'' 
result developed in the series of works~\cite{Regularity, BHZ, HC, BCCH} cannot be applied directly and 
requires significant extension to infinite-dimensional regularity structures. 

Analysis of this general system of SPDEs gives two more interesting results. First, we prove that the 
solution of the quenched KPZ equation with a very strong force also 
converges to the Cole--Hopf solution 
of the KPZ equation. Second, we show that a properly rescaled and renormalised quenched 
Edwards--Wilkinson model in any dimension converges to the stochastic heat equation. \\[.4em]
\noindent {\scriptsize \textit{Keywords:} mean curvature flow, KPZ equation, scaling limit, disordered environment, regularity structures}\\
\noindent {\scriptsize\textit{MSC2020 classification:} 60H15, 60L30, 35K93} 
\end{abstract}

\setcounter{tocdepth}{1}
\tableofcontents

\section{Introduction}
\label{sec:intro}

Consider the time evolution of a curve $(t,x) \mapsto U_t(x) \in \R^2$ with instantaneous normal 
velocity at any $u \in \Image U_t$ given by $v(u) = \kappa(u) + 1 + \sqrt{\eps}\eta(u)$, where 
$\kappa(u)$ is the curvature of $U_t$ at $u$ and $u\mapsto\eta(u)$ is a stationary
Gaussian random field with finite dependence length. The parameter $\eps > 0$ is considered to be 
small and will be taken tending to zero. If the initial state $U_0$ is the graph of some function $u_0$, i.e.\ 
$U_0(x) = (x,u_0(x))$, then one can expect that for sufficiently regular $\eta$ the curve $U_t$ remains 
the graph of some function $u_t$, at least for a short time. In this case the curvature at the point 
$(x,u_t(x))$ is given by~\cite[p.~14]{Ecker}
\begin{equ}
\kappa (x,u_t(x)) = \d_x \biggl( \frac{\d_x u_t(x)}{\sqrt{1 + (\d_x u_t(x))^2}} \biggr) = \frac{\d^2_x u_t(x)}{(1 + (\d_x u_t(x))^2)^{3/2}}\;.
\end{equ}
A calculation analogous to that given for example in~\cite[Eq.~2.17]{Ecker} then yields for $u_t$ the 
random PDE
\begin{equ}
\d_t u_t(x) = v (x,u_t(x)) \sqrt{1 + (\d_x u_t(x))^2}\;,
\end{equ}
which can be written explicitly as
\begin{equ}[eq:main_before_scaling]
\d_t u = {\d_x^2 u \over {1+(\d_x u)^2}}  + \sqrt{1+(\d_x u)^2}\bigl(1 + \sqrt\eps\, \eta^u\bigr)\;,
\end{equ}
where $\eta^u(t,x)$ is now a $u$-dependent driving noise given by $\eta^u(t,x) \eqdef \eta(x,u(t,x))$. The 
time interval over which $U_t$ remains a graph then agrees with the existence time of 
\eqref{eq:main_before_scaling}. In order to avoid problems with solving~\eqref{eq:main_before_scaling} 
on the full space we will assume that both the initial state $u(0,x)$, and the random field $\eta$ are 
periodic in space with period $\eps^{-1}$. To indicate this $\eps$-dependence, we will write $u_\eps^0$ 
for the initial state and $\eta_\eps$ for the random field. 

The noisy environment $\eta_\eps$ is taken to be a mollified space-time white noise. More precisely, let 
$\rho$ be a compactly supported smooth function on $\R^2$ that integrates to $1$, and let 
$\zeta_\eps$ be a space-time white noise on $\R \times (\R / \eps^{-1} \Z)$, which we extend periodically 
to $\R^2$. Then the noisy environment $\eta_\eps$ is given by space-time convolution $\eta_\eps(x, y) = (\rho * \zeta_\eps)(y, x)$. In particular, $\eta_\eps$ is a stationary, centred Gaussian random field.

If one formally takes $\eps \to 0$ then $u$ converges to the deterministic directed mean curvature flow. We aim 
to prove that under a suitable rescaling and in the adequate moving frame, we observe convergence to 
the Cole--Hopf solution of the KPZ equation~\cite{BertiniGiacomin} on the unit circle $\T \eqdef \R / \Z$
\begin{equ}[eq:KPZ]
\d_t h = \d_x^2 h + \lambda\, (\d_x h)^2+ \sigma \xi\;, \qquad h(0, \bigcdot) = h^0(\bigcdot)\;,
\end{equ}
for a space-time white noise $\xi$ on $\R \times \T$ and some $\lambda,\sigma \in \R$. We shall see in Section~\ref{sec:abstract_system} that due to renormalisation one actually obtains a 
convergence to the KPZ equation in a moving frame. We define the speed of the moving frame to be
\begin{equ}[eq:speed]
	\nu = 2 \lambda \sigma^2 \int \int\int \partial_t\rho(z_1)\rho(z_2) (x_1 - x_2 - x_3) G(z_3) \partial^2_x G(z_1-z_2 - z_3)dz_1 dz_2 dz_3\;,
\end{equ}
where $G(t,x) = \1_{t>0} e^{-x^2 / (4 t)} / \sqrt{4\pi t}$\label{label:heat-kernel} is the heat kernel on $\R$, $x_i$ is the spatial component of $z_i \in \R^2$, and $\lambda$ is as in~\eqref{eq:KPZ}. In particular, if $\rho$ is an even function in the spatial variable, 
then $\nu = 0$. The main result of this paper is the following theorem, which is a particular case of a 
more general result provided in Theorem~\ref{thm:main}.

\begin{theorem}\label{thm:main_intro}
Let the initial state $u^{0}_\eps : \eps^{-1} \T \to \R$ of~\eqref{eq:main_before_scaling} be a $\CC^{7/3 + \kappa}$ function for some $\kappa > 0$, such that $\eps^{\frac{2k}{3}} \|u^0_\eps(\eps^{-1} \bigcdot)\|_{\CC^{(1+2k) / 3 + \kappa}}$ is bounded 
uniformly in $\eps \in (0,1]$ for $k \in \{0,1,2,3\}$. Moreover, assume that the functions $x \mapsto u^0_\eps(\eps^{-1} x)$ 
converge as $\eps \to 0$ to a function $h^0 \in \CC^{\beta}(\T)$ for some $\beta > \frac{1}{3}$. Let 
$\nu$ be as in~\eqref{eq:speed} with $\lambda = \frac 12, \sigma = 1$. Then there is a choice of divergent 
constants $C_\eps \sim \eps^{-1}$ such that, as $\eps \to 0$, the functions 
\begin{equ}
	u \bigl(\eps^{-2} t , \eps^{-1}(x + \nu t)\bigr) - \eps^{-2} t - C_\eps t
\end{equ}
converge in probability in the locally uniform topology on $\R_+\times \T$ to the Cole--Hopf solution of 
the KPZ equation~\eqref{eq:KPZ} with $\lambda = \frac{1}{2}$, $\sigma = 1$ and initial condition $h^0$.
\end{theorem}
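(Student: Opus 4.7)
The plan is to derive a rescaled PDE for $h_\eps(t,x) := u(\eps^{-2}t,\eps^{-1}(x+\nu t)) - \eps^{-2}t - C_\eps t$ and recognise it as a special case of the general inhomogeneous system treated in Theorem~\ref{thm:main}, whose proof provides the regularity-structures machinery.

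First I would carry out the scaling. From $\d_x u = \eps\,\d_x h_\eps$ and $\d_x^2 u = \eps^2\,\d_x^2 h_\eps$, equation~\eqref{eq:main_before_scaling} becomes, after subtracting the deterministic drift and shifting by $\nu$,
\begin{equ}
\d_t h_\eps = \frac{\d_x^2 h_\eps}{1+\eps^2(\d_x h_\eps)^2} + \eps^{-2}\bigl(\sqrt{1+\eps^2(\d_x h_\eps)^2}-1\bigr) + \eps^{-3/2}\sqrt{1+\eps^2(\d_x h_\eps)^2}\,\eta^{u}_\eps - \nu\,\d_x h_\eps - C_\eps\;.
\end{equ}
Taylor-expanding in $\eps$ gives $\d_t h_\eps = \d_x^2 h_\eps + \frac12(\d_x h_\eps)^2 + \eps^{-3/2}\eta^u_\eps + R_\eps - \nu\,\d_x h_\eps - C_\eps$, where $R_\eps$ collects terms of the form $\eps^{2k}(\d_x h_\eps)^{2j}\d_x^2 h_\eps$, $\eps^{2k}(\d_x h_\eps)^{2j}$ and $\eps^{-3/2+2k}(\d_x h_\eps)^{2j}\eta^u_\eps$ with $k\ge 1$. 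The prefactor $\eps^{-3/2}$ matches the KPZ scaling of space-time white noise, so the noise term is of order one; all terms in $R_\eps$ carry positive powers of $\eps$ and must be shown to be irrelevant in the sense of regularity structures.

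Next I would identify this as an instance of the abstract system of SPDEs treated in Section~\ref{sec:abstract_system} and Theorem~\ref{thm:main}. The crucial point is that $\eta^{u_\eps}_\eps(t,x)=\eta_\eps(x,u_\eps(t,x))$ is a \emph{quenched} (inhomogeneous) noise depending on the solution's pointwise values, which is exactly the setting that forces the infinite-dimensional extension of the BPHZ framework developed in the paper. I would write $u_\eps(t,x)=\eps^{-2}t+C_\eps t+h_\eps(\eps^2 t,\eps(x-\nu t))$, feed this into $\eta_\eps$, and verify that the resulting driving object falls into the class of admissible noises. The $1+\nu\eps^{-1}$-type contributions from expanding around the moving frame generate the first-order transport $-\nu\d_x h_\eps$ after renormalisation; the formula~\eqref{eq:speed} for $\nu$ is exactly the one picked out by the BPHZ renormalisation of the unique relevant first-order symbol.

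With this identification, the statement follows from three inputs: (i) the continuity of the reconstruction/solution map on modelled distributions, which handles all irrelevant terms in $R_\eps$ uniformly in $\eps$; (ii) convergence of the BPHZ-renormalised models associated with the inhomogeneous noise $\eta^{u_\eps}_\eps$ to the standard KPZ model driven by $\sigma\xi$ with $\sigma=1$, which is the analytic core of Theorem~\ref{thm:main} and forces the divergent constant $C_\eps\sim\eps^{-1}$; and (iii) the equivalence of the limiting regularity-structures solution with the Cole--Hopf solution, as established in~\cite{BertiniGiacomin} and transferred to the regularity-structures framework in previous KPZ literature. The regularity hypotheses on $u^0_\eps$ are the bare minimum ensuring that the rescaled initial data $h^0_\eps(x)=u^0_\eps(\eps^{-1}x)$ admits a lift to a modelled distribution of sufficient regularity: the four uniform bounds at Hölder exponents $(1+2k)/3+\kappa$ for $k=0,1,2,3$ are precisely what is needed to control the initial data together with its first three ``Taylor jets'' used in the local solution theory.

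The main obstacle will be step~(ii): proving convergence of the renormalised models when the driving noise depends on the solution. Standard BPHZ theorems apply to fixed Gaussian noises, whereas here each tree symbol must be interpreted as a functional of the pointwise evaluations of $h_\eps$, forcing one to work in an infinite-dimensional regularity structure where the ``noise trees'' themselves are parametrised by an auxiliary real variable. Controlling the stochastic estimates uniformly in this parameter, while simultaneously tracking the cancellations that produce the clean KPZ nonlinearity and the drift~\eqref{eq:speed}, is the heart of the argument and is exactly what the infinite-dimensional extension of~\cite{Regularity,BHZ,HC,BCCH} referenced in the abstract is designed to provide.
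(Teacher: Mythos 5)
Your high-level strategy is correct: rescale, express the problem as an instance of the inhomogeneous system handled by Theorem~\ref{thm:main}, and invoke the vector-valued BPHZ machinery; the identification of the $h_\eps$-dependent noise as the key obstacle is also accurate. However, there is a genuine gap in the step where you Taylor-expand and assert that the remainder terms $R_\eps$ ``carry positive powers of $\eps$ and must be shown to be irrelevant''. The scalar equation \eqref{eq:main} is \emph{not} locally subcritical in the sense of \cite{Regularity}: the Taylor expansion generates terms like $\eps^n(\d_x h_\eps)^{n+2}$, and $(\d_x h_\eps)^4$ already has formal regularity $4\alpha + 4 < \alpha$ for $\alpha < -\frac{3}{2}$, so the positive power of $\eps$ in front does not automatically render these terms ``irrelevant'' within the standard BPHZ framework. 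The paper resolves this with a device your proposal never introduces: the rewriting of \eqref{eq:main} as the coupled system \eqref{eq:main2} for $h_{k,\eps} := \eps^{2k/3}h_\eps$, $k = 0, \dots, 3$, which \emph{is} locally subcritical and fits the vector-valued BPHZ theorems (the alternative route via the $\CE$-operator of \cite{KPZJeremy} is noted to fall outside the scope of \cite{HC}). Without this system rewriting, the reduction to Theorem~\ref{thm:main} is not available and the argument is incomplete.

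This omission also explains why your reading of the regularity hypotheses is off: the four bounds on $\eps^{2k/3}\|u^0_\eps(\eps^{-1}\bigcdot)\|_{\CC^{(1+2k)/3+\kappa}}$ are precisely the condition that the initial data $h^0_{k,\eps} = \eps^{2k/3}h^0_\eps$ of the four auxiliary equations lie uniformly in the space $\CC^{\init}$ of Proposition~\ref{prop:system_soln}, not ``Taylor jets'' of a single equation. Two smaller remarks. The paper deliberately keeps the drift $\nu t$ \emph{out} of the rescaling \eqref{eq:rescaled_solution} and instead shows that the unframed $h_\eps$ converges to $h$ with $(t,x)\mapsto h(t,x-\nu t)$ solving KPZ; building the frame in from the start creates complications in the choice of renormalisation, cf.\ \cite{CLTforKPZ}. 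And $\nu$ does not arise from ``$1+\nu\eps^{-1}$-type contributions from expanding around the moving frame''; it is the limit of the two renormalisation constants $C_\eps[\<XiJ[H']+s>]$ and $C_\eps[\<H'J1[XiX]s>\!]$ identified in Lemma~\ref{lem:constants} and Appendix~\ref{app:constants}, which match \eqref{eq:speed} after integration by parts.
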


The reason why even mollifiers do not require a moving frame ($\nu=0$) is a 
consequence of the $x \mapsto -x$ symmetry of the equation. 

We could in principle also consider non-Gaussian random fields $\eta_\eps$ driving
\eqref{eq:main_before_scaling} which is not expect to cause any major difficulties.
It would however make it more tedious to formulate assumptions guaranteeing that 
the renormalisation constants are bounded by the ``correct'' powers of $\eps$.
Another natural generalisation would be to allow for Neumann boundary conditions instead of 
periodic, but this would lead to substantial additional difficulties due to the presence
of boundary renormalisation \cite{MateBoundary,IvanHao}.

The high regularity $\CC^{7/3 + \kappa}$ of the initial conditions as well as the uniform bound on $\eps^{\frac{2k}{3}} 
\|u^0_\eps(\eps^{-1} \bigcdot)\|_{\CC^{(1+2k) / 3 + \kappa}}$ are dictated by our method of 
proof (see~\eqref{eq:main2} and Lemma~\ref{prop:system_soln}). Nonetheless, it is not at all 
unrealistic since a smooth enough mollification of a $\frac{1}{3}$-H\"older continuous function does 
satisfy the required uniform in $\eps$ bound. At this stage it is unclear though whether the 
 exponent $\f73$ could be improved substantially.

To derive an SPDE for the above rescaling of $u$, we set
\begin{equ}[eq:rescaled_solution]
	h_\eps(t,x) \eqdef u \bigl(\eps^{-2} t , \eps^{-1}x\bigr) - \eps^{-2} t - C_\eps t
\end{equ}
as well as
\begin{equs}[eq:noise]
	\xi_\eps(t,x) \eqdef \eps^{-\frac{3}{2}} \eta_\eps \bigl(\eps^{-1} x, \eps^{-2} t\bigr)\;,
\end{equs}
which is now $1$-periodic in the spatial variable. Then $h_\eps$ solves
\begin{equs}[eq:rescaledSPDE]
\d_t h_\eps = {\d_x^2 h_\eps \over {1+ \eps^2 (\d_x h_\eps)^2}} + \eps^{-2}\bigl(\sqrt{1+ \eps^2 (\d_x h_\eps)^2}-1\bigr) - C_\eps \\
+ \sqrt{1+ \eps^2 (\d_x h_\eps)^2}\xi^{h_\eps}_\eps\,,
\end{equs}
with the initial state $h_\eps(0, x) = u^0_\eps(\eps^{-1}x)$, where we set the inhomogeneous noise
\begin{equ}[eq:rescaled_noise]
\xi^{h_\eps}_\eps(t,x) \eqdef \xi_\eps \bigl(t + \eps^2 C_\eps t + \eps^2 h_{\eps}(t,x), x\bigr)\;.
\end{equ}
We refrain from incorporating the additional drift $\nu t$ into the definition of the
rescaling~\eqref{eq:rescaled_solution} at this point 
because it will just add an additional term $\nu\partial_x h$ into equation~\eqref{eq:rescaledSPDE}. We will 
simply show the equivalent statement that $h_\eps$ converges to $h$ such that $(t,x) 
\mapsto h(t, x-\nu t)$ solves the KPZ equation. We refer the reader to~\cite{CLTforKPZ} 
where the authors 
keep such a translation inside the rescaled equation. This leads to minor technical 
complications with the choice of renormalisation which our setting allows to bypass. 

Equation~\eqref{eq:rescaledSPDE} is a special case of the class of equations
\begin{equ}[eq:main]
\d_t h_\eps = \d_x^2 h_\eps + F_1(\eps\d_x h_\eps) \d_x^2 h_\eps + F_2(\eps \d_x h_\eps ) (\d_x h_\eps)^2 - C_\eps +   F_3(\eps\d_x h_\eps)\xi^{h_\eps}_\eps,
\end{equ}
where the functions $F_i$ have the following properties.

\begin{assumption}\label{assum:functions}
	The functions $F_i : \R \to \R$ are of class $\CC^7$, with derivatives growing subexponentially at infinity, and such that
	\begin{equ}[eq:Functions]
	F_1(0) = F_1'(0) = 0\;, \qquad F_2'(0) = 0\;, \qquad F_3'(0) = F_3'''(0) = 0\;.
\end{equ}
	Moreover, $F''_1, F'''_1 \in L^\infty(\R)$.
\end{assumption}

\noindent Indeed, if we make the particular choice of the functions
\begin{equ}[eq:functions_main]
F_1(x) = - \frac{x^2}{1 + x^2}\;, \quad F_2(x) = \frac{1}{x^2} \left( \sqrt{1 + x^2} - 1\right)\;, \quad F_3(x) = \sqrt{1+x^2}\;,
\end{equ}
we obtain~\eqref{eq:rescaledSPDE}. Here, we extend the function $F_2$ continuously to $x = 0$.

On the other hand, in the case $F_1 \equiv 0$, $F_2 \equiv \lambda$ and $F_3 \equiv \sigma$, 
equation~\eqref{eq:main} is
\begin{equ}[eq:qKPZ]
\d_t h_\eps = \d_x^2 h_\eps + \lambda (\d_x h_\eps)^2 + \sigma \xi^{h_\eps}_\eps - C_\eps\;, \qquad h_\eps(0, \bigcdot) = h^0_\eps(\bigcdot)\;.
\end{equ}
This equation is obtained by rescaling \eqref{eq:rescaled_solution} from the \emph{quenched KPZ (qKPZ) equation} in the intermediate disorder regime
\begin{equ}
\d_t u = \d_x^2 u + \lambda (\d_x u)^2  + 1 + \sigma \sqrt\eps\, \eta^u\;,
\end{equ}
in which the driving noise is as in \eqref{eq:main_before_scaling}. Together with Theorem~\ref{thm:main_intro} we prove that for a suitable choice of $C_\eps$ the qKPZ equation converges to the standard KPZ equation. Moreover, one expects \cite{Takeuchi} that under the 1:2:3 scaling the solution of the qKPZ equation converges to the KPZ fixed point constructed in \cite{fixedpt} (see also \cite{sarkar2020fixedpoint, virag2020fixedpoint} for the recent proof of convergence of the KPZ equation to the KPZ fixed point). A proof of this much harder conjecture is out of scope of the presented result.

\begin{theorem}\label{thm:qKPZ_intro}
Let the initial states $h_\eps^0 : \T \to \R$ of~\eqref{eq:qKPZ} converge as $\eps \to 0$ in $\CC^{\beta}$ 
to a function $h^0$ for some $\beta > \frac 13$.\footnote{The reason why one can take a less regular 
initial condition than in Theorem~\ref{thm:main_intro} is explained on page~\pageref{cor:qKPZ}.} Then there is a choice of divergent constants $C_\eps \sim \eps^{-1}$ such 
that, if $h_\eps$ solves the qKPZ equation~\eqref{eq:qKPZ} with initial condition $h_\eps^0$, then as $\eps 
\to 0$ the functions $\tilde h_\eps (t,x) \eqdef h_\eps (t,x+ \nu t)$ converge in probability in the locally 
uniform topology on $\R_+\times \T$ to the Cole--Hopf solution of the KPZ equation~\eqref{eq:KPZ} 
with the initial state $h^0$, where $\nu$ is as in~\eqref{eq:speed}.
\end{theorem}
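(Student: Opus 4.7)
The plan is to derive Theorem~\ref{thm:qKPZ_intro} as a direct specialisation of the general Theorem~\ref{thm:main}. Setting $F_1 \equiv 0$, $F_2 \equiv \lambda$ and $F_3 \equiv \sigma$ in~\eqref{eq:main} recovers exactly the qKPZ equation~\eqref{eq:qKPZ}, and these constant choices trivially satisfy Assumption~\ref{assum:functions}: the required vanishing of $F_1(0)$, $F_1'(0)$, $F_2'(0)$, $F_3'(0)$, $F_3'''(0)$ holds because the functions are constant, and $F_1'' \equiv F_1''' \equiv 0 \in L^\infty(\R)$ is vacuous. Hence, modulo identifying the limiting equation and accommodating the weaker regularity of the initial data, the conclusion follows from Theorem~\ref{thm:main}.

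Next, I would read off the limit. Since $F_1 \equiv 0$, no $\eps$-dependent quasilinear diffusion correction is produced and the limit carries the standard Laplacian. The $F_2$ term directly contributes $\lambda (\partial_x h)^2$, while the inhomogeneous noise $\xi_\eps^{h_\eps}$ converges, after renormalisation by $C_\eps$ and passage to the moving frame of speed $\nu$, to $\sigma \xi$ for a standard space-time white noise $\xi$ on $\R \times \T$. Here $\nu$ is given by~\eqref{eq:speed} with the qKPZ parameters $\lambda$ and $\sigma$ rather than the mean-curvature values $\lambda = \tfrac{1}{2}$, $\sigma = 1$. Substituting $\tilde h_\eps(t,x) = h_\eps(t, x + \nu t)$ absorbs the linear drift $\nu \partial_x h$ that would otherwise appear and identifies the limit as the Cole--Hopf solution of~\eqref{eq:KPZ}.

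The relaxation of the regularity hypothesis on the initial data (from $\CC^{7/3 + \kappa}$ in Theorem~\ref{thm:main_intro} to $\CC^{\beta}$ with $\beta > \tfrac{1}{3}$ here) is the point addressed in the footnote. In the mean curvature setting the higher regularity and the uniform $\eps^{2k/3}$ bounds are imposed in order to control the genuinely quasilinear term $F_1(\eps \partial_x h_\eps) \partial_x^2 h_\eps$ and to ensure that the rescaling~\eqref{eq:rescaled_solution} yields initial data which is flat on scale $1$. Since~\eqref{eq:qKPZ} is already semilinear with constant diffusion coefficient and its initial condition $h_\eps^0$ is posed on $\T$ without any $\eps$-rescaling, only the standard KPZ local well-posedness threshold $\beta > \tfrac{1}{3}$ is needed; the steps in Lemma~\ref{prop:system_soln} that demand additional regularity are precisely those involving $F_1$ and drop away.

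The main obstacle, inherited from the general theorem rather than specific to this corollary, is that $\xi_\eps^{h_\eps}$ is a nonlinear functional of the solution itself, so the black-box machinery of~\cite{Regularity, BHZ, HC, BCCH} does not apply directly. Overcoming this requires the extension of regularity structures to infinite-dimensional structures developed in the body of the paper, together with the explicit computation of $\nu$ and of the divergent constants $C_\eps \sim \eps^{-1}$. Once Theorem~\ref{thm:main} is in place, the remaining work reduces to bookkeeping of which trees survive under the choice $F_1 \equiv 0$ and to the verification that the limiting KPZ coefficients come out to $(\lambda, \sigma)$ as claimed.
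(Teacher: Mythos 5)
Your proposal is correct and takes essentially the same route as the paper: both derive the theorem by specialising Theorem~\ref{thm:main} to $F_1 \equiv 0$, $F_2 \equiv \lambda$, $F_3 \equiv \sigma$ and verifying that these trivially satisfy Assumption~\ref{assum:functions}. The identification of the limit equation, the role of the moving frame $\nu$, and the observation that the real work is Theorem~\ref{thm:main} itself are all in agreement with the paper.

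The one point where your account is slightly off-target is the explanation of the relaxed initial-data hypothesis. You attribute the graded $\CC^{(1+2k)/3+\kappa}$ demands of Lemma~\ref{prop:system_soln} ``precisely'' to the quasilinear $F_1$-term. What actually happens is slightly different: the graded regularities are imposed by the decomposition of~\eqref{eq:main} into the system~\eqref{eq:main2} of four coupled equations, a decomposition that was introduced to restore local subcriticality after Taylor-expanding $F_2$ and $F_3$, not only to tame $F_1$. When $F_1$, $F_2$, $F_3$ are all constants, all of $F_{1,\eps}$, $F_{2,\eps}$, $F_{3,\eps}$ in~\eqref{eq:F_eps} vanish identically and $\sigma_1 = 0$, so the $h_{0,\eps}$-equation decouples and is already locally subcritical on its own. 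One can therefore bypass the four-equation system entirely and solve a single fixed-point problem, for which the standard KPZ threshold $\beta > \tfrac13$ suffices. This is the observation the paper makes (and what its footnote refers to); your version, attributing the relaxation to $F_1$ alone, is true in spirit but incomplete, since a non-constant $F_2$ or $F_3$ would still force the system decomposition and the graded hypotheses even with $F_1 \equiv 0$.
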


One can see that equation~\eqref{eq:main} is not locally subcritical in the sense of~\cite{Regularity}, and 
the theory of regularity structures 
cannot be applied directly to the equation written in this form. More precisely, the space where the 
limiting driving noise $\xi = \lim_{\eps 
\to 0} \xi_\eps$ lives is $\CC^\alpha_\s$ for any $\alpha < -\frac{3}{2}$ (see Section~\ref{ss:notation} for the definition of the spaces). Then 
the Schauder estimate implies that the best we can get for $h_\eps$ is to converge in $\CC^{\alpha + 2}_\s$. If one Taylor expands $F_2$, then 
one can observe a $(\partial_x h_\eps)^4$ term appearing on the right-hand side of the equation, which 
a priory diverges in $\CC^\alpha_\s$ 
even after renormalisation.\footnote{Formal power counting suggests that $(\partial_x h_\eps)^4$ 
converges in $\CC^{4\alpha + 4}_\s$, since 
$\partial_x h_\eps$ converges in $\CC^{\alpha + 1}_\s$. Note that $4\alpha + 4 < \alpha$ for $\alpha < - \frac{3}{2}$.} The way around this 
problem is to observe that all such divergent terms are actually always multiplied by $\eps^\beta$ with a 
high enough power $\beta > 0$. More precisely,~\eqref{eq:Functions} yields the Taylor's expansion
\begin{equ}
	F_2(\eps\d_x h_\eps)\, (\d_x h_\eps)^2 = \sum_{n \geq 0} \frac{F_2^{(n)}(0)}{n!} (\eps\d_x h_\eps)^{n} (\d_x h_\eps)^2\;,
\end{equ}
and we can use the fact that $(\d_x h_\eps)^{n+2}$ is always multiplied by $\eps^n$. Formally, assuming 
that we can define the product $(\d_x 
h_\eps)^{n+2}$ uniformly in $\eps > 0$, it is expected to converge in $\CC^{(n+2)(\alpha + 1)}_\s$. If we 
now consider multiplication by $\eps^n$ as an 
``improvement'' of regularity by $n$, we expect convergence of $\eps^n(\d_x h_\eps)^{n+2}$ in $\CC^{(n+2)\alpha + 2n + 2}_\s$, which is 
a more regular space than $\CC^\alpha_\s$ as soon as $\alpha > -\frac{7}{4}$. One can make this argument 
rigorous, similarly to~\cite{KPZJeremy}, 
where in the framework of regularity structures multiplication by $\eps^n$ was 
implemented by an ``integration map'' $\CE^n$ at the level of 
the regularity structure. However, this approach does not quite fall into the scope 
of~\cite{HC,Rhys}, which prevents us from using the general framework of the BPHZ renormalisation.

Instead, we use the trick of rewriting~\eqref{eq:main} as a system of four equations. By considering an 
apparently more complicated 
problem, we make it locally subcritical and hence amenable to solving by using the framework of 
regularity structures without having to introduce the operator $\CE$ of~\cite{KPZJeremy}. 
More precisely, for integers $i \geq 0$ we define the functions $h_{i, \eps}$, the 
random noises $\xi_{i, \eps}$ and the renormalisation constants $C_{i, \eps}$ by
\begin{equ}[eq:new_functions]
	h_{i, \eps} \eqdef \eps^{\frac{2 i}{3}} h_\eps\;, \qquad \xi_{i, \eps} \eqdef \eps^{\frac{2 i}{3}} \xi_\eps\;, \qquad C_{i, \eps} \eqdef \eps^{\frac{2 i}{3}} C_\eps\;,
\end{equ}
so that $h_{0, \eps} = h_{\eps}$, $\xi_{0, \eps} = \xi_{\eps}$ and $C_{0, \eps} = C_{\eps}$. Moreover, we define the functions 
\begin{equs}[eq:F_eps]
F_{1,\eps}(u) &= \eps^{-\frac{2}{3}} F_{1} \bigl(\eps^{\frac{1}{3}} u\bigr)\;,\qquad F_{2,\eps}(u) = \eps^{- \frac{2}{3}} \big( F_{2}\bigl(\eps^{\frac{1}{3}} u\bigr) - F_{2}(0)\big) u\;,\quad \\
&F_{3,\eps}(u) = \eps^{-\frac{4}{3}} \Big(F_{3}\bigl(\eps^{\frac{1}{3}} u\bigr) - F_{3}(0) - {1\over 2}F_{3}''(0) \eps^{\frac{2}{3}} u^2\Big)\;.
\end{equs}
Then from~\eqref{eq:main}, for the constants $\lambda = F_2(0)$, $\sigma = F_3(0)$, $\sigma_1 = \frac{1}{2}F_3''(0)$ and for $i = 0, 1, 2$, we obtain the system of equations
\begin{equs}[eq:main2]
		\d_t h_{i, \eps} &= \d_x^2 h_{i, \eps} + F_{1,\eps}( \d_x h_{1, \eps})\, \d_x^2 h_{i+1, \eps} + \lambda\, \d_x h_{\lceil i/2\rceil, \eps} \, \d_x h_{\lfloor i/2\rfloor, \eps}\\
		 &\qquad + F_{2,\eps}(\d_x h_{1, \eps})\, \d_x h_{i, \eps}  - C_{i, \eps} + \sigma \xi^{h_{0, \eps}}_{i, \eps} \\
		 &\qquad + \sigma_1 (\d_x h_{1, \eps})^2\, \xi^{h_{0, \eps}}_{i+1, \eps} + F_{3,\eps}(\d_x h_{1, \eps})\, \xi^{h_{0, \eps}}_{i+2, \eps}\;, \\
		\d_t h_{3, \eps} &= \d_x^2 h_{3, \eps} + \eps^{\frac{2}{3}} F_{1,\eps}(\d_x h_{1, \eps})\, \d_x^2 h_{3, \eps} \\
		 &\qquad + F_{2}\bigl( \eps^{\frac{1}{3}} \d_x h_{1, \eps}\bigr) \, \d_x h_{1, \eps} \, \d_x h_{2, \eps} - C_{3, \eps} +  F_{3}\bigl(\eps^{\frac{1}{3}}\d_x h_{1, \eps}\bigr) \, \xi^{h_{0, \eps}}_{3, \eps}\;,
\end{equs}
with initial conditions $h^0_{i, \eps} = \eps^{\frac{2i}{3}} h^0_{\eps}$. The 
inhomogeneous noises are defined by analogy with~\eqref{eq:rescaled_noise} as
\begin{equ}[eq:non-hom_noises]
\xi^{h_{0, \eps}}_{i, \eps}(t,x) \eqdef \xi_{i, \eps} \bigl(t + \eps^2 C_\eps t + \eps^2 h_{0, \eps}(t,x), x\bigr)\;, \qquad i = 0, \ldots, 3\;.
\end{equ}
Now, the regularity (uniform in $\eps$) of the noise $\xi_{i, \eps}$ in~\eqref{eq:new_functions} is $\CC^{\alpha + 2i/ 3}_\s$, for any $\alpha < -\frac{3}{2}$ so that, by Schauder estimates, one expects the
functions $h_{i, \eps}$ appearing in~\eqref{eq:new_functions} to be in $\CC^{\alpha + 2 + 2 i / 3}_\s$. 

Given the just described expected regularities, it follows that $\d_x h_{1, \eps}$, $\d_x h_{2, \eps}$, $\d_x^2 h_{3, \eps}$ and $\xi^{h_{0, \eps}}_{3, \eps}$ are Hölder continuous functions (again, uniformly in $\eps$) and that all the products appearing in the equation for $h_{3, \eps}$ are classically defined. Hence, the equation for $h_{3, \eps}$ is a classically well-posed quasi-linear SPDE (uniformly in $\eps \le 1$) \cite[Ch.~V]{Ladyzenskaja}, while we use regularity structures to solve the other three equations. One can check that the system of the first three equations in~\eqref{eq:main2} is locally subcritical in the sense of~\cite[Ass.~8.3]{Regularity}. This is implied by the fact that the coefficients multiplying the noises in \eqref{eq:main2} are of positive regularity (uniformly in $\eps$), while the regularities of all other terms appearing on the right-hand side of the $i$-th equation in \eqref{eq:main2} are strictly larger than the regularity of the noise $\xi^{h_{0, \eps}}_{i, \eps}$. In this power-counting argument, one considers that the regularity of a product of terms of respective regularities $\gamma_1$ and $\gamma_2$ is given by $\gamma_1 \wedge \gamma_2 \wedge (\gamma_1+\gamma_2)$, even 
when $\gamma_1 + \gamma_2 \le 0$ so that the product isn't classically well-defined. We refer to \cite[Def.~5.14]{BHZ} for a more general definition of local subcriticality. 

\begin{remark}
There are of course many different ways of turning~\eqref{eq:main} into a system of 
subcritical equations, and~\eqref{eq:main2} is just one of them. We do however believe that 
this is one of the most convenient choices (systems of only $3$ equations do not seem 
to work) and produces in the end close to a minimum possible number of trees of negative degrees 
(see Appendix~\ref{app:constants}).
\end{remark}

\begin{remark}
At a formal level, one would expect solutions to \eqref{eq:main} to converge to solutions to
the KPZ equation
\begin{equ}[eq:KPZ]
\d_t h = \d_x^2 h + F_2(0) (\d_x h)^2 +   F_3(0)\xi\;.
\end{equ}
Note however that while one might expect that $\xi^{h_{0, \eps}}_{0, \eps}(t,x) \approx \xi_{0, 
\eps}(t,x)$, the pointwise difference between these two terms is actually
of order $\eps^{-3/2}$, i.e.\ it is just as large as $\xi_{0, \eps}$ itself! This is because one has the 
identity~\eqref{eq:non-hom_noises} and, if all goes well, $h_{0, \eps}$ is expected to be of order one in 
the limit, while $\eta_\eps$ has a correlation length
of order $1$. Our main result shows that these diverging error terms 
do actually average out to zero on the scales under consideration. 
\end{remark}

\subsection{A quenched Edwards--Wilkinson model}
\label{sec:QuenchedEW}

Let us look at a much simpler model, the \emph{quenched Edwards--Wilkinson (qEW) model}:
\begin{equ}[eq:QuenchedEW]
\d_t u (t,x) = \Delta u(t,x) + 1 + \eps^\alpha \eta_\eps(u(t,x),x)\;, \qquad u (0,\bigcdot) = u^0_\eps (\bigcdot)\;,
\end{equ}
for $x \in \R^d$ and $d \geq 1$ and some $\alpha > 0$. Similarly to before, the driving noise $\eta_\eps$ and the initial state 
$u^0$ are $\eps^{-1}$-periodic in every spatial direction, and $\eta_\eps$ is a mollified (on scales of order $1$) spatially 
periodic space-time white noise. If the coefficient in front of $\Delta$ and the constant term is not $1$, 
it can always be made such by rescaling time and $u$, and by changing the value of~$\eps$.

For $\beta \in \R$, we rescale space by $\eps^{-1}$ and the solution by $\eps^{-\beta}$, 
namely we set 
\begin{equ}
	h_\eps(t,x) = \eps^{-\beta} u(\eps^{-2} t, \eps^{-1} x) - \eps^{-\beta-2}t- C_\eps t\;.
\end{equ}
Furthermore, we define the rescaled noise $\xi_\eps(t,x) = \eps^{-\frac{d+2}{2}}\eta_\eps 
\bigl(\eps^{-2} t, \eps^{-1} x\bigr)$. One then has
\begin{equ}[eq:QuenchedEW_rescaled]
\d_t h_\eps = \Delta h_\eps - C_\eps + \eps^{\alpha - \beta + \frac{d-2}{2}} \xi_\eps^{h_\eps}\;,
\end{equ}
with the initial state $h_\eps(0,x) = \eps^{-\beta} u_\eps^0(\eps^{-1} x)$, and where the driving noise is 
\begin{equ}[eq:noise_qEW]
\xi_\eps^{h_\eps}(t,x) \eqdef \xi_\eps \bigl(t + \eps^{2 + \beta} C_\eps t + \eps^{2 + \beta} h_\eps(t,x), x\bigr)\;.
\end{equ}
The rescaled noise $\xi_\eps$ weakly converges as $\eps \to 0$ to a space-time white noise, which is 
$1$-periodic in the spatial variable. In order to see non-trivial fluctuations, we therefore choose $\beta$ 
in such a way that
\begin{equ}[eq:alpha_and_beta]
\alpha - \beta + \frac{d-2}{2} = 0 \qquad\Rightarrow\qquad \beta = \alpha + \frac{d-2}{2}\;.
\end{equ}
Note that as soon as $d \ge 2$, one has $\beta > 0$ and if $d = 1$ then $\beta \geq 0$ for $\alpha \geq \frac 12$. Furthermore, the typical size of $h_\eps$ is expected to be of order $\eps^{(2 - d) /2}$ for $d \ge 3$ and of order $1$ for $d \leq 2$. This can be explained as follows: let us remove the shift of the 
time variable in the noise in~\eqref{eq:QuenchedEW_rescaled} by $h_\eps$. Then in the case $d = 1$, 
$h_\eps$ behaves as a function (rather than a distribution) in the limit $\eps \to 0$. On the other hand, 
in the case $d \geq 2$ the rescaled noise has fluctuations of order $\eps^{-(d+2) /2}$ and its 
space-time convolution with the heat kernel increases the power by $2$. Then $\eps^{\beta} 
h_\eps$ is expected to be of order $\eps^{\alpha}$ for $d \geq 2$ and of order $\eps^{\alpha - 1/2}$ for $d = 1$. This justifies (at least at a 
formal level) the expectation
that in $d\ge 2$ and for $\eps$ small, $h_\eps$ is close to $\tilde h_\eps$, where the latter solves 
\begin{equ}
\d_t \tilde h_\eps = \Delta \tilde h_\eps +  \xi_\eps\;,
\end{equ}
whatever the value of $\alpha > 0$ is. In dimension $1$, one would expect the same statement to hold for $\alpha > \frac{1}{2}$. In the following theorem, which is proved in Section~\ref{sec:qEW}, we show that this heuristics is correct. 

\begin{theorem}\label{thm:qEW}
Let $\alpha$ and $\delta$ be such that $\alpha \geq \frac 12$ and $\delta > 0$ in the case $d = 1$, and $\alpha > 0$ and $\delta + \beta + 2 > 0$ in the case $d \geq 2$. Let the initial state $u^{0}_\eps : (\eps^{-1} \T)^d \to \R$ of~\eqref{eq:QuenchedEW} be such that for $\beta$ as in~\eqref{eq:alpha_and_beta} the 
functions $\eps^{-\beta} u^0_\eps(\eps^{-1} \bigcdot)$ converge as $\eps \to 0$ in $\CC^{\delta}(\T^d)$ 
to $h^0 \in \CC^{\delta}(\T^d)$ and if $d \geq 2$ then $\|u^0_\eps(\eps^{-1} 
\bigcdot)\|_{\CC^{\delta+\beta}}$ are also uniformly bounded in $\eps$. Then there is a choice of $C_\eps$  such that for any $\nu < \frac{2 - d}{2}$ the solutions $h_\eps$ of~\eqref{eq:QuenchedEW_rescaled} converge in probability in the topology of $\CC_\s^{\nu \wedge 0}$ to the solution of the stochastic heat equation
 \begin{equ}[eq:heat_qEW]
  	\d_t h = \Delta h +  \xi\;, \qquad h(0, \bigcdot) = h^0(\bigcdot)\;,
 \end{equ}
driven by a space-time white noise $\xi$ on $\R \times \T^d$. Finally, the above renormalisation constant satisfies the following: $C_\eps  \sim \eps^{-1}$ if $d = 1$, $C_\eps \sim \eps^{\alpha - \frac{d-2}{2}}$ if $d \geq 2$ and in particular $C_\eps = 0$ if $\alpha > \frac{d-2}{2}$.
\end{theorem}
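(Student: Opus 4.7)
The plan is to reduce the problem to classical convergence of the stochastic heat equation driven by the un-shifted noise $\xi_\eps$, treating the dependence of $\xi_\eps^{h_\eps}$ on $h_\eps$ as a perturbation that vanishes in the $\eps \to 0$ limit. Concretely, let $\hat h_\eps$ solve $\partial_t \hat h_\eps = \Delta \hat h_\eps + \xi_\eps$ with the same initial condition $\eps^{-\beta} u^0_\eps(\eps^{-1}\bigcdot)$. Standard results for the mollified stochastic heat equation on $\T^d$, combined with the assumed convergence of the initial data in $\CC^{\delta}$, give $\hat h_\eps \to h$ in probability in $\CC_\s^{\nu\wedge 0}$ for any $\nu < (2-d)/2$. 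It therefore suffices to show that the remainder $r_\eps \eqdef h_\eps - \hat h_\eps$ converges to $0$ in the same topology.

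The remainder solves $\partial_t r_\eps = \Delta r_\eps + (\xi_\eps^{h_\eps} - \xi_\eps) - C_\eps$ with zero initial data. I would analyse the difference of noises via Taylor expansion in the small time shift: writing $z_\eps(t,x) \eqdef \eps^{2+\beta}(C_\eps t + h_\eps(t,x))$, one has
\begin{equ}
\xi_\eps^{h_\eps}(t,x) - \xi_\eps(t,x) = z_\eps(t,x) \int_0^1 \partial_t \xi_\eps\bigl(t + \sigma z_\eps(t,x), x\bigr)\, d\sigma\;,
\end{equ}
which comes with the small prefactor $\eps^{2+\beta}$. This factor must compete against the two parabolic degrees lost by differentiating $\xi_\eps$ in time, and a direct power-counting analysis (using the expected pointwise size $\eps^{(2-d)/2}$ of $h_\eps$ discussed after~\eqref{eq:alpha_and_beta}) shows that the purely stochastic contribution vanishes in $\CC_\s^{\nu \wedge 0}$. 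The only surviving contribution is the chaos-zero component coming from correlations between $h_\eps$ and $\partial_t \xi_\eps$ at coinciding points, and the constant $C_\eps$ is defined precisely to cancel the limiting value of this expectation. A direct covariance computation of $\eps^{2+\beta} \E[\hat h_\eps \partial_t \xi_\eps]$ in terms of $\rho$ and $G$ then yields the claimed scaling of $C_\eps$ in each dimension, with $C_\eps \equiv 0$ when $\alpha > (d-2)/2$ because the corresponding integral is then uniformly convergent.

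The argument is closed by a fixed-point iteration for $r_\eps$. In $d=1$, where $\hat h_\eps$ and $h_\eps$ are both H\"older continuous uniformly in $\eps$, this is carried out in $\CC_\s^{\nu \wedge 0}$ directly, using Duhamel's formula and Gaussian moment bounds for stochastic integrals against $\partial_t \xi_\eps$. In $d\ge 2$, where $\hat h_\eps$ is distribution-valued, I would lift the equation to an abstract regularity structure following the framework of Section~\ref{sec:abstract_system}: since the equation is linear in $h_\eps$ aside from the noise shift, only a small number of trees of negative degree arise and convergence of the BPHZ-renormalised model reduces to a short cumulant diagrammatic check. The main obstacle is that $\xi_\eps^{h_\eps} - \xi_\eps$ depends nonlinearly on the unknown $h_\eps$, so the a priori bound on $h_\eps$ and the smallness of the error must be obtained simultaneously; this is handled by a bootstrap, exploiting the uniform bound on $\|u^0_\eps(\eps^{-1}\bigcdot)\|_{\CC^{\delta+\beta}}$ assumed in $d\ge 2$ to initialise the iteration. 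Combining $\hat h_\eps \to h$ with $r_\eps \to 0$ then gives the claim.
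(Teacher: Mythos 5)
Your decomposition $h_\eps = \hat h_\eps + r_\eps$, with $\hat h_\eps$ solving the linear mollified SHE and $r_\eps$ an error, is not the route the paper takes. The paper solves the (rescaled) qEW equation directly through the regularity-structures machinery: for $d=1$ it observes that~\eqref{eq:QuenchedEW_rescaled} is literally an instance of~\eqref{eq:main} with $F_1 \equiv F_2 \equiv 0$, $F_3 \equiv 1$, and invokes the whole analysis of Section~\ref{sec:application}; for $d\geq 2$ it works with the two-component system~\eqref{eq:QuenchedEW_system} and the qEW rule. You do correctly identify the mechanism behind $C_\eps$ — it cancels the expectation of the resonance $\eps^{2+\beta}\partial_t\xi_\eps \cdot (K*\xi_\eps)$ at coinciding points, i.e.\ the tree $\<H1Xi>$ — and the claimed scaling matches. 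But the argument for $r_\eps \to 0$ has a real gap in both regimes.

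For $d=1$, "direct Duhamel plus Gaussian moment bounds" is not enough. After the Taylor expansion, the remainder right-hand side contains $h_\eps\cdot(\eps^{2+\beta}\partial_t\xi_\eps)$, and this product is not classically defined in the limit: uniformly in $\eps$ one has $h_\eps \in \CC_\s^{1/2-\kappa}$ while (by Lemma~\ref{lem:noise_convergence}) $\eps^2\partial_t\xi_\eps$ lives only in $\CC_\s^{-3/2-\kappa}$, and $\f12 - \f32 < 0$. The divergence is exactly the resonant (second Wick chaos) contribution, which must be extracted and subtracted via the BPHZ construction (or an equivalent Wick/paracontrolled decomposition); moment bounds on stochastic integrals do not by themselves give this cancellation because $h_\eps$ is adapted to $\xi_\eps$. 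The paper's reduction to Theorem~\ref{thm:main} is precisely what controls this.

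For $d\geq 2$, you invoke the regularity-structures framework but do not address the obstruction that is the entire reason the paper introduces the system~\eqref{eq:QuenchedEW_system}: the abstract solution $H_0$ lies in a sector of regularity $\alpha_0 = -\f{d-2}{2} - \kappa_\star < 0 = -\s_0 + \deg\ft_0$, so the integrability hypothesis of the Schauder estimate (Lemma~\ref{lem:abstract_integration}) fails and no Picard iteration can be closed for $H_0$ alone. The paper's fix is to observe that the $H_1$ equation is self-contained and better-behaved ($\alpha_1 = \alpha - \kappa_\star > 0$), solve it first by fixed point, and only then \emph{define} $H_0$ by integrating its right-hand side (which depends only on $H_1$). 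Your remainder $r_\eps$ does not escape this: the argument of the shifted noise still involves $h_\eps = \hat h_\eps + r_\eps$, so a naive lift of the $r_\eps$ equation to the regularity structure faces the same obstruction. Any complete proof along your lines would need an analogue of the paper's two-tier decoupling, and the proposal neither identifies the issue nor supplies that step.
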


The proof of this theorem is different for $d = 1$ and $d \geq 2$, because in the latter case the solution of \eqref{eq:heat_qEW} is a distribution and the framework which we develop in the following sections cannot be applied directly (it is important for this framework that we perturb the noise \eqref{eq:noise_qEW} by a function $h_\eps$, which is also a function in the limit $\eps \to 0$). To resolve this problem we use a trick as in the system \eqref{eq:main2}: we define $h_{1, \eps} \eqdef \eps^{\beta} h_\eps$, $\xi_{1, \eps} \eqdef \eps^{\beta} \xi_\eps$ and $C_{1, \eps} \eqdef \eps^{\beta} C_\eps$, and label these objects without the multiplier $\eps^\beta$ with the subscript $0$. Then we obtain the system of equations 
\begin{equ}[eq:QuenchedEW_system]
\d_t h_{0, \eps} = \Delta h_{0, \eps} - C_{0, \eps} + \xi_{0, \eps}^{h_{1, \eps}}\;, \qquad \d_t h_{1, \eps} = \Delta h_{1, \eps} - C_{1, \eps} + \xi_{1, \eps}^{h_{1, \eps}}\;,
\end{equ}
where the perturbed noises are defined as
\begin{equ}
\xi_{i, \eps}^{h_{1, \eps}}(t,x) = \xi_{i, \eps} \bigl(t + \eps^{2 + \beta} C_\eps t + \eps^{2} h_{1, \eps}(t,x), x\bigr)\;.
\end{equ}
Because the solution of~\eqref{eq:heat_qEW} has regularity $\frac{2-d}{2}-$, the function $h_{1, \eps}$ is expected to be a function of regularity $\beta + \frac{2-d}{2} - = \alpha -$, which allows to apply our framework to the system \eqref{eq:QuenchedEW_system}.

\subsection{A general system of SPDEs}

For locally subcritical SPDEs with homogeneous noises renormalisation has been derived in~\cite{BCCH}. 
This result relates the renormalisation group constructed in~\cite{BHZ} with the renormalised canonical 
lifts of driving noises obtained in~\cite{HC}, and gives a general existence of solution and stability 
theorem for a wide class of nonlinear SPDEs. Unfortunately, these results cannot be applied directly in 
our case even after rewriting it in the form \eqref{eq:main2}, because of the inhomogenous noises~\eqref{eq:rescaled_noise}. For this we will have to 
adapt the results from~\cite{BCCH} and for the first part of the paper will consider a general system of 
SPDEs, driven by different inhomogeneous noises. More precisely, for integers $m, n \geq 
1$ and for functions $u_{j} : \R_+ \times \T^{d} \to \R$, with $d \geq 1$ and $j=1,\ldots,m$, 
we consider 
a collection of inhomogeneous noises $\xi^{u,c} = (\xi^{u,c}_i)_{i = 1}^n$, defined by
\begin{equ}[eq:non-hom_noise_intro]
\xi^{u,c}_i(z) \eqdef \xi_{i} \biggl(z + cz + \sum_{j = 1}^{m}\sum_{k \in \N^{d+1}} a^k_{i j} D^k u_{j}(z)\biggr)\;,
\end{equ}
where $a^k_{i j} \in \R^{d+1}$ is a fixed 
collection of vectors such that $a^k_{i j} \neq 0$ only for finitely many $k \in \N^{d+1}$, $c \in L(\R^{d+1}, \R^{d+1})$ is a constant matrix which is needed in order to represent a translation by $\eps^2C_\eps$ in 
\eqref{eq:rescaled_noise}, and $D^k$ is a mixed space-time derivative. In other words, we shift the 
space-time variable $z$ in the noises by a linear combination of the functions $D^k u_j(z)$ and $cz$. 
Then we consider a system of SPDEs
\begin{equ}[eq:system_intro]
\d_t u_i = \SL_i u_i + F_{i} \bigl(\{\d^p_x u_{j} : p \in \N^d\}, \{D^q \xi^{u,c}_{j} : q \in \N^{d+1}\}\bigr)\;, \quad u_i(0, \bigcdot) = u_i^0(\bigcdot)\;,
\end{equ}
where $\SL_i$ is an elliptic differential operator, $\d^p_x$ is a mixed spatial derivative, and where 
$(F_i)_{i=1}^m$ is a collection of local nonlinearities, which are affine in the noises $\xi^{u,c}$ and 
possibly a finite number of their derivatives. Moreover, the nonlinearities depend only on finitely many 
elements $\d^p_x u_{j}$ and are smooth with respect to them. We assume that the system of equations 
\eqref{eq:system_intro} is locally subcritical, which we define rigorously in Section~\ref{sec:reg_str}. 
One can readily see that~\eqref{eq:main2} and~\eqref{eq:QuenchedEW_rescaled} are special cases 
of the general system~\eqref{eq:system_intro}.

The smooth noises $\xi_{j, \eps}$ we are interested in depend on a parameter $\eps > 0$ and have limits 
$\xi_j$ as $\eps \to 0$ in respective spaces of distributions. A typical example of such noises can be 
$\xi_{j, \eps} = \sigma_{j, \eps} (\xi_j * \rho_\eps)$, for some $\eps$-dependent constants $\sigma_{j, 
\eps}$, for space-time white noises $\xi_j$, which are not necessarily independent,  and for a smooth 
mollifier $\rho_\eps$, converging to the Dirac delta as $\eps \to 0$. Then the classical solutions $u_{i, 
\eps}$ of~\eqref{eq:system_intro}, driven by the noises $\xi_{j, \eps}$, do not typically converge to a 
non-trivial limit as $\eps \to 0$. This is due to the fact that the nonlinearities $F_{i}$ are not 
well-defined in the limit, since the driving noises have low regularities. In general, one would like to 
perform renormalisation of equations~\eqref{eq:system_intro} which allows to obtain a non-trivial limit. 
More precisely, for every $i = 1, \ldots, m$ one would like to find a ``natural'' modification $F_{i, \eps}$ of 
$F_{i}$ such that the classical solutions of the renormalised system 
\begin{equ}[eq:system_intro_renorm]
\d_t \hat u_{i, \eps} = \SL_i \hat u_{i, \eps} + F_{i, \eps} \bigl(\{\d^p_x \hat u_{j, \eps} : p \in \N^d\}, \{D^q \xi_j^{\hat u, c} : q \in \N^{d+1}\}\bigr)\;,
\end{equ}
converge as $\eps \to 0$ to a non-trivial collection of functions $(u_i)_{i=1}^m$. If these limits do not 
depend on a particular choice of approximation of the noises $\xi_j$ (although the functions $F_{i, 
\eps}$ might depend on such choice), then they are considered to be solutions of the 
system~\eqref{eq:system_intro}. However, construction of such renormalised equations can be very 
non-trivial, especially when the number of equations is large and the nonlinearities are complex. 

\subsection{Counterterms for the inhomogeneous noises}
\label{sec:counterterms}

In order to deal with the inhomogeneous noises in~\eqref{eq:system_intro}, we are going to consider 
regularity structures such 
that each instance of the noise $\Xi_i$ in a tree $\tau$ has an infinite-dimensional space $\CB$ 
``attached'' 
to it (see Section~\ref{sec:first} for the definition of the trees in the regularity structure). More 
precisely, instead of one instance of the noise $\Xi_i$ we consider a couple $\mu \otimes \Xi_{i}$ for an 
element $\mu \in \CB$. This approach is similar to the quasilinear setting of~\cite{FuGu,HendrikFelix,MH17}, where the authors 
attach infinite-dimensional 
spaces to each instance of the kernel. (In the case of \cite{FuGu,HendrikFelix} this is 
equivalently formulated as a parametrised family of ``models'' or paracontrolled
distributions.)
As in those papers, we take $\CB$ to be the dual of a weighted version of $\CC^{k_\star}$ 
for some sufficiently large value of $k_\star > 0$ which will be determined later,\footnote{Throughout 
the article, we have several global constants which need to be specified. Similarly to $k_\star$, we 
decorate them with ``$\star$'' to distinguish them from other constants.} and we set up the model such that 
for 
$\mu \in \CB$
\begin{equ}[eq:model_intro]
	\big(\Pi^\eps_{z} (\mu \otimes \Xi_{i}) \big)(\bar z) = \int_\R \xi_{i,\eps} \bigl(\bar z + c_\eps \bar z + \eps^2u\bigr) \mu(du)\;,
\end{equ}
where we use the shorthand notation $\bar z + \eps^2u = \bar z + (\eps^2u, 0)$ and $c_\eps = \text{diag}(\eps^2C_\eps, 0)$. A precise definition of models is provided in Section~\ref{sec:models}. 
Note that the regularity (in $\bar z$) of these distributions is the same for every $\mu$ and
is uniform in $\eps$. This is
thanks to the fact that we have $\eps^2$ multiplying $u$, so that potential derivatives hitting 
$\xi_{i,\eps}$ are precisely compensated by the powers of $\eps$ that they generate.

We need to take $k_\star$ big enough, so that $\CB$ contains derivatives of Dirac delta functions of 
high enough order. Delta functions play a special role in our regularity structure because~\eqref{eq:model_intro} implies
\begin{equ}
	\big(\Pi^\eps_{z} (\delta_{h_\eps(t,x)} \otimes \Xi_{i,\eps}) \big)(t,x) = \xi_{i,\eps} \bigl(t+\eps^2C_\eps t + \eps^2 h_\eps(t,x), x\bigr)\;,
\end{equ}
where $\delta_{h}$ denotes the Dirac delta function centred at a point $h$.

With this notation, given a function-like sector $V$, one can define
an operator $\hat\Xi_i \colon \CD^\gamma(V) \to 
\CD^{\gamma-\alpha_i}$ (where $-\alpha_i < 0$ is the degree of $\Xi_i$) by
\begin{equ}[eq:Xi_hat]
	\hat\Xi_i(H)(z) = \sum_{n \ge 0}{1\over n!} \bigl(\delta^{(n)}_{h(z)}\otimes \Xi_i\bigr) \bigl(\tilde h(z)\bigr)^n\;,
\end{equ}
for modelled distribution $H \in \CD^\gamma(V)$ of the form $H(z) = h(z)\one + 
\tilde h(z)$. Here $\delta_u^{(n)}$ denotes the $n^{\text{th}}$ derivative of the Dirac distribution located at $u$. The 
reason to include higher order terms ($n > 0$) in~\eqref{eq:Xi_hat} is analogous to the higher order 
terms in the definition of the composition of modelled distributions with smooth functions (see 
\cite[Sec.~4.2]{Regularity}). We will see that this guarantees boundedness of the 
$\hat \Xi_i$ operators between the spaces of modelled distributions just mentioned. 

In practice, we need to be more careful with what we mean by attaching vector spaces to noises. We are 
going to use the notion of vector-valued regularity structures developed in~\cite{CCHS}. For this, we 
shall actually view noises as edges $\J_{\fl_i}$ rather than leaves $\Xi_i$ in the trees $\tau$. Moreover,  
we shall see that it is more convenient to view the multiplication in~\eqref{eq:Xi_hat} as 
$\J_{\fl_i}[\delta^{(n)}_{h(z)} \otimes \bigl(\tilde h(z)\bigr)^n]$, which should be understood as attaching 
$\delta^{(n)}_{h(z)}$ to the edge $\J_{\fl_i}$ and ``drawing'' $\bigl(\tilde h(z)\bigr)^n$ above it. This is 
convenient for several reasons. First, this allows to distinguish multiplication with $\xi_{i,\eps}$ that 
comes from a multiplicative noise in~\eqref{eq:main2} from the ``multiplication'' that comes from the 
shift by $h_\eps$. Second, if we observe that $n$ edges leaving the edge $\J_{\fl_i}$, it means that 
$\delta^{(k)}_{h}$ for some $k\geq n$ should be attached to that edge.\footnote{If there is no polynomial on top of $\J_{\fl_i}$ then precisely $\delta^{(n)}_{h}$ will be attached to that edge. For the effect of polynomials on the derivatives of $\delta_{h}$ see~\eqref{eq:d_i} and~\eqref{eq:LeibnizFhat}.} This allows us to perform part of the algebra on the 
trees $\tau$ with no vector spaces attached. 

\subsection{Outline of the paper}

In Section~\ref{sec:algebra} we recall the main algebraic definitions from the theory of the regularity 
structures. We use the framework of~\cite{CCHS} to construct vector-valued regularity structures that 
reflect the assignment of an infinite-dimensional spaces on noises from~\eqref{eq:model_intro}. In fact, we construct
two regularity structures $\CT_{\CD}$ and $\CT_\CB$ as well as an evaluation map $\Ev$ between them. 
The regularity structure $\CT_\CD$ is going to be used for an algebraic formulation of 
the renormalisation of general equations like~\eqref{eq:system_intro} and has the advantage 
that its homogeneous subspaces are all finite-dimensional.
The structure $\CT_\CB$ is genuinely infinite-dimensional and will be used to formulate 
the analytic solution theory. Section~\ref{sec:coherence} is an adaptation of results from~\cite[Sec. 3]{BCCH} to 
the inhomogeneous noise setting. We define an operator $\Upsilon$ that is going to produce 
counterterms in the renormalised equation~\eqref{eq:system_intro_renorm} as well as discuss the 
notion of coherence which is an algebraic equivalent of a notion of a solution. Moreover, we define an 
action of the renormalisation group on the non-liearities. In Section~\ref{sec:models} we 
introduce models on the space $\CT_\CB$ and prove the continuity of the map $\hat\Xi$ 
from~\eqref{eq:Xi_hat}. In Section~\ref{sec:renorm_SPDE} we apply the renormalisation procedure to 
SPDEs, i.e.\ we show how an abstract equation on the space of modelled distributions is reconstructed 
for a renormalised model. In Section~\ref{sec:application} we apply all this general machinery to the 
system~\eqref{eq:main2} and prove Theorems~\ref{thm:main_intro} and \ref{thm:qKPZ_intro}. In particular, we show convergences of 
the noises $\xi_{i,\eps}$ from~\eqref{eq:new_functions} and construction of the BPHZ model on 
$\CT_\CB$. We prove Theorem~\ref{thm:qEW} in Section~\ref{sec:qEW}. Appendix~\ref{app:constants} contains computations of renormalisation constants for 
equation~\eqref{eq:main2}. Appendix~\ref{app:notation} contains notations of frequently used symbols.

\subsection{Notation and definitions}
\label{ss:notation}

We use the notations $\N = \{0,1,\dots\}$, $\R_+ = [0, \infty)$, and $\T^d = \R^d/\Z^d$, 
which we simply denote by $\T$ in the case $d = 1$. 

We always work on the time-space domain $\R^{d+1}$, equipped with the parabolic scaling $\s = (\s_0,\s_1, 
\ldots, \s_d) \in \R^{d+1}_+$, where $\s_0=2$ is the scaling of the time variable and $\s_i=1$ are the scalings of the spatial variables for $i = 1,\dots,d$. 
We write $|\s| = \sum_{i=0}^d \s_i$. Although we aim to work with the equations~\eqref{eq:main_before_scaling} 
with $d=1$, we develop a solution theory for a more general system of SPDEs~\eqref{eq:system_intro} 
with arbitrary spatial dimension $d \geq 1$. For $z = (t,x_1, \ldots, x_d) \in \R^{d+1}$ we define the norm 
$\|z\|_\s \eqdef \max \{\sqrt{|t|}, |x_1|, \ldots, |x_d|\}$, and for a multi-index $k = (k_0, k_1, \ldots, k_d) \in 
\N^{d+1}$ we set $|k|_\s \eqdef \sum_{i=0}^d \s_i k_i$. For vectors $v \in \R^m$ we often use the 
$\ell^1$ norm $|v| = \sum_{i = 1}^m |v_i|$, and the norm $|v|_\infty = \max_{1 \leq i \leq m} |v_i|$. For a 
sufficiently many times differentiable function $f : \R^{d+1} \to \R^{d+1}$ and for $k = (k_0, k_1, \ldots, 
k_d) \in \N^{d+1}$, we let $D^k f$ be the mixed derivative of $f$, where we differentiate $k_0$ times in 
the time variable and $k_i$ times in the spatial variable $x_i$, for $1 \leq i \leq d$. 

For topological vector spaces $V,W$ we write $L(V,W)$ for the space of continuous linear maps $V 
\mapsto W$ and set $L(V) \eqdef L(V,V)$. 

We use the standard notation $\CC^\alpha(\R^d)$ for the H\"{o}lder space
 and the Besov spaces of distributions (when $\alpha < 0$), defined in 
\cite[Def.~3.7]{Regularity}, whose (semi-)norms we denote by $\| \bigcdot \|_{\CC^\alpha}$. In the case 
$\alpha \in \N$, the space $\CC^\alpha(\R^d)$ is the space of all $\alpha$ times continuously 
differentiable functions. We note that in contrast to~\cite[Def.~3.7]{Regularity} we do not define the 
semi-norms locally; this is because all the distributions we consider are spatially periodic and are 
defined on a finite time interval. For $\alpha \geq 0$, the space $\CC^\alpha_{\loc}(\R^d)$ refers to the 
space of locally $\alpha$-H\"{o}lder continuous functions. The Besov space of distributions on 
$\R^{d+1}$ with the parabolic scaling $\s$ we denote by $\CC_\s^\alpha(\R^{d+1})$, with the respective 
semi-norm $\| \bigcdot \|_{\CC^\alpha_\s}$. In the case $\alpha > 0$ is non-integer, we denote by 
$\CC_\s^\alpha(\R^{d+1})$ the H\"{o}lder space with respect to the metric $\|\bigcdot\|_\s$. If we want to 
specify a domain of functions/distributions, we write $\CC^\alpha(\fK)$ and $\CC^\alpha_\s(\fK)$, for 
respective domains $\fK$. Whenever the domain is clear from the context, we prefer to write 
$\CC^\alpha$ and $\CC^\alpha_\s$.

For $r \in \N$, sometimes we use the set $\SB^r_\s$ containing all smooth test functions $\phi : \R^{d+1} \to \R$ supported on a unit ball with $\|\phi\|_{\CC^r_\s} \leq 1$. The respective set of functions on $\R$ we denote by $\SB^r$.

To measure regularities of distributions, we use rescaled test functions. More precisely, for a function 
$\phi : \R^d \to \R$, a point $x \in \R^d$ and a scaling parameter $\lambda \in (0,1]$, we define its 
rescaled and recentred version $\phi^\lambda_{x}(y) \eqdef \lambda^{-d} \phi 
\bigl(\lambda^{-1}(y-x)\bigr)$; and in the case of a function $\phi : \R^{d+1} \to \R$ on the space-time 
domain we define 
\begin{equ}[eq:rescaling]
	\phi^\lambda_{(t,x)}(s,y) \eqdef \lambda^{-|\s|} \phi \bigl(\lambda^{-2}(s-t), \lambda^{-1}(y-x)\bigr)\;,
\end{equ}
for $x,y \in \R^d$ and $t, s \in \R$, where we use the parabolic scaling $\s$. We will use both 
$\scal{\zeta,\phi}$ and $\zeta(\phi)$ notations for a distribution $\zeta$ applied to a test function 
$\phi$. As a rule of thumb $\scal{\zeta,\phi}$ is mostly used for spaces $\CB$ from 
Section~\ref{sec:spaces} and $\zeta(\phi)$ is mostly used for models (see Section~\ref{sec:models}).

To make our exposition lighter, we prefer to use ``$\lesssim$'' for a bound ``$\leq$'' with a constant 
multiplier, independent of the relevant quantities, which will always be clear from the context.

\subsection*{Acknowledgments}

The authors would like to thank Ajay Chandra for numerous useful discussions of the renormalisation 
and coherence in Section~\ref{sec:coherence}, as well as Francesco Pedullà for pointing
out a mistake in an earlier version of Lemma~\ref{lem:counterterms}. AG gratefully acknowledges the financial support by the Leverhulme Trust through Hendrik Weber’s Philip Leverhulme Prize. MH was supported by the
Royal Society through a research professorship. KM was partially supported by NSF grant DMS-1953859.

\section{Algebraic framework}
\label{sec:algebra}

In this section, we define a regularity structure that allows to formulate the system of equations 
\eqref{eq:system_intro} in a suitable space of modelled distributions. However, in contrast to the 
original definition in~\cite{Regularity}, each element in our regularity structure will take values in an 
infinite-dimensional Banach space (a similar idea was used in~\cite{MH17} for solving quasilinear singular 
SPDEs). This is due to our approach described in Section~\ref{sec:counterterms}, where each instance 
of the driving noise comes together with a suitably localized derivative of the Dirac delta function.

\subsection{Rules, Trees and Subcriticality}\label{sec:reg_str}
\label{sec:rules}
	
Our aim is to introduce a rich enough algebraic framework which is going to be used to describe a 
generalized Taylor expansion of the solution to the system~\eqref{eq:system_intro}. As in~\cite{BHZ, BCCH} 
let $\fL_{-}$ denote a finite index set of noise labels and $\fL_+$ denote a finite index set of kernel labels. Recall that 
$\fL_+$ can also be associated with the components of the given system of 
equations~\eqref{eq:system_intro} since each component 
corresponds to precisely one differential operator and thus to precisely one integration kernel. We 
denote $\fL \eqdef \fL_-\sqcup\fL_+$ and assign degrees $\deg : \fL \to \R$ to its elements, such that the 
elements of $\fL_-$ have strictly negative degrees and those in $\fL_+$ have strictly positive degrees. Each 
element $\fl \in \fL_-$ represents a driving noise of regularity $\deg \fl$, and each element $\ft \in \fL_+$ 
represents a linear differential operator whose inverse improves regularity by $\deg 
\ft$.\label{lab:index_set}

We define $\Eps \eqdef \fL\times \N^{d+1}$ and $\CO \eqdef \fL_{+}\times \N^{d+1}\subset 
\Eps$\label{p:defCO}, where $d \geq 1$ is the spatial dimension of the equation. For $(\ft, p) \in \Eps$ we write 
$\deg(\ft,p) \eqdef \deg \ft-|p|_\s$, and one should think of $\CO$ as an indexing set of all the 
solutions and their derivatives of our system of SPDEs.\label{lab:CO} On  the other hand, $\Eps$ 
indexes all the components of the system and their derivatives, which includes both the 
solutions and the noises.

As we describe below, we work with decorated trees, such that each edge has a type from $\Eps$. In 
addition to the edge types, we define a set of node types $\Nodes \eqdef \hat{\mcbP} (\Eps)$,  the set of all multisets\footnote{More precisely, $\hat{\mcbP}(A) \eqdef 
\bigsqcup_{n \geq 0} [A]^n$, where $[A]^n$ equals $A^n$ quotiented by permutation of entries.} 
with elements from $\Eps$. The order describing which type of node can follow which type of edge is 
determined by a \emph{rule} $\Rule: \fL \to \mcbP(\Nodes) \backslash \{\emptyset\}$, where $\mcbP(A)$ 
denotes the power set of $A$. We extend $\Rule$ to $\Eps$ by postulating $\Rule(\ft, p) = \Rule(\ft)$.  For any $A \subset \Eps$, we  always identify $\N^{A}$ with $\hat\mcbP(A)$.
 We use angled brackets to build multisets, so for example $[a,a,b]$ is the multiset containing $a$ twice and $b$ once.

Given a map $\reg : \fL \to \R$ we also extend it to $\Eps$ by $\reg(\ft,p) = \reg(\ft) - |p|_\s$
and we define a partition $\Eps = \Eps_- \sqcup \Eps_+$ by  
setting $\Eps_+ \eqdef \{o \in \Eps\,:\,\reg o \geq 0\}$. Heuristically, $\Eps_-$ indexes the 
components of the system which are space-time distributions of negative regularities and $\Eps_+$ 
indexes the function like components of positive regularities. We shall see later that a map $\reg $ that 
we are going to use will always satisfy $\reg(\fL_-) \subset (-\infty,0)$ thus implying that $\fL_-\times \N^{d+1} \subset \Eps_-$. We also have $\CO \not\subset \Eps_+$ since sufficiently high derivatives of 
solutions are distributions. For $\fl \in \fL_-$, define 
\begin{equ}[eq:Eps_ell]
	\Eps_+(\fl) \eqdef  \{o \in \Eps_+ \,:\, [o] \in \Rule(\fl)\} \subset \Eps_+\;,
\end{equ}
to be a set representing the components of the equation (and their derivatives) that are present in the 
inhomogenous noise $\xi^{u}_\fl$ (i.e.\ the set $\Eps_+(\fl)$ represents coefficients $a^k_{ij}$ 
in~\eqref{eq:non-hom_noise_intro} that are non zero).\label{lab:Eell} We postulate also 
$\Eps_+\big((\fl,p)\big) \eqdef \Eps_+(\fl)$ and it will be convenient to set $\Eps_+(\fl) = \emptyset$
for $\fl \in \fL_+$. Note that thanks to the property $\reg(\ft,p) = \reg(\ft) - |p|_\s$, the
sets $\Eps_+$ and $\Eps_+(\fl)$ are finite. Both $\Eps_+$ and $\Eps_+(\fl)$ depend on a 
choice of $\reg$ but this choice will be fixed for each rule.

We now state the assumptions on the rule that we are going to fix throughout the whole article. 

\begin{assumption}\label{ass:rule} 
We assume that the rule $\Rule : \fL \to \mcbP(\Nodes) \backslash \{\emptyset\}$ has the following properties:
	\begin{enumerate}[label=(R\arabic*)]
		\item\label{R1} $\Rule$ is \emph{normal}: for every $\ft\in \fL$, if $M \in \Rule(\ft)$ then $N \in \Rule(\ft)$ for every $N \subset M$;
		\item\label{R2} $\Rule$ is \emph{subcritical} with respect to the scaling $\s$: there exists a map $\reg : \fL \to \R$ such that for every $\ft \in \fL$ one has
		\begin{equ}
			\reg(\ft) < \deg \ft + \inf_{N \in \Rule(\ft)} \sum_{(\fl, k) \in N} \bigl(\reg(\fl) - |k|_\s\bigr)\;;
		\end{equ}
		\item\label{R3} $\Rule$ is \emph{complete} according to~\cite[Def.~5.20]{BHZ};
		\item\label{R5} With $\Eps_+$ defined using $\reg$ as in~\ref{R2}, one has $\Rule(\fl) = \N^{\Eps_+(\fl)}$ for every $\fl \in \fL_-$.
	\end{enumerate}
\end{assumption}

More discussion of properties \ref{R1}--\ref{R3} can be found in~\cite[Sec.~5]{BHZ}.\label{lab:Rule}  In 
contrast to~\cite{BCCH} (but as in \cite{BHZ}), we represent noises by edges and \ref{R5}
clearly implies that these edges need not be terminal. The latter is a reflection of the presence
of inhomogeneous noise and allows us to distinguish between the function-like elements appearing 
because of the multiplicativity of the noise and those appearing because of its inhomogeneity.
The reason why we allow $\Rule(\fl)$ to be all of $\N^{\Eps_+(\fl)}$ is to guarantee that  formulae like~\eqref{eq:Xi_hat} are meaningful.

Let $\fT$\label{lab:trees} be the set of rooted decorated combinatorial trees $\tau = (T, \ff, \fm)$, where $T$ is a tree with root $\rho$, nodes $N_T$, and edges $E_T$, and $\tau$ admits 
edge decorations $\ff : E_T \to \Eps$ and node decorations $\fm : N_T \to \N^{d+1}$. For $e \in E_T$ we 
write $\ff(e) = (\ft(e), p(e))$, where $\ft(e) \in \fL$ and $p(e) \in \N^{d+1}$ and refer to $\ft(e)$ as the 
type of $e$. To simplify notations,  we also write $\tau = T^\fm_\ff$ and we set
\begin{equ}[eq:degree-not-extended]
\deg \tau = \sum_{v \in N_T} | \fm(v) |_\s + \sum_{e \in E_T} \deg \ff(e)\;.
\end{equ}
Given $T^\fm_\ff \in \fT$, we equip its edges with the standard orientation pointing away 
from the root. For 
each node $v \in N_T$ we write $v_{\uparrow} \subset E_T$ for the set of edges leaving $v$, which is 
empty if $v$ is a leaf. If $v$ is not the root, then $v_{\downarrow}  \in E_T$ is the unique edge coming 
into $v$. We associate to each node $v \in N_T$ a node type $\Nodes[v] \in \Nodes$ by $\Nodes[v] \eqdef \bigl[\ff(e) : e \in v_{\uparrow}\bigr]$.

For a rule $\Rule$ satisfying Assumption~\ref{ass:rule}, we denote by $\fT(\Rule) \subset \fT$\label{lab:trees_rule} the set of all trees $T \in \fT$ \emph{conforming} to $\Rule$ in the sense of~\cite[Def. 5.8]{BHZ}, i.e.\ for the root $\rho$ of $T$ there is $\ft \in \fL$ such that $\rho \in \Rule(\ft)$, and for every other node $v \in N_T \setminus \{\rho\}$ one has $\Nodes[v] \in \Rule(v_{\downarrow})$.

We say that the tree $\tau = T^\fm_\ff$ is \emph{planted} if $\Nodes[\rho]$ consists of a single 
edge type $\ff(e) \in \CO$ and we say that $\tau$ is \emph{unplanted} otherwise.\footnote{Note that trees that are planted in the usual sense, but with a trunk of a type in $\fL_-$ are not considered planted in our sense. The reason for this will be 
apparent in Section~\ref{sec:admissible}.} We define the set
\begin{equ}[eq:unplanted]
	\fT_-(\Rule) \eqdef \{\tau \in \fT(\Rule): \deg \tau < 0,\,\text{$\tau$ is unplanted}\}\;.
\end{equ}

We are extensively going to use the following alternative notation for
$\tau = T^\fm_\ff \in \fT(\Rule)$. Let $m = \fm(\rho) \in \N^{d+1}$
and let $\tau_i$ for $i \le N$ be the subtrees rooted atop each of the $N$
edges (of respective types $o_i \in \Eps$) incident to $\rho$.
In this case, we write 
\begin{equ}[eq:recursive_tree]
	\tau = \X^m \prod_{i=1}^{N} \J_{o_i}[\tau_i]\;.
\end{equ}
Sometimes we also write
\begin{equ}[eq:recursive_tree_new]
	\tau = \X^m \prod_{i=1}^{n} \J_{o_i}[\tau_i]^{\beta_i}\;, 
\end{equ}
with the implicit convention that then $(o_i, \tau_i)\neq (o_j, \tau_j)$
whenever $i\neq j$.
Given $\tau$ written as~\eqref{eq:recursive_tree_new} we define its symmetry factor by
\begin{equ}[eq:symmetry_factor]
	S(\tau) = m! \Big(\prod_{i= 1}^{n} S(\tau_i)^{\beta_i} \beta_i! \Big)\;,
\end{equ}
with the usual convention that empty products evaluate to $1$.

\subsubsection{The rule for the system~\eqref{eq:main2}}
\label{sec:first}

In the case of the system of SPDEs~\eqref{eq:main2}, we have $d = 1$ and $\s = (2,1)$. The three 
driving noises: $\xi_{k, \eps}$, $k = 0, 1, 2$, we label respectively by $\fL_- = \{\fl_k : k = 0, 1, 2\}$, and 
the three equations for $h_{k, \eps}$, $k = 0, 1, 2$, we label by $\fL_+ = \{\ft_k : k = 0, 1, 2\}$. We do 
not consider a label corresponding to the equation for $h_{3, \eps}$, because as we stated 
below~\eqref{eq:main2}, this equation can be solved classically and does not require using regularity 
structures. We set the degrees of the noises to be 
\begin{equ}[eq:deg_main]
	\deg \fl_k = -\frac{3}{2} + \frac{2 k}{3} - \kappa_\star\;, \qquad k = 0, 1, 2\;,
\end{equ}
for a fixed constant $0 < \kappa_\star < \frac{1}{42}$ (the upper bound by $\frac{1}{42}$ is explained at 
the beginning of Section~\ref{sec:smooth_models} and is also used in the proof of 
Lemma~\ref{prop:system_soln} to guarantee that $\partial_x h_{1, \eps}$ is a function, rather than a distribution). The 
value of $\deg \fl_0$ corresponds to the regularity of a space-time white noise in one dimension, i.e.\ 
$\xi_{0, \eps} \in \CC_\s^{-3/2 - \kappa_\star}$ for all $\eps > 0$. The values of the other $\deg \fl_k$ 
reflect the increase of regularity, which we obtain after multiplying $\xi_\eps$ by a positive power of 
$\eps$ in~\eqref{eq:new_functions}. Since all equations in~\eqref{eq:main2} have the linear parts $\d_t - 
\d_x^2$, we assign the degrees $\deg \ft_k = 2$, for $k = 0,1,2$, which is the gain of regularity provided 
by the heat kernel. In other words, we expect that $h_{k, \eps} \in \CC_\s^{\deg \fl_k + 2}$. 

All $h_{k, \eps}$ have positive regularities, but their derivatives can be distributions. We 
define the map $\reg$ as $\reg(\fl_k) = \deg \fl_k - \kappa$ and $\reg(\ft_k) = \reg(\fl_k) + 2$, 
for sufficiently small $\kappa > 0$, which yields
\begin{equ}
	\Eps_+ = \{o \in \Eps\,:\, \reg o > 0\}
	 = \{\ft_0, \ft_1, \ft_2, \d\ft_1,\d\ft_2 \}\;,
\end{equ}
where, given $o = (\ft,p) \in \Eps$, we set $\d o = (\ft, p+(0,1))$. 

Regarding the rule $\Rule$ suitable to describe the system~\eqref{eq:main2}, its 
right-hand sides suggest that $\Rule$ should be taken to 
be the smallest complete rule such that
\begin{equs}
\Rule(\fl_k) &\supset \{\ft_{0}^n \,:\, n \geq 0\}\;,\\
	\Rule(\ft_k) &\supset \{\fl_k, \fl_{k+1} \d \ft_1^2,  \fl_{k+2} \d \ft_1^n, 
	\d\ft_{\lceil k / 2 \rceil}\d\ft_{\lfloor k / 2 \rfloor}, \d\ft_{k} \d \ft_1^n , \d^2\ft_{k+1} \d \ft_1^n\,:\,
	n \ge 0\}\;,
\end{equs}
where we use the abuse of notation that $\fl_3 = \fl_4 = []$ and the product denotes the 
concatenation of multisets. We refer to Appendix~\ref{app:constants} for examples of trees generated by this rule.

It is straightforward to see that $\Rule$ is indeed normal~\ref{R1}. From the 
definition of $\Rule(\fl_k)$ the rule $\Rule$ clearly satisfies~\ref{R5}. It is a 
straightforward computation to see that the rule $\Rule$ is \emph{subcritical} with respect to the 
scaling $\s$~\ref{R2} for the choice of $\reg$ as above. The completeness of 
$\Rule$~\ref{R3} is guaranteed by~\cite[Prop.~5.21]{BHZ}. Moreover, the 
completed rule still satisfies~\ref{R5}, because the renormalisation won't affect 
trees above the noises, since only edges from $\Eps_+$ could leave the noise edges to begin with.

As described in Section~\ref{sec:rules}, the rule $\Rule$ generates the set $\fT(\Rule)$ of labelled trees 
conforming to $\Rule$. A sufficiently large subset of $\fT(\Rule)$ forms a basis in the structure space 
of a respective regularity structure, which one can use to solve the system~\eqref{eq:main2}. To bound 
a model for this regularity structure, one typically needs to bound it on the unplanted trees of negative 
degrees $\fT_-(\Rule)$, and a bound on the other trees follows automatically 
(see~\cite[Thm.~10.7]{Regularity}). Using a computer program, 
we found that the number of trees in $\fT_-(\Rule)$ is $72$ when $\kappa_\star$ in~\eqref{eq:deg_main} is sufficiently small. Of course, analysing each tree in $\fT_-(\Rule)$ by hand, as it was done for example for the KPZ equation in~\cite{FrizHairer}, would be tedious and one requires a more automated approach.

\subsubsection{The rule for the KPZ equation}

We can define the rule $\Rule^\KPZ$ for the KPZ equation~\eqref{eq:KPZ}. For this, we use the settings 
and notation of Section~\ref{sec:first}. More precisely, we have $d = 1$ and $\s = (2,1)$. We label the 
driving noise by $\fL^\KPZ_- = \{\fl_0\}$ and the equation by $\fL^\KPZ_+ = \{\ft_0\}$, so that $\fL^\KPZ = 
\fL^\KPZ_- \sqcup \fL^\KPZ_+$; and assign the degrees $\deg$ and regularity $\reg$ on $\ft_0$ and $\fl_0$ exactly the same as in Section~\ref{sec:first}. Then we have $\Eps^\KPZ = \Eps^\KPZ_- \sqcup 
\Eps^\KPZ_+$, where $\Eps^\KPZ_+ = \{\ft_0\}$. The rule in this case is defined as 
\begin{equ}
	\Rule^{\KPZ}(\fl_0) = \{[]\}\qquad\text{and}\qquad \Rule^{\KPZ}(\ft_0) = \{[], \fl_0, \partial\fl_{0}, \partial\fl_{0}^2 \}\;.
\end{equ}
These definitions are related to those from Section~\ref{sec:first} as $\fL^\KPZ \subset \fL$, $\Eps^\KPZ 
\subset \Eps$ and $\Rule^{\KPZ}(\ft_0) \subset \Rule(\ft_0)$. This implies that the regularity structure 
generated by $\Rule^{\KPZ}$ is a sector of the regularity structure generated by $\Rule$ 
(see~\cite[Def.~2.5]{Regularity} for the definition of a sector). 

\subsubsection{The rule for the qEW model~\eqref{eq:QuenchedEW_rescaled}}
\label{sec:qEW_rule}

In the case of the equation~\eqref{eq:QuenchedEW_rescaled} we have $d \geq 1$ and a parabolic 
scaling $\s = (2, 1, \ldots, 1)$. As we explain after Theorem~\ref{thm:qEW}, the cases $d = 1$ and $d \geq 2$ should be treated differently.

In the case $d = 1$ the noise is labelled by $\fL^{\qEW}_- = \{\fl_0\}$ and the equation is 
labelled by $\fL^{\qEW}_+ = \{\ft_0\}$. Mappings $\reg$ and $\deg$ are the same as in $\Rule^\KPZ$. Then $\Eps^\qEW = \Eps^\qEW_- \sqcup \Eps^\qEW_+$, where again $\Eps^\qEW_+ =  \{\ft_0\}$. Finally, the rule is given by 
\begin{equ}
	\Rule^{\qEW}(\fl_0) = \{\ft_0^n : n \geq 0\}\;, \qquad \Rule^{\qEW}(\ft_0) = \{[], \fl_0 \}\;.
\end{equ}

In the case $d \geq 2$ we consider the system \eqref{eq:QuenchedEW_system} with noises
 labelled by $\fL^{\qEW}_- = \{\fl_0, \fl_1\}$ and components  
labelled by $\fL^{\qEW}_+ = \{\ft_0, \ft_1\}$. Then $\fL^\qEW = \fL^\qEW_- \sqcup 
\fL^\qEW_+$ with $\deg \fl_0 = -\frac{d+2}{2} - \kappa_\star$, $\deg \fl_1 = \deg \fl_0 + \beta$, where the constant $\beta$ is fixed in~\eqref{eq:alpha_and_beta}, and $\deg \ft_0 = \deg \ft_1 = 2$. We postulate, $\reg(\fl_k) = \deg (\fl_k) -\kappa$ and $\reg (\ft_k) = \reg(\fl_k) + 2$. With $\alpha > 0$ and $\beta$ defined in~\eqref{eq:alpha_and_beta} it is guaranteed that $\Eps^\qEW_+ = \{\ft_1\}$. Finally, the rule is given by $\Rule^{\qEW}(\ft_0) = \{[], \fl_0 \}$, $\Rule^{\qEW}(\ft_1) = \{[], \fl_1 \}$ and
\begin{equ}
	\Rule^{\qEW}(\fl_0) = \Rule^{\qEW}(\fl_1) = \{\ft_0^n : n \geq 0\}\;.
\end{equ}

\subsection{Nonlinearities}
\label{sec:nonlinearities}

We introduce a family of commuting indeterminates $\fX = (\fX_o)_{o \in \Eps}$ and denote by 
$\SP$\label{lab:SP} the real algebra of smooth functions on $\R^\Eps$, i.e.\ functions of $\fX$. We also define $\SQ_+ \eqdef 
\SP^{\fL_+}$, which corresponds to the $\fL_+$ components of the system of SPDEs under 
consideration.\footnote{The set $\SQ_+$ is denoted by $\mathring{\SQ}$ in~\cite{BCCH}} For $F \in \SP$ 
denote $\Eps(F)$ for the minimal subset of $\Eps$ such that $F$ depends only on component $\fX_o$ with 
$o \in \Eps(F)$. Depending on the context, the indeterminates $\fX_o$ will serve as placeholders for either 
an abstract expansion for the component of the solution\slash noise indexed by $o$, or for a reconstruction of 
that expression.

We define two families of differential operators on $\SP$: $\{D_o\}_{o \in \Eps}$ and $\{\d_i\}_{i=0}^d$. For 
$o \in \Eps$ the operator $D_o : \SP \to \SP$\label{lab:Do} simply denotes differentiation with respect to $\fX_o$. Moreover, for every $0 \leq i \leq d$ and $(\ft,p) \in \Eps$ we set $\d_i 
\fX_{(\ft,p)} = \fX_{(\ft,p+e_i)}$, where $e_i$ is the $i^{\text{th}}$ element of the canonical basis of 
$\R^{d+1}$, and extend it to $\d_i : \SP \to \SP$ by imposing the chain rule
\begin{equ}[eq:chain_rule]
	\d_i F \eqdef \sum_{o \in \Eps(F)} \d_i \fX_o\, D_oF\;.
\end{equ}
We also use the shorthand notation $\d_i^k$ for $k \in \N$ consecutive applications of $\d_i$. For a 
multi-index $k \in \N^{d+1}$ we use the standard notation $\d^k \eqdef \prod_{i = 0}^{d} \d^{k_i}_i$ for 
the mixed derivatives which are well-defined since the $\d_i$ all commute. 
Moreover, for a multi-index $\alpha \in \N^\Eps$ we use the shorthands
\begin{equ}
	\fX^\alpha \eqdef \prod_{o \in \alpha} \fX_o\qquad\text{and}\qquad D^\alpha \eqdef \prod_{o \in \alpha} D_o\;.
\end{equ}

\begin{remark}
	Note that in~\cite{BCCH} instead of $\Eps$ only the set $\CO$ was considered for labelling the 
	indeterminates $\fX$. There are several reasons to consider not only the labels that correspond to 
	the components of the equations (and their derivatives), but also the ``noise labels'' $\fL_{-}\times \N^{d+1}$. First, we are going to use the algebraic formalism of~\cite{BHZ}, where both noises and 
	kernels correspond to labels on the edges of the trees of $\fT$. Thus having a language that does 
	not distinguish at the algebraic level $\fL_{+}\times \N^{d+1}$ from $\fL_{-}\times \N^{d+1}$ will 
	be useful. Second, and most important is that we want to consider trees where one can ``draw'' 
	edges above the noises. This is needed in order to reflect the fact that the 
	noises in~\eqref{eq:system_intro} are inhomogeneous.
\end{remark}

Given a rule $\Rule$ satisfying Assumption~\ref{ass:rule}, we would like to define those functions which 
conform in some sense to this rule.

\begin{definition}\label{def:conform}
	We say that $F \in \SQ_+$ \emph{conforms} to a rule $\Rule$ satisfying Assumption~\ref{ass:rule} if 
	for each $\ft \in \fL_+$ and each $\alpha \in \mcbP(\Nodes) \backslash \{\emptyset\}$ such that 
	$\alpha \notin \Rule(\ft)$ we have $D^\alpha F_\ft = 0$. We define $\SQ(\Rule)\subset \SQ_+$ to be 
	the set of all functions $F$ conforming to $\Rule$.\label{lab:QRule}
\end{definition}

\begin{example}\label{ex:Examplenonlin}
	To show how~\eqref{eq:main2} fits into the above formalism, we use the 
	objects defined in Section~\ref{sec:first}. We set $\fX_{(\ft_i,p)} = \d^p h_i$ and $\fX_{(\fl_i,0)} = \xi_i$, and we use the shorthand notation $\fX_{i,m} = \fX_{(\ft_i,(0,m))}$ for integer $m \geq 0$. 
	Then the right-hand sides of the first three equations in~\eqref{eq:main2} can be written as
	\begin{equs}[eq:abstract_functions]
		F_{\!\ft_i} &= F_{1,\eps}(\fX_{1, 1}) \fX_{i+1, 2} + \lambda\, \fX_{\lceil i/2\rceil, 1} \fX_{\lfloor i/2\rfloor, 1} + F_{2,\eps}(\fX_{1, 1}) \fX_{i, 1} \\
		 &\hspace{2cm}+ \sigma \fX_{(\fl_i, 0)} + \sigma_1 (\fX_{1, 1})^2 \fX_{(\fl_{i+1}, 0)} + F_{3,\eps}(\fX_{i+1, 1}) \fX_{(\fl_{i+2}, 0)}\;,
	\end{equs}
for $i = 0,1,2$, where we do not consider the renormalisation constants $C_{i, \eps}$. It is clear that we have $F_{\!\ft_i} \in \SP$. We do not define a function $F_{\!\ft_3}$ because the corresponding equation will be solved classically. 
\end{example}

\begin{example}
Using notation from Example~\ref{ex:Examplenonlin}, the right-hand side of the KPZ 
equation~\eqref{eq:KPZ} can be written as $F_{\!\ft_0} = \lambda\, (\fX_{0, 1})^2 + \sigma \fX_{(\fl_0, 0)}$.
\end{example}

\begin{example}\label{ex:qEWnonlin}
The right-hand side of~\eqref{eq:QuenchedEW_rescaled} is given simply by $F_{\!\ft_0} = \fX_{(\fl_0, 0)}$.
\end{example}

One can readily check that the nonlinearities described in the preceding examples conform to the rules described in the respective Sections~\ref{sec:first}--\ref{sec:qEW_rule}.

\subsection{Inhomogeneous noises and Green's functions}
\label{sec:general_system}

Using the nonlinearities introduced in the preceding section, we would like to introduce a general 
system of SPDEs, driven by different inhomogeneous noises, which we are going to solve using the 
framework of regularity structures. 

For this, let $\fl \in \fL_{-}$ label smooth noises $\xi_\fl$ and let $\ft \in \fL_+$ label equations for smooth functions $u_{\ft} : \R^{d+1} \to \R$ with $d \geq 1$. For any $z$ in the domain, we write $\bu(z) \eqdef (\d^p u_{\ft}(z) : (\ft, p) \in \CO) \in \R^{\CO}$, where $\d^p$ is a mixed space-time derivative. 
Furthermore,  recalling the definition of the set~\eqref{eq:Eps_ell}, for each $\fl  \in \fL_{-}$ we consider fixed $\ba_{\fl} \eqdef (a_{\fl, o})_{o \in \Eps_+(\fl)} \in \mathcal{L}(\R^{\Eps_+(\fl)}, \R^{d+1})$ and for each $\bu \in \R^{\Eps_+}$ we write
\begin{equ}[eq:a-product]
	\ba_{\fl} \cdot \bu \eqdef \sum_{o \in \Eps_+(\fl)} a_{\fl, o} u_{o}\;. 
\end{equ}
Using this notation, we define for every $c \in L(\R^{d+1})$ the tuple of inhomogeneous noises 
\begin{equ}
\bxi^{\bu,c}(z) \eqdef \bigl((\d^p \xi_{\fl})^{\bu, c}(z) : (\fl, p) \in \Eps \setminus \CO\bigr) \in \R^{\Eps \setminus \CO}\;,
\end{equ}
where each individual noise is defined as
\begin{equ}[eq:non-hom_noise]
(\d^p \xi_{\fl})^{\bu,c}(z) \eqdef (\d^p \xi_{\fl}) \bigl(z + cz + \ba_{\fl} \cdot \bu(z) \bigr)\;.
\end{equ}
We also prefer to write for brevity $\xi^{\bu,c}_{(\fl, p)}(z) \eqdef (\d^p \xi_{\fl})^{\bu,c}(z)$. Once again 
we point out that translation by $c$ is needed to represent a translation by $\eps^2C_\eps$ 
in~\eqref{eq:rescaled_noise}.

Then, for a tuple $F = (F_\ft)_{\ft \in \fL_+}$ conforming to the rule $\Rule$, we consider the system of equations labelled by $\ft \in \fL_+$, 
\begin{equ}[eq:system]
\d_t u_\ft = \SL_\ft u_\ft + F_{\ft} (\bu, \bxi^{\bu,c})\;, \qquad u_\ft(0, \bigcdot) = u_\ft^0(\bigcdot)\;,
\end{equ}
where $\SL_\ft$ is an elliptic differential operator which we specify below, and where we write $(\bu, \bxi^{\bu,c})(z) \subset \R^{\CE}$ for the tuple with the elements
	\begin{equs}\label{eq:uxi}
		(\bu,\bxi^{\bu,c})_o(z) = 
		\begin{cases}
			\d^p u_\ft(z)\,, &\text{for}~o = (\ft,p)\in \CO\;,\\
			(\d^p\xi_\fl)^{\bu,c}(z)\,, &\text{for}~o = (\fl, p) \in \Eps \setminus \CO\;.
		\end{cases}
	\end{equs}
We also use the shorthand notation $F_{\ft} (\bu, \bxi^{\bu,c})(z) = F_{\ft} (z, \bu(z), \bxi^{\bu,c}(z))$. 

Note that~\eqref{eq:uxi} guarantees that both $\xi_\fl$ and $\d^p\xi_\fl$ are translated by 
the same $\ba_\fl\cdot \bu(z)$. If one wishes $\d^p\xi_\fl$ to be translated by some $\ba_{(\fl,p)}\cdot \bu(z)$ with $\ba_{(\fl,p)}\neq\ba_\fl$, then one should view $\d^p\xi_\fl$ as a separate noise $\xi_{\bar\fl} \eqdef \d^p\xi_\fl$ and enlarge $\bar\fL = \fL\cup \{\bar\fl\}$, with $\ba_{\bar\fl} = \ba_{(\fl,p)}$. We would 
like to point out that this might produce renormalisation functions instead of renormalisation constants 
since then $\xi_{\bar\fl}$ is not independent of $\xi_{\fl}$ (see Proposition~\ref{prop:stationarity}).

The differential operator $\SL_\ft$ is assumed to involve only spatial partial derivatives 
$\{\partial_i\}_{i=1}^d$ and is such that the Green's function of $\partial_t-\SL_\ft$ 
satisfies~\cite[Assum.~2.8]{BCCH}, so that it is a regularising kernel of order $\deg \ft$ in the 
sense of~\cite[Assum.~5.1]{Regularity}. 

\begin{example}
The typical example is $\SL_\ft = Q(\nabla_x) - 1$ for a homogeneous polynomial $Q$ of even degree 
$2q$. Then the scaling $\s$ should be taken $\s = (2q, 1, \ldots, 1) \in \R^{d+1}$ and $\deg \ft = 2q$. In 
particular the heat operator with unit mass falls into this framework: $\SL_\ft = \Delta - 1$ making $q=1$ 
and thus $\deg \ft = 2$. This is the operator that we are going to use for both our 
system~\eqref{eq:main2} ($d = 1$) and the quenched Edwards--Wilkinson~\eqref{eq:QuenchedEW}. 
Note that strictly speaking in both of these equations $\SL_\ft = \Delta$ but we can easily add a linear 
term to both sides, which affects neither well-posedness nor renormalisation.
\end{example}

Local subcriticality of the system~\eqref{eq:system} guarantees existence of a regularity structure, built 
in~\cite{BHZ}, and reformulation of the equations~\eqref{eq:system} (with homogeneous noises) in 
terms of modelled distributions on this regularity structure. In order to accommodate the 
inhomogeneous noises however, we will need to consider an associated ``vector-valued'' regularity structure,
the construction of which is recalled in Section~\ref{sec:VectorRS}. Our aim is then to perform renormalisation of the general 
system~\eqref{eq:system} and then apply it to the stochastic mean curvature flow~\eqref{eq:main2} 
and the qEW model~\eqref{eq:QuenchedEW_rescaled}. We will prove existence of a local solution only 
for these particular equations.

\subsection{Vector-valued regularity structures}
\label{sec:VectorRS}

As mentioned in Section~\ref{sec:counterterms}, we would like our regularity structures to include
distributions, ``attached'' to edges of noise types, which will then be mapped to inhomogeneous noises by our model. 
Such a construction does not fall into the 
standard construction of regularity structures given in~\cite{BHZ}, but the formalism of~\cite[Sec.~5]{CCHS} 
(which was already used in \cite{MH17} in a somewhat informal way) is designed precisely to allow this. 

Assume that we are given a \textit{space assignment}, namely a
collection of vector spaces $V = (V_\ft)_{\ft \in \fL}$. (In our cases we will take $V_\ft = \R$ for 
$\ft \in \fL_+$.) Then, given any tree 
$\tau = T^\fm_\ff \in \fT$, it determines a ``symmetric set'' $\scal{\tau}$ 
(with elements given by edges of $\tau$) as well as
a vector space $\scal{\tau}_V = \BF_V \big(\scal{\tau}\big)$, where $\BF_V \colon \TStruc \to \Vec$ 
is the functor described in~\cite[Sec.~5.2]{CCHS}.
In a slightly informal way, $\scal{\tau}_V$ is nothing but the subspace
of $\bigotimes_{e \in E_T} V_{\ft(e)}$ respecting the natural symmetries of the tree $\tau$.

\begin{example}
	For $\tau = \J_{(\ft,p)}[\one]^2\J_{(\bar\ft,\bar p)}[\one]$ with some $(\ft,p), (\bar\ft,\bar p) \in \Eps$ such that $(\ft,p) \neq (\bar\ft,\bar p)$, $\scal{\tau}_V$ is 
	canonically isomorphic to $(V_\ft \otimes_s V_\ft) \otimes V_{\bar \ft}$, where $\otimes_s$ denotes the symmetric 
	tensor product.
\end{example}

Given a rule $\Rule$ satisfying Assumption~\ref{ass:rule} and a space assignment $V$, we then define the space\label{lab:T_V}\footnote{One 
may have expected to have a direct sum in~\eqref{eq:CT_V} instead of a Cartesian product. 
The definition of $\bUpsilon$ 
in~\eqref{eq:bUpsilon} will however be more natural if we allow for infinite sums. 
Since all the  analysis can be carried out in spaces where we restrict ourselves to 
finitely many trees, the difference is mainly cosmetic.}
\begin{equ}[eq:CT_V]
	\CT_V \eqdef \BF_V \Bigl(\prod_{\tau \in \fT(\Rule)} \scal{\tau}\Bigr)	\simeq \prod_{\tau \in \fT(\Rule)} \CT_V[\tau]\;,\qquad \CT_V[\tau] \simeq \scal{\tau}_V\;,
\end{equ}
which we call a $V$-\emph{valued regularity structure}. 
It was shown in \cite{CCHS} how to build a regularity structure in $\TStruc$, which 
$\BF_V$ then allows to transport onto a ``usual'' regularity structure with underlying space
$\CT_V$.
In particular, the machinery developed in~\cite{Regularity, HC, BHZ, BCCH} immediately applies also to 
vector-valued regularity structures.

\begin{remark}\label{rem:quotient}
In our case we have one slight deviation from the construction given in \cite{BHZ,CCHS}, namely the fact that we allow for edges 
of noise type that are not ``terminal'' in our trees. This does not really matter since the construction there does not in principle
need to distinguish between the two types of edges, but it will be convenient for our purpose to guarantee that
$\Deltap_V$ commutes with abstract integration operators corresponding to noises. We implement this by simply quotienting
$\CT_V^+$ by the ideal (which is easily seen to be a Hopf ideal) 
generated by all elements of the type $\CJ_o(\tau)$ with $o \not\in \CO$ and $\tau \in \CT_V$.
From now on, we call this space $\CT_V^+$ again.
\end{remark}

For a vector space assignment $V = (V_\ft)_{\ft \in \fL}$ we are going to use the operators $\tilde\CA^-_V$, $\tilde\CA^+_V$, $\Deltam_V$ and $\Deltap_V$, which are respectively the negative and positive twisted antipodes and coproducts on $\CT_V$ (see \cite{BHZ} for the definitions). Whenever the space $V$ is clear from context, we prefer to omit the subscripts of these operators.

Recall that, for $o = (\ft,p) \in \Eps$, $\J_{o}$ is a map from 
$V_{\ft} \otimes \CT_V$ to $\CT_V$ (which in particular maps
$V_\ft \otimes \CT_V[\tau]$ to $\CT_V[\J_{o}(\tau)]$). Then, given a tree $\tau$ written in the 
form~\eqref{eq:recursive_tree}, we can express a general element of $\CT_V[\tau]$ recursively as
a linear combination of elements of the type 
\begin{equ}[eq:recursive_tree_V]
	\sigma = \X^m \prod_{i=1}^{n} \J_{o_i}[v_i\otimes \sigma_i]\;,
\end{equ}
for some $\sigma_i \in \scal{\tau_i}_V$, $o_i = (\ft_i,p_i) \in \Eps$ and $v_i \in V_{\ft_i}$.

As usual, we write 
$\CT_{V, \gamma} \subset \CT_V$ for the subspace spanned by the $\CT_V[\tau]$ 
with $\deg \tau = \gamma$, and $\CQ_{\gamma}$\label{lab:projectionQ} for the projection onto $\CT_{V,\gamma}$. 
We similarly define $\CQ_{\le \gamma}$, etc.
We will usually denote elements of $\fT(\Rule)$ by $\tau$ and elements of $\CT_V$ by $\sigma$ to 
distinguish between the combinatorial trees and those with vectors assigned to their edges.
Given $\tau\in\fT(\Rule)$, we write $\Proj_\tau: \CT_V \to \scal{\tau}_V$ for the natural projection. 
Similarly, for any subset $Q \subset \fT(\Rule)$ we write $\Proj_Q : \CT_V \to \prod_{\tau \in Q} \scal{\tau}_V$.\label{lab:Proj_Q}

Finally, we define the last type of projection that is going to be used. For a tree $\tau = T^\fm_\ff \in \fT$ we define the \emph{truncation 
parameter} $L(\tau) \eqdef |E^+_\tau| + \sum_{v \in N_T} |\fm(v)|_\s$, where $E^+_\tau \eqdef \{ e \in E_T : \ff(e) \in \CO\}$ is a set of edges of $T$ which do not correspond to noises. For $L \in \N \cup \{\infty\}$ we 
then write $\bp_{\leq L} = \Proj_{\{\tau\in \fT\,:\, L(\tau)\le L\}}$. Moreover, we define a space $\CW_{V, \leq L} = \prod_{\tau \in \fT(\Rule) : L(\tau) \leq L} \scal{\tau}_V$.\footnote{The reason 
for introducing $\bp_{\leq L}$ is that it interacts nicely with negative renormalisation, which does not 
commute with $\CQ_{\leq \gamma}$, see~\cite[Sec. 3.6]{BCCH} and Section~\ref{sec:renorm_SPDE} for more details.}\label{lab:projectionL}

Given another space assignment $(W_\ft)_{\ft \in \fL}$ and a family of linear maps $E_\ft : V_\ft \to W_\ft$, we naturally obtain a linear map $E :\CT_V \to \CT_W$ as 
in~\cite[Rem.~5.19]{CCHS}, which can also be seen in the following recursive way: given $\sigma \in \scal{\tau}_V$ of the form~\eqref{eq:recursive_tree_V}, we obtain an element $E[\sigma] \in \scal{\tau}_W$ by setting
\begin{equ}[eq:recursive_map]
	E[\sigma] = \X^m \prod_{i=1}^{n} \J_{o_i} \bigl[E_{\ft_i}v_i \otimes E[\sigma_i]\bigr]\;,
\end{equ}
and then extending this linearly to all of $\CT_V$. It was shown in \cite{CCHS} that 
this extension is a natural transformation,
which implies in particular that these maps commute with all the operations $\Delta$, $\Deltap$, $\Deltam$, $\tilde\CA^+$ and $\tilde\CA^-$ (see \cite[Rem.~5.19]{CCHS}). 

\subsection{Spaces of abstract derivatives and distributions}
\label{sec:spaces}

In this section we define precisely which vector spaces we use for the regularity structures defined in 
Section~\ref{sec:VectorRS}. In order to solve~\eqref{eq:system}, we would like to assign to each 
noise edge of a tree $\tau \in \fT(\Rule)$ a Dirac delta function $\delta_u$ or its derivative. This means 
that the spaces $V_\ft$ with $\ft \in \fL_-$ should be large enough to contain delta functions and their 
derivatives of high enough 
orders. While this is not a problem from the analytic point of view, it creates technical problems at the
algebraic level since some of the arguments of \cite{CCHS} rely on the spaces $V_\fl$ being finite-dimensional. 

We circumvent this by introducing auxiliary finite-dimensional vector spaces, elements of which should be thought of as constant-coefficient differential operators that will later be applied to $\delta_u$. 
We will then use natural transformations of the form described in \eqref{eq:recursive_map} to switch between
these points of view, see also
Section~\ref{sec:evaluation}. The finite-dimensional auxiliary vector spaces will be used in 
Section~\ref{sec:coherence} to describe the renormalisation of nonlinearities.

Let us start by introducing a space to which delta functions belong. For a fixed $k_\star \geq 2$, 
which is going to be determined later, we define the set $\SB^{k_\star+1}$ as those functions 
$\phi \in \CC_0^{k_\star + 1}(\R)$ that are supported on the ball $\{u \in \R\,:\, |u| \leq 1\}$ and such that 
$\|\phi\|_{\CC^{k_\star + 1}} \leq 1$. Then we define the space $\CB$ as those tempered
distributions $\zeta \in \SS'(\R)$ belonging to the dual of $\CC^{k_\star + 1}_0(\R)$ and for which
\begin{equ}[eq:norm_weighted]
\| \zeta \|_{\CB} \eqdef \sup_{u \in \R} \sup_{\lambda \in (0,1]} \sup_{\phi \in \SB^{k_\star+1}} \w(u) \lambda^{k_\star} |\scal{\zeta,\phi^\lambda_u}| < \infty\;.
\end{equ}
Here, $\w(u) \eqdef 1 + |u|^{2+\eta_\star}$ is a weight function, and where we use a recentred and 
rescaled function $\phi^\lambda_u(v) \eqdef \lambda^{- 1} \phi(\lambda^{-1}(v - u))$.
We leave the value of $\eta_\star$ free for the moment. It will be fixed to a sufficiently large
value later on.

Note that $u \mapsto \delta^{(k)}_u$ is a Lipschitz continuous function $\R \to \CB$ for all $k \leq k_\star - 2$ 
where $\delta^{(k)}_u$ is the $k^{\text{th}}$ distributional derivative of the Dirac delta function at $u$.

\begin{remark}\label{rem:weights}
The role of the weight $\w$ is twofold. First, it avoids concentration of mass at infinity since 
we have for example $ \| \delta_u \|_{\CB} \simeq |u|^{2+\eta_\star}$ as $|u| \to \infty$. Second, the function $1/\w(u)$ is 
integrable on $\R$, which allows to pair any $\zeta \in \CB$ with functions $\psi \in 
\CC^{k_\star + 1}(\R)$ such that $\lim_{u \to \pm \infty} |D^m \psi(u)| / |u|^{\delta} = 0$, for some $\delta \in (0,1)$ and all $m \le k_\star+1$ (See~\cite[Eq.~3.15]{Regularity} for the proof of a very similar statement.)
\end{remark}

Write now $\CB^{\otimes \ell}$ for the $\ell$-fold projective tensor product of 
$\CB$ and let $\CB_\star^{(\ell)}$ be the completion of $\CC_0^\infty(\R^\ell)$ under the 
norm dual to that of $\CB^{\otimes \ell}$. (We follow the usual convention identifying 
the tensor product of $\ell$ distributions on $\R$ with a distribution on $\R^\ell$.) The 
following simple result is then useful.
\begin{lemma}\label{lem:usefulB}
Given any $\alpha > 0$, we have the bound
\begin{equ}[e:boundBstar]
\|f\|_{\CB_\star^{(\ell)}}
\lesssim \sum_{\bu \in \Z^\ell} \Bigl( \prod_{j \le \ell} \f1{1+|\bu_j|^{2+\eta_\star}}\Bigr) \sup_{|k| \le \ell k_\star} \Big(|\d^k f(\bu)| + \sup_{|\bv-\bu| \le 1\atop |\bar\bv-\bu| \le 1} {|\d^{k} f(\bv)- \d^{k} f(\bar\bv)| \over|\bv-\bar\bv|^\alpha}\Bigr)\;.
\end{equ}
Here the variables $\bv$ and $\bar\bv$ take values in $\R^\ell$.
\end{lemma}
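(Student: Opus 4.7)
The plan is to dualise. Since $\CB_\star^{(\ell)}$ is the completion of $\CC_0^\infty(\R^\ell)$ in the norm dual to $\CB^{\otimes\ell}$, and since the projective tensor norm of a simple tensor is the product of the factor norms with $\sup_\zeta |\scal{\zeta,f}|/\|\zeta\|_{\CB^{\otimes\ell}}$ attained (up to $\eps$) on elementary tensors,
\begin{equ}
	\|f\|_{\CB_\star^{(\ell)}} = \sup\bigl\{|\scal{\zeta_1 \otimes \cdots \otimes \zeta_\ell, f}| : \textstyle\prod_j\|\zeta_j\|_\CB \le 1\bigr\}\;.
\end{equ}
Fix a smooth partition of unity $\sum_{n\in\Z}\rho(\cdot-n) = 1$ with $\rho\in\CC_0^\infty([-1,1])$, set $\rho_\bu(\bv) = \prod_j\rho(v_j-u_j)$, decompose $f = \sum_{\bu\in\Z^\ell} f\rho_\bu$, and estimate each summand separately. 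For fixed $\bu$, iterate the pairings by setting $g_0 = f\rho_\bu$ and $g_j(\bv') \eqdef \scal{\zeta_j, g_{j-1}(\cdot,\bv')}$ for $\bv' = (v_{j+1},\ldots,v_\ell)$. Since $g_{j-1}(\cdot,\bv')$ is supported in $[u_j-1,u_j+1]$, the definition of $\|\zeta_j\|_\CB$ applied at scale $\lambda=1$ and centre $u_j\in\Z$ produces a factor $\w(u_j)^{-1} = (1+|u_j|^{2+\eta_\star})^{-1}$, and iterating accumulates the weight product on the right-hand side of \eqref{e:boundBstar}.

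The main difficulty is that the $\CB$-norm at unit scale pairs directly only with $\CC^{k_\star+1}$ functions, whereas the right-hand side of \eqref{e:boundBstar} furnishes at most $\CC^{\ell k_\star+\alpha}$ regularity with $\alpha<1$. I bridge this at each step by expanding $g_{j-1}$ in $v_j$ around $u_j$ as a Taylor polynomial of degree $k_\star$ plus a Hölder remainder: the polynomial part pairs with $\zeta_j$ at scale $1$ and produces the derivative values $\d^{k_j}_{v_j}g_{j-1}(u_j,\bv')$ with $k_j\le k_\star$; the remainder $R_{j-1}$ satisfies $|\d^{k}_{v_j}R_{j-1}| \lesssim M_{j-1}|v_j-u_j|^{k_\star+\alpha-k}$ for $k\le k_\star$, where $M_{j-1}$ is the $\CC^\alpha$-seminorm of $\d^{k_\star}_{v_j}g_{j-1}$ on $[u_j-1,u_j+1]$. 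The remainder is then decomposed dyadically into annular pieces at scales $\lambda_n = 2^{-n}$; a scaling computation shows that the rescaled piece (as a test function in $\SB^{k_\star+1}$-form $\tilde\phi_n^{\lambda_n}_{u_j}$) has $\CC^{k_\star+1}$-norm of order $\lambda_n^{k_\star+\alpha+1}M_{j-1}$, so the $\lambda_n^{-k_\star}$ weight from the $\CB$-norm at scale $\lambda_n$ yields the convergent series $\sum_n\lambda_n^{\alpha+1}$ (using $\alpha>0$), producing the Hölder-increment term of the bound.

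Iterating across the $\ell$ variables generates mixed derivatives $\d^k f$ with $k_j\le k_\star$ for each $j$ (and therefore $|k|\le\ell k_\star$) together with $\alpha$-Hölder increments of such derivatives on $\bu+[-1,1]^\ell$, both of which are automatically dominated by the sup over $|k|\le\ell k_\star$ on the right. The principal technical hurdle is executing the Taylor/dyadic decomposition at step $j$ uniformly in the outer variables $\bv'\in\prod_{k>j}[u_k-1,u_k+1]$, so that the extracted Taylor coefficients and Hölder seminorms (which are themselves functions of $\bv'$) can be fed into the subsequent pairings with $\zeta_{j+1},\ldots,\zeta_\ell$; this is handled by freezing $\bv'$ before each decomposition and taking suprema at the end, which reduces the final bound precisely to the quantity displayed in \eqref{e:boundBstar}.
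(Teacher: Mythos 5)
Your proposal has the right spirit---localise to unit cubes, exploit vanishing moments against the high-degree part of $f$, and dyadically decompose what remains, accumulating a factor $\w(u_j)^{-1}$ from each tensor factor---and this is close in spirit to the paper's wavelet argument. However, as written there is a genuine gap in the handling of the Taylor remainder $R_{j-1}$. You claim that the dyadic annular piece, after rescaling to $\tilde\phi_n^{\lambda_n}$, lies in $\SB^{k_\star+1}$ with $\|\tilde\phi_n\|_{\CC^{k_\star+1}} \lesssim M_{j-1}\lambda_n^{k_\star+\alpha+1}$. But $R_{j-1}$ differs from $g_{j-1}$ by a polynomial of degree $k_\star$ in $v_j$, so $\d^{k_\star+1}_{v_j} R_{j-1} = \d^{k_\star+1}_{v_j} g_{j-1}$, which does not exist pointwise when $g_{j-1}$ is only $\CC^{k_\star+\alpha}$ in $v_j$ (as happens at the last step $j=\ell$, where all the excess regularity coming from the other $\ell-1$ variables has been consumed). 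The $\CB$-norm \eqref{eq:norm_weighted} is defined exclusively via $\CC^{k_\star+1}$ test functions, so the annular pieces of $R_{j-1}$ are simply not admissible and cannot be paired with $\zeta_j$ as you propose. To make this work one would first have to establish that the $\CB$-norm is equivalent to one defined with $\CC^{k_\star+\alpha'}$ test functions for some $0<\alpha'<\alpha$---but the natural proof of that equivalence is itself a wavelet argument, so nothing is saved.

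A secondary issue concerns the iteration. After pairing with $\zeta_1$ you must control the $\alpha$-Hölder modulus in $v_2$ of $v_2\mapsto \d^{k_\star}_{v_2}\scal{\zeta_1, g_0(\cdot,v_2,\bv'')}$; unwinding the step-$1$ estimate applied to the difference $\d^{k_\star}_{v_2}\big(g_0(\cdot,v_2,\bv'')-g_0(\cdot,\bar v_2,\bv'')\big)$ produces an iterated (mixed) Hölder modulus $\sup\,|\Delta_{v_1}\Delta_{v_2}\d^{(k_\star,k_\star)}f|\,/\,(|v_1-\bar v_1|^\alpha|v_2-\bar v_2|^\alpha)$, which is \emph{not} dominated by the joint $\alpha$-Hölder seminorm appearing on the right-hand side of \eqref{e:boundBstar}. (The joint seminorm only gives $|\Delta_{v_1}\Delta_{v_2}\d^k f| \lesssim \min(|v_1-\bar v_1|,|v_2-\bar v_2|)^\alpha$, which is insufficient.) One can repair this by shrinking the exponent to $\alpha/\ell$ at each step, but you do not address it.

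The paper avoids both difficulties by running the argument in the opposite direction: instead of decomposing $f$ into pieces to be fed into the $\CB$-norm, it fixes once and for all a wavelet basis of $\R^\ell$ of regularity $k_\star+1$ and expands $f$ in this basis. The wavelets themselves are smooth, compactly supported rescaled test functions that tensorise across the $\ell$ coordinates, so the single pairing $(\zeta_1\otimes\cdots\otimes\zeta_\ell)(\psi_\bu^{i,n})$ is estimated directly (no iteration, no regularity issue), while the wavelet coefficients $f_\bu^{i,n}$ are bounded via the standard coefficient estimate in terms of the local $\CC^{\ell k_\star+\alpha}$ modulus of $f$. This puts the smoothness on the test-function side, where it is free, rather than trying to extract it from $f$.
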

\begin{proof}
Choose $f$ such that the right-hand side of \eqref{e:boundBstar} equals $1$,
fix a wavelet basis $\psi_\bu^{i,n}$, $\phi_\bu$ on $\R^\ell$ of regularity $k_\star + 1$ (with
basis elements scaled in such a way that their $L^1$ norm \dash not the $L^2$ norm \dash is of order $1$)
and write
\begin{equ}
f =  \sum_{n \ge 0} 2^{-n\ell} \sum_{\bu \in 2^{-n}\Z^\ell}\sum_{i\in I} f_\bu^{i,n} \psi_\bu^{i,n} + \sum_{\bu \in \Z^\ell} f_\bu \phi_\bu\;,
\end{equ}
where $I$ is some fixed finite set (see for example \cite[Sec.~3.1]{Regularity}). The normalisation chosen
here is such that $f_\bu^{i,n} = \scal{f,\psi_\bu^{i,n}}$ and the sum over $\bu$ is a Riemann sum approximation
of an integral.
It also follows from standard estimates (see for example \cite{MR1228209}) that
\begin{equ}
|f_\bu^{i,n}| \lesssim 2^{-\ell k_\star n} \sup_{|k| \le \ell k_\star} \Big(|\d^k f(\bu)| + \sup_{|\bv-\bu| \le C2^{-n} \atop |\bar\bv-\bu| \le C2^{-n}} {|\d^{k} f(\bv)- \d^{k} f(\bar\bv)| \over|\bv-\bar\bv|^\alpha}\Bigr)\;.
\end{equ}
In particular, the sum of $2^{-n\ell}|f_{\bar \bu}^{i,n}|$ over all $2^{n\ell}$ dyadic cubes $\bar \bu$ at scale $2^n$ that are
closest to some fixed $\bu \in \Z^d$ is bounded by the same expression, but with the condition $|\bv-\bu| \le C2^{-n}$
replaced by $|\bv-\bu| \le 1$ and similarly for $\bar\bv$.

It follows at once that we have the bound
\begin{equ}
2^{-n\ell}\sum_{\bu \in 2^{-n}\Z^\ell}  \Bigl( \prod_{j \le \ell} \f1{1+|\bu_j|^{2+\eta_\star}}\Bigr) |f_\bu^{i,n}| \lesssim 2^{-(\ell k_\star+\alpha) n} \;,
\end{equ}
and similarly for $f_\bu$. On the other hand, it follows immediately
from the scaling properties of the wavelet basis and the definition of $\CB$ that 
\begin{equ}
|(\zeta_1\otimes \cdots \otimes \zeta_\ell)(\psi_\bu^{i,n})| \lesssim 2^{k_\star n \ell} \prod_{j \leq \ell} \f{\|\zeta_j\|_\CB}{1+|\bu_j|^{2+\eta_\star}}\;.
\end{equ}
Summing over $\bu$ and $n$, 
the required bound $|\zeta(f)|\lesssim 1$ then follows when $\zeta$ is a pure tensor.
The general case follows from the definition of the projective tensor product.
\end{proof}

Given a normed space $E$, we also write $\CB_\star^{(\ell)}(E)$ for the space of bounded linear maps
$\CB^{(\ell)} \to E$ endowed with its operator norm. Note that we have a canonical embedding of
$\CC_0^\infty(\R^\ell,E)$ into $\CB_\star^{(\ell)}(E)$. We then have the following result:

\begin{corollary}\label{cor:useful}
There exists $p > 0$ such that, for every $f \in \CC_0^\infty(\R^\ell,E)$, one has
\begin{equ}
\E \|f\|_{\CB_\star^{(\ell)}(E)}^p \lesssim \sup_{\bu \in \R^\ell} \sup_{|\alpha| \le \ell k_\star+1} {\E |\d^\alpha f(\bu)|_E^p \over 1+|\bu|^{\eta_\star p}}\;.
\end{equ}
\end{corollary}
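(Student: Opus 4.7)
The plan is to reduce the corollary to an $E$-valued analogue of Lemma~\ref{lem:usefulB} and then use Kolmogorov's continuity argument to dominate the Hölder seminorm appearing on the right-hand side by pointwise moments of one additional derivative. First I would verify that the proof of Lemma~\ref{lem:usefulB} carries over verbatim when $f$ takes values in $E$: the wavelet expansion and the estimate on the wavelet coefficients are purely linear in $f$, so replacing $|\cdot|$ by $|\cdot|_E$ throughout gives
\begin{equ}
\|f\|_{\CB_\star^{(\ell)}(E)} \lesssim \sum_{\bu \in \Z^\ell} W(\bu)\, Q(\bu)\;,
\end{equ}
where $W(\bu) = \prod_{j\le\ell} (1+|\bu_j|^{2+\eta_\star})^{-1}$ and
\begin{equ}
Q(\bu) = \sup_{|k|\le \ell k_\star} \Bigl(|\d^k f(\bu)|_E + \sup_{|\bv-\bu|,\,|\bar\bv - \bu|\le 1} \frac{|\d^k f(\bv) - \d^k f(\bar\bv)|_E}{|\bv-\bar\bv|^\alpha}\Bigr)\;,
\end{equ}
for some fixed $\alpha \in (0,1)$.

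Next I would raise to the $p$-th power. Choosing $p$ large enough that $W$ remains summable after absorbing a polynomial factor (equivalently, using Jensen's inequality against the probability measure $W(\bu)/\|W\|_{\ell^1}$ if $p \ge 1$, or a convexity-of-$x^p$ trick if $p < 1$), we obtain
\begin{equ}
\|f\|_{\CB_\star^{(\ell)}(E)}^p \lesssim \sum_{\bu\in\Z^\ell} W(\bu)\, Q(\bu)^p\;.
\end{equ}
Taking expectations and exchanging sum with expectation, it remains to bound $\E Q(\bu)^p$ by $C(1+|\bu|)^{\eta_\star p} M$, where $M$ is the right-hand side of the claimed inequality; summability of $W(\bu)(1+|\bu|)^{\eta_\star p}$ (which holds for $\eta_\star$ sufficiently large relative to $p$) then yields the corollary.

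The remaining step, and the main technical one, is to control the Hölder increment on a unit ball around $\bu$ by pointwise moments of $\d^{k+e_i}f$. Since $f$ is smooth, $\d^k f(\bv)-\d^k f(\bar\bv) = \int_0^1 \sum_{i=1}^\ell (\bv_i-\bar\bv_i)\, \d^{k+e_i}f(\bar\bv + s(\bv-\bar\bv))\, ds$, so choosing $\alpha < 1$ the Hölder quotient is controlled by $\sup_{|\bw-\bu|\le 1}|\d^{k+e_i}f(\bw)|_E$. A standard Kolmogorov-type argument on the cube $\{|\bw - \bu| \le 1\}$, using the identity $g(\bw) = g(\bu) + \int g' $ once more and iterating, gives for any smooth $E$-valued process $g$ the bound
\begin{equ}
\E \sup_{|\bw - \bu| \le 1} |g(\bw)|_E^p \lesssim \sup_{|\bw - \bu|\le 1}\bigl(\E |g(\bw)|_E^p + \E |\nabla g(\bw)|_E^p\bigr)\;,
\end{equ}
valid for $p$ large enough depending on $\ell$ (Sobolev embedding $W^{1,p}(B_1) \hookrightarrow L^\infty(B_1)$ for $p > \ell$). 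Applying this with $g = \d^{k+e_i} f$ and using $|\d^\alpha f(\bw)|_E^p \le (1+|\bu|+1)^{\eta_\star p}\cdot M$ on $|\bw-\bu|\le 1$ yields the required bound on $\E Q(\bu)^p$. The main subtlety is thus choosing $p$ large enough simultaneously for Sobolev embedding, for absorbing the loss from raising the $\ell^1$-sum to the $p$-th power, and for summability of the weight against $(1+|\bu|)^{\eta_\star p}$; all three are compatible since $p$ can be taken arbitrarily large at the cost of increasing $\eta_\star$ in the exponent.
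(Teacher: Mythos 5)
Your plan follows the paper's broad approach (extend Lemma~\ref{lem:usefulB} to $E$-valued $f$, raise to the $p$-th power, and control the local H\"older seminorm by moments of one additional derivative), but there are two issues, the second of which is a genuine obstruction.

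The first is a derivative count mismatch. You first convert the H\"older quotient of $\d^k f$ (with $|k|\le\ell k_\star$) into the supremum of $|\d^{k+e_i}f|_E$ over the unit ball around $\bu$ via the mean value theorem, and then apply $W^{1,p}\hookrightarrow L^\infty$ to $g=\d^{k+e_i}f$. The latter requires moments $\E|\nabla g|^p_E$, i.e.\ derivatives of total order up to $\ell k_\star+2$, which exceeds the $\ell k_\star + 1$ appearing in the statement. This is easily fixed by skipping the mean-value step: apply $W^{1,p}\hookrightarrow C^\alpha$ (valid for $p>\ell$ and $\alpha<1-\ell/p$) \emph{directly} to $\d^k f$, which controls the H\"older quotient using only moments of $\d^k f$ and $\nabla\d^k f$ and stays within budget.

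The second, more serious problem is that your closing compatibility claim is backwards. Writing $W(\bu) = \prod_{j\le\ell}(1+|\bu_j|^{2+\eta_\star})^{-1}$, the sum you need to be finite is $\sum_{\bu\in\Z^\ell} W(\bu)(1+|\bu|)^{\eta_\star p}$. Restricting already to $\bu = (R,0,\dots,0)$ shows the summand is of order $R^{\eta_\star p - 2 - \eta_\star}$, so convergence forces $\eta_\star(p-1)<1$. For $p > 1$ this constraint becomes \emph{more} restrictive as $\eta_\star$ increases: the faster decay of $W$ in $\eta_\star$ is overwhelmed by the factor $\eta_\star p$ in the exponent of $(1+|\bu|)^{\eta_\star p}$. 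Thus increasing $\eta_\star$ is not a cost one can absorb; it destroys summability. Since your Sobolev embedding requires $p>\ell$, your route yields the corollary for $\ell\ge 2$ only when $\eta_\star < 1/(\ell-1)$, whereas the paper subsequently needs to take $\eta_\star$ ``sufficiently large'' in the proof of Proposition~\ref{prop:BPHZlift}. The two elementary facts the paper's proof cites (boundedness of $(1+|\bu|)/\prod_j(1+|\bu_j|)$ and summability of $\prod_j(1+|\bu_j|^2)^{-1}$) close the summation precisely when $p\le 1$: in that regime $(1+|\bu|)^{\eta_\star p}\le(1+|\bu|)^{\eta_\star}\lesssim\prod_j(1+|\bu_j|)^{\eta_\star}$, and the $\eta_\star$-dependence of $W(\bu)(1+|\bu|)^{\eta_\star p}$ cancels, uniformly in $\eta_\star$. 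The genuine technicality in the paper's argument is therefore to run the Kolmogorov continuity step at a \emph{small} exponent $p$, which a route via $W^{1,p}\hookrightarrow L^\infty$ with $p>\ell$ cannot reproduce.
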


\begin{proof}
This is a straightforward consequence of the lemma, combined with Kolmogorov's continuity test,
the fact that the function $\bu \mapsto (1+|\bu|)/\prod_{j \le \ell}(1+|\bu_j|)$ is bounded, and the fact that the weight $\prod_{j \leq \ell} 1/(1+|\bu_j|^2)$ is summable.
\end{proof}

We now introduce auxiliary \textit{finite-dimensional} vector spaces, whose elements are interpreted as constant-coefficient
differential operators. For 
each $\fl \in \fL$,  we write $\CD_\fl$ for the free unital algebra generated by $\Eps_+(\fl)$, with generators written as $\{\fd_o\}_{o \in \Eps_+(\fl)}$ \label{lab:fd}
and unit denoted by $\one_\fl$ (recall that $\Eps_+(\fl)$ is defined in~\eqref{eq:Eps_ell}), quotiented by the ideal
generated by monomials of degree strictly greater than $k_\star$.\footnote{The reason for not considering abstract derivatives $\fd$ with $\deg \fd > k_\star$ comes from the fact that $\sigma \in \scal{\tau}_\CD$ which contains such $\fd$ cannot 
appear in the expansion of the solution to our equations. This is because~\eqref{eq:Xi_hat_new} implies 
that in this case $\deg \tau$ would itself be too big.}
Here, the degree of a monomial $\fd^\alpha$ with $\alpha \in \N^{\Eps_+(\fl)}$ is
given by $\deg(\fd^\alpha) = |\alpha|_\infty \eqdef \max\{\alpha_o\,:\, o \in \Eps_+(\fl)\}$.
 We identify $\CD_\fl$ with the vector space generated by
\begin{equ}[eq:D-ell-k]
\fD^\fl_{k_\star} \eqdef \{\fd^\alpha\,:\, \deg(\fd^\alpha) \le k_\star\}\;.
\end{equ}
The reason for writing $o \mapsto \fd_o$ for the canonical embedding $\Eps_+(\fl) \hookrightarrow \CD_\fl$
is that one should think of $\fd_o$ as representing differentiation in the $o$-th direction on
$\R^{\Eps_+(\fl)}$. In fact, we will 
make use in Section~\ref{sec:evaluation} of an ``evaluation map'' which maps $\one_\fl$ 
to a Dirac delta function located at some point of $\R^{\Eps_+(\fl)}$ and $\fd^\alpha$ to 
its corresponding derivative. Note that since
we set $\Eps_+(\fl) = \emptyset$  whenever $\fl \in \fL_+$ by convention, we 
automatically have $\CD_\fl \simeq \R$ in that case.
 
With these definitions at hand, we define two regularity structures $\CT_\CD$ and $\CT_\CB$\label{lab:TD_TB} according to~\eqref{eq:CT_V}, with $\CD_\ft$ as just described and \label{lab:Bell}
$\CB_\ft  \eqdef  \CB^{\otimes \Eps_+(\ft)}$
for the space of distributions $\CB$ defined above~\eqref{eq:norm_weighted} and where $\CB^{\otimes A}$ denotes the $A$-fold tensor product of $\CB$ with itself, completed with respect to the projective cross norm. We also postulate 
that $\CD_o = \CD_\ft$ and $\CB_o = \CB_\ft$ for $o = (\ft,p) \in \Eps$.

We need to introduce the regularity structure $\CT_{\CB}$ to be able to work with the object \eqref{eq:Xi_hat}. The latter is an abstract description of the Taylor expansion of the driving noise, where each abstract instance of the noise $\Xi_i$ should be equipped with a suitable Dirac delta function which tells at which point the noise is evaluated. The regularity 
structure $\CT_\CB$ will be used later in the analytic part and will allow us to set up a Picard iteration on the space of modelled distributions in $\CT_\CB$ (see Sections~\ref{sec:application}). On the other hand, $\CT_\CD$ is the regularity structure on which most of the algebra and renormalisation is going to take 
place (see Sections~\ref{sec:coherence} and~\ref{sec:renorm_SPDE}). It is advantageous to work with $\CT_\CD$, because the space $\CD$ is finite dimensional (in contrast to $\CB$) and the previously developed theory can be used for $\CT_\CD$. Later we will define an evaluation map, which gives a correspondence between elements of the two regularity structures. 

We extend now the differentiation operations defined on $\SP$ in Section~\ref{sec:nonlinearities} to 
$\CD_\fl$ as follows. For $o \in \Eps$ and $\fd \in \fD^\fl$ we set
\begin{equs}[eq:D_o]
	D_o \fd \eqdef \begin{cases}
		\fd_o \cdot \fd & \text{if $o \in \Eps_+(\fl)$}\,,\\
		0 & \text{otherwise}\,,
	\end{cases} \qquad 
\end{equs}
and we extend them to $\CD_\fl \otimes \SP$ by the Leibniz rule 
\minilab{e:defDer}
\begin{equ}[e:extendDo]
D_o (\fd \otimes F) \eqdef \fd \otimes D_o F + D_o\fd \otimes  F \;.
\end{equ}
Definition \eqref{eq:D_o} should be understood in light of the ``evaluation map'' 
mentioned just after \eqref{eq:D-ell-k}. If we take $\fd^\alpha \in \fD^\fl$ to represent $\delta^{(\alpha)}$,
then $\fd_o \cdot \fd^\alpha$ does indeed represent $D_o \delta^{(\alpha)}$. The rule that $D_o \fd = 0$
for  $o \not\in \Eps_+(\fl)$ can be interpreted as stating that distributions on $\R^{\Eps_+(\fl)}$ are extended to
distributions on $\R^{\Eps}$ by tensorising with the constant function $1$, so that their derivatives in directions other than those in $\Eps_+(\fl)$ do vanish.
Definition~\eqref{e:extendDo} corresponds of course simply to the standard Leibniz rule.

Moreover, for each $i \in \{0,\dots, d\}$ we define $\d_i \colon \CD_\fl \otimes \SP \to \CD_\fl \otimes \SP$ by
\minilab{e:defDer}
\begin{equ}[eq:d_i]
\d_i (\fd \otimes F) \eqdef \fd \otimes \d_i F + \sum_{o \in \Eps} D_o\fd \otimes  F \d_i\fX_{o}\;.
\end{equ}
Having in mind the definition of $\d_i\fX_{o}$, provided in Section~\ref{sec:nonlinearities}, this identity
 is again just the Leibniz rule for the differential operator $\d_i$. The above sum is finite by~\eqref{eq:D_o} and the fact that $\Eps_+(\fl)$ is finite. We note that definition~\eqref{eq:d_i} gives a natural 
restriction of $\d_i$ to $\CD_\fl$ as $\d_i \fd = \d_i (\fd \otimes 1)$. 

\begin{example}\label{ex:derivatives}
Using the definitions from Section~\ref{sec:first} and Example~\ref{ex:Examplenonlin}, since $\Eps \cap \Rule(\fl_k) = \{(\ft_0, 0)\}$, for $i = 0, 1$ we have
\begin{equ}
\d_i \one_{\fl_k} = D_{(\ft_0, 0)} \one_{\fl_k} \otimes \fX_{(\ft_0, e_i)} =\fd_{(\ft_0, 0)} \otimes \fX_{(\ft_0, e_i)}\;,
\end{equ}
where $e_i$, $i = 0, 1$, are the elements of the canonical basis of $\R^2$. Moreover, \eqref{eq:d_i} yields 
\begin{equ}
\d_i^2 \one_{\fl_k} = \d_i \bigl(\fd_{(\ft_0, 0)} \otimes \fX_{(\ft_0, e_i)}\bigr) = \fd_{(\ft_0, 0)} \otimes \fX_{(\ft_0, 2e_i)} + \fd^2_{(\ft_0, 0)} \otimes \fX^2_{(\ft_0, e_i)}\;.
\end{equ}
\end{example}

\subsection{The evaluation map}
\label{sec:evaluation}

As already mentioned in the preceding section, the regularity structure $\CT_\CB$ will be used for the 
analytic part (solving PDEs) and the regularity structure $\CT_\CD$ will be used in the algebraic part 
(renormalisation). Of course, we would like to somehow connect these in order to use both parts for 
the system of equations~\eqref{eq:system_intro}. This connection is given by evaluation maps $\Ev_\bu$ 
mapping a derivative $\fd^\alpha$ to $\d^\alpha \delta_\bu$.

Given $\bu \in \R^{\Eps_+}$ and $\ft \in \fL_-$, the \emph{evaluation map} $\Ev_\bu \colon \CD_\ft \to \CB_\ft$ is defined by\label{lab:Eval}
\begin{equ}[eq:Eval]
	\Ev_\bu[\, \fd^\alpha] = \delta^{(\alpha)}_{\bu} \eqdef \d^\alpha \delta_{\bu} = \bigotimes_{o \in  \Eps_+(\ft)} \d^{\alpha_o}\delta_{\bu_o}\;,
\end{equ}
for every $\fd \in \CD_\ft$ and $\alpha \in \N^{\Eps_+(\ft)}$. For $\ft \in \fL_+$, we set $\Ev_\bu = \id \colon \CD_\ft \to \CB_\ft$ (both spaces are equal to $\R$ in this case). Note that if $\Eps_+(\ft) = \emptyset$, 
then $\CB_\ft \simeq \R$ and the empty tensor product above is just $1$.
By~\eqref{eq:recursive_map}, this determines linear maps $\Ev_\bu\colon \CT_\CD \to \CT_\CB$.

Whenever it is more convenient, we sometimes write $\Ev(\bu,\bigcdot)$ instead of $\Ev_\bu$. Also, with a little 
abuse of notation, we will write $\Ev_\bu$ as a shorthand for $\Ev_\bu \otimes \id \colon \CD_\ft  \otimes \SP \to \CB_\ft  \otimes \SP$.

\begin{example}
Continuing Example~\ref{ex:derivatives}, for $\bh \in \R^{\Eps_+}$ we denote $h = h_{(\ft_0, 0)}$. Then we have
\begin{equ}
	\Ev_\bh \bigl [\d_i\one_{\fl_k}\bigr] = \delta^{(1)}_{h} \otimes \fX_{(\ft_0, e_i)}\;, \qquad \Ev_\bh \bigl [\d_i^2\one_{\fl_k}\bigr] = \delta^{(1)}_{h} \otimes \fX_{(\ft_0, 2 e_i)} + \delta^{(2)}_{h} \otimes \fX^2_{(\ft_0, e_i)}\;.
\end{equ}
\end{example}

Let $u \in \R$ we define a translation operator $T_u : \CB \to \CB$ by setting $\scal{T_u\mu, \varphi} \eqdef \scal{\mu, \varphi(\cdot + u)}$. For $\bu \in \R^{\Eps_+}$ and $\ft \in \fL_-$ the translation operator $T_\bu :\CB_\ft \to \CB_\ft$ is simply a tensor product $T_\bu = \bigotimes_{o \in \Eps_+(\ft)} T_{u_o}$. By postulating that $T_\bu$ is simply an identity on $\CB_\ft$ for $\ft \in \fL_+$ we can further lift $T_\bu$ to a linear operator $\CT_\CB \to \CT_\CB$ similarly as for the evaluation map. Translation operator acts naturally on the evaluation map: for $\bv,\bu \in \R^{\Eps_+}$
\begin{equ}[eq:translation]
	T_\bv \circ \Ev_\bu = \Ev_{\bu+\bv}\,,
\end{equ}
which is an easy consequence of the fact that $T_v \delta^{(n)}_u = \delta^{(n)}_{u+v}$.

\section{Renormalisation of nonlinearities}
\label{sec:coherence}

We now describe the action of the renormalisation group $\fR$ constructed in~\cite{BHZ}, on 
equations of the type~\eqref{eq:main2}. Our methodology is similar to the construction given 
in~\cite[Sec.~6]{CCHS}, but the results of that article do not quite apply to our situation 
due to the $h$-dependence of the noise.
As in \cite{CCHS}, we will give a recursive construction of the map 
$\fT(\Rule) \ni \tau \mapsto \Upsilon[\tau] \in \scal{\tau}_\CB$ that simultaneously describes 
the counterterm generated by the renormalisation constant associated to the tree $\tau$
and the corresponding term 
appearing in the generalised Taylor expansion of the solution. In our case, $\Upsilon$ must of course 
take into account 
the nature of the operators $\hat \Xi$ described in~\eqref{eq:Xi_hat}. Once again, we 
will use the spaces of abstract derivatives $\CD^\fl_{k_\star}$ to implement this.

Starting from this section we adopt the following abuse of notation. Assume that we are given a collection of 
vector spaces $(B_i)_{i=1}^n,$ such 
that $B_m \neq B_\ell$ for $m\neq \ell$, and a linear map $T : B_j \to B_j$ for some $j \in \{1,\dots, n\}$. Then 
we extend $T$ to a map on 
$\bigotimes_{i = 1}^n B_i$ by postulating that
\begin{equ}[eq:tensor_notation]
	T \bigl(\otimes_{i=1}^n b_i \bigr) = \bigl(\otimes_{i=1}^{j-1} b_i \bigr) \otimes T(b_j) \otimes \bigl(\otimes_{i=j+1}^{n} b_i \bigr)\;,
\end{equ} 
for all $b_i \in B_i$, i.e.\ $T$ acts as the identity on factors other than the $j^{\text{th}}$. 

\subsection{Definition of $\Upsilon$}

Given $F \in \SQ_+$ (recall that $\SQ_+$ was defined in Section~\ref{sec:nonlinearities}), where $F_\ft$ 
is interpreted as describing the right-hand side of the $\ft^{\text{th}}$ component of a system of SPDEs, we define 
a tuple $(\hat F_\ft)_{\ft\in \fL}$ with $\hat F_\ft \in \CD_\ft \otimes \SP$ by setting
\begin{equs}\label{eq:hatF}
	\hat F_\ft \eqdef 
		\one_{\ft} \otimes F_\ft\;,
\end{equs}
with the convention that $F_\ft = 1$ for $\ft \in \fL_-$.
Recall that~\eqref{e:defDer} gives a natural action of the ``partial derivatives'' $\d_i$  
and the ``functional derivatives'' $D_o$ on the spaces $\CD_\ft \otimes \SP$. Note that for $\hat F \in \CD_\ft \otimes \SP$, \eqref{eq:chain_rule} and~\eqref{eq:d_i} can be rewritten as
\begin{equ}[eq:LeibnizFhat]
	\d_i \hat F_\ft = \sum_{o \in \Eps} \bigl(\id \otimes \d_i\fX_{o}\bigr) D_{o}\hat F_\ft\;,
\end{equ}
where $\d_i\fX_{o}$ is identified with the corresponding multiplication operator on $\SP$. 

For $o = (\ft,p) \in \Eps$ and $\tau \in \fT(\Rule)$ of the form~\eqref{eq:recursive_tree}, we define inductively elements $\bar \Upsilon^F_o[\tau] \in \CD_\ft \otimes \scal{\tau}_\CD \otimes \SP$ and $\Upsilon^F_o[\tau] \in \scal{\J_o[\tau]}_\CD \otimes \SP$ by
\begin{equs}[eq:upsilon]
	\bar \Upsilon^F_o[\tau] &= \X^m \Bigl(\prod_{i=1}^{N} \Upsilon^F_{o_i}[\tau_i] \Bigr) \d^m \bigl(D_{o_1}\cdots D_{o_N}\bigr)\hat F_\ft\;,\\
	\Upsilon^F_o[\tau] &= \J_{o}\big[\bar \Upsilon^F_o[\tau] \big]\;.
\end{equs}
In the case $N = 0$, i.e.\ when $\tau = \X^m$, these definitions give $\bar \Upsilon^F_o[\X^m] = \X^m \d^m \hat F_\ft$, thus providing a starting 
point for the induction. However, this definition requires 
some explanation, because it might be difficult to keep track of the spaces. The Leibniz 
rule and the definition of $\hat F_\ft$ yields
\begin{equ}
\d^m \bigl(D_{o_1}\dots D_{o_N}\bigr)\hat F_\ft = 
\begin{cases}
		1 \otimes \d^m \bigl(D_{o_1}\cdots D_{o_N}\bigr) F_\ft  &\text{for}~\ft \in \fL_{+}\;,\\
		\d^m \bigl(\fd_{o_1}\cdots \fd_{o_N}  \otimes 1\bigr) &\text{for}~\ft \in \fL_{-}\;,
	\end{cases}
\end{equ}
and this expression belongs to $\CD_\ft \otimes \SP$. Furthermore, since $\Upsilon^F_{o_i}[\tau_i] \in \scal{\J_{o_i}[\tau_i]}_\CD \otimes \SP$, the product $\prod_{i=1}^{N} \Upsilon^F_{o_i}[\tau_i]$ belongs to 
$\scal{\prod_{i=1}^{N} \J_{o_i}[\tau_i]}_\CD \otimes \SP$. As described 
in~\eqref{eq:tensor_notation}, the multiplication by $\X^m$ is extended to $\scal{\prod_{i=1}^{N} \J_{o_i}[\tau_i]}_\CD \otimes \SP$, 
so that the product $\X^m \prod_{i=1}^{N} \Upsilon^F_{o_i}[\tau_i]$ yields an element of $\scal{\tau}_\CD \otimes \SP$. 
Finally, the product between an element 
$\sigma \otimes f_1 \in \scal{\tau}_\CD \otimes \SP$ and an element $\fd \otimes f_2 \in \CD_\ft \otimes \SP$ is given by
\begin{equ}
	(\sigma \otimes f_1)\; (\fd \otimes f_2) = (\fd \otimes \sigma) \otimes f_1f_2 \;\in\; \CD_\ft \otimes \scal{\tau}_\CD \otimes \SP\;.
\end{equ}
Since $\CD_\ft \simeq \R$  for $\ft \in \fL_+$, one then has
$\CD_\ft \otimes \scal{\tau}_\CD \otimes \SP \simeq \scal{\tau}_\CD \otimes \SP$, so that
in this case we will usually interpret $\bar\Upsilon^F_\ft[\tau]$ as an element 
of $\scal{\tau}_\CD \otimes \SP$.
Regarding the second definition in~\eqref{eq:upsilon}, recall from 
Section~\ref{sec:VectorRS} that $\J_o \colon \CD_\ft \otimes \scal{\tau}_\CD \to \scal{\J_o[\tau]}_\CD$, which we again extend by tensorising with
the identity on the factor $\SP$.

Using the two maps from~\eqref{eq:upsilon}, we set  
\begin{equ}[eq:maps_bold_Upsilon]
\bUpsilon^F_o[\tau] \eqdef \Upsilon^F_o[\tau]/S(\tau)\;, \qquad \bbUpsilon^F_o[\tau] \eqdef \bar\Upsilon^F_o[\tau]/S(\tau)\;, 
\end{equ}
where $S$ is the symmetry factor introduced in~\eqref{eq:symmetry_factor}. In the same way as for $\J_\ft$, 
we set $\Upsilon_\ft  \eqdef  \Upsilon_{(\ft,0)}$, and analogously for $\bar\Upsilon_\ft,\bUpsilon_\ft$ and 
$\bbUpsilon_\ft$.

\subsection{Coherence}
\label{subsec:coherence}

Let $\Poly \eqdef \{\X^k: k \in \N^{d+1}\}$\label{lab:Poly} be the set of all abstract Taylor monomials and let 
$\TT_{\Poly} \eqdef \text{Span}(\Poly)$ be the corresponding polynomial sector inside $\CT_{D}$. 
For every $o \in \Eps$ we define a set of trees 
$\fT_o(\Rule) \subset \fT (\Rule) $ and two subspaces $\TT_o \subset \CT_{D}$ and $\bar\TT_o \subset \CD_o \otimes \CT_{D}$ by
\begin{equs}[eq:jets]
	\fT_o(\Rule)   \eqdef  \{\tau \in \fT(\Rule):\; \tau &= \J_{o}[\btau]\; \text{for some}\; \btau \in \fT(\Rule)\}\,,  \\[0.4em]
	\bar\fT_o(\Rule)   \eqdef  \{\tau \in \fT(\Rule)&:\; \J_{o}[\tau] \in \fT(\Rule)\}\,,  \\[0.4em]
	\TT_o  \eqdef  \TT_{\Poly} \oplus \Im \J_{o}\,,\qquad &\bar\TT_o  \eqdef  \{\sigma \in \CT_{D}\,: \J_{o}[\sigma] \in \TT_o\}\,.
\end{equs}
As before, we use the shorthand notation $\fT_\ft(\Rule) = \fT_{(\ft, 0)}(\Rule)$ and similarly for other sets.
We also set $\SH_\CD  \eqdef  \bigoplus_{\ft \in \fL_{+}} \TT_\ft$ and $\bar\SH_\CD = \bigoplus_{\ft \in \fL_{+}} \bar\TT_\ft$.\label{lab:H-spaces} The space $\TT_\ft$ contains the ``jets'' used to describe the left-hand side 
of the $\ft$-component of~\eqref{eq:system}, while $\bar\TT_\ft$ contains those used to describe 
the right-hand side. We also use the notations $\TT_{\ft, \leq \gamma} \eqdef \CQ_{\leq \gamma} \TT_\ft$ and 
$\bar\TT_{\ft, \leq \gamma} \eqdef \CQ_{\leq \gamma} \bar\TT_\ft$.\label{lab:projected-T}

\begin{remark}\label{rem:sector}
	The result~\cite[Eq.~5.11]{BHZ} shows that $\TT_o$ and $\bar \TT_o$ are sectors \cite[Def.~2.5]{Regularity} of $\CT_D$ of 
	respective regularities $\reg(o) \wedge 0$ and $(\reg(o) - \deg o) \wedge 0$, where the map $\reg$ is 
	from Assumption~\ref{ass:rule}~\ref{R2}. In particular $\TT_o$ is function-like for $o \in \Eps_+$ 
	since $\reg o > 0$.
\end{remark}

Given $F \in \SQ(\Rule)$ we define for every $o \in \Eps$ two elements $\bUpsilon^F_o \in \TT_o \otimes \SP$ 
and $\bar \bUpsilon^F_o \in \CD_o \otimes \bar\TT_o \otimes \SP$ by
\begin{equ}[eq:bUpsilon]
	\bUpsilon^F_o \eqdef \sum_{\tau \in \fT} \bUpsilon^F_o[\tau] \qquad\text{and} \qquad \bbUpsilon^F_o \eqdef \sum_{\tau \in \fT} \bbUpsilon^F_o[\tau]\;,
\end{equ}
where we use the two maps~\eqref{eq:maps_bold_Upsilon}. The reason for using Cartesian 
products in the definition of $\CT_V$~\eqref{eq:CT_V} is precisely because we want to allow such infinite 
sums.

\begin{remark}\label{rem:Upsilon}
	In fact, the summands  
	in~\eqref{eq:bUpsilon} vanish unless $\tau \in \fT$ is such that $\J_o[\tau] \in \fT(\Rule)$. Indeed, if 
	$\J_{o}[\tau] \notin \fT(\Rule)$ then there exist an edge $e$ within $\J_{o}[\tau]$ with the type $\ff(e) = (\bar\ft,\bar p) \in \Eps$ and a set of edge types $\alpha \in \N^{\Eps}$ leaving the edge $e$, such that 
	$\alpha \notin \Rule(\bar\ft)$. This implies that
	\begin{equs}
		D^\alpha \hat F_{\bar\ft} = \begin{cases}
			D^\alpha (1 \otimes F_{\bar\ft}) = 1 \otimes D^\alpha F_{\bar\ft} = 0 & \text{ if } \bar\ft \in \fL_{+}\;,\\
			D^\alpha (\one_{\bar\ft} \otimes 1) = D^\alpha \one_{\bar\ft} \otimes 1 = 0 & \text{ if } \bar\ft \in \fL_{-}\;,
		\end{cases}
	\end{equs}
	where the first line follows from the fact that $F$ conforms to the rule $\Rule$ and the second line follows 
	from~\eqref{eq:D_o}. This, together with the recursive definition of the $\Upsilon$ map~\eqref{eq:upsilon}, 
	implies that $\Upsilon_o^F[\tau] = 0$ and $\bar\Upsilon_o^F[\tau] = 0$. 
\end{remark}

For $i \in \{0,\ldots,d\}$ we define abstract differentiation operators $\SD_i$ on $\CT_{\CD}$ by setting
\begin{equ}
\SD_i \X_j = \delta_{ij}\one\;,\qquad \SD_i\J_{(\ft, p)}[\fd \otimes \sigma] = \J_{(\ft, p+e_i)}[\fd \otimes \sigma]\;,
\end{equ}
and extending it to all of $\CT_{\CD}$ by the Leibniz rule. 
Given $p \in \N^{d+1}$, we also write $\SD^p = \prod_i \SD_i^{p_i}$, noting that these
operators commute, so this is well-defined. Given $U = (U_\ft)_{\ft \in \fL_+} \in \SH_\CD$, we then set 
\begin{equ}[eq:U_coefficients]
U_{(\ft,p)} = \SD^p U_\ft \qquad \text{ and } \qquad  u^U_{(\ft,p)} = \Proj_{\1} U_{(\ft,p)}\;.
\end{equ}

We now define rigorously the operators $\hat \Xi$ that were motivated in~\eqref{eq:Xi_hat}. For $U \in \SH_\CD$ and $o \in \Eps_+$, we can write $U_{o} = u^U_{o} \one + \tilde u_{o}$. Note that by 
Remark~\ref{rem:sector} $\tilde u_{o}$ has components only of strictly positive degrees unless $\tilde u_o = 0$, since $o \in \Eps_+$.
For $o = (\fl,p) \in \Eps\setminus \CO$, we define the operators $\hXi_{\CD,o}$ on $\SH_\CD$
\begin{equ}[eq:Xi_hat_new]
\hXi_{\CD,o}(U)  \eqdef  \sum_{\alpha \in \N^{\Eps_+(o)}}{1\over \alpha!} \J_{o} \bigl[\fd^{\alpha} \otimes \tilde u^{\alpha}\bigr]\;.
\end{equ} 
The operator $\hXi_{\CD,o}$ is the ``abstract'' counterpart of the inhomogeneous 
noise~\eqref{eq:non-hom_noise}, which is really nothing but \eqref{eq:Xi_hat}, but interpreted in $\CT_\CD$. 
In view of \eqref{eq:non-hom_noise}, it may be surprising 
that the constants $a_{\fl, o}$ do not appear explicitly in this 
expression. They will appear later in the choice of a suitable model, see~\eqref{eq:canonical}.
If $\Rule(o) = \{()\}$, we have $\Eps_+(o) = \emptyset$, and $\hXi_{\CD,o}(U) = \J_{o}[\one_\fl \otimes \one]$, which corresponds to a ``classical'' homogeneous noise. If on the other 
hand there is no $\ft \in \fL_{+}$ such that $o \notin \Rule(\ft)$ we simply set $\hat\Xi_{\CD,o} = 0$.

Given $U \in \SH_\CD$, we write
\begin{equ}
	\textbf{U} = (U_{o})_{o \in \CO}\,, \quad \bu^U = (u^U_o)_{o \in \CO}\,, \quad \bXi_\CD^U = \big( \hXi_{\CD,o}(U)\big)_{o\in \Eps\setminus \CO}\,.
\end{equ}
Finally, let us denote $\SW_\CD \eqdef \CT_\CD^{\fL_{+}}$ and for $F \in \SQ(\Rule)$ let us define $\BF_{\CD} : \SH_\CD \to \SW_\CD$ by\footnote{Formally $\BF_\CD(U)$ can be viewed as a Taylor expansion of $F(\textbf{U},\bXi^U_\CD)$ around the point $(\bu^U,0)$.}
	 \begin{equ}[eq:FofU]
	 	\BF_{\CD, \ft}(U)  \eqdef  \sum_{\alpha \in \N^\Eps} \frac{D^\alpha F_{\ft}\big(\bu^U,0\big)}{\alpha!} \bigl(\textbf{U}-\bu^U\one, \bXi_\CD^U\bigr)^\alpha\;.
	 \end{equ}

\begin{lemma}\label{lem:BF}
	 Let the rule $\Rule$ satisfy Assumption~\ref{ass:rule} and $F \in \SQ(\Rule)$. Then $\BF_\CD$ maps $\SH_\CD$ to $\bar{\SH_\CD}$.
\end{lemma}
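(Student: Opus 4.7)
My plan is to prove the lemma by unpacking the definition \eqref{eq:FofU} of $\BF_{\CD,\ft}(U)$, observing that $F \in \SQ(\Rule)$ prunes the sum to admissible multisets, and then checking that each surviving term is a linear combination of trees conforming to $\Rule$ whose root edge has type in $\CO$. Concretely, Definition~\ref{def:conform} says $D^\alpha F_\ft(\bu^U, 0) = 0$ whenever $\alpha \in \N^\Eps$ (viewed as a multiset in $\hat\mcbP(\Eps)$) is not in $\Rule(\ft)$. Hence only indices $\alpha \in \Rule(\ft)$ contribute, and the identity $\BF_{\CD,\ft}(U) \in \bar\TT_\ft$ is equivalent to $\J_\ft[\BF_{\CD,\ft}(U)] \in \TT_\ft$, i.e.\ to showing that each product $(\bU - \bu^U\one, \bXi^U_\CD)^\alpha$ lies in $\bar\TT_\ft$ for every such $\alpha$.

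Next, I would describe the structure of each factor. For $o \in \CO$, the construction~\eqref{eq:U_coefficients} together with Remark~\ref{rem:sector} shows $U_o \in \TT_o = \TT_\Poly \oplus \Im\J_o$, and $\tilde u_o = U_o - u^U_o\one$ retains only its strictly-positive polynomial part and its component in $\Im\J_o$. For $o \in \Eps\setminus\CO$, every summand of $\hXi_{\CD,o}(U)$ from \eqref{eq:Xi_hat_new} lies in $\Im\J_o$. Multiplying the resulting trees corresponds to identifying their roots: expanding the product $\prod_{o \in \Eps}((\bU-\bu^U\one,\bXi_\CD^U)_o)^{\alpha_o}$ therefore yields a linear combination of decorated trees whose root carries some node decoration $m$ (coming from the polynomial contributions of the factors indexed by $\CO$) and whose outgoing edges form a sub-multiset of $\alpha$ (with $\alpha_o$ edges of type $o$ for every $o \in \Eps\setminus\CO$, and at most $\alpha_o$ for $o \in \CO$).

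Finally, I would verify the rule both at the root and recursively. Since $\alpha \in \Rule(\ft)$ and $\Rule$ is normal~\ref{R1}, every sub-multiset of $\alpha$ also lies in $\Rule(\ft)$, so the root of each tree $\tau$ appearing after applying $\J_\ft$ satisfies $\Nodes[\rho] \in \Rule(\ft)$. The subtrees above each outgoing edge come either from a summand of $\tilde u_o$ (with $o \in \CO$) or from a summand $\J_o[\fd^\beta \otimes \tilde u^\beta]$ of $\hXi_{\CD,o}(U)$. In the first case the subtree already belongs to $\CT_\CD$ and so is built from trees in $\fT(\Rule)$ by construction \eqref{eq:CT_V}; in the second case the immediate children of the $\J_o$-edge form the multiset $\beta \in \N^{\Eps_+(o)}$, which lies in $\Rule(o)$ by property~\ref{R5}, and the trees further above come again from $\tilde u_{o'}$ with $o' \in \Eps_+$ and are dealt with inductively. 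Putting these observations together, every tree arising in the expansion of $\J_\ft[\BF_{\CD,\ft}(U)]$ lies in $\fT_\ft(\Rule)$, so $\BF_{\CD,\ft}(U) \in \bar\TT_\ft$.

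The only mildly delicate point I anticipate is the bookkeeping of how polynomial contributions from the $\tilde u_o$ with $o \in \CO$ combine with the $\Im\J_{o'}$ contributions to produce a single decorated tree, together with the verification that the resulting node decoration at the root does not spoil conformance; this is handled by the normality hypothesis~\ref{R1} on $\Rule$ and by the fact that multiplication by $\X^m$ only alters $\fm(\rho)$, not the multiset of outgoing edges.
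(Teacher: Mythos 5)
Your proposal is correct and takes essentially the same route as the paper: both proofs invoke conformance of $F$ to restrict the sum to $\alpha \in \Rule(\ft)$, split $\alpha$ into its $\CO$ and non-$\CO$ parts, appeal to property~\ref{R5} to place $\hXi_{\CD,o}(U)$ in $\CT_\CD$, and conclude that the product lands in $\bar\TT_\ft$. The one difference is cosmetic — you make explicit the sub-multiset\slash normality bookkeeping at the root, which the paper's final step leaves implicit.
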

\begin{proof}
	Let $U\in \SH_\CD$ and $\ft \in \fL_+$. Let $\alpha \in \N^\Eps$ be such that $D^\alpha F_\ft \neq 0$. 
	Then, since $F$ conforms to the rule $\Rule$, Definition~\ref{def:conform} yields $\alpha \in \Rule(\ft)$. We 
	can split $\alpha$ as $\alpha = \alpha_+ \sqcup \alpha_-$ for $\alpha_+ \in \N^{\CO}$ and $\alpha_- \in \N^{\Eps\setminus\CO}$, which allows us to write
	\begin{equs}[eq:U^alpha]
		\bigl(\textbf{U}-\bu^U\one, \bXi_\CD^U\bigr)^\alpha = \bigl(\textbf{U} - \bu^U \1 \bigr)^{\alpha_+}\bXi_{\CD}^{\alpha_-}(U)\;,
	\end{equs}
	where $\bXi_{\CD}^{\alpha_-}(U) \eqdef \prod_{o \in \alpha_-} \hXi_{\CD,o}(U)$. Note that the definition of 
	$\Eps_+(o)$ in~\eqref{eq:Eps_ell}, the fact that $U \in \SH_\CD$ and Assumption~\ref{ass:rule}\ref{R5} 
	imply $\hXi_{\CD,o}(U) \in \CT_\CD$ for every $o \in \Eps\setminus \CO$. Therefore, 
	$\bXi_{\CD}^{\alpha_-}(U) \in \CT_\CD$ and $\bigl(\textbf{U} - \bu^U \1 \bigr)^{\alpha_+}\in\CT_\CD$, and 
	together with the fact that $\alpha \in \Rule(\ft)$ we see that $\bigl(\textbf{U}-\bu^U\one, \bXi_\CD^U\bigr)^\alpha \in \bar\TT_\ft$. This implies that $\BF_\CD$ takes values in $\bar{\SH_\CD}$, as required.
\end{proof}

For $U \in \SH_\CD$ and $\ft \in \fL_+$ we define $U^R$ as the unique element of $\bar\SH_\CD$ such that
\begin{equ}[eq:UR]
	U_\ft = \sum_{p \in \N^{d+1}} \frac{1}{p!} u^U_{(\ft,p)} \X^p + \J_{\ft}[U^R_\ft]\;,
\end{equ}
where we use the coefficients $u^U_{(\ft,p)}$, defined in~\eqref{eq:U_coefficients}. With this at hand we are 
ready to present a definition of coherence. 

\begin{definition}\label{def:coherence}
	We say that $U \in \SH_\CD$ is \emph{coherent} to order $L \in \N \cup \{\infty\}$ with $F \in \SQ(\Rule)$, if for all $\ft \in \fL_+$
	\begin{equ}[eq:coherence]
		\bp_{\leq L} U^R_\ft = \bp_{\leq L} \bbUpsilon^F_\ft\big(\bu^U\big)\;,
	\end{equ}
	where $U^R$ is defined in~\eqref{eq:UR}, the map $\bbUpsilon^F_\ft$ is defined in~\eqref{eq:maps_bold_Upsilon}, and the projection $\bp_{\leq L}$ is defined on p.\pageref{lab:projectionL}.
\end{definition}

Regarding the interpretation of the right-hand side of~\eqref{eq:coherence}, one has
$\bbUpsilon^F_\ft \in \CD_\ft \otimes \bar\TT_\ft \otimes \SP$. For every $\bv \in \R^{\Eps}$, we have a 
natural evaluation map $\iota_\bv\colon \SP \to \R$ such that $\iota_\bv(F) = F(\bv)$. For
$\bu \in \R^{\Eps_+}$ we simply extend it with zeros to an element of $\R^{\Eps}$, and then 
interpret $\bbUpsilon^F_\ft\big(\bu\big)$ as 
\begin{equ}
	\bigl(\id_{\CD} \otimes \id_{\TT} \otimes \iota_{(\bu, 0)}\bigr) \bbUpsilon^F_\ft \in \CD_\ft \otimes \bar\TT_\ft \otimes \R\;.
\end{equ}
Since $\CD_\ft \simeq \R$ for $\ft \in \fL_+$, we have $\CD_\ft \otimes \bar\TT_\ft \otimes \R \simeq \bar\TT_\ft$, which allows to consider $\bbUpsilon^F_\ft\big(\bu^U\big)$ as an element of $\bar\TT_\ft$.

Note that such an extension by zeros of $\bu \in \R^{\Eps_+} \hookleftarrow \R^{\Eps}$ will only be used in $\bbUpsilon$ and not in explicit functions $F \in \SQ(\Rule)$ like in~\eqref{eq:FofU}. This is because noise components of $\bbUpsilon$ most of the times be evaluated at $0$ which might not be the case for some specific functions $F \in \SQ(\Rule)$ (see Lemma~\ref{lem:RF}). The only exception is when Theorem~\ref{thm:renormalised_PDE} where $\bbUpsilon[\tau]$ is evaluated at $(\bu, \xi^{\bu, c})$ for $\tau \in \fT_-(\Rule)$. 

Before stating a generalisation of the Fa\`{a} di Bruno's formula, we make some definitions. We use the 
lexicographic order ``$<$'' on $\N^{d+1}$, so that $0$ is the smallest element. For every $r \in \N$ and $k \in \N^{d+1}$ we define the set 
\begin{equ}
	I(r,k)  \eqdef  \Bigl\{ (\vec q, \vec m) \in \bigl(\N^{d+1}\bigr)^r \times \bigl(\N^{\Eps} \setminus \{0\}\bigr)^{r} : 0 < q_1 < \cdots < q_r,\; \sum_{i = 1}^r |m_i| \cdot q_i = k \Bigr\},
\end{equ}
and for $(\vec q, \vec m) \in I(r,k)$ we use the shorthands $r_{(\vec q, \vec m)} = r$ and $|\vec m| = \sum_{i = 1}^r m_i$. Furthermore, we define $I(k)  \eqdef  \bigsqcup_{r = 0}^\infty I(r,k)$. Then the following is a  version of the Fa\`{a} di Bruno's formula.

\begin{lemma}\label{lem:FaaDiBruno}
For any $k \in \N^{d+1}$, $\fl \in \fL$ and any $\hat F \in \CD_\fl \otimes \SP$ one has
\begin{equ}[eq:FaaDiBruno]
	\d^k \hat F = k! \sum_{(\vec q, \vec m) \in I(k)} \Biggl[ \prod_{1 \leq i \leq r_{(\vec q, \vec m)}} \prod_{(\ft, p) \in \Eps} \frac{1}{m_i[(\ft, p)]!} \Bigl( \frac{1}{q_i!} \fX_{(\ft, p + q_i)}\Bigr)^{m_i[(\ft, p)]} \Biggr] D^m \hat F\;,
\end{equ}
where $m_i[(\ft, p)]$ is the $(\ft, p)$-component of $m_i \in \N^{\Eps}$, and where multiplication of the 
monomials $\fX_{o}$ and the function $\hat F$ is on the level of $\SP$-components.
\end{lemma}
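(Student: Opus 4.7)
The formula \eqref{eq:FaaDiBruno} is the classical multivariate Fa\`{a} di Bruno chain rule transported to the algebraic setting of $\CD_\fl \otimes \SP$. The key observation is that the derivation $\d_i$ defined by \eqref{eq:chain_rule} and \eqref{eq:d_i} acts on the generators by $\d_i \fX_{(\ft,p)} = \fX_{(\ft,p+e_i)}$ with $e_i$ the $i^{\text{th}}$ canonical basis vector of $\R^{d+1}$, which is exactly how the total derivative $\partial_{z_i}$ would act if one were to interpret $\fX_{(\ft,p)}$ as $\partial_z^p u_\ft(z)$ for smooth functions $u_\ft\colon \R^{d+1}\to \R$.

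My plan is to proceed by induction on $|k|_1 = \sum_i k_i$. The base case $k = 0$ is immediate: $I(0)$ contains only the empty tuple (the $r=0$ element), and the claim reduces to $\hat F = 0!\cdot\hat F$. For the inductive step, I would fix $i$ with $k_i\ge 1$, set $\bar k = k - e_i$, and apply $\d_i$ to the inductive hypothesis for $\bar k$. Two kinds of contributions arise from the Leibniz rule: either $\d_i$ differentiates one of the factors $\fX_{(\ft,p+q_j)}^{m_j[(\ft,p)]}$, producing a combinatorial prefactor $m_j[(\ft,p)]$ together with a ``shift'' of one unit of the $q_j$-block into a $(q_j+e_i)$-block; or $\d_i$ differentiates the factor $D^m\hat F$, producing by \eqref{eq:LeibnizFhat} the sum $\sum_{(\ft,p)}\fX_{(\ft,p+e_i)}D^{m+[(\ft,p)]}\hat F$, which corresponds to the insertion/enlargement of a $q=e_i$ block. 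Both contributions are then reassembled into a single sum over $I(k)$, and the matching of combinatorial coefficients relies on the identities $m_j[(\ft,p)]/m_j[(\ft,p)]! = 1/(m_j[(\ft,p)]-1)!$ together with $\bar k!\cdot k_i = k!$ and the analogous factorial reshuffling for insertions.

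The main obstacle will be the combinatorial case analysis: when the shifted index $q_j+e_i$ (or the index $e_i$ in the second type of contribution) happens to coincide with an already-present $q_{j'}$ in $\vec q$, a ``new block insertion'' becomes a ``merger of blocks'' and one must verify carefully that the factorials still reshuffle to give precisely the coefficient appearing in the formula for $k$. To avoid this bookkeeping I would prefer a direct reduction to the classical multivariate Fa\`{a} di Bruno formula (see e.g.\ Constantine--Savits): choose arbitrary smooth $u_\ft\colon\R^{d+1}\to\R$ and evaluate both sides of \eqref{eq:FaaDiBruno} through the algebra homomorphism $\varphi\colon \SP \to \CC^\infty(\R^{d+1})$ defined by $\fX_{(\ft,p)}\mapsto \d^p u_\ft$. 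Since $\varphi$ intertwines $\d_i$ with $\partial_{z_i}$ by construction, the identity becomes the standard chain rule for $\partial_z^k[F\circ \vec g]$ where $g_{(\ft,p)}(z)=\d^p u_\ft(z)$, and the freedom in the choice of the $u_\ft$ implies \eqref{eq:FaaDiBruno} as an identity in $\SP$. The $\CD_\fl$-factor is then handled in exactly the same way using the evaluation map $\Ev_\bu$ of \eqref{eq:Eval} to translate $\fd^\alpha$ into $\d^\alpha\delta_\bu$, reducing once more to the smooth case.
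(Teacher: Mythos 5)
Your first route — induction on $|k|$, starting from \eqref{eq:LeibnizFhat} as the base case $k=e_i$ and applying $\d_i$ together with the Leibniz rule in the inductive step — is exactly what the paper does for $\fl\in\fL_-$; for $\fl\in\fL_+$, where $\CD_\fl\simeq\R$ and the $\CD_\fl$-tensor factor is trivial, the paper simply cites \cite[Lem.~A.1]{BCCH} rather than redoing the bookkeeping. The combinatorial reshuffling (insertion vs.\ merger of blocks) you worry about is indeed the crux of that inductive step; the paper leaves it implicit.

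Your preferred alternative route is genuinely different and worth commenting on. Checking the identity on the $\SP$-factor by pushing forward through the evaluation homomorphism $\varphi\colon \SP\to\CC^\infty(\R^{d+1})$, $\fX_{(\ft,p)}\mapsto\d^p u_\ft$, and then invoking the classical multivariate Fa\`a di Bruno formula with the freedom to choose the $u_\ft$'s, is a clean way to treat the $\fL_+$ case and is equivalent in spirit to what BCCH do. The gain is that you avoid re-deriving the combinatorics by hand; the cost is that you make the reduction to an external reference explicit rather than implicit. Neither is clearly preferable for $\fL_+$.

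Where your proposal has a gap is in the last sentence, treating the $\CD_\fl$-factor. You say ``use the evaluation map $\Ev_\bu$ to translate $\fd^\alpha$ into $\d^\alpha\delta_\bu$, reducing once more to the smooth case,'' but this does not work as stated for two reasons. First, $\delta_\bu$ is a distribution, not a smooth function, so there is no ``smooth case'' to reduce to. Second, and more importantly, $\Ev_\bu$ evaluates at a \emph{fixed} point $\bu$, whereas the chain-rule content of \eqref{eq:d_i} (the term $\sum_o D_o\fd \otimes F\,\d_i\fX_o$) records a composition with a $z$-dependent argument; a single fixed $\bu$ sees none of that. The fix is to replace $\delta_\bu$ by an auxiliary smooth ``outer function'' $\Phi\in\CC^\infty(\R^{\Eps_+(\fl)})$, interpret $\fd^\alpha\otimes F$ as representing $\Phi^{(\alpha)}(g(z))\,F(g(z))$ with $g_{(\ft,p)}(z)=\d^p u_\ft(z)$, and check that $\d_i$ of \eqref{eq:d_i} intertwines with $\d_{z_i}$ under this interpretation. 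Equality in $\CD_\fl\otimes\SP$ then follows by varying $\Phi$ and the $u_\ft$ arbitrarily (the quotient by monomials of $|\cdot|_\infty$-degree greater than $k_\star$ is harmless since the identity is graded and passes to the quotient). This closes the gap and actually makes your reduction route uniform over $\fL_+$ and $\fL_-$, which is arguably tidier than the paper's two-case treatment.
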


\begin{proof}
For $\fl \in \fL_+$, the definition~\eqref{eq:hatF} implies that the function $\hat F$ does not contain abstract derivatives (i.e.\ $\CD_\fl = \R$) and formula~\eqref{eq:FaaDiBruno} was proved in~\cite[Lem.~A.1]{BCCH}. 

For $\fl \in \fL_- $, formula~\eqref{eq:FaaDiBruno} can be proved by induction over $k \in \N^{d+1}$. More 
precisely, for $k = e_i$ (the latter is an element of the canonical basis of $\R^{d+1}$) the right-hand side 
of~\eqref{eq:FaaDiBruno} is exactly~\eqref{eq:LeibnizFhat}. Furthermore, every $k \in \N^{d+1}$ such that 
$|k| \geq 2$ can be written as $k = \bar k + e_i$, for $\bar k < k$ and some $0 \leq i \leq d$. Then writing 
$\d^k \hat F = \d_i (\d^{\bar k} \hat F)$, formula~\eqref{eq:FaaDiBruno} follows from~\eqref{eq:LeibnizFhat} 
and the induction hypothesis for $\bar k$.
\end{proof}

The following lemma is an analogue of~\cite[Lem.~4.6]{BCCH}. 

\begin{lemma}\label{lem:CoherenceEquivalence}
Let $U \in \SH_\CD$ and $F \in \SQ(\Rule)$ for a rule $\Rule$ satisfying Assumption~\ref{ass:rule}. Then $U$ is coherent to order $L$ with $F$ if and only if, for every $\ft \in \fL_+$,
\begin{equ}
\bp_{\leq L} U^R_\ft = \bp_{\leq L} \BF_{\CD, \ft}(U)\;.
\end{equ}
\end{lemma}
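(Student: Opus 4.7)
The plan is to prove the stronger pointwise identity $\bbUpsilon^F_\ft(\bu^U) = \BF_{\CD,\ft}(U)$ in $\bar\TT_\ft$, from which the lemma follows at once by applying the projection $\bp_{\leq L}$. The proof is combinatorial and closely parallels~\cite[Lem.~4.6]{BCCH}; the only new ingredient is the presence of inhomogeneous noise subtrees produced by $\hXi_{\CD,o}$, which must be tracked through the auxiliary algebra $\CD_\ft$.

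First, I would fix a tree $\tau \in \bar\fT_\ft(\Rule)$ and compare the projections $\Proj_\tau \bbUpsilon^F_\ft(\bu^U)$ and $\Proj_\tau \BF_{\CD,\ft}(U)$. Writing $\tau = \X^m \prod_{i=1}^n \J_{o_i}[\tau_i]^{\beta_i}$ with distinct $(o_i,\tau_i)$, the recursive definition~\eqref{eq:upsilon} together with~\eqref{eq:maps_bold_Upsilon} unfolds $\Proj_\tau \bbUpsilon^F_\ft(\bu^U)$ into $\frac{1}{S(\tau)}$ times the product of the $\bUpsilon^F_{o_i}[\tau_i]$ (evaluated at $\bu^U$) multiplied by $\d^m (D_{o_1})^{\beta_1}\cdots (D_{o_n})^{\beta_n}\hat F_\ft\big|_{(\bu^U,0)}$. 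Applying the Faà di Bruno formula \eqref{eq:FaaDiBruno} to $\d^m\hat F_\ft$ converts the polynomial derivatives $\d_i$ into functional derivatives $D_o$ paired with additional indeterminates $\fX_{(\ft',p)}$; after evaluation at $(\bu^U,0)$ the latter simply insert additional components $u^U_{(\ft',p)}$ into the coefficient. On the $\CD_\ft$-side, the derivatives $D_o\hat F_\ft$ for $\ft\in\fL_-$ produce precisely the abstract derivatives $\fd^\alpha$ that dress the noise edges, cf.~\eqref{eq:D_o}--\eqref{e:extendDo}.

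Second, I would expand $\BF_{\CD,\ft}(U)$ using its Taylor definition \eqref{eq:FofU}, decompose $\alpha=\alpha_+\sqcup\alpha_-$ as in \eqref{eq:U^alpha}, and substitute the explicit forms: for $o'=(\ft',p)\in\CO$, one has $U_{o'}-u^U_{o'}\one = \sum_{k\neq 0}\frac{1}{k!}u^U_{(\ft',p+k)}\X^k + \J_{o'}[U^R_{\ft'}]$ from \eqref{eq:UR}, while $\hXi_{\CD,o}(U)$ is given by \eqref{eq:Xi_hat_new}. Expanding the multinomial power $(\textbf{U}-\bu^U\one,\bXi^U_\CD)^\alpha$ and regrouping contributions by the tree structure they produce, each pure polynomial factor gets absorbed into the $\X^m$ at the root with a combinatorial weight. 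Collecting these contributions, the cumulative prefactor decomposes as Faà di Bruno multinomial weights (accounting for which $\X^k$ pieces attach to the root) times inverse factorials from~\eqref{eq:FofU} and~\eqref{eq:Xi_hat_new}, which together reconstruct exactly $1/S(\tau)$.

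The main obstacle is bookkeeping of the prefactors: one must verify that the factorials $1/\alpha!$ and $1/k!$ appearing in~\eqref{eq:FofU}, \eqref{eq:UR} and~\eqref{eq:Xi_hat_new}, together with the Faà di Bruno multiplicities of \eqref{eq:FaaDiBruno}, combine to give precisely the $1/S(\tau) = 1/(m!\prod_i S(\tau_i)^{\beta_i}\beta_i!)$ prefactor from~\eqref{eq:symmetry_factor}. Provided this cancellation is carried out carefully, both sides agree coefficient by coefficient on every tree, yielding $\bbUpsilon^F_\ft(\bu^U) = \BF_{\CD,\ft}(U)$ and hence the lemma. A reduction to the purely kernel-based case $\fL_-=\emptyset$ of~\cite[Lem.~4.6]{BCCH} isolates the novelty: since noise edges appear in $\bbUpsilon^F_\ft$ through $\J_o[\fd^\alpha\otimes\tilde u^\alpha]$ and on the $\BF_{\CD,\ft}$-side through $\hXi_{\CD,o}(U)$, the matching is immediate from the definition \eqref{eq:Xi_hat_new} and the convention that $\hat F_{\fl} = \one_\fl\otimes 1$, so that repeated application of $D_o$ to $\hat F_\fl$ for $\fl\in\fL_-$ produces exactly the $\fd^\alpha$ tensors prescribed by~\eqref{eq:Xi_hat_new}.
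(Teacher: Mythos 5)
The central claim of your plan—that $\bbUpsilon^F_\ft(\bu^U)=\BF_{\CD,\ft}(U)$ holds unconditionally for every $U \in \SH_\CD$—is false, and the rest of the argument collapses as a result. The left-hand side depends on $U$ only through its polynomial part $\bu^U$: the maps $\Upsilon^F_o[\tau]$ are built from $F$ and the combinatorics of $\tau$ alone, and evaluating at $\bu^U$ simply substitutes real numbers for the indeterminates. The right-hand side, by contrast, is genuinely a function of the whole modelled-distribution-like object $U$: expanding \eqref{eq:FofU} and substituting $U_{(\ft',p)} - u^U_{(\ft',p)}\one = \sum_{k\neq 0}\frac{1}{k!}u^U_{(\ft',p+k)}\X^k + \J_{(\ft',p)}[U^R_{\ft'}]$, the non-polynomial data $U^R_{\ft'}$ enter directly and are not constrained. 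For a concrete failure, take the KPZ nonlinearity $F_{\ft_0} = \lambda\fX^2_{(\ft_0,1)} + \sigma\fX_{\fl_0}$: the coefficient of the tree $\J_{(\ft_0,1)}[\Xi_0]^2$ in $\BF_{\CD,\ft_0}(U)$ is $\lambda$ times (roughly) the square of $\Proj_{\Xi_0}U^R_{\ft_0}$, while the coefficient in $\bbUpsilon^F_{\ft_0}(\bu^U)$ is the fixed quantity $\lambda\sigma^2 \one_{\fl_0}\otimes\one_{\fl_0}$. These agree precisely when $\Proj_{\Xi_0}U^R_{\ft_0} = \sigma\one_{\fl_0} = \Proj_{\Xi_0}\bbUpsilon^F_{\ft_0}(\bu^U)$, which is exactly what coherence asserts at that tree. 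If the unconditional identity were true, Lemma~\ref{lem:CoherenceEquivalence} would be a tautology; the whole content of the lemma is that the self-consistency $U^R = \BF_\CD(U)$ and the explicit recursive description $U^R = \bbUpsilon^F(\bu^U)$ are equivalent, and this requires a genuine argument.

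The step where your expansion silently uses coherence is precisely the substitution of $\J_{(\ft',p)}[U^R_{\ft'}]$: for the resulting multinomial expansion to reproduce the $\Upsilon^F$ recursion, you need $U^R_{\ft'}$ to already equal $\bbUpsilon^F_{\ft'}(\bu^U)$, which is the conclusion you are trying to reach. The paper handles this by arguing the two directions separately. In the forward direction (coherence $\Rightarrow U^R = \BF_\CD(U)$) it first establishes, via Lemma~\ref{lem:Xi_Upsilon} and the auxiliary expansion Lemma~\ref{lem:Upsilon_auxiliary}, that coherence forces $\hXi_{\CD,o}(U) = \bUpsilon^F_o(\bu^U)$ and that $(\textbf{U}-\bu^U\one,\bXi^U_\CD)^\alpha$ admits an explicit expansion in terms of $(\Upsilon^F)^\nu(\bu^U)$; only then does $\BF_{\CD,\ft}(U) = \bbUpsilon^F_\ft(\bu^U)$ follow. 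The converse ($U^R = \BF_\CD(U) \Rightarrow$ coherence) uses a separate device: pick an auxiliary $V$ coherent of all orders with $\bu^V = \bu^U$ and show $\Proj_{\J_\ft[\tau]}(U_\ft) = \Proj_{\J_\ft[\tau]}(V_\ft)$ by induction on the number of edges of $\tau$, using the fixed-point property on both sides. Your combinatorial bookkeeping of factorials and Faà di Bruno weights is a plausible ingredient, but it needs to be organized around these two directions, with coherence available as a hypothesis in one and the inductive comparison argument supplying the other.
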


Before proving Lemma~\ref{lem:CoherenceEquivalence}, we obtain a couple of auxiliary results. 
To state them, we consider the set $\CA = \Eps \times \bar \CA$ with $\bar \CA  \eqdef  (\N^{d+1}\setminus\{0\}) \sqcup \fT$ and for $\nu \in \N^{\CA}$ we define $\bar \nu \in \N^{\Eps}$ as
\begin{equ}[eq:nu_bar]
\bar \nu[o]  \eqdef  \sum_{q \in \bar \CA} \nu[(o, q)]\;,
\end{equ}
where $o \in \Eps$. Furthermore, for a collection $\bu^U = (u^U_o)_{o \in \CO}$ and for $\nu \in \N^{\CA}$, 
such that $|\nu| < \infty$, we set 
\begin{equ}
(u^U)^\nu  \eqdef  \prod_{(\ft, p) \in \Eps} \prod_{q \in \N^{d+1}\setminus\{0\}} \Bigl( \frac{u^U_{(\ft, p + q)}}{q!}\Bigr)^{\nu[((\ft, p), q)]}, \quad \bigl(\Upsilon^F\bigr)^\nu  \eqdef  \prod_{o \in \Eps} \prod_{\tau \in \fT} \bigl( \Upsilon^F_{o}[\tau]\bigr)^{\nu[(o, \tau)]}\;,
\end{equ}
as well as $\sigma(\nu)  \eqdef  \sum_{o \in \Eps} \sum_{q \in \N^{d+1}\setminus\{0\}} q \cdot \nu[(o, q)] \in \N^{d+1}$. Note 
that $(u^U)^\nu \in \R$ and $\bigl(\Upsilon^F\bigr)^\nu \in \CT_\CD \otimes \SP$. The following lemma then provides 
another expression for the map $\bar \bUpsilon^F$ defined in~\eqref{eq:bUpsilon}.

\begin{lemma}\label{lem:Upsilon_auxiliary}
For any $\ft \in \fL$ one has\footnote{Here for $(\fd\otimes f) \in \CD\otimes \SP$ we view $(\fd\otimes f)(\bu^U,0) = (\fd\otimes f(\bu^U,0)) \in \CD\otimes \R \simeq \CD$. In particular $(\fd\otimes 1)(\bu^U,0) = (\fd\otimes 1)$.}
\begin{equ}[eq:Upsilon_auxiliary]
\bar \bUpsilon^F_{\ft} (\bu^U,0) = \sum_{\nu \in \N^{\CA}} \frac{D^{\bar \nu} \hat F_{\ft}(\bu^U,0)}{\nu!} (u^U)^\nu \X^{\sigma(\nu)} \cdot \bigl(\Upsilon^F\bigr)^\nu (\bu^U)\;.
\end{equ}
\end{lemma}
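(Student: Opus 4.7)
The plan is to unfold the recursive definition of $\bar\Upsilon^F_{\ft}[\tau]$ tree by tree, apply the Fa\`a di Bruno formula of Lemma~\ref{lem:FaaDiBruno} to the $\d^m$ operator, and then reorganise the resulting double sum via a combinatorial bijection so as to match the index $\nu\in\N^\CA$ on the right-hand side. Evaluation at $(\bu^U,0)$ will then be handled at the very end.

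More concretely, writing every tree in $\fT(\Rule)$ uniquely in the form $\tau=\X^m\prod_{(o,\sigma)}\J_{o}[\sigma]^{\beta_{(o,\sigma)}}$ with distinct pairs $(o,\sigma)$, the symmetry factor is $S(\tau)=m!\prod_{(o,\sigma)} S(\sigma)^{\beta_{(o,\sigma)}}\,\beta_{(o,\sigma)}!$ by \eqref{eq:symmetry_factor}, and the recursive formula~\eqref{eq:upsilon} gives
\begin{equ}
\bar\bUpsilon^F_{\ft}=\sum_{m,\{\beta\}}\frac{\X^m}{m!}\prod_{(o,\sigma)}\frac{(\Upsilon^F_o[\sigma])^{\beta_{(o,\sigma)}}}{S(\sigma)^{\beta_{(o,\sigma)}}\beta_{(o,\sigma)}!}\,\d^m D^{\bar\beta}\hat F_{\ft}\;,
\end{equ}
where $\bar\beta\in\N^{\Eps}$ is defined by $\bar\beta[o]=\sum_{\sigma}\beta_{(o,\sigma)}$. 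I would then apply Lemma~\ref{lem:FaaDiBruno} to the factor $\d^m D^{\bar\beta}\hat F_{\ft}$; note that the prefactor $m!$ produced by Fa\`a di Bruno cancels the $1/m!$ coming from the symmetry factor of the root decoration, and that the remaining $D$-derivatives combine with $D^{\bar\beta}$ into $D^{\bar\beta+|\vec m'|}$.

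The key combinatorial step is then to define a bijection between the indexing data
\[
\bigl(m,\{\beta_{(o,\sigma)}\}_{(o,\sigma)\in\Eps\times\fT},\,(\vec q,\vec m')\in I(m)\bigr)
\]
and multi-indices $\nu\in\N^{\CA}$ of finite support, by setting
$\nu[(o,\sigma)]\eqdef\beta_{(o,\sigma)}$ for $\sigma\in\fT$ and $\nu[(o,q_i)]\eqdef m'_i[o]$ for $q_i\in\N^{d+1}\setminus\{0\}$. Under this correspondence one readily checks $\sigma(\nu)=\sum_i|m'_i|\,q_i=m$, so $\X^m=\X^{\sigma(\nu)}$, and $\bar\nu=\bar\beta+|\vec m'|$, so the surviving derivative is $D^{\bar\nu}\hat F_{\ft}$. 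The factorials $\prod_{(o,\sigma)}\beta_{(o,\sigma)}!\cdot\prod_{i,o} m'_i[o]!$ appearing in the denominator assemble into $\nu!=\prod_{(o,x)\in\CA}\nu[(o,x)]!$, and the $q_i!$ factors produced by Fa\`a di Bruno attach to the corresponding monomials $\fX_{o^{(q_i)}}$ to produce exactly the factor $(u^U)^\nu$ after evaluation.

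Once all the factors are rearranged according to this bijection, evaluating at $(\bu^U,0)$ replaces $\fX_{(\ft,p)}$ by $u^U_{(\ft,p)}$ when $(\ft,p)\in\CO$ and by $0$ when $(\ft,p)\in\Eps\setminus\CO$; the second case kills precisely those monomials that would produce an $u^U$ evaluated on a noise label, consistent with the convention that $u^U_o$ is only defined for $o\in\CO$. What remains is exactly the sum on the right-hand side of~\eqref{eq:Upsilon_auxiliary}. The main obstacle is the combinatorial bookkeeping, in particular verifying that the symmetry factors $S(\sigma)^{\beta_{(o,\sigma)}}$ match the factors implicitly built into the product $(\Upsilon^F)^\nu$, so that the equality of combinatorial factors is consistent with the conventions for $\Upsilon^F$ versus $\bUpsilon^F$; everything else is a straightforward bijective repackaging.
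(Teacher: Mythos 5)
Your sketch reproduces the paper's proof: expand $\bar\bUpsilon^F_\ft=\sum_\tau\bar\Upsilon^F_\ft[\tau]/S(\tau)$, apply the Fa\`a di Bruno formula of Lemma~\ref{lem:FaaDiBruno} to $\d^m\hat F_\ft$, and repackage the triple sum over trees $\tau$ (written with distinct children) and Fa\`a di Bruno data $(\vec q,\vec m')$ into a single sum over $\nu\in\N^\CA$ by setting $\nu[(o,\sigma)]=\beta_{(o,\sigma)}$ and $\nu[(o,q_i)]=m'_i[o]$; the residual factorials assemble into $\nu!$, the $q_i!$'s into $(u^U)^\nu$, and evaluation at $(\bu^U,0)$ discards the noise monomials exactly as you describe. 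Your caution about the $S(\sigma)^{\beta_{(o,\sigma)}}$ factors is the one genuine subtlety, and it is well placed: carrying your bookkeeping to the end, the factor $\prod_j S(\tau_j)^{\beta_j}$ released when dividing by $S(\tau)=m!\prod_j S(\tau_j)^{\beta_j}\beta_j!$ does not cancel against $\nu!$, so the identity closes only if $(\Upsilon^F)^\nu$ is read as built from the normalised $\bUpsilon^F_o[\tau]=\Upsilon^F_o[\tau]/S(\tau)$ rather than the raw $\Upsilon^F_o[\tau]$; with that convention fixed your proposal matches the paper's argument term by term.
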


\begin{proof}
From the definitions \eqref{eq:bUpsilon} and \eqref{eq:maps_bold_Upsilon} we have
\begin{equ}[eq:Upsilon-bar-expansion]
\bar \bUpsilon^F_{\ft} = \sum_{\tau \in \fT} \bar\Upsilon^F_\ft[\tau]/S(\tau)\;,
\end{equ}
where the symmetry factor $S(\tau)$ is introduced in~\eqref{eq:symmetry_factor}. Let us take $\tau$ of the form \eqref{eq:recursive_tree_new}. Then from the recursive definition~\eqref{eq:upsilon} and the generalised Fa\`{a} di Bruno's formula~\eqref{eq:FaaDiBruno} we get
\begin{equs}
\bar\Upsilon^F_\ft[\tau] 
& = \X^m \Bigl(\prod_{i=1}^{n} \bigl(\Upsilon^F_{o_i}[\tau_i] \bigr)^{\beta_i}\Bigr) \\
&\quad \times m! \sum_{(\vec q, \vec m') \in I(m)} \Biggl[ \prod_{1 \leq i \leq r_{(\vec q, \vec m')}} \prod_{(\ft', p) \in \Eps} \frac{1}{m'_i[(\ft', p)]!} \Bigl( \frac{1}{q_i!} \fX_{(\ft', p + q_i)}\Bigr)^{m'_i[(\ft', p)]} \Biggr] \\
&\hspace{5cm} \times D^{m'} \bigl(D^{\beta_1}_{o_1}\cdots D^{\beta_n}_{o_n}\bigr)\hat F_\ft\;.
\end{equs}
Evaluating these expressions on $(\bu^U,0)$ means replacing $\fX_{(\ft', p + q_i)}$ by $u^U_{(\ft', p + q_i)}$. 

Let us now define $\nu \in \N^\CA$ as follows: $\nu[(o, q)] = m'_i[o]$ if $q = q_i$ for some $1 \leq i \leq r_{(\vec q, \vec m')}$, and $\nu[(o, q)] = 0$ otherwise; $\nu[(o, \tau)] = \beta_i$ if $(o, \tau) = (o_i, \tau_i)$ for some $1 \leq i \leq n$, and $\nu[(o, \tau)] = 0$ otherwise. In particular, $(\vec q, \vec m') \in I(m)$ implies $\sigma(\nu) = m$. Then using the definitions provided above this lemma allow to write the preceding expression as 
\begin{equ}
\bar\Upsilon^F_\ft[\tau](\bu^U,0) = S(\tau) \sum_{\nu \in \N^\CA: \sigma(\nu) = m} \frac{D^{\bar \nu} \hat F_{\ft}}{\nu!} (u^U)^\nu \X^{\sigma(\nu)} \cdot \bigl(\Upsilon^F\bigr)^\nu (\bu^U)\;.
\end{equ}
Plugging this identity into \eqref{eq:Upsilon-bar-expansion} we get the required identity \eqref{eq:Upsilon_auxiliary}.
\end{proof}

The following lemma shows a relation between the maps~\eqref{eq:Xi_hat_new} and~\eqref{eq:bUpsilon}.

\begin{lemma}\label{lem:Xi_Upsilon}
If $U$ is coherent of all orders with $F$, then for every $o \in \Eps \setminus \CO$ one has
\begin{equ}[eq:Xi_Upsilon_coherence]
\hat \Xi_{\CD, o}(U) = \bUpsilon^F_o (\bu^U)\;.
\end{equ}
\end{lemma}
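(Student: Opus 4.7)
The strategy is to reduce the identity to an identity of the form $\bar\bUpsilon^F_o(\bu^U) = \sum_\alpha \frac{1}{\alpha!}\,\fd^\alpha \otimes \tilde u^\alpha$, then expand the left-hand side via Lemma~\ref{lem:Upsilon_auxiliary} and match it with the right-hand side using coherence together with the multinomial theorem. Write $o = (\fl, p)$ with $\fl \in \fL_-$. Since $\Upsilon^F_o[\tau] = \J_o[\bar \Upsilon^F_o[\tau]]$ and $\J_o$ is linear on $\CD_o \otimes \CT_\CD$, we have $\bUpsilon^F_o = \J_o[\bar\bUpsilon^F_o]$, so combining with the definition~\eqref{eq:Xi_hat_new} of $\hXi_{\CD,o}$ it is enough to show $\bar\bUpsilon^F_o(\bu^U) = \sum_{\alpha \in \N^{\Eps_+(\fl)}} \tfrac{1}{\alpha!}\, \fd^\alpha \otimes \tilde u^\alpha$. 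A preliminary remark is that $\bar\bUpsilon^F_o$ depends on $o = (\fl, p)$ only through $\fl$: this is because the recursion~\eqref{eq:upsilon} only uses $\hat F_\fl$, and for $\fl \in \fL_-$ we have $\hat F_\fl = \one_\fl \otimes 1$, so the $p$-dependence drops out.

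Now apply Lemma~\ref{lem:Upsilon_auxiliary} with $\ft = \fl \in \fL_-$. Since $D^{\bar\nu}\hat F_\fl = \fd^{\bar\nu} \otimes 1$, vanishing unless the support of $\bar\nu$ is contained in $\Eps_+(\fl)$, the formula reads
\begin{equ}
\bar\bUpsilon^F_\fl(\bu^U,0) = \sum_{\nu \in \N^{\Eps_+(\fl)\times\bar\CA}} \frac{\fd^{\bar\nu}}{\nu!}\,(u^U)^\nu\, \X^{\sigma(\nu)}\cdot (\Upsilon^F)^\nu(\bu^U)\;.
\end{equ}
The summand factorises as a product over $o' \in \Eps_+(\fl)$ of pieces depending on $\nu_{o'} \eqdef \nu[(o',\bigcdot)] \in \N^{\bar\CA}$. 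Grouping terms by $\alpha = \bar\nu \in \N^{\Eps_+(\fl)}$ so that $|\nu_{o'}| = \alpha_{o'}$, and applying the multinomial formula $\sum_{|\gamma|=n}\frac{1}{\gamma!}\prod_r c_r^{\gamma_r} = \frac{1}{n!}\bigl(\sum_r c_r\bigr)^n$ with $c_{q} = \tfrac{u^U_{o'+q}}{q!}\X^q$ for $q \in \N^{d+1}\setminus\{0\}$ and $c_\tau = \Upsilon^F_{o'}[\tau](\bu^U)$, one obtains (after absorbing the symmetry factors $S(\tau)$ inherent in $\Upsilon^F$ versus $\bUpsilon^F$ into the multinomial counting of unordered tree families, consistently with the convention of Lemma~\ref{lem:Upsilon_auxiliary})
\begin{equ}
\bar\bUpsilon^F_\fl(\bu^U) = \sum_{\alpha \in \N^{\Eps_+(\fl)}} \frac{\fd^\alpha}{\alpha!} \prod_{o'\in \Eps_+(\fl)} \Bigl(\sum_{q\neq 0} \frac{u^U_{o'+q}}{q!}\,\X^q + \bUpsilon^F_{o'}(\bu^U)\Bigr)^{\alpha_{o'}}\;.
\end{equ}

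To finish, I would invoke coherence to identify the bracketed expression with $\tilde u_{o'}$. Indeed, for $o' = (\ft, p') \in \Eps_+$, differentiating~\eqref{eq:UR} yields $U_{o'} = \SD^{p'} U_\ft = \sum_{k}\tfrac{1}{k!}u^U_{(\ft,k+p')}\X^k + \J_{o'}[U^R_\ft]$, and by Definition~\ref{def:coherence} we have $U^R_\ft = \bbUpsilon^F_\ft(\bu^U)$. Since the set $\bar\fT_{(\ft,p')}(\Rule) = \bar\fT_\ft(\Rule)$ and the recursion~\eqref{eq:upsilon} for $\bar\Upsilon^F_{(\ft,p')}$ depends only on $\ft$, we have $\bbUpsilon^F_{(\ft,p')} = \bbUpsilon^F_\ft$, so $\J_{o'}[U^R_\ft] = \J_{o'}[\bbUpsilon^F_{o'}(\bu^U)] = \bUpsilon^F_{o'}(\bu^U)$. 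Subtracting off the constant $u^U_{o'}\1$ gives exactly $\tilde u_{o'} = \sum_{q\neq 0}\tfrac{u^U_{o'+q}}{q!}\X^q + \bUpsilon^F_{o'}(\bu^U)$. Plugging this into the previous display produces $\bar\bUpsilon^F_\fl(\bu^U) = \sum_\alpha \tfrac{\fd^\alpha}{\alpha!}\otimes \tilde u^\alpha$, and applying $\J_o$ to both sides recovers~\eqref{eq:Xi_Upsilon_coherence}.

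The main obstacle is the bookkeeping in the multinomial reorganisation: one must carefully match the symmetry factors $S(\tau)$ built into the definition of $\bUpsilon^F_o[\tau]$ with the combinatorial factor $\frac{1}{\nu!}$ appearing in Lemma~\ref{lem:Upsilon_auxiliary}, making sure that an unordered family of subtrees at the noise vertex is counted with the correct weight. Everything else is a direct manipulation using the definitions and coherence.
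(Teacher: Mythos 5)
Your proof is correct and takes essentially the same route as the paper's: both hinge on Lemma~\ref{lem:Upsilon_auxiliary}, the multinomial regrouping over $\nu$ with $\bar\nu = \alpha$, and coherence to identify the resulting product with $\tilde u^\alpha$; you simply run the chain of equalities in the reverse direction, starting from $\bar\bUpsilon^F_o(\bu^U)$ rather than from $\hXi_{\CD,o}(U)$. The symmetry-factor bookkeeping you flag is a genuine subtlety shared with the paper's own notational convention in Lemma~\ref{lem:Upsilon_auxiliary} and~\eqref{eq:U_expansion} (where $(\Upsilon^F)^\nu$ and $\Upsilon^F_\ft[\tau]$ should consistently be read as $(\bUpsilon^F)^\nu$ and $\bUpsilon^F_\ft[\tau]$), and your resolution matches the intended one.
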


\begin{proof}
Let us assume that $U$ is coherent of all orders with $F$. Then for $\ft \in \fL_+$ and $\J_{\ft}[\tau] \in \fT(\Rule)$, Definition~\ref{def:coherence} and~\eqref{eq:upsilon} yield
\begin{equ}[eq:Xi_Upsilon_coherence_some_identity]
\Proj_{\J_{\ft}[\tau]}(U_\ft) = \Proj_{\J_{\ft}[\tau]} \bigl(\J_{\ft}[U^R_\ft] \bigr) = \Upsilon^F_{\ft}[\tau] (\bu^U)\;,
\end{equ}
where the remainder $U^R_\ft$ is defined in~\eqref{eq:UR}.
This implies that for each $(\ft, p) \in \CO$ we can write
\begin{equ}[eq:U_expansion]
 U_{(\ft, p)} = u_{(\ft, p)} \1 + \sum_{q \in \N^{d+1}\setminus\{0\}} \frac{u_{(\ft, p + q)}}{q!} \X^q + \sum_{\tau \in \fT \,:\, \J_\ft[\tau] \in \fT(\Rule)} \Upsilon^F_{\ft}[\tau] (\bu^U)\;,
\end{equ}
and we denote the last two sums by $\mathring{U}_{(\ft, p)}$ and $\hat{U}_{(\ft, p)}$ respectively. Let us also 
we define $\tilde{U}_{(\ft, p)}  \eqdef  \mathring{U}_{(\ft, p)} + \hat{U}_{(\ft, p)}$. Then for $o \in \Eps \setminus \CO$ definition~\eqref{eq:Xi_hat_new} yields 
\begin{equ}[eq:Xi_Upsilon]
\hat \Xi_{\CD, o}(U)  \eqdef  \sum_{\alpha \in \N^{\Eps_+(o)}} {1\over \alpha!} \J_{o} \Bigl[\fd^{\alpha} \otimes \prod_{(\fb, p) \in \Eps_+(o)} \bigl(\tilde{U}_{(\fb, p)}\bigr)^{\alpha[(\fb, p)]} \Bigr]\;.
\end{equ}
Using the definitions from the beginning of this section, for any $\alpha \in \N^{\Eps_+}$ we can write 
\begin{equs}
\prod_{(\fb, p) \in \Eps_+} \bigl(\tilde{U}_{(\fb, p)}\bigr)^{\alpha[(\fb, p)]} = \sum_{\nu \in \N^{\CA} : \bar \nu = \alpha} \frac{\alpha!}{\nu!} (u^U)^\nu \X^{\sigma(\nu)} \cdot (\Upsilon^F)^\nu (\bu^U)\;.
\end{equs}
Plugging this expression into~\eqref{eq:Xi_Upsilon} and using Lemma~\ref{lem:Upsilon_auxiliary}, we get
\begin{equ}
\hat \Xi_{\CD, o}(U) = \sum_{\nu \in \N^{{\CA}}} \frac{1}{\nu !} \J_{o} \Bigl[ \fd^{\bar \nu} \otimes u^\nu \X^{\sigma(\nu)} \cdot (\Upsilon^F)^\nu (\bu^U)\Bigr] = \J_{o} \Bigl[\bar \bUpsilon^F_o (\bu^U)\Bigr]\;.
\end{equ}
According to~\eqref{eq:upsilon}, the preceding expression equals $\bUpsilon^F_o (\bu^U)$, which is our claim.
\end{proof}

\begin{proof}[of Lemma~\ref{lem:CoherenceEquivalence}]
We will prove a more general result, namely that $U$ is coherent of all orders with $F$ if and only if for every $\ft \in \fL_+$ one has
\begin{equ}[eq:coherence_more_general]
U^R_\ft = \BF_{\CD, \ft}(U)\;.
\end{equ}
Then Lemma~\ref{lem:CoherenceEquivalence} follows from the fact that the truncation parameter 
$L(\tau)$, defined on p.~\pageref{lab:projectionL} satisfies $L(\tau_1\tau_2) = L(\tau_1)+L(\tau_2)$.

Lemma~\ref{lem:Xi_Upsilon}, coherence and~\eqref{eq:U^alpha} imply that for all $\alpha \in \Eps$
\begin{equ}
	\prod_{(\fb, p) \in \Eps} \bigl(\textbf{U} - \bu^U\1, \bXi^U_D \bigr)^{\alpha[(\fb,p)]} = \sum_{\substack{\nu \in \N^{\CA} \\ \bar \nu = \alpha}} \frac{\alpha!}{\nu!} (u^U)^\nu \X^{\sigma(\nu)} \cdot \bigl(\Upsilon^F\bigr)^\nu (\bu^U)\;.
\end{equ}
Then Lemma~\ref{lem:BF} and~\eqref{eq:Upsilon_auxiliary} allow to write
\begin{equ}
\BF_{\CD, \ft}(U) = \sum_{\nu \in \N^{{\CA}}} \frac{D^{\bar \nu} F_{\ft}(\bu^U,0)}{\nu!} (u^U)^\nu \X^{\sigma(\nu)} \cdot (\Upsilon^F)^\nu (\bu^U) = \bar \bUpsilon^F_{\ft} (\bu^U)\;.
\end{equ}
This equals $U^R_\ft$ as we proved in~\eqref{eq:Xi_Upsilon_coherence_some_identity}.

Now, we will prove that~\eqref{eq:coherence_more_general} implies coherence of all orders. For this, let us 
take $V \in \SH_\CD$, which is coherent of all orders with $F$ and such that $\bu^V = \bu^U$. Then, in order 
to prove coherence, it is enough to show that $\Proj_{\J_{\ft}[\tau]}(V_\ft) = \Proj_{\J_{\ft}[\tau]}(U_\ft)$ for all 
$\ft \in \fL_+$ and all $\J_{\ft}[\tau] \in \fT(\Rule)$. We can prove it by induction over the number of edges in 
$\tau$. The case when $\tau$ does not have edges is trivial. Let $\tau$ have $k \geq 1$ edges and let us 
assume that the claim is proved for all trees with at most $k-1$ edges. Then we have 
\begin{equ}
	\Proj_{\J_{\ft}[\tau]}(U_\ft) = \Proj_{\tau} \bigl(\BF_{\CD, \ft}(U)\bigr) = \Proj_{\tau} \bigl(\BF_{\CD, \ft}(V)\bigr) = \Proj_{\J_{\ft}[\tau]}(V_\ft)
\end{equ}
as required, where the first identity follows from~\eqref{eq:coherence_more_general}, the second identity 
follows from the induction hypothesis, and the last identity follows from coherence.
\end{proof}

Similarly to the above definitions, we define the spaces $\SW_\CB, \SH_\CB$ and $\bar\SH_\CB$ by 
changing $\CD$ to $\CB$ 
in their respective definitions. For $U \in \SH_\CB$ and $o \in \CO$, we also define in a similar manner $U_o$, 
$u^U_o$ and write $U_o = u^U_o 
\one + \tilde u_o$ in case $o\in \Eps_+$. We can also define the respective maps on the regularity structure $\CT_{\CB}$. For $o \in \Eps\setminus \CO$ we set
	\begin{equ}[eq:Xi_hat_B]
		\hXi_{\CB, o}(U) \eqdef \sum_{\alpha \in \N^{\Eps_+(o)} : |\alpha|_\infty \leq k_\star} {1\over \alpha!} \J_{o} \Bigl[\; \delta^{(\alpha)}_{\bu^U\restr_{\Eps_+(o)}} \otimes \tilde u^{\alpha}\Bigr]\,.
	\end{equ}
Note the similarity of this definition with the usual way of lifting a nonlinear function
$F$ as in~\cite[Eq.~4.11]{Regularity}, with the role of the derivatives of the function being played by $\J_{(\fl,p)} \bigl[\,\delta^{(\alpha)}_{\bu^U\restr_{\Eps_+(o)}} \otimes \bigcdot\, \bigr]$. We introduce the notation $\textbf{U}$, $\bXi_\CB$ and $\bu^U$ similarly to above and define the function $\BF_{\CB}$ by
\begin{equ}[eq:FofU_B]
	\BF_{\CB, \ft}(U)  \eqdef  \sum_{\alpha \in \N^\Eps} \frac{D^\alpha F_{\ft}\big(\bu^U,0\big)}{\alpha!} \bigl(\textbf{U}-\bu^U \1, \bXi_\CB^U\bigr)^\alpha.
\end{equ}
One can carry out the proof of Lemma~\ref{lem:BF} \emph{mutatis mutandis} to show that $\BF_{\CB}$ 
maps $\SH_\CB$ to $\bar\SH_\CB$.

The next result shows a connection between $\BF_{\CB}$ and $\BF_{\CD}$ through the evaluation map.

\begin{lemma}
For $U \in \SH_{\CD} $ and for each $o \in \Eps\setminus\CO$ and $\ft \in \fL_+$ one has
\minilab{eqs:eval}
\begin{equs}
\Ev_{\bu^U}\hXi_{\CD, o}(U) &=  \hXi_{\CB, o}\bigl(\Ev_{\bu^U}U\bigr)\;, \label{eq:Xi_eval}\\
\Ev_{\bu^U}\BF_{\CD, \ft}(U ) &= \BF_{\CB, \ft}\bigl(\Ev_{\bu^U}U \bigr)\;,\label{eq:F_eval}
\end{equs}
where we use the evaluation maps defined in Section~\ref{sec:evaluation}.
\end{lemma}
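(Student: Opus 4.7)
The plan is to reduce both identities to three basic properties of the evaluation map $\Ev_{\bu^U}$: it is a linear natural transformation, it respects the tree product, and it acts as the identity on the polynomial sector. Concretely, since $\Ev_{\bu^U}: \CT_\CD \to \CT_\CB$ is built from the component maps $\Ev_{\bu^U, \ft}$ via the recursive prescription~\eqref{eq:recursive_map}, a short induction on the recursive form~\eqref{eq:recursive_tree_V} gives $\Ev_{\bu^U}(\sigma_1\sigma_2) = \Ev_{\bu^U}(\sigma_1)\Ev_{\bu^U}(\sigma_2)$ and $\Ev_{\bu^U}(\X^m) = \X^m$. In particular, for every $o \in \CO$ (where $\ft \in \fL_+$ so that $\Ev_{\bu^U,\ft} = \id$) the scalar $\1$-coefficient is preserved, so that with $V \eqdef \Ev_{\bu^U} U \in \SH_\CB$ one has $\bu^V = \bu^U$ and $\Ev_{\bu^U}\tilde u_o = \tilde v_o$, where $\tilde v_o$ denotes the non-constant part of $V_o$.

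For~\eqref{eq:Xi_eval}, apply $\Ev_{\bu^U}$ to the definition~\eqref{eq:Xi_hat_new}. Since $\J_o\colon \CD_o \otimes \CT_\CD \to \CT_\CD$ is compatible with the natural transformation, we obtain
\begin{equ}
\Ev_{\bu^U}\hXi_{\CD,o}(U) = \sum_{\alpha \in \N^{\Eps_+(o)}}\frac{1}{\alpha!}\J_o\bigl[\Ev_{\bu^U}(\fd^\alpha) \otimes \Ev_{\bu^U}(\tilde u^\alpha)\bigr]\;.
\end{equ}
Multiplicativity gives $\Ev_{\bu^U}(\tilde u^\alpha) = \tilde v^\alpha$, while the definition~\eqref{eq:Eval} of the evaluation map on $\CD_\fl$ combined with the fact that $\CD_\fl$ is quotiented by monomials of degree strictly greater than $k_\star$ yields $\Ev_{\bu^U}(\fd^\alpha) = \delta^{(\alpha)}_{\bu^U\restr_{\Eps_+(o)}}$ for $|\alpha|_\infty \le k_\star$ and $0$ otherwise. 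The summation then reduces to exactly that appearing in the definition~\eqref{eq:Xi_hat_B} of $\hXi_{\CB,o}(V)$, proving~\eqref{eq:Xi_eval}.

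For~\eqref{eq:F_eval}, expand $\BF_{\CD,\ft}(U)$ using~\eqref{eq:FofU} and split each multi-index $\alpha \in \N^\Eps$ as $\alpha = \alpha_+ \sqcup \alpha_-$ with $\alpha_+ \in \N^{\CO}$ and $\alpha_- \in \N^{\Eps\setminus \CO}$, exactly as in~\eqref{eq:U^alpha}. Using linearity and multiplicativity of $\Ev_{\bu^U}$, together with $\bu^V = \bu^U$, the factor $(\textbf{U} - \bu^U\one)^{\alpha_+}$ maps to $(\textbf{V} - \bu^V \one)^{\alpha_+}$, while the factor $\bXi_\CD^{\alpha_-}(U) = \prod_{o \in \alpha_-}\hXi_{\CD,o}(U)$ maps to $\prod_{o \in \alpha_-}\hXi_{\CB,o}(V) = \bXi_\CB^{\alpha_-}(V)$ by~\eqref{eq:Xi_eval}. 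Reassembling the series, and noting that the Taylor coefficients $D^\alpha F_\ft(\bu^U,0) = D^\alpha F_\ft(\bu^V,0)$ are unaffected, gives $\BF_{\CB,\ft}(V)$. The whole argument is essentially bookkeeping; the only point requiring care is that the upper bound $|\alpha|_\infty \le k_\star$ in the sum defining $\hXi_\CB$ precisely matches the quotient imposed in the definition of $\CD_\fl$, so that the two sums are indeed term-by-term equal under $\Ev_{\bu^U}$.
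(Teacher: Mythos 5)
Your proof is correct and follows essentially the same route as the paper's: both arguments reduce~\eqref{eq:Xi_eval} to the fact that $\Ev_{\bu^U}$ is a multiplicative natural transformation combined with the observation that the quotient of $\CD_\fl$ by monomials with $|\alpha|_\infty > k_\star$ exactly reproduces the restricted sum in~\eqref{eq:Xi_hat_B}, and then deduce~\eqref{eq:F_eval} by expanding~\eqref{eq:FofU} and applying~\eqref{eq:Xi_eval} termwise. You spell out the intermediate bookkeeping (that $\bu^V = \bu^U$, that $\Ev_{\bu^U}\tilde u^\alpha = \tilde v^\alpha$, the $\alpha = \alpha_+ \sqcup \alpha_-$ split) slightly more explicitly than the paper, which simply says~\eqref{eq:F_eval} "follows from~\eqref{eq:Xi_eval} and the above mentioned properties," but there is no substantive difference.
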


\begin{proof}
 Both identities in~\eqref{eqs:eval} follow immediately from the fact that the evaluation map $\Ev$ is a natural transformation from $\CT_\CD$ to $\CT_\CB$. More precisely, using the definition~\eqref{eq:Xi_hat_new}, we obtain
\begin{equs}
\Ev_{\bu^U} \hXi_{\CD, o}(U) &= \sum_{\alpha \in \N^{\Eps_+(o)}}{1\over \alpha!}  \J_{o} \big[\Ev_{\bu^U} [\fd^{\alpha}] \otimes \Ev_{\bu^U}[\tilde u^{\alpha}]\big] \\
& = \sum_{\alpha \in \N^{\Eps_+(o)}: |\alpha|_\infty \leq k_\star}{1\over \alpha!} \J_{o} \Bigl[ \delta^{(\alpha)}_{\bu^U\restr_{\Eps_+(o)}} \otimes \Ev_{\bu^U}[\tilde u]^{\alpha}\Bigr]\,,\label{eq:Xi_for_B_proof}
\end{equs}
where we applied the identity~\eqref{eq:recursive_map} for the evaluation map, as well as used its 
definition~\eqref{eq:Eval} and the fact that $\fd^\alpha = 0$ if $|\alpha|_\infty > k_\star$. Note that the 
identity $\Ev_{\bu^U}\big[U_o\big] = u_o \one + \Ev_{\bu^U}[\tilde u_o]$ for $o \in \CO$, implies 
that~\eqref{eq:Xi_for_B_proof} equals~\eqref{eq:Xi_hat_B}, applied to the element $\Ev(\bu^U, U)$. This gives 
the identity~\eqref{eq:Xi_eval}.

Identity~\eqref{eq:F_eval} follows from~\eqref{eq:Xi_eval} and the above mentioned properties of the evaluation map.
\end{proof}

\begin{lemma}\label{lem:twoFs}
Let $U \in \SH_\CD$ be coherent to order $L$ with $F$. Then one has
\begin{equ}[eq:twoFs]
\bp_{\leq L} \BF_{\CB, \ft} \bigl( \Ev_{\bu^U}U\bigr) = \bp_{\leq L} \Ev_{\bu^U}\big[\bar \bUpsilon^{F}_\ft\big(\bu^U\big)\big]\;,
\end{equ}
where the map $\bar \bUpsilon^{F}_\ft$ is defined in~\eqref{eq:maps_bold_Upsilon} and the projection $\bp_{\leq L}$ is defined on page~\pageref{lab:projectionL}.
\end{lemma}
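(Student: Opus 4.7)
The proof should be a short chain of identifications that reduces \eqref{eq:twoFs} to already-established lemmas. My plan is as follows.

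First, I would observe that the evaluation map $\Ev_{\bu^U}$ is defined recursively via~\eqref{eq:recursive_map} and therefore preserves the tree decomposition $\CT_\CD = \prod_{\tau} \scal{\tau}_\CD$, mapping $\scal{\tau}_\CD$ into $\scal{\tau}_\CB$ for each individual tree $\tau \in \fT(\Rule)$. In particular, $\Ev_{\bu^U}$ commutes with every projection of the form $\Proj_Q$ and in particular with the truncation $\bp_{\leq L}$, since $\bp_{\leq L}$ is itself defined tree-wise through the integer $L(\tau)$.

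Next, I would combine the two characterisations of coherence. By Definition~\ref{def:coherence}, coherence to order $L$ gives
\begin{equ}
\bp_{\leq L} U^R_\ft = \bp_{\leq L} \bbUpsilon^F_\ft(\bu^U)\;,
\end{equ}
while Lemma~\ref{lem:CoherenceEquivalence} rephrases the same hypothesis as
\begin{equ}
\bp_{\leq L} U^R_\ft = \bp_{\leq L} \BF_{\CD, \ft}(U)\;.
\end{equ}
Eliminating $\bp_{\leq L} U^R_\ft$ yields the key intermediate identity
\begin{equ}
\bp_{\leq L} \BF_{\CD, \ft}(U) = \bp_{\leq L} \bbUpsilon^F_\ft(\bu^U)\;.
\end{equ}

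Finally, I apply $\Ev_{\bu^U}$ to both sides. Using the commutation of $\Ev_{\bu^U}$ with $\bp_{\leq L}$ from the first step, together with identity~\eqref{eq:F_eval} which states
\begin{equ}
\Ev_{\bu^U} \BF_{\CD, \ft}(U) = \BF_{\CB, \ft}\bigl(\Ev_{\bu^U} U\bigr)\;,
\end{equ}
we obtain precisely~\eqref{eq:twoFs}. There is essentially no obstacle here: the only non-cosmetic point is the naturality of $\Ev$ with respect to the tree decomposition, which is immediate from its construction via~\eqref{eq:recursive_map}, and the fact that \eqref{eq:F_eval} is already established. The lemma is thus a direct consequence of combining Lemma~\ref{lem:CoherenceEquivalence}, Definition~\ref{def:coherence}, and identity~\eqref{eq:F_eval}.
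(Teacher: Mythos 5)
Your proof is correct and follows exactly the same route as the paper's: combine Definition~\ref{def:coherence} with Lemma~\ref{lem:CoherenceEquivalence} to derive $\bp_{\leq L}\BF_{\CD,\ft}(U) = \bp_{\leq L}\bbUpsilon^F_\ft(\bu^U)$, then apply $\Ev_{\bu^U}$ using its commutation with $\bp_{\leq L}$ and identity~\eqref{eq:F_eval}. The only cosmetic difference is that you spell out the tree-wise commutation of $\Ev_{\bu^U}$ and $\bp_{\leq L}$, which the paper asserts without elaboration.
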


\begin{proof}
Combining the definition of coherence~\eqref{eq:coherence} and Lemma~\ref{lem:CoherenceEquivalence}, we 
obtain the identity $\bp_{\leq L} \BF_{\CD, \ft}(U) = \bp_{\leq L} \bar \bUpsilon^{F}_\ft\big(\bu^U\big)$, which then implies that $\Ev_{\bu^U} \bigl[\bp_{\leq L} \BF_{\CD, \ft}(U)\bigr] = \Ev_{\bu^U}\big[\bp_{\leq L} \bar \bUpsilon^{F}_\ft\big(\bu^U\big)\big]$. Since $\bp_{\leq L}$ 
and $\Ev_{\bu^U}$ commute,~\eqref{eq:twoFs} follows from~\eqref{eq:F_eval}.
\end{proof}

We now address the choice of $k_\star$ in the definition of $\CB_\fl$ and $\CD_\fl$ from Section~\ref{sec:spaces}. This requires to look at truncated versions of the operators $\hat \Xi_\CB$. 

\begin{lemma}\label{lem:k_star}
	Let $\Rule$ satisfy Assumption~\ref{ass:rule} and let $\gamma > 0$. There exists $k_\star \in \N$ such that for every $o \in \Eps\setminus\CO$ and every $U \in \SH_{\CB}$ the following relation is true
	\begin{equ}[e:idenProjXi]
		\CQ_{\leq \gamma + \deg o} \hat\Xi_{\CB,o}(U) = \sum_{\alpha \in \N^{\Eps_+(o)} } {1\over \alpha!} \J_{o} \Bigl[\; \delta^{(\alpha)}_{\bu^U\restr_{\Eps_+(o)}} \otimes \CQ_{\leq \gamma} \tilde u^{\alpha}\Bigr]\;,
	\end{equ}
	 where the space $\CB$ is defined in Section~\ref{sec:spaces} with this choice of $k_\star$ and where 
	 we interpret both sides to be equal to $0$ if there is no $\ft \in \fL_+$ such that $o \in \Rule(\ft)$.
\end{lemma}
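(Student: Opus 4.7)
The identity to prove splits into two pieces once we break up the left-hand side according to whether $|\alpha|_\infty \le k_\star$. Thus my plan is to argue: (a) for every $\alpha$ with $|\alpha|_\infty \le k_\star$, one has $\CQ_{\leq \gamma + \deg o}\J_o\big[\delta^{(\alpha)}_{\bu^U\restr_{\Eps_+(o)}} \otimes \tilde u^{\alpha}\big] = \J_o\big[\delta^{(\alpha)}_{\bu^U\restr_{\Eps_+(o)}} \otimes \CQ_{\leq \gamma}\tilde u^{\alpha}\big]$; and (b) for every $\alpha$ with $|\alpha|_\infty > k_\star$, the whole term $\CQ_{\leq \gamma + \deg o}\J_o\big[\delta^{(\alpha)}_{\bu^U\restr_{\Eps_+(o)}} \otimes \tilde u^{\alpha}\big]$ is already zero. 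Combined, these give exactly the right-hand side of~\eqref{e:idenProjXi}, with the sum now running over all $\alpha \in \N^{\Eps_+(o)}$.

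Statement (a) is immediate from the construction of the vector-valued regularity structure recalled in Section~\ref{sec:VectorRS}: the operator $\J_o$ maps a homogeneous element of degree $d$ into degree $d + \deg o$, and it is also $\R$-linear in its second ``tree'' slot while being trivial in its first ``vector'' slot. Hence $\CQ_{\le \gamma + \deg o}$ commutes through to the tree component as $\CQ_{\le \gamma}$.

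For (b) I will use that $U \in \SH_\CB$ together with $o \in \Eps_+$ (since each index entering the product $\tilde u^\alpha$ lies in $\Eps_+(o) \subset \Eps_+$). By Remark~\ref{rem:sector}, $\TT_o$ is a function-like sector, so after subtracting the constant $u^U_o \one$ every remaining homogeneous component of $\tilde u_o$ has \emph{strictly} positive degree. Because $\fT(\Rule)$ is generated by a subcritical rule, the set of degrees appearing in the regularity structure is locally finite, and in particular for each $o \in \Eps_+$ the set of positive degrees appearing in $\TT_o$ has a strictly positive infimum $\alpha^{\min}_o > 0$. Since $\Eps_+(o)$ is finite (by~\ref{R5} and the discussion on page~\pageref{lab:Eell}), we may set
\begin{equ}
\alpha^\star \eqdef \min_{o' \in \Eps_+(o)} \alpha^{\min}_{o'} > 0\;.
\end{equ}
Every homogeneous component of the product $\tilde u^\alpha = \prod_{o' \in \Eps_+(o)} \tilde u_{o'}^{\alpha_{o'}}$ then has degree at least $|\alpha|_1 \, \alpha^\star \geq |\alpha|_\infty \,\alpha^\star$.

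The plan is then to fix $k_\star$ as any natural number satisfying $k_\star\, \alpha^\star > \gamma$ (this requirement goes on top of the ones already imposed in Section~\ref{sec:spaces}). For such a choice, whenever $|\alpha|_\infty > k_\star$, every homogeneous piece of $\tilde u^\alpha$ has degree strictly greater than $\gamma$, so $\CQ_{\le \gamma}\tilde u^\alpha = 0$, and together with (a) this kills the corresponding term in $\hat\Xi_{\CB,o}(U)$ after applying $\CQ_{\le \gamma + \deg o}$. Combining (a) and (b) gives~\eqref{e:idenProjXi}. The only (mild) obstacle is checking that the threshold $\alpha^\star$ depends only on the rule and not on $U$ — which is exactly what the finiteness of $\Eps_+(o)$ together with the discrete structure of degrees guarantees. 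The degenerate case where no $\ft \in \fL_+$ has $o \in \Rule(\ft)$ is then handled by convention with both sides equal to zero.
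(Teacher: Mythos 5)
Your proof is correct and follows essentially the same route as the paper's: both note that (i) $\CQ_{\le \gamma+\deg o}$ slides through $\J_o$ as $\CQ_{\le\gamma}$, so the only gap between the two sides is the truncation $|\alpha|_\infty \le k_\star$ present in \eqref{eq:Xi_hat_B}, and (ii) this truncation is harmless once $k_\star$ exceeds $\gamma/\alpha_0$ for $\alpha_0$ the smallest strictly positive degree in the structure, since then every surviving homogeneous piece of $\tilde u^\alpha$ with $|\alpha|_\infty > k_\star$ has degree $> \gamma$. (Minor slip: near the start of your part (b) you write ``$o \in \Eps_+$'' where you mean that the indices $o'\in\Eps_+(o)\subset\Eps_+$ entering $\tilde u^\alpha$ are function-like; the parenthetical makes the intent clear, but $o$ itself is in $\Eps\setminus\CO\subset\Eps_-$.)
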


\begin{proof}
The only obstruction to \eqref{e:idenProjXi} is the fact that the sum in 
\eqref{eq:Xi_hat_B} is restricted to $|\alpha|_\infty \leq k_\star$.
It then suffices to take $k_\star$ large enough so that $\CQ_{\leq \gamma} \tilde u^{\alpha} = 0$
as soon as $|\alpha|_\infty > k_\star$, which can be done since
there exists a smallest strictly positive degree $\alpha_0$ in our regularity structure: simply
take $k_\star$ so that $\alpha_0 k_\star > \gamma$.
\end{proof}

\subsection{Renormalisation of the nonlinearities}
\label{sec:renormalisation_of_nonlinearities}

Using the results from the previous section, we can establish a relation between coherence and 
renormalisation. Let us first recall some notions from the renormalisation in regularity structures. Recall that 
$\fT_-(\Rule)$ from~\eqref{eq:unplanted} is the set of all unplanted trees conforming to the rule which have 
negative degrees. We then define $\fF_-$ 
to be the free unital monoid generated by $\fT_-(\Rule)$. Elements of $\fF_-$ are interpreted as linear 
combinations of ``forests'',
i.e.\ disjoint unions of trees, with the product given by the ``forest product'', namely 
for $\tau_i = (T_i, \ff_i, \fm_i) \in \fT$, $i = 1,2$ we set
 $\tau_1 \cdot \tau_2  = (T_1\sqcup T_2, \ff_1\sqcup \ff_2, \fm_1\sqcup\fm_2)$.  The unit is 
 naturally represented by the empty forest.
Given a vector space assignment $V$, we define $\CT^-_V = \bigoplus_{\tau \in \fF_-} 
\scal{\tau}_V$ which is a commutative unital algebra under the forest product (which 
extends naturally to elements of $\scal{\tau_i}_V$, $\tau_i \in \fF_-$).

A coproduct $\Delta^-_V : \CT_V \to \CT^-_V \otimes \CT_V$ is defined in~\cite[Sec.~5-6]{BHZ} and is adapted 
to the setting of vector-valued regularity structures in~\cite[Sec.~5.5]{CCHS}. As usual, we write $\CG^-_V$ 
for the group of characters on $\CT^-_V$, called the renormalisation group (see~\cite[Def.~5.13]{BHZ}). We 
say that $M : \CT_V \to \CT_V$ is a renormalisation map if $M = (g \otimes \id )\Delta^-_V$ for some  
$g \in \CG^-_V$. We denote by $\fR(\CT_V)$ the space of all renormalisation maps on $\CT_V$.

\begin{definition}
	For $M \in \fR(\CT_\CD)$, we define the action $\hat M$ on $\SQ_+$ by setting for every $F \in \SQ_+$ and 
	$\ft \in \fL_+$
	\begin{equ}[eq:MF]
		(\hat M F)_{\ft} \eqdef \Proj_\one M \bbUpsilon^F_\ft = \Proj_\one \sum_{\tau \in \fT} M \bbUpsilon^F_\ft[\tau] \;.
	\end{equ}
(Recall \eqref{eq:bUpsilon} for the second identity.) Here, we use the identification 
	$\scal{\one}_\CD \otimes \SP \cong \SP$, since $\scal{\one}_\CD \cong \R$.
\end{definition}

We now state two auxiliary results, Lemmas~\ref{lem:MFinQ} and~\ref{lem:MF}, which justify our notations.

\begin{lemma}\label{lem:MFinQ}
	Let the rule $\Rule$ satisfy Assumption~\ref{ass:rule}, let $F \in \SQ(\Rule)$ and $M\in \fR(\CT_\CD)$. Then 
	$\hat MF \in \SQ(\Rule)$.
\end{lemma}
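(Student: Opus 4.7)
My plan is to show directly that $D^\alpha (\hat M F)_\ft = 0$ for every $\alpha \in \N^\Eps$ with $\alpha \notin \Rule(\ft)$. The first observation is that $M \in \fR(\CT_\CD)$ and $\Proj_\one$ both act as $M \otimes \id_\SP$ and $\Proj_\one \otimes \id_\SP$ respectively on the ambient space $\CT_\CD \otimes \SP$, while the derivative $D^\alpha = \prod_o D_o^{\alpha_o}$ acts only on the $\SP$-factor. Hence the three operations commute, yielding
\begin{equ}
	D^\alpha (\hat M F)_\ft \;=\; \Proj_\one\, M \sum_{\tau} D^\alpha \bbUpsilon^F_\ft[\tau]\;,
\end{equ}
and by Remark~\ref{rem:Upsilon} we may restrict this sum to trees $\tau$ with $\J_\ft[\tau] \in \fT(\Rule)$.

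The core of the plan is to extract, via the recursive formula \eqref{eq:upsilon} and the Leibniz rule, the structure of each summand after $M$ and $\Proj_\one$ have been applied. For each $\tau$ one has $\bbUpsilon^F_\ft[\tau] \in \scal{\tau}_\CD \otimes \SP$, whose $\SP$-factor is (up to combinatorial constants) a product $\prod_v D^{\alpha(v)} F_{\ft(v)}$ running over the internal nodes $v$ of $\tau$, where $\alpha(v)$ denotes the multiset of edge-types leaving $v$ and $\ft(v)$ is its label; at the root $\rho$ one has $\ft(\rho)=\ft$ and $\alpha(\rho) \in \Rule(\ft)$ by conforming. Since $\Delta^-$ extracts only \emph{proper} subforests and leaves the root structure intact (modulo polynomial decorations that are killed by $\Proj_\one$ once we demand $\tau/\sigma = \1$), applying $M$ and $\Proj_\one$ preserves the fact that the root factor is of the form $D^{\alpha(\rho)} F_\ft$ with $\alpha(\rho) \in \Rule(\ft)$.

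The conclusion then follows from normality of the rule. Distributing $D^\alpha$ across the product by Leibniz produces a sum over decompositions $\alpha = \sum_v \alpha_v$, each term carrying a factor $D^{\alpha(\rho)+\alpha_\rho} F_\ft$ at the root. For this term to survive the conformance $F \in \SQ(\Rule)$ it must be that $\alpha(\rho) + \alpha_\rho \in \Rule(\ft)$; iterating this down the tree, each surviving configuration forces the total set of derivatives attributed at the root (after collapsing the extracted subforest via $M$, which merges the inner nodes' derivative content into root-level factors against the character $g$) to be in $\Rule(\ft)$. Since in any surviving configuration $\alpha$ is realised as a sub-multiset of some element of $\Rule(\ft)$, Assumption~\ref{ass:rule}~\ref{R1} (normality) forces $\alpha \in \Rule(\ft)$, contradicting the hypothesis. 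Hence every term vanishes and $D^\alpha(\hat M F)_\ft = 0$.

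The main obstacle is the precise combinatorial bookkeeping in the middle step: one must track how the Leibniz distribution of $D^\alpha$ across the nodes of $\tau$ interacts with the polynomial decorations introduced by $\Delta^-$, and argue that the only non-cancelling contributions correspond to configurations where the totality of $\alpha$ is ``rooted'' — i.e.\ effectively added to $\alpha(\rho)$ — so that the normality argument can close. A cleaner alternative, which I expect to use in practice, is to establish the identity $M \bUpsilon^F_\ft = \bUpsilon^{\hat M F}_\ft$ (a coherent lifting statement for $M$, analogous to \cite[Prop.~3.22]{BCCH}), after which the conforming of $\hat M F$ is immediate from Remark~\ref{rem:Upsilon} applied to $\bUpsilon^{\hat M F}$; verifying this lifting identity is then the technical heart of the argument.
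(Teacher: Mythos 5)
Your opening reduction, commuting $D^\alpha$ past $\Proj_\one$ and $M$ since the former acts on the $\SP$-factor while the latter two act on the $\CT_\CD$-factor, is fine, and your identification of the $\SP$-factor of $\bbUpsilon^F_\ft[\tau]$ as a product over internal nodes $v$ of $D^{\alpha(v)}F_{\ft(v)}$ is also correct. The gap occurs in the third paragraph: it is not true that the surviving Leibniz configurations force $\alpha$ to be attributed at the root. In the decomposition $\alpha = \sum_v \alpha_v$, a derivative $D_o$ may land on a non-root node $v$ whose enlarged factor $D^{\alpha(v)+\alpha_v}F_{\ft(v)}$ is perfectly nonzero because $\alpha(v)+\alpha_v \in \Rule(\ft(v))$ even though $\alpha\notin\Rule(\ft)$. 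Conformance of $F$ applied node-by-node does not propagate the constraint $\alpha\in\Rule(\ft)$ to the root, so normality alone cannot close the argument. The phrase ``after collapsing the extracted subforest via $M$, which merges the inner nodes' derivative content into root-level factors against the character $g$'' points in the right direction, but the operation that actually performs this merging and preserves conformance is \emph{completeness}~\ref{R3}, which you never invoke.

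The paper's argument makes exactly this mechanism precise. Its first step (which you also omit) is to reduce, using normality~\ref{R1}, to the case where $\alpha$ has no repeated elements. It then invokes the identity from \cite[App.~A.1]{BCCH}: there exist finitely many trees $\tau_i$, each containing the full set of edge types $\alpha$, such that $D^\alpha g\bar\Upsilon_\ft^F[\tau] = \sum_i g\bar\Upsilon_\ft^F[\tau_i]$. In other words, distributing a derivative $D_o$ to a node $v$ is exactly grafting a new edge of type $o$ at $v$, producing a different tree; the Leibniz sum \emph{is} a sum over new trees. The dichotomy is then: if $\tau_i\notin\fT_-(\Rule)$, the character $g$ kills it; if $\tau_i\in\fT_-(\Rule)$, then completeness shows $\J_\ft[\tau_i]\notin\fT(\Rule)$ (since $\tau_i$ contains $\alpha$ and $\alpha\notin\Rule(\ft)$, otherwise contracting the negative tree $\tau_i$ would put $\alpha$ in $\Rule(\ft)$), and hence $\bar\Upsilon_\ft^F[\tau_i]=0$ by Remark~\ref{rem:Upsilon}. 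Without that dichotomy the argument does not close.

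Your ``cleaner alternative'' is also not as clean as advertised. The identity $M\bUpsilon^F_\ft = \bUpsilon^{\hat M F}_\ft$ is indeed Lemma~\ref{lem:MF} of the paper, but applying Remark~\ref{rem:Upsilon} to $\bUpsilon^{\hat M F}$ to conclude conformance of $\hat M F$ is circular: that remark is a \emph{consequence} of conformance of the relevant nonlinearity, so it cannot be used as evidence for it. You would still need an independent argument to pass from vanishing of $\bUpsilon^{\hat M F}_\ft[\tau]$ on non-conforming $\tau$ to $D^\alpha(\hat M F)_\ft=0$, and this argument again requires completeness.
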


\begin{proof}
Let $g \in \CG^-(\CT_\CD)$ be such that $M = (g \otimes \id) \Delta^-_\CD$. In order to show that 
	$\hat M F$ conforms to the rule it is sufficient to prove that $g\bar\Upsilon^F[\tau]$ conforms to the 
	rule for every $\tau \in \fT(\Rule)$. For this fix $\tau \in \fT_-(\Rule)$, let $\ft \in \fL_+$ and assume that 
	$\alpha \notin \Rule(\ft)$. We want to show that $D^\alpha g \bar\Upsilon_\ft^F[\tau] = 0$. Because 
	of the normality of the rule $\Rule$ (assumption~\ref{R1}) we can assume without loss of generality that 
	$\alpha \in \N^\Eps$ contains no repeated elements. Following the same ideas as in~\cite[A.1]{BCCH} 
	there exist a finite set of trees $\tau_i \in \fT$ such that 
	\begin{equ}[eq:upsilon_conforms]
		D^\alpha g \bar\Upsilon_\ft^F[\tau] = \sum_i g \bar\Upsilon_\ft^F[\tau_i]\;,
	\end{equ}
	and such that the set of all edges types of $\tau_i$ contains $\alpha$. If $\tau_i \notin \fT_-$ then 
	$g \bar\Upsilon^F[\tau_i] = 0$ since the character $g$ only acts on forest of trees from $\fT_-(\Rule)$. Now assume that 
	$\tau_i \in \fT_-$. By completeness of the rule~\ref{R3} we must have $\J_\ft[\tau_i] \notin \fT(\Rule)$. 
	Indeed, since $\tau_i$ contains within itself all the edges from $\alpha$ assuming $\J_\ft[\tau_i] \in \fT(\Rule)$ together with completeness would imply that $\alpha \in \Rule(\ft)$ which is a contradiction. 
	The fact that $\J_\ft[\tau_i] \notin \fT(\Rule)$ implies that $\bar\Upsilon^F_\ft[\tau_i] = 0$ by 
	Remark~\ref{rem:Upsilon}. Therefore, for all $\tau_i$ that are present on the right-hand side 
	of~\eqref{eq:upsilon_conforms} we have $g\bar\Upsilon_\ft^F[\tau_i] = 0$ thus finishing the proof. 
\end{proof}

\begin{lemma}\label{lem:MF}
For $M \in \fR(\CT_\CD)$, let $\hat M$ be defined in~\eqref{eq:MF}. Then for every $F \in \SQ(\Rule)$ and $\ft \in \fL_{+}$ we have $M \bar \bUpsilon^F_\ft = \bar \bUpsilon^{\hat M F}_\ft$.
\end{lemma}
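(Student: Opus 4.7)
The strategy is induction on the size of trees contributing to $\bar\bUpsilon^F_\ft$ (the number of non-polynomial edges), following the blueprint of \cite[Sec.~3]{BCCH} but adapted to the vector-valued regularity structure $\CT_\CD$ of \cite{CCHS}. Since both sides of the identity $M\bar\bUpsilon^F_\ft = \bar\bUpsilon^{\hat M F}_\ft$ lie in $\bar\TT_\ft\otimes \SP$ (recall $\CD_\ft\simeq \R$ for $\ft\in\fL_+$), it suffices to check the equality projected onto each subspace $\scal{\btau}_\CD\otimes\SP$ for $\btau\in\bar\fT_\ft(\Rule)$.

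The first step is to rewrite $\bar\bUpsilon^F_\ft$ using the recursive formula~\eqref{eq:upsilon}: any $\btau \in \bar\fT_\ft(\Rule)$ decomposes as $\btau = \X^m\prod_i \J_{o_i}[\tau_i]^{\beta_i}$, and
\begin{equ}
\bar\Upsilon^F_\ft[\btau]/S(\btau) = \frac{\X^m}{m!}\Bigl(\prod_i\tfrac{1}{\beta_i!}\Upsilon^F_{o_i}[\tau_i]^{\beta_i}\Bigr)\,\d^m\bigl(D_{o_1}^{\beta_1}\cdots D_{o_N}^{\beta_N}\bigr)\hat F_\ft.
\end{equ}
The second step is the algebraic core: one uses the recursive definition of $\Delta^-_\CD$ from~\cite[Sec.~6]{BHZ} (extended to the vector-valued setting by~\cite[Sec.~5.5]{CCHS}) together with the facts that (i) $\Delta^-_\CD$ acts as the identity on $\Poly$, (ii) it is multiplicative with respect to the tree product at the root (since different planted edges $\J_{o_i}$ correspond to disjoint subtrees, which the coproduct treats independently), and (iii) it commutes with the scalar factor $\d^m(\prod_j D_{o_j}^{\beta_j})\hat F_\ft \in \SP$ which carries no tree structure. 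This yields
\begin{equ}
M\bar\Upsilon^F_\ft[\btau]/S(\btau) = \tfrac{\X^m}{m!}\Bigl(\prod_i\tfrac{1}{\beta_i!}(M\Upsilon^F_{o_i}[\tau_i])^{\beta_i}\Bigr)\,\d^m\bigl(\prod_j D_{o_j}^{\beta_j}\bigr)\hat F_\ft.
\end{equ}
By the inductive hypothesis applied at edges of type $o_i\in\Eps_+$ (and by the same argument extended to edges of type in $\Eps\setminus\CO$, using the $\CD_\fl$-decorations), each $M\Upsilon^F_{o_i}[\tau_i]$ expands as a sum of terms $\Upsilon^{\hat M F}_{o_i}[\sigma]$ plus polynomial contributions produced by $\Proj_\one M$ acting on divergent subtrees that get extracted by $\Delta^-_\CD$.

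The third step is to collect the polynomial contributions and identify them with the modification $F\mapsto \hat MF$. This is exactly the place where the definition~\eqref{eq:MF} of $\hat MF$ was engineered: the terms arising when $\Delta^-_\CD$ extracts a divergent subtree $\sigma$ contribute, after projection onto $\one$, precisely $\sum_\sigma \Proj_\one M\bar\Upsilon^F_\ft[\sigma]/S(\sigma) = (\hat MF)_\ft - F_\ft$, and these contributions then combine with the leading term of the recursion to produce $\bar\Upsilon^{\hat MF}_\ft[\btau]$. The combinatorial identity is encoded, as in \cite[Prop.~3.11]{BCCH}, by a generalised Fa\`a di Bruno formula in the spirit of Lemma~\ref{lem:FaaDiBruno}: the extractions of $\Delta^-_\CD$ produce derivatives in the $\fX_o$ variables that exactly match the derivatives appearing when one expands $\bar\bUpsilon^{\hat M F}$ using~\eqref{eq:upsilon}.

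The main obstacle is the careful bookkeeping of the combinatorial coefficients (symmetry factors $S(\btau)$, binomial factors from $\Delta^-_\CD$, and the derivatives $D_o$ in~\eqref{eq:upsilon}) and verifying that the tree-extraction procedure of $\Delta^-_\CD$ commutes with the $\CD_\ft$-decoration on noise edges. This last point follows from the naturality of the constructions in the functor $\BF_V$ of~\cite[Sec.~5]{CCHS}: since $\Delta^-_\CD$ is built as a natural transformation, it is compatible with the linear extension from tree shapes to $\CD$-decorated trees, so the proof essentially reduces to the scalar case handled in \cite{BCCH} together with a bookkeeping check on the decorations, which are passively carried along.
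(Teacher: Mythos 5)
The critical flaw is in step 2, claim (ii): $\Delta^-_\CD$ is \emph{not} multiplicative with respect to the tree product at the root. The extraction--contraction coproduct can extract a \emph{connected} subtree containing the root of $\btau = \X^m\prod_i\J_{o_i}[\tau_i]$ that spans several of the branches. When two branches both contribute root-adjacent edges to the extracted subset, those edges share the root vertex, so the extracted object is a \emph{single} tree, not a forest of two trees; and since characters $g\in\CG^-_\CD$ are multiplicative for the forest product but not for the tree product at the root, $(g\otimes\id)\Delta^-_\CD(\btau)$ differs from $(g\otimes\id)\bigl(\X^m\prod_i\Delta^-_\CD(\J_{o_i}[\tau_i])\bigr)$. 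A concrete counterexample is $\btau = \<Ch> = \J_{(\ft_0,1)}[\Xi_0]^2$, for which $\Delta^-_\CD(\<Ch>)$ contains the term $\<Ch>\otimes\one$ (extraction of the whole tree, which is unplanted and of negative degree), whereas $\Delta^-_\CD(\J_{(\ft_0,1)}[\Xi_0])^2$ contains no such term because each factor $\J_{(\ft_0,1)}[\Xi_0]$ is planted and hence cannot be extracted. Your displayed identity in step 2 is therefore false, and step 3 --- which is meant to account for exactly these root-crossing extractions via the passage $F\mapsto\hat MF$ --- cannot be derived from it. Repairing the argument would require explicitly separating the root-containing extractions from the rest and proving, via a genuine Fa\`a di Bruno-type combinatorial matching, that the former reproduce precisely $(\hat MF)_\ft - F_\ft$; as written, that work is only gestured at.

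The paper's proof circumvents this entirely by working in the adjoint: it introduces an inner product on $\CT_\CD$, grafting operators $\graft_o^\fd$ and $\up_i$ (Proposition~\ref{prop:graft}), and the commutation property~\eqref{eq:mgraft} $M^*(\bar\sigma\graft_o^\fd\sigma)=(M^*\bar\sigma)\graft_o^\fd(M^*\sigma)$. Because grafting attaches exactly one branch at a time, the pre-Lie identity underlying~\eqref{eq:mgraft} absorbs the root-crossing combinatorics that your direct decomposition mishandles, and the compatibility~\eqref{eq:graft_ups} with $\bbUpsilon^F$ turns the lemma into a clean one-step induction over the grafting generators of $B_\CD$. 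Finally, the claim that the $\CD_\ft$-decorations are ``passively carried along'' undersells a real subtlety: the interplay of $D_o$ with the abstract derivatives $\fd^\alpha$ via~\eqref{eq:D_o} and~\eqref{eq:d_i} is exactly what forces the twisted Leibniz identity~\eqref{eq:twisted_Leibniz}, proved separately in Lemma~\ref{lem:derivatives_swap}, and is where the vector-valued structure genuinely enters the argument.
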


The proof relies strongly on the grafting operators introduced in~\cite[Sec 4.3]{BCCH}. In order to use the 
properties of such grafting operators we introduce a notion of the inner product on $\CT_\CD$. First, set 
$\fD^{\fl}$ and $\fD^{\fl}_{k_\star}$ be respectively canonical bases of $\CD^{\fl}$ and $\CD^{\fl}_{k_\star}$ (i.e.\ 
vectors of the form $\fd^\alpha$ for $\alpha \in \N^{\Eps_+(\fl)} $). We define an inner product $\scal{\bigcdot\,,\bigcdot}$ on 
$\CD = \sqcup_{\ft \in \fL} \CD_\ft$ in the following way: for $\fl_i \in \fL_-$ and $\fd_i\in \fD^{\fl_i}$, $i\in \{1,2\}$ we write:
\begin{equ}
	\scal{\fd_1,\fd_2} = \delta_{\fl_1,\fl_2}\delta_{\fd_1,\fd_2}\;,
\end{equ}
and extend this inner product on elements of $\CD_{\fl_i}$ by linearity. Now for general $\ft_i \in \fL_+$  and 
$a,b \in \CD_{\ft_i} \equiv \R$ we set $\scal{a,b} = \delta_{\ft_1,\ft_2} a.b$ and the scalar product between 
elements of $\CD_{\fl}$ and $\CD_{\ft}$ for $\fl \in \fL_{-}$ and $\ft \in \fL_{+}$ are set to be zero. It will be 
beneficial for us to also identify $\fD^\ft_k = \{1\}$ for $\ft \in \fL_{+}$.

Consider $\tau \in \fT(\Rule)$ of the form~\eqref{eq:recursive_tree} and an element $\sigma \in \scal{\tau}_\CD$ of the form
\begin{equ}[eq:general_sigma]
	\sigma = \X^m \prod_{i \in J} \J_{o_i}[\fd_i \otimes \sigma_i ]\;,
\end{equ}
for some set of indices $J$, $m \in \N^{d+1}$, $o_i = (\ft_i,p_i) \in \Eps$, $\fd_i \in \CD^{\ft_i}_{k_\star}$, 
$\sigma_i \in \scal{\tau_i}_\CD$ and $\tau_i \in \fT(\Rule)$, and where we again view $\J_{o_i}$ as a map 
$\CD_{\ft_i} \otimes \scal{\tau_i}_\CD \to \scal{\J_{o_i}\tau_i}_\CD$ as in~\eqref{eq:upsilon}. For $\btau \in \fT(\Rule)$ and $\bar\sigma \in \scal{\btau}_\CD$ that has a form as in~\eqref{eq:general_sigma} but where all 
$m,J,\fd_i,\ft_i,p_i,\sigma_i,\tau_i$ are replaced by $\bar m, \bar J,\bar \fd_i,\bo_i, \bar \sigma_i,\btau_i$ we 
inductively define
\begin{equ}[eq:scalar_prod]
	\scal{\sigma,\bar\sigma} =  \delta_{m,\bar m}\, m!\; \sum_{s \in S(J,\bar J)} \prod_{j \in J} \delta_{o_j,\bo_{s(j)}} \scal{\fd_j, \bar \fd_{s(j)}} \scal{\sigma_j, \bar \sigma_{s(j)}}\;,
\end{equ}
where $S(J,\bar J)$ is the set of bijections on from $J$ to $\bar J$ which is trivially is equal to an empty set 
if $|J|\ne |\bar J|$. We extend this inner product to the whole $\CT_\CD$ linearly.
Note that a similar inner product can be defined on the elements of $\fT(\Rule)$ by simply 
dropping $\fd_i$ and replacing $\sigma_i$ by $\tau_i$. This actually gives $\scal{\tau,\btau} = \delta_{\tau,\btau} S(\tau)$. 

We will denote the basis elements of $\CT_\CD$ by $B_\CD = \sqcup_{\ell \in \N} B^{\ell}_\CD$ where the 
sets on the right-hand side are defined inductively in the following way:
$B^{0}_\CD = \Poly$, and every element $\sigma \in B^{\ell}_\CD$ has the form~\eqref{eq:general_sigma} where 
$\sigma_i \in B^{\ell-1}_\CD$ and $\fd_i \in \fD^{\ft_i}_{k_\star}$. Elements of $B_\CD$ form an orthogonal (not 
orthonormal) basis of $\CT_\CD$ with respect to the inner product~\eqref{eq:scalar_prod}. We also set $B_\CD[\tau]  \eqdef  B_\CD \cap \scal{\tau}_\CD$. 

For $\tau \in \fT$ and $\sigma \in B_\CD[\tau]$ we postulate $S(\sigma) = S(\tau)$. One can 
easily see using the definition of the symmetry factor~\eqref{eq:symmetry_factor} that for $\tau$ of the 
form~\eqref{eq:recursive_tree_new} and $o = (\ft,p)$
\begin{equ}[eq:symmetry_ups]
	\bbUpsilon^F_o[\tau] = S(\tau)^{-1} \X^m \Big(\prod_{i=1}^{n} \bUpsilon^F_{o_i}[\tau_i]^{\beta_i}\Big) \d^m \prod_{i = 1}^n D_{o_i}^{\beta_i} \hat F_\ft\;.
\end{equ}
For simplicity, we also denote by $D^J = \prod_{j \in J} D_{o_j}$. With this at hand.

\begin{lemma}
	Let $\sigma \in B_\CD$ be of the form~\eqref{eq:general_sigma} then for any $o = (\ft,p) \in \Eps$ and $\fd \in \fD^\ft_{k_\star}$
\begin{equ}[eq:sigma_ups]
	\scal{\fd \otimes \sigma, \bbUpsilon^F_o} = \scal{\fd, \d^m D^J \hat F_\ft} \prod_{j \in J} \scal{\fd_j \otimes \sigma_j, \bbUpsilon^F_{o_j}}\;.
\end{equ}
\end{lemma}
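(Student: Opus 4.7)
The plan is to exploit the block-diagonal structure of the inner product~\eqref{eq:scalar_prod} together with the factorised form of $\bbUpsilon^F_o[\tau]$ coming from~\eqref{eq:symmetry_ups} (equivalently, from the recursive definitions~\eqref{eq:upsilon} and~\eqref{eq:maps_bold_Upsilon}). First, $\sigma$ belongs to $\scal{\tau}_\CD$ for a unique combinatorial tree $\tau = \X^m \prod_{j \in J} \J_{o_j}[\tau_j] \in \fT(\Rule)$, namely the one obtained by forgetting the $\fd_i$-decorations of $\sigma$ and replacing each $\sigma_i$ with its underlying tree $\tau_i$. Since the inner product~\eqref{eq:scalar_prod} vanishes across distinct components of the product $\CT_\CD = \prod_{\btau}\scal{\btau}_\CD$, the sum $\bbUpsilon^F_o = \sum_{\btau} \bbUpsilon^F_o[\btau]$ contributes to the left-hand side of~\eqref{eq:sigma_ups} only through the single term $\bbUpsilon^F_o[\tau]$.

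Next, grouping the children of $\tau$ into equivalence classes by $(o_j,\tau_j)$, I write $\tau = \X^m \prod_{i=1}^n \J_{o_i}[\tau_i]^{\beta_i}$ with distinct $(\tau_i, o_i)$ and $\sum_i \beta_i = |J|$, so that $S(\tau) = m!\prod_i S(\tau_i)^{\beta_i}\beta_i!$ by~\eqref{eq:symmetry_factor}. Combining~\eqref{eq:upsilon} and~\eqref{eq:maps_bold_Upsilon} gives the explicit factorisation
\begin{equ}
	\bbUpsilon^F_o[\tau] = \frac{1}{S(\tau)}\, \X^m \prod_{i=1}^n \Upsilon^F_{o_i}[\tau_i]^{\beta_i} \cdot \d^m D^J \hat F_\ft\;,
\end{equ}
where the symmetric tensor product is understood in the factor $\prod_i \Upsilon^F_{o_i}[\tau_i]^{\beta_i}$. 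The $\CD_\ft$-factor $\d^m D^J \hat F_\ft$ at the root pairs with $\fd$ via the scalar product on $\CD$, producing $\scal{\fd, \d^m D^J \hat F_\ft}$ and reducing the remaining computation to the pairing inside $\scal{\tau}_\CD \otimes \SP$ between $\sigma$ and $\X^m \prod_i \Upsilon^F_{o_i}[\tau_i]^{\beta_i}$. Expanding the latter pairing through~\eqref{eq:scalar_prod}, the $\X^m\otimes\X^m$ piece contributes a factor $m!$, and the sum over bijections $s \in S(J, \bar J)$ restricts to those mapping each class $J_i \subset J$ onto the $\beta_i$ symmetric copies of $\Upsilon^F_{o_i}[\tau_i]$. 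There are $\prod_i \beta_i!$ such bijections, and by symmetry each yields the same product $\prod_{j \in J} \scal{\J_{o_j}[\fd_j \otimes \sigma_j], \Upsilon^F_{o_j}[\tau_j]}$. A further application of~\eqref{eq:scalar_prod} at the level of the $\J_{o_j}$-edge rewrites each edge-pairing as $\scal{\fd_j \otimes \sigma_j, \bar\Upsilon^F_{o_j}[\tau_j]}$.

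Combining everything, the combinatorial prefactor simplifies by~\eqref{eq:symmetry_factor} to
\begin{equ}
	\frac{m!\prod_i \beta_i!}{S(\tau)} = \frac{1}{\prod_i S(\tau_i)^{\beta_i}} = \prod_{j \in J}\frac{1}{S(\tau_j)}\;,
\end{equ}
which is precisely the factor converting each $\bar\Upsilon^F_{o_j}[\tau_j]$ into $\bbUpsilon^F_{o_j}[\tau_j] = \bar\Upsilon^F_{o_j}[\tau_j]/S(\tau_j)$. A last invocation of block-diagonality (since $\sigma_j \in \scal{\tau_j}_\CD$) promotes $\bbUpsilon^F_{o_j}[\tau_j]$ to the full sum $\bbUpsilon^F_{o_j}$, yielding~\eqref{eq:sigma_ups}. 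The main obstacle is the careful bookkeeping of the symmetry factors across the recursive structure, but it is resolved cleanly by the identity $m!\prod_i \beta_i!/S(\tau) = \prod_{j \in J} S(\tau_j)^{-1}$.
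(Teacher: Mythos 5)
Your proof is correct. It follows the same basic template as the paper's — isolate the single contributing tree $\tau$, use the factorised form of $\bbUpsilon^F_o[\tau]$, expand the $\scal{\bigcdot,\bigcdot}$-pairing over bijections, and clean up the symmetry factors — but the bookkeeping is genuinely different. You work directly with $\bbUpsilon^F_o[\tau] = S(\tau)^{-1}\X^m\prod_j \Upsilon^F_{o_j}[\tau_j]\,\d^m D^J\hat F_\ft$ (with $\Upsilon$, not $\bUpsilon$, coming straight from~\eqref{eq:upsilon} and~\eqref{eq:maps_bold_Upsilon}), so each edge-pairing lands on $\bar\Upsilon^F_{o_j}[\tau_j]$; the prefactor $m!\prod_i\beta_i!/S(\tau) = \prod_j S(\tau_j)^{-1}$ then does exactly the work of converting each $\bar\Upsilon^F_{o_j}[\tau_j]$ into $\bbUpsilon^F_{o_j}[\tau_j]$. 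The paper instead passes through~\eqref{eq:symmetry_ups} (stated with $\bUpsilon$ in the product) and, crucially, introduces a case distinction: it first disposes of basis elements $\sigma$ for which two children of the same $(o,\tau)$-type carry distinct decorations $\fd_i\otimes\sigma_i \ne \fd_j\otimes\sigma_j$, arguing that both sides of~\eqref{eq:sigma_ups} vanish there, and only then performs the combinatorial count on the remaining $\sigma$'s. Your route avoids this split altogether: the key observation (which you call ``by symmetry'') is that $\Upsilon^F_{o_{s(j)}}[\tau_{s(j)}]$ depends only on the combinatorial pair $(o_{s(j)},\tau_{s(j)})$ and not on any vector decoration of $\sigma$, so every admissible bijection contributes literally the same product, whether or not the $(\fd_j\otimes\sigma_j)$ within a class are pairwise distinct. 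This makes your argument uniform across all $\sigma\in B_\CD$ and, in my view, slightly cleaner; its only fragility is that the phrase ``by symmetry'' understates the actual reason, which is the $s$-independence of $\Upsilon^F_{o_j}[\tau_j]$, so spelling that out explicitly would make the proof airtight.
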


\begin{proof}
	First, there exists $\tau \in \fT(\Rule)$ such that $\sigma \in B_\CD[\tau]$. Then, since for every $\bar \tau$, 
	$\bar \bUpsilon^F_\ft[\bar \tau] \in \fD^\ft_k \otimes \scal{\btau}_\CD \otimes \SP$ we have writing $\tau = \X^m \prod_{j\in J}\J_{o_j}[\tau_j]$ and using~\eqref{eq:symmetry_ups}
	\begin{equs}[eq:sigma_ups_computation]
		\scal{\fd \otimes \sigma, \bbUpsilon^F_o} &= \scal{\fd \otimes \sigma, \bbUpsilon^F_o[\tau]}\\
		&= \scal{\fd, \d^m D^J \hat F_\ft} m! S(\sigma)^{-1} \scal{\prod_{i \in J} \J_{o_i}[\fd_i \otimes \sigma_i ], \prod_{j \in J} \bUpsilon^F_{o_j}[\tau_j]}\\
		&= \scal{\fd, \d^m D^J \hat F_\ft} m! S(\sigma)^{-1} \sum_{s \in S(J,J)} \prod_{j\in J} \delta_{o_j, o_{s(j)}} \scal{\fd_j \otimes \sigma_j , \bbUpsilon^F_{o_{s(j)}}[\tau_{s(j)}]}\;.
	\end{equs}
	Now assume that exist $i,j \in J$, $i\neq j$ such that $o_i = o_j$ and $\sigma_i,\sigma_j \in B_\CD[\tau_i]$ 
	but $\fd_i \otimes \sigma_i \neq \fd_j \otimes \sigma_j$. Then we must have 
	\begin{equ}
		\scal{\fd_i \otimes \sigma_i, \bbUpsilon^F_{o_i}} \scal{\fd_j \otimes \sigma_j, \bbUpsilon^F_{o_j}} = \scal{\fd_i \otimes \sigma_i, \bbUpsilon^F_{o_i}[\tau_i]} \scal{\fd_j \otimes \sigma_j, \bbUpsilon^F_{o_i}[\tau_i]} = 0\;,
	\end{equ}
	because $\fd_i \otimes \sigma_i$ and $\fd_j \otimes \sigma_j$ are orthogonal. Therefore, if such 
	assumption is satisfied then by~\eqref{eq:sigma_ups_computation} we see that 
	equality~\eqref{eq:sigma_ups} is true since both sides are equal to zero. If the aforementioned assumption 
	is false then in fact one has (after re-indexing) $\sigma = \X^m \prod_{i = 1}^n \J_{o_i}[\fd_i \otimes \sigma_i]^{\beta_i}$ such that either $o_i \neq o_j$ or $\sigma_i \in B_\CD[\tau_i], \sigma_i \in B_\CD[\tau_j]$ 
	for some $\tau_i \neq \tau_j$. For such $\sigma$ we see using $\scal{\fd_j \otimes \sigma_j , \bbUpsilon^F_{o_{s(j)}}[\tau_{s(j)}]} = 0$ if $\sigma_j \notin B_\CD[\tau_{s(j)}]$ that
	\begin{equs}
		\sum_{s \in S(J,J)} \prod_{j\in J} \delta_{o_j, o_{s(j)}} \scal{\fd_j \otimes \sigma_j , \bbUpsilon^F_{o_{s(j)}}[\tau_{s(j)}]} &= \prod_{j=1}^n \beta_j! \scal{\fd_j \otimes \sigma_j , \bbUpsilon^F_{o_{j}}[\tau_j]}^{\beta_j}\\
		&= \Bigl(\prod_{j=1}^n \beta_j!  \Bigr) \prod_{i\in J} \scal{\fd_i \otimes \sigma_i , \bbUpsilon^F_{o_{i}}}\;,
	\end{equs}
	where in the second equality we have used $\scal{\fd_i \otimes \sigma_i , \bbUpsilon^F_{o_{i}}[\tau_i]} = \scal{\fd_i \otimes \sigma_i , \bbUpsilon^F_{o_{i}}}$ since $\sigma\in B_\CD[i]$ as well as used the 
	re-indexing from $j \in \{1,\dots,n\}$ to $i\in J$. Plugging in the above 
	into~\eqref{eq:sigma_ups_computation} and recalling the definition of the symmetry factor \eqref{eq:symmetry_factor} gives the desired result. 
\end{proof}

Using the above notion of inner product we can write every element of $\sigma \in \CT_\CD$ in the dual basis 
as $\sigma = \sum_{\bar\sigma \in B_\CD} \bar\sigma \scal{\sigma,\bar\sigma} \scal{\bar\sigma,\bar\sigma}^{-1}$. For example for $M \in \fR(\CT_\CD)$ we have 
\begin{equ}[eq:mupsilon]
	M \bbUpsilon^F_\ft = \sum_{\sigma \in B_\CD} \frac{\scal{\sigma,M\bbUpsilon^F_\ft}}{\scal{\sigma,\sigma}}\sigma = \sum_{\sigma \in B_\CD} \frac{\scal{M^*\sigma,\bbUpsilon^F_\ft}}{\scal{\sigma,\sigma}}\sigma\;,
\end{equ}
where $M^*$ is the adjoint of $M$ with respect to $\scal{\bigcdot,\bigcdot}$.

The next result is about an existence of the grafting operators in our context.

\begin{proposition}\label{prop:graft}
	For every $(\ft,p) \in \Eps$, $\fd \in \fD^{\ft}_{k_\star}$ and $i \in \{0,\dots,d\}$ there exist grafting operators $\graft_{(\ft,p)}^\fd : \CT_\CD \otimes \CT_\CD \to \CT_\CD$ and $\uparrow_i : \CT_\CD \to \CT_\CD$ with the following properties:
	\begin{enumerate}
		\item  $B_\CD$ is generated by the operations $\{\graft_{(\ft,p)}^\fd: (\ft,p) \in \Eps,\fd \in \fD^\ft_{k}\}$ 
		and generators in $\Poly$ in the sense that every $\sigma \in B_\CD$ can be written as a linear 
		combination of terms of the form 
		\begin{equ}
			\sigma_0 \graft_{o_1}^{\fd_1}\, \sigma_1 \graft_{o_2}^{\fd_2} \dots \graft_{o_n}^{\fd_n}\, \sigma_n\;,
		\end{equ}
		for some $(\sigma_i)_{i = 0}^n \subset \Poly$, $(o_i)_{i = 1}^n \subset \Eps$, $(\fd_i)_{i = 1}^n$ such that 
		$o_i = (\ft_i,p_i)$ and $\fd_i \in \fD^{\ft_i}_{k_\star}$. (We interpret the grafting operators as being 
		right-associative.)
		
		\item Let $o, \bo \in \Eps$ with $o = (\ft,p), \bo = (\fl,q)$ and let $\fd \in \fD^\ft_{k_\star}, \bfd \in \fD^{\fl}_{k_\star}$. Then for all $\sigma,\bar\sigma \in B_\CD$ and $F \in \SQ(\Rule)$ we have
		\begin{equs}
			\scal{\fd \otimes (\bar\sigma \graft_{\bo}^{\bfd} \sigma), \bbUpsilon^F_{o}} &= \scal{\bfd \otimes \bar\sigma, \bbUpsilon^F_{\bo}} \scal{\fd \otimes \sigma, D_{\bo}  \bbUpsilon^F_{o}}\;,\label{eq:graft_ups}
			\\
			\scal{\fd\otimes \up_i\sigma, \bbUpsilon^F_{o}} &=  \scal{\fd \otimes \sigma, \d_i \bbUpsilon^F_{o}}\;,\label{eq:up_ups}
		\end{equs}
		where we use the fact that $\scal{\fd\otimes \sigma, \bbUpsilon^F_{\ft}} \in \SP$.
		\item For all $o = (\ft,p) \in \Eps$, $\fd \in \fD^\ft_{k_\star}$ and $\sigma,\bar\sigma \in B_\CD$ we have 
		for every $M\in \fR(\CT_\CD)$, that
		\begin{equ}[eq:mgraft]
			M^* (\bar\sigma \graft_o^\fd \sigma) = (M^*\bar\sigma) \graft_o^\fd (M^*\sigma) \qquad\text{and}\qquad M^* \up_i \sigma = \up_i M^* \sigma
			\;.
		\end{equ}
	\end{enumerate}
\end{proposition}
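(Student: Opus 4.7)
My plan is to define the two families of operators explicitly on the basis $B_\CD$ and then verify the three claims one at a time. For $\bar\sigma,\sigma \in B_\CD$ with $\sigma$ of the form \eqref{eq:general_sigma}, and for $\bo = (\fl,q) \in \Eps$ and $\bfd \in \fD^\fl_{k_\star}$, I set
\begin{equ}
\bar\sigma \graft_{\bo}^{\bfd} \sigma \eqdef \X^m\, \J_{\bo}[\bfd \otimes \bar\sigma]\prod_{i \in J}\J_{o_i}[\fd_i \otimes \sigma_i]\;,
\end{equ}
and extend bilinearly. Similarly, I set $\up_i \sigma \eqdef \X^{m+e_i}\prod_{i\in J}\J_{o_i}[\fd_i\otimes\sigma_i]$ and extend linearly. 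Property~1 is then immediate: one checks by induction on $\ell$ that any $\sigma \in B^\ell_\CD$ as in \eqref{eq:general_sigma} can be written as
\begin{equ}
\sigma = \sigma_{i_1} \graft_{o_{i_1}}^{\fd_{i_1}}\cdots \graft_{o_{i_{|J|}}}^{\fd_{i_{|J|}}}\, \up_0^{m_0}\cdots\up_d^{m_d}\,\one\;,
\end{equ}
where the grafting operators are read right-associatively and the induction hypothesis is applied to each $\sigma_{i_k} \in B^{\ell-1}_\CD$.

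For Property~2, the identity \eqref{eq:graft_ups} follows directly from \eqref{eq:sigma_ups}. Applying \eqref{eq:sigma_ups} to $\bar\sigma \graft_{\bo}^{\bfd}\sigma$ (whose ``$J$-list'' is $J \sqcup \{*\}$ with the new entry carrying $(\bo,\bfd,\bar\sigma)$) gives
\begin{equ}
\scal{\fd \otimes (\bar\sigma \graft_{\bo}^{\bfd}\sigma), \bbUpsilon^F_o} = \scal{\fd, \d^m D_{\bo} D^J \hat F_\ft}\,\scal{\bfd\otimes\bar\sigma, \bbUpsilon^F_{\bo}}\prod_{j \in J}\scal{\fd_j \otimes \sigma_j, \bbUpsilon^F_{o_j}}\;,
\end{equ}
and \eqref{eq:sigma_ups} applied to $D_{\bo}\bbUpsilon^F_o$ rewrites the product of the first and the last factor as $\scal{\fd\otimes\sigma, D_{\bo}\bbUpsilon^F_o}$. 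For \eqref{eq:up_ups}, one uses $\d^{m+e_i}D^J\hat F_\ft = \d_i(\d^m D^J \hat F_\ft)$ in \eqref{eq:sigma_ups} applied to $\up_i\sigma$, and then transports the outer $\d_i$ onto $\bbUpsilon^F_o$ via the Leibniz rule \eqref{eq:LeibnizFhat}.

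The main obstacle is Property~3. For $\up_i$, the argument is short: the coproduct $\Delta^-_\CD$ extracts subforests rooted strictly below the top vertex, so the decoration $\X^m$ at the root of $\sigma$ ends up untouched in the right-hand factor of every term of $\Delta^-_\CD\sigma$. Thus $\Delta^-_\CD$ commutes with multiplication by $\X^{e_i}$ on the right factor, hence $M\up_i = \up_i M$ for every $M\in\fR(\CT_\CD)$; passing to adjoints gives $M^*\up_i = \up_i M^*$. For $\graft_{\bo}^{\bfd}$, I will prove a coproduct identity analogous to \cite[Prop.~4.13]{BCCH} of the shape
\begin{equ}
\Delta^-_\CD\bigl(\bar\sigma \graft_{\bo}^{\bfd}\sigma\bigr) = \bigl(\mathrm{mult}\otimes \graft_{\bo}^{\bfd}\bigr)\bigl(\Delta^-_\CD\bar\sigma \otimes_{23}\Delta^-_\CD\sigma\bigr)\;,
\end{equ}
where $\otimes_{23}$ indicates that the second and third factors are exchanged so as to multiply left factors in $\CT^-_\CD$ and graft right factors in $\CT_\CD$. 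The content of this identity is that any subforest extracted from $\bar\sigma \graft_{\bo}^{\bfd}\sigma$ decomposes into the disjoint union of a subforest extracted from $\bar\sigma$ and one extracted from $\sigma$: no legitimate cut can split the newly added edge $\J_{\bo}$, since such a cut would produce a planted tree of a noise type or would detach a piece whose root is not a contractible node of the original tree. This is the main technical point to verify and is done by unfolding the definition of $\Delta^-_\CD$ on trees built by successive graftings and tracing carefully how the abstract derivatives $\fd_i \in \fD^{\ft_i}_{k_\star}$ are distributed (this is what differs from \cite{BCCH}, but it is essentially formal because the $\fd_i$ ride along the edges on which they sit). Applying any character $g\in\CG^-_\CD$ to the first factor then gives $M(\bar\sigma\graft_{\bo}^{\bfd}\sigma) = M\bar\sigma \graft_{\bo}^{\bfd} M\sigma$, and reading this identity on the dual basis, together with the orthogonality of $B_\CD$, yields \eqref{eq:mgraft}.
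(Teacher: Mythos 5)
The central problem is that the operators you define are not the ones that make Property~2 true, and the short argument you give for \eqref{eq:graft_ups} contains a hidden error. Your $\graft_{\bo}^{\bfd}$ attaches $\J_{\bo}[\bfd\otimes\bar\sigma]$ \emph{only at the root} of $\sigma$, and your $\up_i$ only increments the root polynomial $\X^m \mapsto \X^{m+e_i}$. With these definitions, applying \eqref{eq:sigma_ups} to $\bar\sigma\graft_{\bo}^{\bfd}\sigma$ does indeed give
$\scal{\fd,\d^m D_{\bo} D^J\hat F_\ft}\,\scal{\bfd\otimes\bar\sigma,\bbUpsilon^F_{\bo}}\prod_{j\in J}\scal{\fd_j\otimes\sigma_j,\bbUpsilon^F_{o_j}}$. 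But the right-hand side of \eqref{eq:graft_ups}, $\scal{\fd\otimes\sigma,D_{\bo}\bbUpsilon^F_o}$, is \emph{not} what you say it is: the formula \eqref{eq:sigma_ups} is a statement about $\bbUpsilon^F_o$, not about $D_{\bo}\bbUpsilon^F_o$, and when you compute $D_{\bo}\bbUpsilon^F_o$ by the Leibniz rule you get one term where $D_{\bo}$ hits $\d^m D^J\hat F_\ft$, plus a sum of terms where it hits the factors $\bUpsilon^F_{o_j}[\tau_j]$. Your root-only grafting produces only the first group. For the same reason $\up_i$ as you define it fails \eqref{eq:up_ups}: ``transporting the outer $\d_i$ onto $\bbUpsilon^F_o$'' by the Leibniz rule \eqref{eq:LeibnizFhat} generates terms with $\d_i$ acting on the $\bUpsilon^F_{o_j}[\tau_j]$ that your definition does not see.

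There is a second, subtler defect: even the root term is wrong, because $\d^m$ and $D_{\bo}$ do not commute (Lemma~\ref{lem:derivatives_swap} gives $D_{(\ft,p)}\d^m = \sum_\ell\binom{m}{\ell}\d^{m-\ell}D_{(\ft,p-\ell)}$). Your definition produces $\d^m D_{\bo}$, while the target is $D_{\bo}\d^m$. The correct grafting must therefore both (i) sum recursively over all insertion points, i.e.\ graft not only at the root of $\sigma$ but inside every $\sigma_j$, and (ii) include the binomial correction $\sum_\ell\binom{m}{\ell}\X^{m-\ell}\J_{(\ft,p-\ell)}[\fd\otimes\bar\sigma]\cdots$. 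This is exactly the recursive definition used in the paper (cf.\ \eqref{eq:graft_D} and \eqref{eq:up_i}), and Property~2 is then proved by induction on the tree, with the twisted Leibniz identity \eqref{eq:twisted_Leibniz} closing the root case. With your simpler operators Property~1 is essentially immediate, but it buys you nothing, because the whole point of Proposition~\ref{prop:graft} is to combine Properties~1--3 (notably in the proof of Lemma~\ref{lem:MF}), and Property~2 is false. Note also that you identify Property~3 as the main obstacle; in fact for the genuine (pre-Lie, recursive) graftings, Property~3 is the part that carries over almost verbatim from \cite[Prop.~4.18]{BCCH}, while Property~1 requires a pre-Lie identity argument and Property~2 an induction — the effort is distributed quite differently than you expect.
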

\begin{proof}
	First, we define a grafting operator on the space $\tilde\CT_\CD = \prod_{\tau \in \fT} \scal{\tau}_\CD$. For $\sigma \in \tilde\CT_\CD$ given by~\eqref{eq:general_sigma} and $o = (\ft,p) \in \Eps, \fd \in \fD^{\ft}_{k_\star}$ we define $\bar\curvearrowright^{\fd}_{(\ft,p)} : \CT_\CD \otimes \CT_\CD \to \tilde\CT_\CD$ recursively as 
	\begin{equs}[eq:graft_D]
		\bar\sigma \bar\curvearrowright^{\fd}_{(\ft,p)} \sigma &= \sum_{\ell} {m \choose \ell} \X^{m-\ell} \J_{(\ft,p-\ell)}[\fd \otimes \bar\sigma] \prod_{i \in J} \J_{o_i}[\fd_i \otimes \sigma_i] \\
		&\qquad + \sum_{j \in J} \X^m \J_{o_j}[\fd_j \otimes (\bar\sigma \bar\curvearrowright^{\fd}_{(\ft,p)} \sigma_j)] \prod_{i \in J \setminus \{j\}} \J_{o_i}[\fd_i \otimes \sigma_i]\;,
	\end{equs} 
	where for $p \ngeqslant \ell$ we understand the operator $\J_{(\ft,p-\ell)}$ as an operator that maps 
	everything to $0$. Moreover, we set 
	\begin{equ}[eq:up_i]
		\up_i \sigma = \X^{m+i} \prod_{j \in J} \J_{o_j}[\fd_j \otimes \sigma_j] + \sum_{\jmath \in J} \X^m \J_{o_\jmath}[\fd_\jmath \otimes \up_i \sigma_\jmath] \prod_{j \in J \setminus \{\jmath\}} \J_{o_j}[\fd_j \otimes \sigma_j]\;,
	\end{equ}
	where $m+i \eqdef m+e_i$.
	Let $o, \bo \in \Eps$ with $o = (\ft,p), \bo = (\bar\ft,\bar p)$ and let $\fd \in \fD^\ft_{k_\star}, \bfd \in \fD^{\bar\ft}_{k_\star}$. Assume that $\sigma \in B_\CD[\tau]$ is of the form~\eqref{eq:general_sigma} for 
	some $\tau \in \fT(\Rule)$. We will first show by induction on the number of edges of $\tau$ that 
	$\bar\curvearrowright_{\bo}^{\bfd}$ satisfies~\eqref{eq:graft_ups}. We use bilinearity of the scalar product, 
	linearity of operators $\J_{o_j}$ as well as formula~\eqref{eq:sigma_ups} to deduce
\begin{equs}
		\scal{\fd &\otimes (\bar\sigma \bar\curvearrowright_{\bo}^{\bfd} \sigma), \bbUpsilon^F_{o}} = \sum_\ell {m \choose \ell} \scal{\fd \otimes \X^{m-\ell} \J_{(\bar\ft,\bar p-\ell)}[\bar \fd \otimes \bar\sigma] \prod_{i \in J} \J_{o_i}[\fd_i \otimes \sigma_i], \bbUpsilon^F_{o}}\\
		&\quad + \sum_{j \in J} \scal{\fd \otimes \X^m \J_{o_j}[\fd_j \otimes (\bar\sigma \bar\curvearrowright^{\bar\fd}_{(\bar\ft,\bar p)} \sigma_j)] \prod_{i \in J \setminus \{j\}} \J_{o_i}[\fd_i \otimes \sigma_i], \bbUpsilon^F_{o}}\\
		&= \sum_{\ell} {m \choose \ell} \scal{\fd,\d^{m-\ell} D_{(\bar\ft,\bar p-\ell)} D^{J} \hat F_\ft} \scal{\bfd \otimes \bar\sigma, \bbUpsilon^F_{(\bar\ft,\bar p-\ell)}} \prod_{i \in J} \scal{\fd_i \otimes \sigma_i, \bbUpsilon^F_{o_i}}\\
		&\quad+ \sum_{j \in J} \scal{\fd,\d^{m} D^{J} \hat F_\ft} \scal{\fd_j \otimes (\bar\sigma \bar\curvearrowright_{\bo}^{\bfd} \sigma_j), \bbUpsilon^F_{o_j}} \prod_{i \in J\setminus\{j\}} \scal{\fd_i \otimes \sigma_i, \bbUpsilon^F_{o_i}}\\
		&= \sum_{\ell} {m \choose \ell} \scal{\fd,\d^{m-\ell} D_{(\bar\ft,\bar p-\ell)} D^{J} \hat F_\ft} \scal{\bfd \otimes \bar\sigma, \bbUpsilon^F_{(\bar\ft,\bar p-\ell)}} \prod_{i \in J} \scal{\fd_i \otimes \sigma_i, \bbUpsilon^F_{o_i}}\\
		&\quad+ \sum_{j \in J} \scal{\fd,\d^{m} D^{J} \hat F_\ft} \scal{\bfd \otimes \bar\sigma, \bbUpsilon^F_{\bo}} \scal{\fd_j \otimes \sigma_j, D_{\bo}  \bbUpsilon^F_{o_j}} \prod_{i \in J\setminus\{j\}} \scal{\fd_i \otimes \sigma_i, \bbUpsilon^F_{o_i}}\;,
\end{equs}
where we used induction hypothesis in the last equality. Note that 	$\bbUpsilon^F_{\bo} = \bbUpsilon^F_{\fl}= \bbUpsilon^F_{(\bar\ft,\bar p-\ell)}$, which leads to 
\begin{equs}
	\scal{\fd \otimes &(\bar\sigma \bar\curvearrowright_{\bo}^{\bfd} \sigma), \bbUpsilon^F_{o}}\\
	 &= \scal{\fd,\sum_{\ell} {m \choose \ell} \d^{m-\ell} D_{(\bar\ft,\bar p-\ell)} D^{J} \hat F_\ft} m! \scal{\bfd \otimes \bar\sigma, \bbUpsilon^F_{\bo}} \prod_{i \in J} \scal{\fd_i \otimes \sigma_i, \bbUpsilon^F_{o_i} }\\
	 &\quad+ \scal{\fd,\d^{m} D^{J} \hat F_\ft} \scal{\bfd \otimes \bar\sigma, \bbUpsilon^F_{\bo}} \sum_{j \in J} \scal{\fd_j \otimes \sigma_j, D_{\bo} \bbUpsilon^F_{o_j}} \prod_{i \in J\setminus\{j\}} \scal{\fd_i \otimes \sigma_i, \bbUpsilon^F_{o_i} }\;.
\end{equs}
On the other hand using the Leibniz rule for $D_o \bbUpsilon^F_o$ it is easy to see that
\begin{equs}
	\scal{\bfd \otimes &\bar\sigma, \bbUpsilon^F_{\bo}} \scal{\fd \otimes \sigma, D_{\bo}  \bbUpsilon^F_{o}} = \scal{\bfd \otimes \bar\sigma, \bbUpsilon^F_{\bo}} \scal{\fd, D_{\bo} \d^m D^J \hat F_\ft} \prod_{i \in J} \scal{\fd_i \otimes \sigma_i, \bbUpsilon^F_{o_i}}\\ 
	&+ \scal{\bfd \otimes \bar\sigma, \bbUpsilon^F_{\bo}} \scal{\fd, \d^m D^J \hat F_\ft} \sum_{j \in J } \scal{\fd_j \otimes \sigma_j, D_{\bo} \bbUpsilon^F_{o_j}} \prod_{i \in J\setminus\{j\}} \scal{\fd_i \otimes \sigma_i, \bbUpsilon^F_{o_i} }\;.
\end{equs}
It remains therefore to show that 
\begin{equ}[eq:twisted_Leibniz]
	\sum_{\ell} {m \choose \ell} \d^{m-\ell} D_{(\bar\ft,\bar p-\ell)} D^{J} \hat F_\ft = D_{\bo}\, \d^{m} D^{J} \hat F_\ft\;,
\end{equ}
which follows from Lemma~\ref{lem:derivatives_swap} below, applied to $D^{J} \hat F_\ft$. We leave it to the reader 
to show the base case for induction when $\sigma = \X^m$. 

The true grafting operator $\graft_{o}^{\fd}$ is then given by $\graft_{o}^{\fd}  \eqdef  \Proj_{\fT(\Rule)} \circ \bar\curvearrowright_{o}^{\fd}$. Note that since $\bUpsilon^F_o[\tau] = 0$ for $\J_o\tau \notin \fT(\Rule)$ and 
since~\eqref{eq:graft_ups} holds true for grafting $\bar\curvearrowright_{o}^{\fd}$ then the same is true for 
grafting $\graft_{o}^{\fd}$.

Now, we will show that our grafting operator~\eqref{eq:up_i} satisfies~\eqref{eq:up_ups}. Let us take $\sigma \in B_\CD[\tau]$ of the form~\eqref{eq:general_sigma} for some $\tau \in \fT(\Rule)$, and we will show the 
claim by induction over the number of edges in $\tau$. More precisely, we assume that~\eqref{eq:up_ups} 
holds for all such $\bar \sigma \in B_\CD[\bar \tau]$ with the number of edges strictly smaller than in $\tau$. 
Then we prove~\eqref{eq:up_ups} for $\sigma$ (the induction base can be proved in exactly the same way, 
that's why we do not provide it separately). Definition~\eqref{eq:up_i} and identity~\eqref{eq:sigma_ups} yield
\begin{equs}
\scal{\fd\otimes \up_i\sigma, \bbUpsilon^F_{o}} &= \scal{\fd \otimes \X^{m+i} \prod_{j \in J} \J_{o_j}[\fd_j \otimes \sigma_j], \bbUpsilon^F_{o}} \\
&\qquad + \sum_{\jmath \in J} \scal{\fd \otimes \X^m \J_{o_\jmath}[\fd_\jmath \otimes \up_i \sigma_\jmath] \prod_{j \in J \setminus \{\jmath\}} \J_{o_j}[\fd_j \otimes \sigma_j], \bbUpsilon^F_{o}}\\
&= \scal{\fd, \d^{m+i} D^J \hat F_\ft} \prod_{j \in J} \scal{\fd_j \otimes \sigma_j, \bbUpsilon^F_{o_j}} \label{eq:up_ups_interm}\\
&\qquad + \sum_{\jmath \in J} \scal{\fd, \d^{m} D^J \hat F_\ft} \scal{\fd_\jmath \otimes \up_i \sigma_\jmath, \bbUpsilon^F_{o_\jmath}} \prod_{j \in J \setminus \{\jmath\}} \scal{\fd_j \otimes \sigma_j, \bbUpsilon^F_{o_j}}\;.
\end{equs}
Then, since the tree of $\up_i \sigma_\jmath$ has strictly smaller number of edges than in $\tau$, we can use 
the induction hypothesis to write $\scal{\fd_\jmath \otimes \up_i \sigma_\jmath, \bbUpsilon^F_{o_\jmath}} = \scal{\fd_\jmath \otimes \sigma_\jmath, \d_{i} \bbUpsilon^F_{o_\jmath}}$. Next, we will show 
that~\eqref{eq:up_ups_interm} is equal to the right-hand side of~\eqref{eq:up_ups}. 
Identity~\eqref{eq:symmetry_ups} and the Leibniz rule yield 
\begin{equs}
\d_i &\bbUpsilon^F_{o}[\tau] = S(\tau)^{-1} \X^m \Big(\prod_{j=1}^{n} \bUpsilon^F_{o_j}[\tau_j]^{\beta_j}\Big)\, \Bigl(\d^{m+i} \prod_{j = 1}^n D_{o_j}^{\beta_j} \hat F_\ft\Bigr)\\
& + \sum_{\jmath = 1}^n \beta_{\jmath} S(\tau)^{-1} \X^m \d_i \bUpsilon^F_{o_\jmath}[\tau_\jmath] \Big(\bUpsilon^F_{o_\jmath}[\tau_\jmath]^{\beta_\jmath - 1} \prod_{j \neq \jmath} \bUpsilon^F_{o_j}[\tau_j]^{\beta_j}\Big)\, \Bigl(\d^{m} \prod_{j = 1}^n D_{o_j}^{\beta_j} \hat F_\ft\Bigr)\;.
\end{equs}
Applying~\eqref{eq:sigma_ups} this identity turns into~\eqref{eq:up_ups} as required.

First part can be proven similarly to the~\cite[Cor 4.23]{BCCH} by showing that grafting 
$\bar\curvearrowright_{o}^{\fd}$ satisfies the pre-Lie type identity: 
\begin{equs}
	( &\sigma_1 \bar\curvearrowright_{o_1}^{\fd_1} \sigma_2 ) \bar\curvearrowright_{o_2}^{\fd_2} \sigma_3 - \sigma_1 \bar\curvearrowright_{o_1}^{\fd_1} (\sigma_2  \bar\curvearrowright_{o_2}^{\fd_2} \sigma_3) \\
	= ( &\sigma_2 \bar\curvearrowright_{o_2}^{\fd_2} \sigma_1 ) \bar\curvearrowright_{o_1}^{\fd_1} \sigma_3 - \sigma_2 \bar\curvearrowright_{o_2}^{\fd_2} (\sigma_1  \bar\curvearrowright_{o_1}^{\fd_1} \sigma_3)\;.
\end{equs}

Third part of Proposition~\ref{prop:graft} can be shown \textit{mutatis mutandis} as the proof of~\cite[Prop 4.18]{BCCH} using the fact that $\scal{\tau}_\CD$ is finite-dimensional for each $\tau \in \fT(\Rule)$. This is 
because $\bar\curvearrowright$ and $\up_i$ have almost the same form as their analogues 
in~\cite[Sec.~4]{BCCH}. The only difference is the attachment of the corresponding abstract derivatives 
$\fd$ from $\CD$ (see~\cite[Rem.~4.12]{BCCH}) which interact with the negative coproduct in a ``trivial'' way.
\end{proof}

The following lemma shows how one can swap derivatives $\d^k$ and $D_o$. This result is known and can be found in \cite{Wilcox}, as we explain in the proof. In the framework of regularity structures it was previously used in \cite{2101.11949, MR4537770}.

\begin{lemma}\label{lem:derivatives_swap}
For $k \in \N^{d+1}$, $(\ft, p) \in \Eps$, $\fl \in \fL$ and any $\hat F \in \CD_\fl \otimes \SP$ one has
\begin{equ}[ex:derivatives_swap]
	\sum_{\ell} \binom{k}{\ell}  \d^{k - \ell} D_{(\ft, p-\ell)} \hat F = D_{(\ft, p)}\, \d^{k} \hat F\;,
\end{equ}
with the convention that $\d^q = D_{(\ft, q)} = 0$ unless $q_i \ge 0$ for all $i$.
\end{lemma}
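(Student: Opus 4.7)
The key observation is that $\d_i$ acts as the first-order differential operator $\d_i = \sum_{o \in \Eps} \fX_{\d_i o}\, D_o$ on $\SP$ (with the analogous action extended to $\CD_\fl \otimes \SP$ by~\eqref{eq:d_i}), while $D_o$ is ordinary partial differentiation in the variable $\fX_o$. My plan is to compute the commutator $[D_{(\ft,p)}, \d_i]$, show that it equals $D_{(\ft, p-e_i)}$ (with the stated convention that this is zero when $p - e_i \not\ge 0$), and then iterate.

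First, I would establish the base case $k = e_i$. Using~\eqref{eq:d_i} and $[D_{(\ft,p)}, \fX_{(\fb, q)}] = \delta_{(\ft,p),(\fb,q)}$, a direct computation yields
\begin{equ}
	D_{(\ft, p)}\,\d_i = \d_i D_{(\ft, p)} + \sum_{o \in \Eps} \delta_{\d_i o, (\ft,p)} D_{o} = \d_i D_{(\ft, p)} + D_{(\ft, p - e_i)}\;,
\end{equ}
where the last equality uses that the only $o \in \Eps$ with $\d_i o = (\ft,p)$ is $o = (\ft, p - e_i)$, which does not exist (and the contribution vanishes) when $p - e_i \not \ge 0$. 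This is exactly~\eqref{ex:derivatives_swap} for $k = e_i$. The same identity must also be verified acting on $\CD_\fl$-valued quantities via~\eqref{e:defDer}, which is immediate from~\eqref{eq:D_o} since $D_o$ acts by a Leibniz rule and $\fd \otimes \d_i F$ does not interact with $D_o$ on the $\CD_\fl$ factor.

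Next, I would proceed by induction on $|k|_\s$. Writing $k = k' + e_i$ for some $i$ and applying the inductive hypothesis followed by the base case, one obtains
\begin{equs}
	D_{(\ft, p)}\,\d^{k} &= D_{(\ft, p)}\,\d_i\,\d^{k'} = \d_i D_{(\ft, p)}\,\d^{k'} + D_{(\ft, p - e_i)}\,\d^{k'} \\
	&= \sum_{\ell} \binom{k'}{\ell}\,\d^{k' + e_i - \ell}\,D_{(\ft, p-\ell)} + \sum_{\ell} \binom{k'}{\ell}\,\d^{k' - \ell}\,D_{(\ft, p - e_i - \ell)}\;.
\end{equs}
Reindexing the second sum by $\ell \mapsto \ell - e_i$ and combining the two sums using the multi-index Pascal identity $\binom{k' + e_i}{\ell} = \binom{k'}{\ell} + \binom{k'}{\ell - e_i}$ yields precisely the right-hand side of~\eqref{ex:derivatives_swap} for $k = k' + e_i$.

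There is no real obstacle here: the identity is purely combinatorial once the commutator $[D_{(\ft,p)}, \d_i]$ is known, and the only mild subtlety is tracking the boundary cases when components of the multi-indices would become negative, which are handled cleanly by the stated convention. The computation goes through without change for $\hat F \in \CD_\fl \otimes \SP$ because both operators act by Leibniz rules and the $\CD_\fl$-valued component only contributes through terms already accounted for by~\eqref{eq:d_i}.
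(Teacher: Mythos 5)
Your proof is correct. It shares with the paper the key step of computing the commutator $[D_{(\ft,p)},\d_i] = D_{(\ft,p-e_i)}$, but then the two arguments diverge: the paper observes that this is the same commutation relation satisfied by $\d_i$ and multiplication by $(-x)^p/p!$, and then cites a standard quantum-mechanics identity (Wilcox, Eq.~10.7) to conclude, whereas you carry out the induction on $k$ explicitly, reindexing and applying the multi-index Pascal identity. Your version is fully self-contained and makes the combinatorics visible; the paper's is shorter by delegating the iteration to a known result about operators with Heisenberg-type commutation relations. Both are valid, and the extra care you take in the base case to confirm that the $\CD_\fl$-factor does not create additional terms (via \eqref{eq:D_o} and the Leibniz extensions in \eqref{e:defDer}) is the one point the paper leaves implicit.
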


\begin{proof}
For $i \in \{0,\ldots,d\}$ it follows from the definition \eqref{eq:chain_rule} that
$D_{(\ft, p)} \d_i = \d_i D_{(\ft, p)} + D_{(\ft, p-e_i)}$, which is the same commutation 
relation satisfied by $\d_i$ and multiplication by $(-x)^p/p!$. The claim is therefore a particular instance of
a well-known result in elementary quantum mechanics, see for example \cite[Eq.~10.7]{Wilcox}.
\end{proof}

Before we turn to the proof of the Lemma~\ref{lem:MF} we would like to make sense of the renormalisation of $\hat F_\ft$ where $\hat F$ is given by~\eqref{eq:hatF}. We postulate that for every $\ft \in \fL$
\begin{equ}
	(\hat M \hat F)_\ft = \Proj_\1 M \bbUpsilon^F_\ft \in \CD_\ft \otimes \SP\;.
\end{equ}
For $\ft \in \fL_+$, this definition yields $(\hat M \hat F)_\ft = (\hat M F)_\ft$, where the latter 
was defined in \eqref{eq:MF}.

\begin{proof}[of Lemma~\ref{lem:MF}]
	We are going to prove inductively that for all $\ft \in \fL$, $\fd \in \CD^\ft_{k_\star}$ and $\sigma \in B_\CD$
	\begin{equ}[eq:scalar_ups]
		\scal{\fd \otimes \sigma, M\bbUpsilon^F_\ft} = \scal{\fd \otimes \sigma, \bbUpsilon^{\hat MF}_\ft}\;.
	\end{equ}
	This together with~\eqref{eq:mupsilon} and the fact that for $\ft \in \fL_+$ we have $\CD^\ft_{k_\star} \equiv \R$ will give Lemma~\ref{lem:MF}. Note that in~\eqref{eq:scalar_ups} we view $M$ as an operator on 
	$\CD_{\ft} \otimes \CT_{\CD} \otimes \SP$ which acts as identity on the first and third vector space.
	We first check~\eqref{eq:scalar_ups} for $\sigma =  \1$ where $\fl \in \fL_{-}$ and $\bfd \in \CD^\fl_{k_\star}$. Note that $\Proj_\1 \sigma = \scal{\1, \sigma}$ for every $\sigma \in \CT_\CD$ and therefore
	\begin{equ}
		\scal{\fd \otimes \1, \bbUpsilon^{\hat MF}_\ft} = \scal{\fd \otimes \1, \bbUpsilon^{\hat MF}_\ft[\1]} = \scal{\fd , (\hat M \hat{F})_\ft } = \scal{\fd , \Proj_\1 M \bbUpsilon^F_\ft} = \scal{\fd \otimes \1, M \bbUpsilon^F_\ft}\,.
	\end{equ}
	Now for $m \in \N^{d+1}$ denote by $\up^m = \prod_{i = 0}^{d} \up_i^{m[i]}$ and note that $\up^m\one = \X^m$ by definition~\eqref{eq:up_i}. Therefore, using~\eqref{eq:up_ups} and~\eqref{eq:mgraft}
	\begin{equs}
		\scal{\fd \otimes \X^m , \bbUpsilon^{\hat MF}_\ft} &= \scal{\fd \otimes \up^m\1, \bbUpsilon^{\hat MF}_\ft} 
			= \scal{\fd \otimes\1, \d^m \bbUpsilon^{\hat MF}_\ft}\\ 
		&= \scal{\fd \otimes \1, \d^m (M \bUpsilon^F)_\ft} = \scal{\fd \otimes M^*\1 , \d^m  \bbUpsilon^{F}_\ft}\\
		&= \scal{\fd \otimes \up^mM^*\1, \bbUpsilon^F_\ft} = \scal{\fd \otimes M^*\up^m \1 , \bbUpsilon^{F}_\ft}\\
		& = \scal{\fd \otimes X^m , M\bbUpsilon^{F}_\ft}\;,
	\end{equs}
	where in the fourth equality we use the fact that $\d^m M = M \d^m$ since here $M$ acts $\CT_\CD$ and not on $\SQ$. 
	For inductive hypothesis assume that~\eqref{eq:scalar_ups} is true for some $\sigma,\bar\sigma \in B_\CD$. Let $o = (\fl,p) \in \Eps$, and $\bfd \in \fD^{\fl}_{k_\star}$ then 
	\begin{equs}
		\scal{\fd \otimes (\bar\sigma \graft_{o}^{\bfd} \sigma), \bbUpsilon^{\hat MF}_{\ft}} &= \scal{\bfd \otimes \bar\sigma, \bbUpsilon^{\hat MF}_o} \scal{\fd \otimes \sigma, D_o  \bbUpsilon^{\hat MF}_{\ft}}\\
		&= \scal{\bfd \otimes \bar\sigma, M\bbUpsilon^{F}_o}  \scal{\fd \otimes \sigma, D_o M\bbUpsilon^{F}_{\ft}}\\
		&= \scal{\bfd \otimes M^*\bar\sigma, \bbUpsilon^{F}_o}  \scal{\fd \otimes M^*\sigma, D_o\bbUpsilon^{F}_{\ft}}\\
		&= \scal{\fd \otimes \big((M^*\bar\sigma) \graft_{o}^{\bfd} (M^*\sigma)\big), \bbUpsilon^{F}_{\ft}}\\
		&= \scal{\fd \otimes M^*(\bar\sigma \graft_{o}^{\bfd} \sigma), \bbUpsilon^{F}_{\ft}}\\
		&= \scal{\fd \otimes (\bar\sigma \graft_{o}^{\bfd} \sigma), M\bbUpsilon^{F}_{\ft}}\;,
	\end{equs}
	where we use~\eqref{eq:graft_ups} in the first and fourth equality, $M D_o = D_o M$ in the third 
	equality,~\eqref{eq:mgraft} in the fifth and induction hypothesis in the second. We conclude the proof, 
	since the set $\Poly$ together with the family $\{\graft_{(\ft,p)}^\fd: (\ft,p) \in \Eps,\fd \in \fD^\ft_{k_\star}\}$ 
	generates $B_\CD$ by Proposition~\ref{prop:graft}.
\end{proof}

With these results at hand we are ready to prove one of the most important properties of the renormalised functions.

\begin{proposition}\label{prop:MU_coherent}
Let  $\Rule$ be a rule satisfying Assumption~\ref{ass:rule}, let $F \in \SQ(\Rule)$, let $M \in \fR(\CT_\CD)$,
and let $\hat M$ be given by~\eqref{eq:MF}. Then for every $L \in \N \cup \{\infty\}$ there exists $\bar L \in \N \cup \{\infty\}$, which is finite if $L$ is finite, such that if $U$ is coherent with $F$ to order $\bar L$, then 
$MU$ is coherent with $\hat MF$ to order $L$.
\end{proposition}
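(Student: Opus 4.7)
The plan is to chain Lemma~\ref{lem:MF} with two preparatory observations and then verify coherence directly. The first observation concerns the compatibility of $M$ with the decomposition~\eqref{eq:UR}: writing $M = (g \otimes \id)\Deltam$, I would use that $\Deltam \X^p = \one_- \otimes \X^p$, so $M$ fixes each Taylor monomial, and that the negative coproduct never extracts the trunk of a planted tree $\J_\ft[\sigma]$ with $\ft \in \fL_+$, giving $M\J_\ft[\sigma] = \J_\ft[M\sigma]$. Substituting into~\eqref{eq:UR} and reading off the polynomial and planted parts of $MU_\ft$ then yields $\bu^{MU} = \bu^U$ and $(MU)^R_\ft = M U^R_\ft$.

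The second observation is a bound on how $M$ interacts with the truncation projection: I claim there exists a finite $\bar L$, depending only on $L$, such that $\bp_{\leq L} \circ M = \bp_{\leq L} \circ M \circ \bp_{\leq \bar L}$. To establish this, note that $\Deltam \tau$ is a sum over extractions of disjoint subforests $\fA$ whose components lie in the finite set $\fT_-(\Rule)$ (finiteness comes from subcriticality and completeness of $\Rule$), so that $L_{\max} \eqdef \max_{\sigma \in \fT_-(\Rule)} L(\sigma)$ is finite. A tree $\bar\tau = \tau/\fA$ with $L(\bar\tau) \le L$ has at most $L+1$ vertices, so $|\fA| \le L+1$. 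The additivity $L(\tau_1\tau_2) = L(\tau_1) + L(\tau_2)$ then gives $L(\tau) \le L + (L+1)L_{\max}$, and one may take $\bar L \eqdef L + (L+1)L_{\max}$.

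With these two preparations in place and the assumed coherence of $U$ with $F$ to order $\bar L$, the conclusion follows from the chain
\begin{equs}
\bp_{\leq L}(MU)^R_\ft
&= \bp_{\leq L} M U^R_\ft
= \bp_{\leq L} M \bp_{\leq \bar L} U^R_\ft \\
&= \bp_{\leq L} M \bp_{\leq \bar L} \bbUpsilon^F_\ft(\bu^U)
= \bp_{\leq L} M \bbUpsilon^F_\ft(\bu^U) \\
&= \bp_{\leq L} \bbUpsilon^{\hat M F}_\ft(\bu^U)
= \bp_{\leq L} \bbUpsilon^{\hat M F}_\ft(\bu^{MU})\,,
\end{equs}
using the first observation in the opening equality, the choice of $\bar L$ for the second and fourth equalities, the coherence hypothesis in the middle, Lemma~\ref{lem:MF} in the penultimate step, and the identity $\bu^{MU} = \bu^U$ at the end. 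This is precisely the coherence of $MU$ with $\hat M F$ to order $L$.

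The main obstacle will be the second preparatory step: carefully justifying the combinatorial bound $\bar L \lesssim L^2$ requires tracking exactly how the negative coproduct acts on trees of a given $L$-value, and the commutation $M\J_\ft = \J_\ft M$ for $\ft \in \fL_+$ needs a structural argument at the level of vector-valued regularity structures in the sense of \cite{CCHS}; any would-be obstruction coming from planted trees of negative degree being extractable as a whole must be handled by verifying that such extractions contribute only to the polynomial part, which is then reabsorbed into the character $g$ and plays no role modulo $\bp_{\leq L}$.
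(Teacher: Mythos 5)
Your proof follows the paper's argument step for step: the two preparatory observations ($\bu^{MU} = \bu^U$ with $(MU)^R_\ft = MU^R_\ft$, and the existence of $\bar L$ with $\bp_{\le L}M = \bp_{\le L}M\bp_{\le \bar L}$), followed by the same six-equality chain invoking coherence to order $\bar L$ and Lemma~\ref{lem:MF}. The only place you depart is that the paper cites \cite[Def.~3.16 \& Eq.~3.6]{BCCH} and \cite[Sec.~6.4.3]{BHZ} for the two preparatory facts, whereas you attempt self-contained arguments.

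Your argument for the existence of $\bar L$ has a gap. You claim that a tree $\bar\tau$ with $L(\bar\tau) \le L$ has at most $L+1$ vertices, but the truncation parameter $L(\tau) = |E^+_\tau| + \sum_{v\in N_T}|\fm(v)|_\s$ counts only kernel edges and polynomial decorations, \emph{not} noise edges. Already the tree $\Xi_0 = \J_{\fl_0}[\one]$ has $L(\Xi_0) = 0$ and two vertices, and under a rule like $\Rule(\fl) = \N^{\Eps_+(\fl)}$ a tree with small $L$-value can have arbitrarily many noise-edge vertices unless one exploits subcriticality to bound the number of noise edges leaving each node. The correct version of your bound requires first establishing a rule-dependent bound on the number of noise edges (and hence vertices) in terms of $L(\bar\tau)$; the resulting $\bar L$ is then of the form $L + V(L)\cdot L_{\max}$ with $V(L)$ a rule-dependent vertex bound, not the literal $L+1$. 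One also has to be slightly careful about the node and edge decorations $\fm_\fA,\fe_\fA$ that $\Deltam$ redistributes during a contraction, which can perturb the additive bookkeeping. The fact you need is true, and it is precisely what the cited result in BCCH supplies; your heuristic conveys the right idea but is not a proof as written.

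Your closing worry about ``planted trees of negative degree being extractable as a whole'' is a non-issue, and your proposed resolution (``contributes only to the polynomial part, reabsorbed into $g$'') is not the right explanation. The definition \eqref{eq:unplanted} of $\fT_-(\Rule)$ explicitly excludes planted trees, so $\Deltam$ simply never extracts a planted subtree rooted at the root; this is what makes $M\J_\ft[\sigma] = \J_\ft[M\sigma]$ hold for $\ft \in \fL_+$ (as noted in the footnote following \eqref{eq:unplanted} and in \cite[Sec.~6.4.3]{BHZ}). Were such an extraction allowed, it would produce a genuine polynomial term with nonzero $L$-value, which would \emph{not} be killed modulo $\bp_{\le L}$; the situation is avoided by the definition, not by cancellation.
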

\begin{proof}
Applying $M$ to the expansion~\eqref{eq:UR} componentwise, we get
\begin{equs}[eq:MU1]
	(MU)_\ft \eqdef MU_\ft &= \sum_{p \in \N^{d+1}} \frac{1}{p!} u^U_{(\ft,p)} M \X^p + M\J_\ft\big[U^R_\ft\big] \\
	&= \sum_{p \in \N^{d+1}} \frac{1}{p!} u^U_{(\ft,p)} \X^p + \J_\ft\big[MU^R_\ft\big]\;,
\end{equs}
where in the last equality we used $M \X^p = \X^p$ and the fact that $\J_\ft$ commutes with $M$ for each 
$\ft \in \fL_{+}$ (see~\cite[Sec. 6.4.3]{BHZ}). On the other hand, applying~\eqref{eq:UR} directly for $MU$ yields
\begin{equ}[eq:MU2]
(M U)_\ft = \sum_{p \in \N^{d+1}} \frac{1}{p!} u^{M U}_{(\ft,p)}\X^p + \J_\ft\big[(M U)^R_\ft\big]\;.
\end{equ}
Applying $\Proj_{\X^p}$ to~\eqref{eq:MU1} and~\eqref{eq:MU2} we get $\bu^{MU} = \bu^{U}$ which therefore 
implies $(MU)^R_\ft = MU^R_\ft$. Moreover,~\cite[Def~3.16 \& Eq.~3.6]{BCCH} implies that for every $L \in \N \cup \{\infty\}$ there exists $\bar L \in \N \cup \{\infty\}$ (which is finite if $L$ is finite and which satisfies 
$\bar L \geq L$), such that $M \sigma \in \CW_{\CD, \leq \bar L}$ for every $\sigma \in \CT_\CD$ 
satisfying $L(\sigma) \leq L$. Hence, coherence~\eqref{eq:coherence} to order $\bar L$ yields
\begin{equs}
\bp_{\leq L} M U^R_\ft &= \bp_{\leq L} \bigl(M \bp_{\leq \bar L} U^R_\ft\bigr) = \bp_{\leq L} \bigl(M \bp_{\leq \bar L} \bar \bUpsilon^F_\ft(\bu^U)\bigr) \\
&= \bp_{\leq L} \bar \bUpsilon^{\hat M F}_\ft(\bu^U) = \bp_{\leq L} \bar \bUpsilon^{\hat MF}_\ft(\bu^{MU})\;,
\end{equs}
where in the second identity we simply omitted $\bp_{\leq \bar L}$, in the third equality we used 
Lemma~\ref{lem:MF} and in the last equality we used $(\bu^{U})= (\bu^{MU})$. This is precisely the 
coherence of $MU$ with $\hat M F$ to order $L$, as defined in~\eqref{eq:coherence}.
\end{proof}

We now connect renormalisation on $\CT_\CB$ with renormalisation on $\CT_\CD$ via the evaluation map $\Ev$. Given a character $g \in \CG^-_\CB$ and denoting $M = (g \otimes \id) \Delta^-_\CB$ we define for every $\bu \in \R^{\Eps_+}$
\begin{equ}[eq:Mu]
	g^\bu = g\circ \Ev_\bu\qquad\text{and}\qquad M_\bu = (g^\bu\otimes \id) \Delta^-_\CD\;,
\end{equ}
where we extend $\Ev_\bu$ to be multiplicative on a forest product. The fact that $\Ev_\bu$ is a natural transformation $\BF_\CD \to \BF_\CB$ implies by~\cite[Rem.~5.8]{CCHS} that $g^\bu \in \CG^-_\CD$ and $M_\bu \in \fR(\CT_\CD)$ for every $\bu \in \R^{\Eps_+}$.

\begin{lemma}\label{lem:EvM}
	For every $\bu \in \R^{\Eps_+}$ and $M \in \fR(\CT_\CB)$ one has $M \Ev_\bu = \Ev_\bu M_\bu$.
\end{lemma}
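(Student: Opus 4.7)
The plan is to deduce the identity from the fact that $\Ev_\bu$, viewed as a natural transformation from the functor $\BF_\CD$ to the functor $\BF_\CB$, intertwines the negative coproducts on the two regularity structures. Concretely, extending $\Ev_\bu$ multiplicatively to the forest algebra (so $\Ev_\bu \colon \CT^-_\CD \to \CT^-_\CB$ is an algebra morphism), one expects the compatibility
\begin{equ}[eq:Ev-Delta]
	\Delta^-_\CB \circ \Ev_\bu \;=\; (\Ev_\bu \otimes \Ev_\bu) \circ \Delta^-_\CD\;.
\end{equ}
This is really the content of the remark cited just above~\eqref{eq:Mu}: the map $\Ev$ arises fibrewise via the space assignment $\CD_\ft \to \CB_\ft$ and hence lifts to a natural transformation of the whole $\TStruc$-valued machinery of~\cite{CCHS}, so it commutes with every structural operation constructed there, and in particular with $\Delta^-$.

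Given \eqref{eq:Ev-Delta}, the computation is purely formal. Writing $M = (g\otimes\id)\Delta^-_\CB$ and applying the intertwining relation we obtain
\begin{equ}
	M\,\Ev_\bu \;=\; (g \otimes \id)\,\Delta^-_\CB\,\Ev_\bu
	\;=\; (g \otimes \id)(\Ev_\bu \otimes \Ev_\bu)\Delta^-_\CD
	\;=\; (g\circ \Ev_\bu \otimes \Ev_\bu)\Delta^-_\CD\;.
\end{equ}
By the very definition $g^\bu = g \circ \Ev_\bu$ in~\eqref{eq:Mu} and since $\Ev_\bu$ acts as the identity on $\SP$-components so may be pulled out of the right tensor factor, this equals $\Ev_\bu \,(g^\bu \otimes \id)\Delta^-_\CD = \Ev_\bu\, M_\bu$, which is the claimed identity.

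The only point that warrants care is the justification of \eqref{eq:Ev-Delta}. It can either be invoked as a direct consequence of the naturality statement already used in the paragraph preceding the lemma (which gives $g^\bu \in \CG^-_\CD$ and $M_\bu \in \fR(\CT_\CD)$ in the first place), or verified by recursion on the tree structure: $\Delta^-$ is built from operations of extracting subforests of negative-degree subtrees and replacing them by polynomials generated via the twisted antipode, none of which touches the vector-space fibres attached to the edges. Since $\Ev_\bu$ acts purely on those fibres, it commutes with each step of the recursion, giving \eqref{eq:Ev-Delta}. I do not anticipate any genuine obstacle here; the lemma is essentially a functoriality statement dressed up in the language of renormalisation maps.
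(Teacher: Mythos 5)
Your proof is correct and follows essentially the same route as the paper's: both rest on the intertwining identity $\Delta^-_\CB\Ev_\bu = (\Ev_\bu\otimes\Ev_\bu)\Delta^-_\CD$, justified by the naturality of $\Ev_\bu$ from \cite[Rem.~5.8]{CCHS}, and then a one-line formal computation. (The paper's proof also records the companion identity $\tilde\CA^-_\CB\Ev_\bu=\Ev_\bu\tilde\CA^-_\CD$, but as you correctly observe, the coproduct relation alone suffices for this particular statement.)
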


\begin{proof}
	This follows easily from the definitions~\eqref{eq:Mu} and the identities $\tilde{\CA}^-_\CB \Ev_\bu = \Ev_\bu \tilde{\CA}^-_\CD$ and $(\Ev_\bu\otimes \Ev_\bu) \Delta^-_\CD = \Delta^-_\CB \Ev_\bu$, which are 
	direct consequences of the fact that $\Ev_\bu$ induces a natural transformation $\BF_\CD \to \BF_\CB$ 
	by~\cite[Rem.~5.8]{CCHS}.
\end{proof}

\begin{lemma}\label{lem:MF_u}
	Let $\bu \in \R^{\Eps_+}$ and $M = (g \otimes \id) \Delta^-_\CB \in \fR(\CT_\CB)$. Let $\hat M_\bu$ denote the action of $M_\bu$ onto the space of nonlinearites given by~\eqref{eq:MF}. Then for every $F \in \SQ_+$ and $\ft \in \fL_+$ 
	we have
	\begin{equ}[eq:MF_u]
		\bigl(\hat M_\bu F\bigr)_\ft = F_\ft + \sum_{\tau \in \fT_-(\Rule)} g^\bu \bbUpsilon^{F}_\ft[\tau]\;,
	\end{equ}
	where $g^\bu$ acts on the factor in $\scal{\tau}_\CD$ of $\bbUpsilon^{F}_\ft[\tau]$ as in~\eqref{eq:tensor_notation}.
\end{lemma}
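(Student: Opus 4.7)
The plan is to unfold the definitions and identify which trees $\tau \in \fT$ contribute non-trivially to the sum $(\hat M_\bu F)_\ft = \Proj_\one M_\bu \bbUpsilon^F_\ft = \sum_{\tau \in \fT} \Proj_\one M_\bu \bbUpsilon^F_\ft[\tau]$, using the structure of the extraction coproduct $\Delta^-_\CD$ that underlies $M_\bu = (g^\bu \otimes \id)\Delta^-_\CD$. The argument is a routine term-by-term analysis combined with the basic property that $g^\bu$ is a character on $\CT^-_\CD$ (which is generated by forests of trees in $\fT_-(\Rule)$).

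First I would handle the trivial tree $\tau = \one$: from the recursion~\eqref{eq:upsilon} with $m=0$ and no edges, we have $\bbUpsilon^F_\ft[\one] = \bar\Upsilon^F_\ft[\one] = \hat F_\ft$, which for $\ft \in \fL_+$ is identified with $F_\ft \in \SP$ via the isomorphism $\CD_\ft \simeq \R$. Since $\Delta^-_\CD \one = \one \otimes \one$ and $g^\bu$ is a character satisfying $g^\bu(\one) = 1$, the contribution from $\tau = \one$ is simply $\Proj_\one M_\bu F_\ft = F_\ft$.

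Next I would treat $\tau \in \fT_-(\Rule)$. For $\sigma \in \scal{\tau}_\CD$, the expansion of $\Delta^-_\CD \sigma$ in the BHZ framework contains a distinguished ``full extraction'' term $\sigma \otimes \one$, since $\tau$ has negative degree and is unplanted so the whole tree is an admissible extracted subforest. All other summands in $\Delta^-_\CD \sigma$ have the form $\sigma_{(1)} \otimes \sigma_{(2)}$ where $\sigma_{(2)}$ lies in $\scal{\bar\tau}_\CD$ for some $\bar\tau \neq \one$ (the contracted tree retains at least the root, together with any polynomial decoration and any non-extracted edges/nodes), hence is annihilated by $\Proj_\one$. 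Consequently $\Proj_\one M_\bu \bbUpsilon^F_\ft[\tau] = g^\bu \bbUpsilon^F_\ft[\tau]$, where as in the statement of the lemma $g^\bu$ acts on the $\scal{\tau}_\CD$-factor following the convention~\eqref{eq:tensor_notation}.

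Finally, it remains to argue that every $\tau \in \fT \setminus (\fT_-(\Rule) \cup \{\one\})$ contributes zero. This splits into three cases handled together by the same reasoning: if $\tau$ is planted, if $\tau$ has non-trivial polynomial decoration at the root, or if $\tau$ has non-negative degree, then no summand in $\Delta^-_\CD \sigma$ has second factor equal to $\one$ (either because the planted trunk must remain, because the root monomial $\X^m$ with $m\neq 0$ survives the contraction, or because the whole tree cannot appear as an extracted component since $g^\bu$ vanishes outside $\CT^-_\CD$). In addition, the only summand with second factor potentially proportional to $\sigma$ itself is the trivial one $\one \otimes \sigma$, and $\Proj_\one \sigma = 0$ whenever $\sigma \notin \scal{\one}_\CD$. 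Recalling from Remark~\ref{rem:Upsilon} that $\bbUpsilon^F_\ft[\tau]$ already vanishes unless $\J_\ft[\tau] \in \fT(\Rule)$, assembling these three steps yields exactly~\eqref{eq:MF_u}. The one point that requires genuine care, rather than just bookkeeping, is confirming that the full-extraction term in $\Delta^-_\CD$ gives precisely $\sigma \otimes \one$ with no extra polynomial decorations surviving on the right factor when $\tau \in \fT_-(\Rule)$, which is a standard consequence of the BHZ construction recalled at the start of Section~\ref{sec:renormalisation_of_nonlinearities}.
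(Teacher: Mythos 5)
Your overall strategy coincides with the paper's: both proofs unfold $(\hat M_\bu F)_\ft = \Proj_\one M_\bu \bbUpsilon^F_\ft$ and use the fact that $\Proj_\one$ applied to $(g^\bu\otimes\id)\Delta^-_\CD\sigma$ isolates the ``full extraction'' term $\sigma\otimes\one$, which is non-trivial precisely when $\sigma = \one$ or $\tau \in \fT_-(\Rule)$. Your treatment of the $\tau = \one$ contribution and of $\tau \in \fT_-(\Rule)$ is correct and matches the paper.

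However, the third-case analysis contains a genuine error. You claim that if $\tau$ has non-trivial polynomial decoration at the root, then no summand in $\Delta^-_\CD\sigma$ has second factor $\one$ because ``the root monomial $\X^m$ with $m \neq 0$ survives the contraction.'' This is false, and it contradicts both the paper's proof and your own closing remark. The BHZ extraction coproduct \emph{does} redistribute the root polynomial decoration so that, when the entire tree is extracted, the root decoration can be absorbed into the extracted factor, leaving $\one$ on the right; this is precisely why trees with $\X^m$ at the root and negative degree (for example $\<XHXis>\in\fT_-(\Rule)$ from Table~\ref{table:trees}) have the term $\sigma\otimes\one$ in their coproduct and therefore \emph{do} appear in the sum in \eqref{eq:MF_u} --- the renormalisation constant $g^\bu$ happens to vanish on them (column ``X'' of Table~\ref{table:trees}) but that is a fact about the character, not about $\Delta^-_\CD$. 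Moreover, ``poly decoration at the root'' is not actually one of the conditions characterising $\fT\setminus(\fT_-(\Rule)\cup\{\one\})$: that complement is exactly $\{\text{planted}\}\cup\{\text{non-negative degree}\}$, and introducing the spurious third condition muddles the case split. The error is not fatal --- the other two cases already cover the relevant set, so the formula is still obtained --- but as stated the third case both overreaches and makes a false claim about the coproduct, and it directly contradicts the (correct) observation in your final paragraph that the full extraction yields $\sigma\otimes\one$ for $\tau\in\fT_-(\Rule)$.
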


\begin{proof}
	By definition $(\hat M_\bu F)_\ft = \Proj_\one \sum_{\tau \in \fT(\Rule)} (g\circ \Ev_\bu \otimes \id) \Delta^-_\CD \bar{\bUpsilon}^F_\ft[\tau]$. Note that for $\tau \in \fT(\Rule)$ and $\sigma \in \scal{\tau}_\CD$ one has 
	$\Delta^-_\CD \sigma = \sum_i \sigma^i_1\otimes \sigma^i_2$ and we have $\sigma^i_2 = \one$ only when 
	$\sigma^i_1=\sigma$. Moreover, this can only happen either if $\sigma = \one$ or if $\tau \in \fT_-(\Rule)$. 
	Therefore, since $\bar{\bUpsilon}^F_\ft[\tau] \in \scal{\tau}_\CD \otimes \SP$, from~\eqref{eq:MF} we get
	\begin{equs}
		\bigl(\hat M_\bu F\bigr)_\ft &= \bigl(g^\bu \otimes \id\bigr) \bar{\bUpsilon}^F_\ft[\one] + \sum_{\tau \in \fT_-(\Rule)} \bigl(g^{\,\bu} \otimes \id\bigr) \bar{\bUpsilon}^F_\ft[\tau]\\
		&= \bigl(g^\bu \otimes \id\bigr) (\one \otimes F_\ft) + \sum_{\tau \in \fT_-(\Rule)} g^{\,\bu}\bar{\bUpsilon}^F_\ft[\tau]\;,
	\end{equs}
	which is exactly~\eqref{eq:MF_u}.
\end{proof}

We say that a character $g \in \CG^-_\CB$ is translation invariant if $g \circ T_\bu = g$ for all $\bu \in \R^{\Eps_+}$. Here $T_\bu$ is the translation map defined in Section~\ref{sec:evaluation} extended multiplicatively on the forest product of $\CT^-_\CB$. 

\begin{proposition}\label{prop:translation}
	Let $g \in \CG^-_\CB$ be translation invariant then $g^\bu$ is independent of $\bu$.
\end{proposition}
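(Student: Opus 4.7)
The proof plan is essentially a one-line manipulation using the identity \eqref{eq:translation} lifted to the forest algebra. The key observation is that both $\Ev_\bu$ and $T_\bv$ have been extended multiplicatively to the forest products (on $\CT^-_\CD$ and $\CT^-_\CB$ respectively), so the relation $T_\bv \circ \Ev_\bu = \Ev_{\bu+\bv}$ from \eqref{eq:translation} passes to these extensions without change since both sides are multiplicative algebra homomorphisms that agree on generators.

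Given arbitrary $\bu, \bv \in \R^{\Eps_+}$, I would take the difference $\bw = \bu - \bv$ and write
\begin{equ}
g^\bu \;=\; g \circ \Ev_\bu \;=\; g \circ T_{\bw} \circ \Ev_\bv \;=\; g \circ \Ev_\bv \;=\; g^\bv\;,
\end{equ}
where the first and last equalities are the definition from \eqref{eq:Mu}, the second uses the lifted version of \eqref{eq:translation} (applied with $\bv$ in place of $\bu$ and $\bw$ in place of $\bv$), and the third uses the assumed translation invariance $g \circ T_\bw = g$.

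The only thing that formally needs to be checked is that \eqref{eq:translation} does extend from $\CT_\CB \leftarrow \CT_\CD$ to the forest products. But this is immediate: a forest is a finite product $\tau_1 \cdots \tau_n$, and multiplicativity of both operators yields $T_\bw \Ev_\bv(\tau_1 \cdots \tau_n) = \prod_i T_\bw \Ev_\bv(\tau_i) = \prod_i \Ev_\bu(\tau_i) = \Ev_\bu(\tau_1 \cdots \tau_n)$. There is no real obstacle here; the statement is a direct bookkeeping consequence of \eqref{eq:translation} together with the definition of translation invariance.
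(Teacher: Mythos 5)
Your proof is correct and takes essentially the same approach as the paper: insert a translation operator via the lifted identity \eqref{eq:translation} and then delete it by translation invariance of $g$. The only cosmetic difference is that the paper specializes to $\bv = \0$ (showing $g^\bu = g^\0$) whereas you show $g^\bu = g^\bv$ for arbitrary $\bu,\bv$; your remark that \eqref{eq:translation} survives the multiplicative extension to forests is a correct and slightly more careful treatment of a point the paper leaves implicit.
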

\begin{proof}
	As a consequence of~\eqref{eq:translation}, we have
	\begin{equ}
		g^\bu = g\circ \Ev_\bu = g \circ T_{-\bu} \circ \Ev_\bu = g \circ \Ev_{\bu-\bu} = g \circ \Ev_\0 = g^\0\;,
	\end{equ}
	as claimed.
\end{proof}

\begin{remark}\label{rem:F_z}
	The content of this section can be applied to functions $F \in \CC(\R^{d+1}, \SQ(\Rule))$ of the form
	\begin{equ}
		F(z,\fX) = F^1(\fX) + F^2(z)\;,
	\end{equ}
	by simply postulating that $\Upsilon^F = \Upsilon^{F^1}$ and $\hat M F =\hat M F^1 + F^2$ 
	i.e.\ by treating $F^2(z)$ as a constant. The coherence in Lemma~\ref{lem:CoherenceEquivalence} is then simply changed to $\bp_{\leq L} U^R = \bp_{\leq L} \BF^1_{\CD}$. This will be useful in Section~\ref{sec:fixed_point} where the nonlinearity will be of the above form and $F^2$ will depend regularly enough on the reconstruction of the solution.
\end{remark}

\section{Models and modelled distributions on $\CT_{\CB}$}
\label{sec:models}

Recall the definition of a model from~\cite[Sec.~2.3]{Regularity}.

\begin{definition}
A model $Z = (\Pi, \Gamma)$ for the regularity structure $\CT_{\CB}$ 
consists of two collections of continuous linear maps $\Pi_{z} : \CT_{\CB} \to \CS'(\R^{d+1})$ and elements $\Gamma_{z \bar z} \in \CG$, parametrised by $z, \bar z \in \R^{d+1}$, such that the following three algebraic identities hold: 
$\Pi_{\bar z} = \Pi_z \Gamma_{z \bar z}$, $\Gamma_{z \bar{\bar z}} = \Gamma_{z \bar z} \Gamma_{\bar z \bar{\bar z}}$ and $\Gamma_{z z} = 1$, for any points $z, \bar z, \bar{\bar z} \in \R^{d+1}$. Moreover, for 
every $\gamma \in \R$ and every compact set $\fK \subset \R^{d+1}$ there is a constant $C_{\gamma, \fK} \geq 0$ such that the following analytical bounds hold:
\begin{equs}[eq:model]
\sup_{\sigma, \lambda, \phi} \frac{\lambda^{-\deg\sigma}}{\|\sigma\|}\bigl|(\Pi_{z} \sigma)( \phi_{z}^\lambda)\bigr| \leq C_{\gamma, \fK}\;, \qquad \sup_{\sigma, \zeta} \frac{\| \CQ_\zeta (\Gamma_{z \bar z} \sigma)\|}{\|\sigma\| \|z - \bar z\|^{\deg\sigma - \zeta}_\s} \leq C_{\gamma, \fK}\;,
\end{equs}
uniformly over points $z \neq \bar z \in \fK$, where the supremum is taken over all elements $\sigma \in \CT_{\CB}$ such that $\deg\sigma \leq \gamma$, all degrees $\zeta < \deg\sigma$, all functions $\phi \in \SB^r_\s$ and all $\lambda \in (0,1]$. 
\end{definition}

Here $r$ is the 
smallest integer such that $\deg \tau > - r$ for every $\tau \in \fT(\Rule)$ which is guaranteed to be finite 
by~\cite[Prop. 5.15]{BHZ} and the subcriticality of the rule~\ref{R2}. In practice, we will only 
consider 
models defined on $\CT_{\CB,\gamma}$ for some $\gamma > 0$ rather than on the whole $\CT_\CB$. 

\begin{definition}
We denote by $\$ Z \$_{\fK}$ the smallest constant $C_{\gamma, \fK}$, for which the 
bounds \eqref{eq:model} hold. Respectively, we define a ``distance'' $\$ Z; \bar Z \$_{\fK}$ 
between two models $Z = (\Pi, \Gamma)$ and $\bar Z = (\bar \Pi, \bar \Gamma)$ to be the smallest constant $C_{\gamma; \fK} \geq 0$ such that the bounds~\eqref{eq:model} hold for the pair of maps $\Pi - \bar \Pi$ 
and $\Gamma - \bar \Gamma$.
\end{definition}

Let us denote the time-space domain $\Lambda \eqdef  \R_+ \times \T^{d}$,\label{lab:Lambda} where $\T$ is 
the circle as in~\eqref{eq:system_intro}. We define the set $P = \{(t,x) \in \Lambda : t = 0\}$ at which a 
modelled distribution can have a singularity. Let $\gamma, \eta \in \R$ and let $Z$ be a model on the 
regularity structure $\CT_{\CB,\gamma}$. 

\begin{definition}
For a sector $V$ of $\CT_{\CB,\gamma}$ we define as 
in~\cite[Def.~6.2]{Regularity} the set $\CD^{\gamma, \eta}(V)$ of modelled distributions $U :\Lambda \setminus P \to V$ with singularities at $P$. Whenever the sector $V$ is clear from the context, we will simply 
write $\CD^{\gamma,\eta}$. 
\end{definition}

When solving a fixed point problem in a space of modelled distributions, it is important to restrict the domain 
to a fixed time interval $[0, T]$. In this case, we write $\CD^{\gamma, \eta}_T$ for the respective space, and 
we denote by $\$\bigcdot\$_{\gamma, \eta; T}$ the respective norm.

Recalling the definitions of the spaces $\SH_\CB$ and $\bar \SH_\CB$ in Section~\ref{subsec:coherence}, we 
denote by $\SU$ the set of all maps $U : \Lambda \setminus P \to \SH_\CB$ and by $\bar \SU$ the set of all 
maps $\bar U : \Lambda \setminus P \to \bar \SH_\CB$. Then for these maps we write $U = (U_\ft)_{\ft \in \fL_+}$ and $\bar U = (\bar U_\ft)_{\ft \in \fL_+}$. For $U \in \SU$ and a model $(\Pi, \Gamma)$ we shall 
understand $\Pi_z U$ and $\Gamma_{z \bar z} U$ component-wise, i.e.\ $\Pi_z U = \big(\Pi_z U_\ft\big)_{\ft \in \fL_{+}}$ and $\Gamma_{z \bar z} U = \big(\Gamma_{z \bar z} U_\ft\big)_{\ft \in \fL_{+}}$.

\begin{definition}
For a choice of constants $\gamma_\ft, \eta_\ft \in \R$ with $\ft \in \fL_{+}$ and a model $Z$ on $\CT_{\CB, \bar\gamma}$ for $\bar\gamma = \max_{\ft \in \fL_{+}} \gamma_\ft$ we define the space
\begin{equ}[eq:U-space]
	\SU^{\gamma,\eta} \eqdef  \bigoplus_{\ft \in \fL_{+}} \CD^{\gamma_\ft, \eta_\ft}(\TT_{\ft, \leq \gamma_\ft})\,,
\end{equ}
which satisfies $\SU^{\gamma,\eta} \subset \SU$. In the case when the time variable is restricted to the 
interval $[0, T]$, we write $\SU^{\gamma,\eta}_T$. The corresponding norm on $\SU^{\gamma,\eta}$ 
is denoted by $\$ \bigcdot \$_{\gamma,\eta}$.
\end{definition}

In the following result we prove that the map~\eqref{eq:Xi_hat_B} acts on the appropriate spaces of modelled 
distributions. 

\begin{lemma}\label{lem:Xi}
Let $\gamma = (\gamma_\ft)_{\ft \in \fL_{+}} \in \R_+^{\fL_+}$, and let $\eta = (\eta_\ft)_{\ft \in \fL_{+}} \in \R_+^{\fL_+}$ be such that $\eta_\ft \in [0, \gamma_\ft]$ for $\ft \in \fL_{+}$. For $o \in \Eps\setminus \CO$ define $\hat \gamma_o = \min_{(\ft, p) \in \Eps_+(o)} (\gamma_\ft - |p|_\s)$ and $\hat \eta_o = \min_{(\ft, p) \in \Eps_+(o)} (\eta_\ft - |p|_\s)$ and let $k_\star$ be given in Lemma~\ref{lem:k_star} for this value $\hat \gamma_o$. Then the map $\hXi_{\CB, o}$, defined in~\eqref{eq:Xi_hat_B}, is locally Lipschitz continuous 
from $\SU^{\gamma,\eta}$ to $\CD^{\hat \gamma_o + \deg o, \hat \eta_o + \deg o}$, with range in a sector of 
regularity $\deg o$.
\end{lemma}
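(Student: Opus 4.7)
\textbf{Proof plan for Lemma~\ref{lem:Xi}.} The plan is to treat $\hXi_{\CB,o}$ as a variant of the composition-with-smooth-function operator of \cite[Thm.~4.16]{Regularity}, where the ``smooth function'' is the $\CB$-valued map $\bv \mapsto \delta_\bv = \bigotimes_{o' \in \Eps_+(o)} \delta_{v_{o'}}$. The key input is that, thanks to our choice of the weighted norm \eqref{eq:norm_weighted} with parameter $k_\star$, the maps $\bv \mapsto \delta^{(\alpha)}_\bv$ are Lipschitz from $\R^{\Eps_+(o)}$ into $\CB_o = \CB^{\otimes \Eps_+(o)}$ with uniform bounds for all $|\alpha|_\infty \le k_\star - 2$, and more generally are $\CC^{k_\star-|\alpha|_\infty-1}$ in $\bv$ in that norm.

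First I would verify the algebraic well-posedness of $\hXi_{\CB,o}(U)$ as a modelled distribution. For each $\alpha$ with $|\alpha|_\infty \le k_\star$, the product $\tilde u^\alpha$ is an element of a sector of strictly positive minimal degree (since each factor $\tilde u_{o'} = U_{o'} - u^U_{o'}\one$ is controlled by $\CQ_{>0}U_{o'}$ on the positive sector $\TT_{o'}$, using that $o' \in \Eps_+$ means $\reg o' > 0$); applying iteratively the multiplication theorem \cite[Prop.~6.12]{Regularity}, one obtains a modelled distribution in $\CD^{\hat\gamma_o, \hat\eta_o}$. Wrapping by $\J_o[\delta^{(\alpha)}_{\bu^U(z)}\otimes \bigcdot\,]$ then lowers the regularity/weights by $\deg o$, giving a modelled distribution in $\CD^{\hat\gamma_o + \deg o, \hat\eta_o + \deg o}$ valued in the sector $\J_o[\CB_o \otimes \CT_\CB]$ of regularity $\deg o$. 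The crucial point that allows us to sum over $\alpha$ is that, by the choice of $k_\star$ in Lemma~\ref{lem:k_star}, the projection $\CQ_{\le \hat\gamma_o + \deg o}$ annihilates all terms with $|\alpha|_\infty > k_\star$, so the sum in \eqref{eq:Xi_hat_B} is effectively finite modulo the relevant truncation.

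Next, I would check the defining bounds for $\CD^{\hat\gamma_o+\deg o, \hat\eta_o+\deg o}$ directly. The translation bound $\hXi_{\CB,o}(U)(z) - \Gamma_{z\bar z}\hXi_{\CB,o}(U)(\bar z)$ decomposes into (i) the contribution from the recentering $\bu^U(z) \leadsto \bu^U(\bar z)$ inside the $\delta^{(\alpha)}$ factors, and (ii) the contribution from the modelled-distribution part $\tilde u^\alpha$. Part (ii) is handled verbatim as in \cite[Thm.~4.16]{Regularity}. For part (i), the formal Taylor identity
\begin{equ}
\delta^{(\alpha)}_{\bu^U(\bar z)} = \sum_{\beta} \frac{(\bu^U(\bar z) - \bu^U(z))^\beta}{\beta!} \delta^{(\alpha+\beta)}_{\bu^U(z)}
\end{equ}
combined with the Lipschitz/$\CC^k$ estimates of $\bv \mapsto \delta^{(\alpha)}_\bv$ in $\CB_o$ up to order $k_\star$ supplies exactly the cancellations needed so that the residual Taylor remainder matches the combinatorics of $\Gamma$ acting on $\J_o[\delta^{(\alpha)}_{\bu^U(z)} \otimes \tilde u^\alpha]$; here $\Gamma$ acts as identity on the $\CB_o$-factor. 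The $\eta$-weighted bounds near $P$ follow from the corresponding bounds on $U$ and the fact that $\bv \mapsto \delta^{(\alpha)}_\bv$ is uniformly bounded on bounded sets of $\bv$ (via \eqref{eq:norm_weighted}).

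Finally, the local Lipschitz bound on $\hXi_{\CB,o}(U) - \hXi_{\CB,o}(\bar U)$ is obtained by splitting into two differences: one with fixed $\delta^{(\alpha)}$ but different $\tilde u^\alpha$ (controlled by the multilinear multiplication theorem and the $\CD^{\gamma,\eta}$-distance of $U$ and $\bar U$), and one with fixed $\tilde u^\alpha$ but different $\delta^{(\alpha)}$-factors (controlled, via a fundamental-theorem-of-calculus argument in $\bv$, by $|\bu^U - \bu^{\bar U}|$, which is in turn bounded by $\$U - \bar U\$_{\gamma,\eta}$). The main obstacle is the algebraic-analytic bookkeeping in step (i) above: one has to verify that the $\Gamma$-action on $\J_o[\delta^{(\alpha)}_{\bu^U(z)} \otimes \tilde u^\alpha]$ generates precisely the Taylor remainders in $\bu^U(\bar z) - \bu^U(z)$ that match the change of basepoint of the Dirac deltas, and that the truncation at $|\alpha|_\infty \le k_\star$ provided by Lemma~\ref{lem:k_star} does not interfere with the $\Gamma$-compatibility at the required degrees.
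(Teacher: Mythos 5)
Your proposal is correct and follows essentially the same route as the paper's proof: both view $\hXi_{\CB,o}$ as a $\CB_o$-valued variant of composition with a smooth function (\cite[Sec.~4.2, Prop.~6.13]{Regularity}), use the fact that $\bv \mapsto \delta^{(\alpha)}_\bv$ is Lipschitz (and has higher-order finite-difference estimates) in the weighted $\CB$-norm with uniform control up to order $k_\star$, and handle the $\Gamma$-compatibility via the same Taylor/binomial identity for $\delta^{(\alpha)}_{\bu^U(\bar z)}$ expanded around $\bu^U(z)$, with the remainder $\nu^\alpha_{z,\bar z}$ bounded in $\CB_o$. The only cosmetic differences are that the paper works directly with the truncated quantity $V(z)=\CQ_{\le\hat\gamma_o+\deg o}\hXi_{\CB,o}(U)(z)$ and makes the remainder bounds fully quantitative via \eqref{eq:Xi_3}--\eqref{eq:Xi_4}, whereas you describe the same combinatorics at a slightly higher level; your "main obstacle" is resolved in the paper exactly by the binomial identity following \eqref{eq:Xi_3}.
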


\begin{proof}
For $U \in \SU^{\gamma,\eta}$, let us denote $V(z) = \CQ_{\leq \hat \gamma_o + \deg o} \hXi_{\CB, o}(U)(z)$. 
Then it obviously takes values in a sector of regularity $\deg o$, and we need to prove the bounds 
\minilab{eqs:Xi-bounds}
\begin{equs}
	 \| V(z) \|_{\zeta} &\lesssim \$U\$_{\gamma,\eta} |z|_0^{(\hat \eta_o + \deg o - \zeta) \wedge 0}\;, \label{eq:Xi-bound1} \\
	  \|\Gamma_{z \bar z} V(\bar z) - V(z)\|_\zeta &\lesssim \$U\$_{\gamma,\eta} \|z - \bar z\|_\s^{\hat \gamma_o + \deg o - \zeta} | z, \bar z |_0^{\hat \eta_o - \hat \gamma_o}\;, \label{eq:Xi-bound2}
\end{equs}
for any $\zeta \in \CA \eqdef \{\deg \tau : \tau \in \fT(\Rule)\}$ satisfying $\zeta < \hat \gamma_o + \deg o$. 
Here, we use the norm $\| \bigcdot \|_{\zeta} \eqdef \| \CQ_\zeta (\bigcdot) \|$ on the regularity structure $\CT_\CB$, and the quantities $|t,x|_0 \eqdef \sqrt{|t|} \wedge 1$ and $|z, \bar z|_0 \eqdef |z|_0 \wedge |\bar z|_0$. Let us 
furthermore denote $\bar V(z) = \CQ_{\leq \hat \gamma_o + \deg o} \hXi_{\CB, o}(\bar U)(z)$ for $\bar U \in \SU^{\gamma,\eta}$. Then the 
Lipschitz continuity will follow if we prove the bounds 
\begin{equs}
	 \| V(z) - \bar V(z) \|_{\zeta} &\lesssim \$U - \bar U\$_{\gamma,\eta} |z|_0^{(\hat \eta_o + \deg o - \zeta) \wedge 0}\;, \label{eq:Xi-bound-difference} \\
	  \|\Gamma_{z \bar z} (V - \bar V)(\bar z) - (V - \bar V)(z)\|_\zeta &\lesssim \$U - \bar U\$_{\gamma,\eta} \|z - \bar z\|_\s^{\hat \gamma_o + \deg o - \zeta} | z, \bar z |_0^{\hat \eta_o - \hat \gamma_o}\;,
\end{equs}
where the proportionality constants are linear in $\$U\$_{\gamma,\eta} + \$\bar U\$_{\gamma,\eta}$.

We will prove only~\eqref{eqs:Xi-bounds} and the bounds~\eqref{eq:Xi-bound-difference} can be proved by 
analogy. From Lemma~\ref{lem:k_star} we have 
\begin{equ}[eq:Xi_0]
	V(z) = \sum_{\alpha \in \N^{\Eps_+(o)} } {1\over \alpha!} \J_{o} \Bigl[\; \delta^{(\alpha)}_{\bu^U(z)\restr_{\Eps_+(o)}} \otimes \CQ_{\leq \hat \gamma_o} \tilde u(z)^{\alpha}\Bigr]\;.
\end{equ}
Only the modelled distributions $U_{\bar o}$ for $\bar o \in \Eps_+(o)$ contribute to the sum 
in~\eqref{eq:Xi_0}, which are all function-like. Then as in the proof of Lemma~\ref{lem:k_star} that writing $U_{\bar o}(z) = u_{\bar o}(z)\one + \tilde u_{\bar o}(z)$, one has $\tilde u_{\bar o}(z) = \sum_{\sigma : \reg\bar o \leq \deg \sigma < \gamma_{\bar o}} \tilde u_{\bar o, \sigma}(z) \sigma$, for $\tilde u_{\bar o, \sigma}(z) \in \R$. For $\zeta \in \CA$, let $\hat \zeta = \zeta - \deg o$. Then we have for $\hat\zeta < \hat\gamma_o$
\begin{equ}
	 \| V(z) \|_{\zeta} \lesssim \sum_{\alpha \in \N^{\Eps_+(o)}} \Bigl\| \delta^{(\alpha)}_{\bu^U(z)\restr_{\Eps_+(o)}}\Bigr\|_{\CB^{\otimes \Eps_+(o)}}\, \|\tilde u(z)^{\alpha}\|_{\hat \zeta}\;.
\end{equ}
Since all values $\eta_\ft$ are positive, the functions $\bu^U(z)\restr_{\Eps_+(o)}$ are uniformly 
bounded, and from the definition~\eqref{eq:norm_weighted} we conclude $\bigl\| \delta^{(\alpha)}_{\bu^U(z)|_{\Eps_+(o)}}\bigr\|_{\CB^{\otimes \Eps_+(o)}} \lesssim 1$. Furthermore, 
\begin{equ}
	\|\tilde u^{\alpha}(z)\|_{\hat \zeta} \lesssim \sum_{A_\alpha, \zeta_{\bar o, i}} \prod_{\bar o \in A_\alpha} \prod_{i = 1}^{\alpha_{\bar o}} \|\tilde u_{\bar o}(z)\|_{\zeta_{\bar o, i}}\;,
\end{equ}
where the summation is over all subsets $A_\alpha \subset \Eps_+(o)$ containing $\bar o$ for which 
$\alpha_{\bar o} \geq 1$, and all strictly positive values $\zeta_{\bar o, i} \in \CA$ parametrised by $\bar o \in A_\alpha$ and $1 \leq i \leq \alpha_{\bar o}$, and satisfying $\sum_{\bar o, i} \zeta_{\bar o, i} = \hat \zeta$. 
From the definition of modelled distributions, we get 
\begin{equ}[eq:Xi_1]
	\|\tilde u^{\alpha}(z)\|_{\hat \zeta} \lesssim \sum_{A_\alpha, \zeta_{\bar o, i}} \prod_{\bar o \in A_\alpha} \prod_{i = 1}^{\alpha_{\bar o}} |z|_0^{(\eta_{\bar o} - \zeta_{\bar o, i}) \wedge 0}\;.
\end{equ}
One can readily see that if $|z|_0 \leq 1$, then the latter is bounded by $|z|_0^{(\hat \eta_o - \hat \zeta) \wedge 0}$, where $\hat \eta_o$ is as in the statement of this lemma. Indeed, if we have $\eta_{\bar o} - \zeta_{\bar o, i} \geq 0$ for all $\bar o$ and $i$, then the bound is trivial. In the case $\eta_{\tilde o} - \zeta_{\tilde o, \tilde i} < 0$ for some $\tilde o$ and $\tilde i$, then we simply bound $(\eta_{\bar o} - \zeta_{\bar o, i}) \wedge 0 \geq - \zeta_{\bar o, i}$ for all $(\bar o, i) \neq (\tilde o, \tilde i)$, which yields the 
required bound. Combining everything together, we have proved~\eqref{eq:Xi-bound1}.

Now we turn to the proof of~\eqref{eq:Xi-bound2}. Recall that $\Eps_+(o) = \min_{\bar o \in \Eps_+(o)} \reg(o)$. Let $L = \lfloor\hat \gamma_o / \reg \Eps_+(o) \rfloor$ be the largest value $|\alpha| = \sum_{o \in \Eps_+(o)} \alpha_o$ of the multiindex $\alpha$ 
which can contribute to the sum in~\eqref{eq:Xi_0}. Then as in~\cite[Eq.~4.13]{Regularity} we can use 
only the algebraic properties of the model and modelled distributions to write
\begin{equ}[eq:Xi_2]
\Gamma_{z \bar z} V(\bar z) = \sum_{\substack{\alpha \in \N^{\Eps_+(o)} \\ |\alpha| \leq L}}{1\over \alpha!} \J_{o} \Bigl[\; \delta^{(\alpha)}_{\bu^U(\bar z)\restr_{\Eps_+(o)}} \otimes \bigl(\tilde u(z) + (u(z) - u(\bar z))\one\bigr)^{\alpha}\Bigr] + R(z, \bar z)\,,
\end{equ}
where the remainder $R(z,\bar z)$ satisfies $\| R_1(z,\bar z)\|_\zeta \lesssim \|z - \bar z\|_\s^{\hat \gamma_o + \deg o - \zeta} | z, \bar z |_0^{\hat \eta_o - \hat \gamma_o}$, for any $\zeta \in \CA$ satisfying $\zeta < \hat \gamma_o + \deg o$ (see~\cite[Eqs.~6.15, 6.17]{Regularity} for a detailed explanation of this bound).  Here, we used $|u_{\bar o}(\bar z) - u_{\bar o}(z)| \lesssim \|z - \bar z\|_\s^{\reg \Eps_+(o)} \|z, \bar z\|_0^{(\eta_{\bar o} - \reg \Eps_+(o)) \wedge 0}$, which can be proved as in~\cite[Eq.~6.18]{Regularity}. Furthermore, we have the distributional identity 
\begin{equ}[eq:Xi_3]
\delta^{(\alpha)}_{\bu^U(\bar z)\restr_{\Eps_+(o)}} = \sum_{\substack{\hat \alpha \in \N^{\Eps_+(o)} \\|\alpha + \hat \alpha| \leq L}} {1 \over \hat \alpha!} \delta^{(\alpha + \hat \alpha)}_{\bu^U( z)\restr_{\Eps_+(o)}} (u(\bar z) - u(z))^{\bar \alpha} + \nu^\alpha_{z, \bar z}\;,
\end{equ}
for a distribution $\nu^\alpha_{z, \bar z} \in \CB^{\otimes \Eps_+(o)}$ is such that 
\begin{equ}
	\| \nu^\alpha_{z, \bar z} \|_{\CB^{\otimes \Eps_+(o)}} \lesssim \|z - \bar z\|_\s^{\hat\gamma_o - |\alpha| \reg \Eps_+(o)} \|z, \bar z\|_0^{\chi_\alpha}\,,
\end{equ}
following from our assumption on $U_{\bar o}$, where $\chi_\alpha = (|\alpha| \reg \Eps_+(o) - \hat \gamma_o - |\alpha| \hat \eta_o + \hat \gamma_o \hat \eta_o / \reg \Eps_+(o)) \wedge 0$ 
(see also similar computations in~\cite[Eq.~6.19]{Regularity} for regular functions). Hence, using the 
binomial identity 
\begin{equ}
\sum_{\alpha + \hat \alpha = \tilde \alpha} {1\over \alpha! \hat \alpha!} (u(\bar z) - u(z))^{\hat \alpha} \bigl(\tilde u(z) + (u(z) - u(\bar z))\one\bigr)^{\alpha} = {1\over \tilde \alpha!} \tilde u(z)^{\tilde \alpha}\;,
\end{equ}
where $\tilde \alpha \in \Eps_+(o)$ and the summation is over $\alpha, \hat \alpha \in \Eps_+(o)$, we obtain
\begin{equ}
\Gamma_{z \bar z} V(\bar z) = V(\bar z) + \sum_{\alpha} {1\over \alpha!} \J_{o} \Bigl[ \nu^\alpha_{z, \bar z} \otimes \bigl(\tilde u(z) + (u(z) - u(\bar z))\one\bigr)^{\alpha}\Bigr] + R(z, \bar z)\;,
\end{equ}
where $\alpha$ is as in~\eqref{eq:Xi_2}. For $\zeta \in \CA$, let us denote as above $\hat \zeta = \zeta - \deg o$. Then
\begin{equ}[eq:Xi_4]
\bigl\|\Gamma_{z \bar z} V(\bar z) - V(\bar z)\bigr\|_\zeta \lesssim \sum_{\alpha} \bigl\| \nu^\alpha_{z, \bar z}\bigr\|_{\CB} \|\bigl(\tilde u(z) + (u(z) - u(\bar z))\one\bigr)^{\alpha}\|_{\hat \zeta} + \bigl\|R(z, \bar z)\bigr\|_\zeta\;.
\end{equ}
The required bound on the norm of $R(z, \bar z)$ is provided above, and we need to bound the sum over 
$\alpha$. As in the proof of~\cite[Prop.~6.13]{Regularity} we get
\begin{equ}
\|\bigl(\tilde u(z) + (u(z) - u(\bar z))\one\bigr)^{\alpha}\|_{\hat \zeta} \lesssim \|z - \bar z\|_\s^{\reg \Eps_+(o) |\alpha| - \hat \zeta} \|z, \bar z\|_0^{|\alpha|(\hat \eta_o - \reg \Eps_+(o)) \wedge 0}\;.
\end{equ}
Then using the bound on the norm of $\nu^\alpha_{z, \bar z}$, the sum over $\alpha$ in~\eqref{eq:Xi_4} is 
bounded by $\|z - \bar z\|_\s^{\hat \gamma_o - \hat \zeta} | z, \bar z |_0^{\hat \eta_o - \hat \gamma_o}$, where 
we have used $|\alpha|(\hat \eta_o - \reg \Eps_+(o)) \wedge 0 + \chi_\alpha \geq \hat \eta_o - \hat \gamma_o$.
\end{proof}

\subsection{Admissible models}
\label{sec:admissible}

Assume that for every $\ft \in \fL_{+}$ we are given a Green's function $G_\ft$ 
satisfying~\cite[Assum.~2.8]{BCCH}. Such Green's function can be written as $G_\ft = K_\ft + R_\ft$, such 
that the $K_\ft$ and $R_\ft$ satisfy assumptions at the beginning of~\cite[Sec. 5.1]{BCCH}. In particular 
$K_\ft(z)$ is supported on a unit ball $\|z\|_\s \leq 1$ and agrees with $G_\ft$ on the ball $\|z\|_\s \leq \frac{1}{2}$ as 
well as $R_\ft$ is a smooth function. Construction of such decomposition of $G_\ft$ can be found 
in~\cite[Lem.~7.7]{Regularity}. Given a collection of such Green's functions $G = (G_\ft)_{\ft \in \fL_+}$ we 
define an admissible map on the vector-valued regularity structures $\CT_V$ with $V_\ft = \R$ for $\ft \in \fL_+$.

\begin{definition}\label{def:admissible}
 We say that a linear map $\PPi : \CT_V \to \SC^\infty$ is \emph{admissible} if it satisfies 
 \minilab{eqs:admissible}
 \begin{equ}[eq:admissible1]
	\PPi \one = 1\;,\qquad 
	\PPi \X^m \sigma = z^m \PPi \sigma\;,\qquad
	\PPi \J_{(\ft, p)}[\sigma] = D^p K_\ft * \PPi \sigma\;,
\end{equ}
 \minilab{eqs:admissible}
as well as 
\begin{equs}[eq:admissible2]
	\PPi \J_{(\fl,p)}[\mu  \otimes \sigma] &= \PPi \bigl( \sigma \J_{(\fl,p)}[\mu \otimes \one] \bigr)\;, \\
	\PPi \J_{(\fl,p)}[\mu  \otimes \X^m] &=  z^m \d^p \big(\PPi \J_{\fl}[\mu \otimes \one]\big)\;,
\end{equs}
for all $\sigma \in \CT_V$, $p,m \in \N^{d+1}$, $\ft \in \fL_{+}$, $\fl \in \fL_{-}$ and $\mu \in V_\fl$, where $*$ is distributional convolution, $\J_{(\ft, p)}$ is viewed as a map on $\CT_V$ since $V_\ft = \R$ for $\ft \in \fL_+$ and where we use the notation
\begin{equ}
	\d^p \eqdef \prod_{i = 0}^{d} \frac{\d^{p_i}}{\d y^{p_i}_i}\,,\qquad z^m : \R^{d+1} \to \R\,,\quad z^m(y) = \prod_{i = 0}^{d} y^{m_i}_i\;.
\end{equ}
\end{definition}
In practice, we will only care about admissible maps on $\CT_{V,\gamma}$ for some $\gamma>0$ rather than 
the whole $\CT_V$. Another natural notion, which will however be broken by the 
renormalisation procedure, is the following.

\begin{definition}\label{def:multiplicative}	
We say that an admissible map $\PPi : \CT_V \to \SC^\infty$ is \textit{multiplicative}, if
\begin{equ}
	\PPi \sigma\bar\sigma = (\PPi\sigma)\, (\PPi\bar\sigma)\;, \label{eq:multiplicative1}
\end{equ}
for all $\sigma,\bar\sigma \in \CT_V$. 
\end{definition}

\begin{definition}\label{def:canonical}	
Let smooth functions $\xi = (\xi_\fl)_{\fl \in \fL_-}$ and all their derivatives grow sublinearly at infinity. Let $c \in L(\R^{d+1})$ and $\ba = (\ba_\fl)_{\fl\in\fL_-}$ with $\ba_\fl \in \CL(\R^{\Eps_+(\fl)}, \R^{d+1})$ for every $\fl\in\fL_-$. In the case when $V = \CB$ we say that a multiplicative admissible map $\PPi : \CT_\CB\to \SC^\infty$ is the 
\emph{canonical lift} of $\xi$ translated by $c$ with inhomogeneous shift $\ba$, if
for every $\fl \in \fL_-$, for every distribution $\mu \in \CB_\fl$ and for $z \in \R^{d+1}$
\begin{equ}[eq:canonical]
		\PPi \J_{\fl}[\mu\otimes \one](z) = \int_{\R^{\Eps_+(\fl)}}\xi_\fl(z + cz+ \ba_\fl \cdot \bu) \mu\big(d\bu\big)\;,
\end{equ}
where we use the shorthand notation~\eqref{eq:a-product} with some fixed constants $a_{\fl, o}$. As we 
explain in Remark~\ref{rem:weights}, our definition of the weighted spaces~\eqref{eq:norm_weighted} allows 
testing distributions with smooth functions; hence,~\eqref{eq:canonical} is well-defined.
\end{definition}

The additional translation by $c \in L(\R^{d+1})$ is needed in order to later allow the 
noise $\xi$ to be shifted by a 
renormalisation $\eps^2 C_\eps $ like in~\eqref{eq:rescaled_noise}. When the precise value of the translation $c$ or a shift $\ba$ is irrelevant we simply refer to $\PPi$ as a canonical lift of $\xi$.

\begin{definition}
	Given an admissible map $\PPi$ on $\CT_V$ and a character $g \in \CG^-_V$ we write $\PPi^g$ for 
	\begin{equ}[eq:RenormalisedPi]
		\PPi^g  \eqdef  \PPi \circ M\;,
	\end{equ}
	where $M = (g \otimes \id) \Deltam_V$.
\end{definition}

Let the map $\ZZ : \PPi \mapsto (\Pi,\Gamma)$ be defined as in~\cite[Def.~6.8]{BHZ}. Then the results 
of~\cite[Sec.~6]{BHZ} imply that for $\gamma>0$ if $\PPi : \CT_{V,\gamma} \to \CC^\infty$ is multiplicative, 
then both $(\Pi, \Gamma) = \ZZ(\PPi)$ and $(\hat\Pi, \hat\Gamma) = \ZZ(\PPi \circ M)$ are in fact models on 
$\CT_{V,\gamma}$ for every renormalisation map $M \in \fR(\CT_{V,\gamma})$. We shall say that a model 
$\ZZ(\PPi)$ is admissible if $\PPi$ is, and we denote $\$ \PPi \$_\fK \eqdef \$ \ZZ(\PPi) \$_\fK$.

\begin{remark}\label{rem:models}
	Strictly speaking, \cite{BHZ} does not deal with trees of the form  $\J_{(\fl, p)}[\mu\otimes \sigma]$ with $\fl \in \fL_-$ and $p \in \N^{d+1}$, but the results still apply since for admissible models we have \eqref{eq:admissible2} which is consistent with the fact that Remark~\ref{rem:quotient} guarantees that the
	action of the structure group commutes with that of $\J_{(\fl,p)}[\mu \otimes \bigcdot]$. 
	Furthermore, Lemma~\ref{lem:down} guarantees
	that models satisfying this property are preserved under the action of the renormalisation group.
\end{remark}

We denote by $\M_\infty(\CT_{\CB,\gamma})$ the space of all smooth models on $\CT_\CB$ of the form 
$\ZZ(\PPi)$, for some admissible map $\PPi$. We note that for every $\gamma > 0$ the ``distances'' between 
two models $\$\bigcdot\,; \bigcdot\$_{\fK}$ for all compact $\fK \subset \R^{d+1}$, introduced in the 
beginning of Section~\ref{sec:models}, define a metric $d_\gamma$ on $\M_\infty(\CT_{\CB,\gamma})$. We 
then define $\M_0(\CT_{\CB,\gamma})$ to be the completion of $\M_\infty(\CT_{\CB,\gamma})$ with respect to 
this metric $d_\gamma$.\label{lab:models}

\begin{remark}
	Given a multiplicative admissible map $\PPi$ and $\bu \in \R^{\Eps_+}$, we can define 
	$\PPi^\bu = \PPi \circ \Ev_\bu$ which gives a multiplicative admissible map on $\CT_\CD$ as well a model 
	$(\Pi^\bu, \Gamma^\bu) = \ZZ(\PPi^\bu)$. Unfortunately this does not allow us to later formulate 
	our PDE in the 
	space of modelled distributions on $\CT_\CD$. This is because ultimately $\bu$ is going to represent the 
	reconstruction of the solution itself and therefore is not a fixed number, but rather a function $\bu : \R^{d+1} \to \R^{\Eps_+}$. It is also not true in general that $\PPi_z^{\bu(z)}$ is a model. This is another 
	motivation why we want to separate the regularity structure $\CT_\CB$, where we perform the analysis, from 
	the regularity structure $\CT_\CD$, where we perform the algebraic renormalisation.
\end{remark}

\subsection{Properties of the reconstruction map}

We now take a look at the reconstruction of the abstract function $\BF_{\CB,\ft}$. For this we first 
examine the structure of the functions that conform to our rule. Note that Assumption~\ref{ass:rule}\ref{R2} 
on the rule $\Rule$ guarantees that for $F \in \SQ(\Rule)$ and $\ft \in \fL_+$, the function $F_\ft$ must be 
polynomial in $(\fX_{o})_{o \in \Eps_-}$ so that
\begin{equ}[eq:Fconforms]
	F_\ft(\fX) = \sum_{\alpha \in \N^{\Eps_-}} F^\alpha_\ft(\fX)\fX^\alpha\,,
\end{equ}
and $F^\alpha_\ft$ is non-zero only for finitely many $\alpha$. Furthermore, $\Eps(F^\alpha_\ft) \subset \mathcal \Eps_+$ for each $\alpha\in \Eps_-$ and $\ft \in \fL_+$.

We recall from~\cite[Rem. 3.15]{Regularity} that for a smooth model $(\Pi, \Gamma)$, the reconstruction map $\CR$ is given by
\begin{equ}[eq:reconstruction]
	(\CR U)(z) = (\Pi_z U(z) \bigr)(z)\;,
\end{equ}
where $U$ is a modelled distribution. The following result shows how the reconstruction map acts on 
nonlinearities that conform to the rule.

\begin{lemma}\label{lem:RF}
	Let the noise $\xi$ satisfy assumptions of the Definition~\ref{def:canonical}. Let $\PPi: \CT_\CB \to \SC^\infty$ be the canonical lift of $\xi$ translated by $c \in L(\R^{d+1})$, and 
	$(\Pi,\Gamma) = \ZZ(\PPi)$ be the canonical model. Let $U \in \SU$ and let $u_\ft(z) = (\CR U_\ft)(z)$ for 
	$\ft \in \fL_{+}$. Then for all $z \in \Lambda \setminus P$ and $\ft \in \fL_{+}$
	\begin{equ}[eq:RF]
		\CR \Big(\CQ_{\leq 0} \BF_{\CB,\ft}(U)\Big)(z) = F_\ft\big(\bu, \xi^{\bu,c}\big)(z)\;,
	\end{equ}
	where $(\bu, \xi^{\bu,c})$ is defined in~\eqref{eq:uxi}.
\end{lemma}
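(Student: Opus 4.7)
The plan is to exploit the polynomial structure of $F_\ft$ in the noise-type variables from \eqref{eq:Fconforms}. Grouping the Taylor coefficients in \eqref{eq:FofU_B} by the power of $\fX_-$ and using that $D^{\alpha_-}\fX_-^\gamma\restr_{\fX_-=0}=\gamma!\,\delta_{\alpha_-,\gamma}$, one obtains
\begin{equ}
	\BF_{\CB,\ft}(U)\;=\;\sum_{\gamma\in\N^{\Eps_-}}\hat F_\ft^\gamma(\textbf{U})\cdot\bigl(\bXi_\CB^U\bigr)^\gamma\,,
\end{equ}
where $\hat F_\ft^\gamma(\textbf{U})$ is the standard lift of the smooth function $F_\ft^\gamma$ composed with the function-like modelled distribution $\textbf{U}$ as in \cite[Thm.~4.16]{Regularity}, and the sum is finite by \eqref{eq:Fconforms}.

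Next, I would use that $\PPi$ is the canonical multiplicative admissible lift of a smooth noise, so the model $(\Pi,\Gamma)=\ZZ(\PPi)$ is smooth, $\Pi_z$ acts as an algebra homomorphism on all of $\CT_\CB$ (no renormalisation is present), and the polynomial subtractions in its construction only concern non-polynomial basis vectors of strictly positive degree. Consequently $(\Pi_z\sigma)(z)=0$ for any non-polynomial basis vector with $\deg\sigma>0$, whereas $(\Pi_z\sigma)(z)=(\PPi\sigma)(z)$ for $\deg\sigma\le 0$, and $(\Pi_z\X^k)(z)=\delta_{k,0}$. By Remark~\ref{rem:sector}, for every $o\in\Eps_+$ the sector $\TT_o$ contains no non-polynomial element of non-positive degree, so $\CR U_o(z)=u^U_o(z)$, i.e.\ $\CR\textbf{U}=\bu$ on the function-like components; the composition theorem \cite[Thm.~4.16]{Regularity} then yields $\CR\hat F_\ft^\gamma(\textbf{U})=F_\ft^\gamma(\bu)$. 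The same pointwise vanishing gives $\CR(\CQ_{\le 0}V)(z)=(\Pi_zV(z))(z)$ for any $V$, so the truncation in the statement is harmless at the level of pointwise reconstruction.

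The crux is the identity
\begin{equ}
	\CR\hat\Xi_{\CB,o}(U)(z)=\xi^{\bu,c}_o(z)\qquad\text{for every }o=(\fl,p)\in\Eps\setminus\CO\,.
\end{equ}
Starting from \eqref{eq:Xi_hat_B}, using \eqref{eq:admissible2} to rewrite each summand as $\tilde u(z)^\alpha\cdot\J_o[\delta^{(\alpha)}_{\bu^U(z)\restr_{\Eps_+(\fl)}}\otimes\one]$ and invoking multiplicativity of $\Pi_z$ yields
\begin{equ}
	\bigl(\Pi_z\hat\Xi_{\CB,o}(U)(z)\bigr)(z)=\sum_{\alpha}\frac{1}{\alpha!}\bigl(\Pi_z\tilde u(z)^\alpha\bigr)(z)\cdot\bigl(\Pi_z\J_o[\delta^{(\alpha)}_{\bu^U(z)\restr_{\Eps_+(\fl)}}\otimes\one]\bigr)(z)\,.
\end{equ}
Each $\tilde u_{o'}(z)$ with $o'\in\Eps_+(\fl)\subset\Eps_+$ lies in a function-like sector whose non-$\one$ basis vectors all have strictly positive degree, so the first factor vanishes unless $\alpha=0$. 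The surviving $\alpha=0$ term equals $\PPi\J_o[\delta_{\bu^U(z)\restr_{\Eps_+(\fl)}}\otimes\one](z)$, which by \eqref{eq:admissible2} followed by \eqref{eq:canonical} unfolds to the value at $z$ of $\d^p[\xi_\fl(\,\bigcdot+c\,\bigcdot+\ba_\fl\cdot\bu(z))]$, namely $\xi^{\bu,c}_{(\fl,p)}(z)$.

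Combining these ingredients, multiplicativity of $\Pi_z$ converts the reconstruction of $\hat F_\ft^\gamma(\textbf{U})\cdot(\bXi_\CB^U)^\gamma$ at $z$ into $F_\ft^\gamma(\bu(z))\prod_{o\in\gamma}\xi^{\bu,c}_o(z)$, and summing over $\gamma$ recovers $F_\ft(\bu,\xi^{\bu,c})(z)$ via \eqref{eq:Fconforms}. The most delicate step I anticipate is the reduction of $\CR\hat\Xi_{\CB,o}(U)$ to its $\alpha=0$ contribution, since one must keep careful track of the simultaneous $z$-dependence of the delta distribution and of the product $\tilde u(z)^\alpha$, and use precisely the factorisation furnished by admissibility together with multiplicativity of the canonical model.
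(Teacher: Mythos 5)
Your overall strategy mirrors the paper's: exploit multiplicativity of the canonical $\Pi_z$, the fact that $(\Pi_z\sigma)(z)=0$ for non-polynomial basis vectors of positive degree, and the polynomial form \eqref{eq:Fconforms} of $F_\ft$ in the $\Eps_-$ variables. Your computation of $\CR\,\hXi_{\CB,o}(U)(z)=\xi^{\bu,c}_o(z)$, including the reduction to the $\alpha=0$ term via \eqref{eq:admissible2} and multiplicativity, is exactly the key step in the paper's proof, and your treatment of the harmlessness of $\CQ_{\le 0}$ is fine.

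The gap is in your opening factorisation. The Taylor base point in \eqref{eq:FofU_B} is $(\bu^U,0)$, where $\bu^U$ ranges over all of $\CO$, and $\CO\cap\Eps_-$ is non-empty (e.g.\ a high spatial derivative such as $\fX_{(\ft_1,(0,2))}$, which actually appears in $F_{\ft_0}$ in Example~\ref{ex:Examplenonlin}). Thus the $\Eps_-$ variables are \emph{not} uniformly evaluated at zero: only the $\Eps\setminus\CO$ (noise) ones are, whereas the $\CO\cap\Eps_-$ ones are evaluated at $u^U_o$. Your identity $D^{\alpha_-}\fX_-^\gamma\restr_{\fX_-=0}=\gamma!\,\delta_{\alpha_-,\gamma}$ therefore fails as soon as $\gamma$ has support in $\CO\cap\Eps_-$, and the claimed factorisation $\BF_{\CB,\ft}(U)=\sum_\gamma \hat F^\gamma_\ft(\textbf{U})\,(\bXi^U_\CB)^\gamma$ is missing the factor $\textbf{U}^{\gamma_1}$, where $\gamma=(\gamma_1,\gamma_2)$ is the split over $\CO\cap\Eps_-$ and $\Eps\setminus\CO$. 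The corrected grouping is
\begin{equ}
	\BF_{\CB,\ft}(U)=\sum_{\gamma\in\N^{\Eps_-}}\hat F^\gamma_\ft(\textbf{U})\,\textbf{U}^{\gamma_1}\,(\bXi^U_\CB)^{\gamma_2}\;,
\end{equ}
which you obtain from the Taylor expansion because, once the coefficient $D^{\alpha_1}\fX^{\gamma_1}(\bu^U)$ is kept, the sum $\sum_{\alpha_1\le\gamma_1}\binom{\gamma_1}{\alpha_1}(\bu^U)^{\gamma_1-\alpha_1}(\textbf{U}-\bu^U\one)^{\alpha_1}$ collapses to $\textbf{U}^{\gamma_1}$. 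With this fix, the rest of your reconstruct-each-factor-and-multiply argument goes through, since $\textbf{U}^{\gamma_1}$ reconstructs at $z$ to the corresponding product of $\d^p u_\ft(z)$. The paper itself sidesteps the need for a clean a~priori factorisation by Taylor-expanding the monomial $\fX^\beta$ around $(\bu^U,0)$ in \eqref{eq:RF_proof2} and letting the $\bu^U$ terms cancel after applying multiplicativity of $\Pi_z$, which amounts to the same computation.
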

\begin{proof}
	First, note that for $o = (\ft, p) \in \CO$
	\begin{equ}
		\d^p u_\ft(z) = \d^p \Pi_z(U(z))(z) = \Pi_z(\SD^p U_\ft(z))(z) = \Pi_z(U_o(z))(z)\,.
	\end{equ}
	Using the fact that $\Pi_z(\sigma)(z) = 0$ for all $\sigma \in \scal{\tau}_\CB$ with $\deg(\tau) > 0$ as 
	well as~\eqref{eq:admissible2}, the above implies in particular that $\d^p u_\ft(z) = u^U_o(z)$ for $o \in \Eps_+$ by Remark~\ref{rem:sector}. Therefore, for $(\fl,p) \in \fL_- \times \N^{d+1}$ the definitions \eqref{eq:Xi_hat_B} and \eqref{eq:admissible2} yield
	\begin{equs}
		\Pi_z\bigl(\hXi_{\CB, (\fl,p)}(U(z))\bigr)(z) &= \Pi_z \bigl( \J_{(\fl,p)} \bigl[ \delta_{\bu^U|_{\Eps_+(\fl)}(z) } \otimes \1\bigr]\bigr)(z) \\
		&= \PPi \bigl( \J_{(\fl,p)} \bigl[ \delta_{\bu^U|_{\Eps_+(\fl)}(z) } \otimes \1\bigr]\bigr)(z) \\
		&= \d^p \Big(\PPi \bigl( \J_{\fl} \bigl[ \delta_{\bu^U|_{\Eps_+(\fl)}(z) } \otimes \1\bigr]\bigr)(x) \Big)\Big|_{x=z}\\
		&= \int_{\R^{\Eps_+(\fl)}} (\d^p\xi_\fl)(z + cz+ \ba_\fl \cdot \bv) \delta_{\bu^U|_{\Eps_+(\fl)}(z) }(\bv) d\bv\\ 
		&= (\d^p\xi_\fl)\big(z + cz + \ba_\fl \cdot \bu^U(z)\big) = \xi_{(\fl,p)}^{\bu,c}(z), \label{eq:RF_proof1}
	\end{equs}
	where in the second equality we have used $\deg(\fl,p) < 0$.
	
	Second, let $\beta \in \N^{\Eps_-}$, we can view $\fX^\beta$ as a function $\R^{\Eps} \to \R$ by setting 
	$\fX^\beta(\bv)= \bv^\beta$. Since polynomials equal their Taylor expansion, 
	we get for $\bv, \tilde{\bv} \in \R^{\Eps}$
	\begin{equ}
		\fX^\beta(\bv) = \sum_{\alpha \in \N^{\Eps_-}} \frac{D^\alpha \fX^\beta (\tilde\bv)}{\alpha!} \fX^\alpha(\bv-\tilde\bv)\;.\label{eq:RF_proof2}
	\end{equ}
	Note also that for $\alpha \in \N^{\Eps}$ such that there is $o \in \Eps_+$ with $\alpha(o) > 0$ one has
\begin{equ}
	\Pi_z\bigl(\textbf{U}(z)-\bu^U(z) \1, \bXi_\CB^U(z)\bigr)^\alpha(z) = 0\;,
\end{equ}
	 by multiplicativity of $\Pi$. Finally, for $F \in \SQ(\Rule)$ we use~\eqref{eq:Fconforms} to obtain (dropping $z$ for an easier presentation)
	\begin{equs}
		\Pi \Big(\CQ_{\leq 0} \BF_\CB\big(U\big)\Big) &= \sum_{\alpha \in \N^{\Eps_-}} \frac{D^\alpha F (\bu^U,0)}{\alpha!} \Pi \bigl(\textbf{U}-\bu^U \1, \bXi_\CB^U\bigr)^\alpha \\
		&= \sum_{\alpha \in \N^{\Eps_-}}\sum_{\beta \in \N^{\Eps_-}} \frac{ D^\alpha  F^\beta(\bu^U,0) \fX^\beta (\bu^U,0)}{\alpha!} \Pi \bigl(\textbf{U}-\bu^U \1, \bXi_\CB^U\bigr)^\alpha\\
		&= \sum_{\beta \in \N^{\Eps_-}}F^\beta(\bu^U,0)  \sum_{\alpha \in \N^{\Eps_-}} \frac{ D^\alpha  \fX^\beta (\bu^U,0)}{\alpha!} \Pi \bigl(\textbf{U}-\bu^U \1, \bXi_\CB^U\bigr)^\alpha\\
		&= \sum_{\beta \in \N^{\Eps_-}}F^\beta(\bu^U,0)\; \Pi \bigl(\textbf{U}-\bu^U \1, \bXi_\CB^U\bigr)^\beta\;,\label{eq:RF_proof3}
	\end{equs}
	where in the third equality we use the fact that $\Eps(F^\beta) \subset \Eps_+$ and that $F^\beta \neq 0$ 
	only for finitely many $\beta$, and in the fourth equality we use~\eqref{eq:RF_proof2} together with the 
	multiplicativity of $\Pi$ and $\Pi \one = 1$. Now since $\Eps(F^\beta) \subset \Eps_+$ we have 
	$F^\beta(\bu^U,0) = F^\beta(\bu,\xi^{\bu,c})$. Using the fact that $\d^p u_\ft = \Pi(U_{(\ft,p)})$ 
	and~\eqref{eq:RF_proof1} we also get for $\beta \in \N^{\Eps_-}$ that $\Pi \bigl(\textbf{U}-\bu^U \1, \bXi_\CB^U\bigr)^\beta = \fX^\beta(\bu,\xi^{\bu,c})$. Plugging this into~\eqref{eq:RF_proof3} and using 
	again~\eqref{eq:Fconforms}, we obtain
	\begin{equ}
		\Pi_z \Big(\CQ_{\leq 0} \BF_\CB\big(U(z)\big)\Big) (z) = \sum_{\beta \in \N^{\Eps_-}} F^\beta(\bu,\xi^{\bu,c})(z) \fX^\beta(\bu,\xi^{\bu,c})(z) = F_\ft\big(\bu, \xi^{\bu,c}\big)(z)\,,
	\end{equ}
	thus finishing the proof.
\end{proof}

\begin{remark}\label{rem:reconstruction}
	The result of Lemma~\ref{lem:RF} still holds if one replaces $\CQ_{\leq 0}$ in~\eqref{eq:RF} by 
	$\CQ_{\leq a}$ for any $a > 0$. This is due to the fact that analytical properties of 
	models~\eqref{eq:model} imply that $\Pi_z(\sigma)(z) = 0$ for all $\sigma \in \scal{\tau}_\CB$ with 
	$\deg(\tau) > 0$.
\end{remark}

\subsection{BPHZ character}
\label{sec:renorm-models}

\begin{definition}\label{def:BPHZ}
	Assume that we are given a collection of random stationary smooth functions $\xi = (\xi_\fl)_{\fl \in \fL_-}$ 
	 satisfying the assumption of Definition~\ref{def:canonical} almost surely. Let $\PPi$ be the canonical lift of $\xi$ to $\CT_\CB$. We define characters $g^-(\PPi)$, $\ren \in \CG^-_\CB$ on $\CT^-_\CB$ by
	\begin{equ}[eq:character]
		g^-(\PPi)(\sigma) \eqdef \E(\PPi \sigma) (0)\;,\qquad\text{and}\qquad \ren(\sigma) = g^-(\PPi)\tilde\CA^-_\CB \sigma\,,
	\end{equ}
	where $\tilde\CA^-_\CB$ is the negative twisted antipode defined for the vector space assignment $\CB$ (see Section~\ref{sec:VectorRS}).
	The character $\ren$ is called the BPHZ character for $\PPi$. We will call $M^\BPHZ = (\ren \otimes \id)\Delta^-_\CB$ the BPHZ renormalisation on $\CT_\CB$ and $\ren^{\,\bu} = \ren \circ \Ev_\bu$ and $M^\BPHZ_\bu$ respectively are called BPHZ character and a BPHZ renormalisation on $\CT_\CD$. Finally, $\PPi^\BPHZ$ is called the BPHZ lift of $\xi$.\label{lab:BPHZ}
\end{definition}

If we are given a random collection of functions $\xi^\eps$ satisfying the above assumptions for all $\eps > 0$ we shall write $\ren^\eps$ and $\ren^{\,\bu,\eps}$ the corresponding BPHZ characters which are coming from the canonical lift $\PPi^\eps$ of $\xi^\eps$. Note that as in the proof of Lemma~\ref{lem:EvM} we have $\tilde\CA^-_\CB \Ev_\bu = \Ev_\bu \tilde\CA^-_\CD$ thus implying $\ren^{\,\bu} = g^-(\PPi) \Ev_\bu \tilde\CA^-_\CD$.

The next proposition presents a sufficient condition Minor comment (9) on the noises guaranteeing that the BPHZ character $\ren$ 
is translation invariant.

For this, assume that we are given a partition of the set of noise labels $\fL_- = \sqcup_{i \in I} \fL^i_-$ such that for every $i\in I$ there exists a finite subset $\Eps_+^i \subset \Eps_+$ and $\ba^i \in \mathcal{L}(\R^{\Eps_+^i},\R^{d+1})$ with $\ba^i = \ba_\fl$ and $\Eps_+^i = \Eps_+(\fl)$, for all $\fl \in \fL^i_-$.

\begin{proposition}\label{prop:stationarity}
	Let $\xi = (\xi_\fl)_{\fl \in \fL_{-}}$ be a collection of smooth random functions as in Definition~\ref{def:canonical}, such that for all 
	$i,j \in I$ with $i\neq j$ and all $\fl \in \fL_{-}^i$, $\bar\fl \in \fL_{-}^j$ the function $\xi_\fl$ is independent of 
	$\xi_{\bar \fl}$. Then the character $\ren$ is translation invariant.
\end{proposition}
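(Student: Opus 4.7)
The plan is to reduce translation invariance of $\ren$ to a joint stationarity property of the canonical lift $\PPi$. The translation operator $T_\bu$ defined in Section~\ref{sec:evaluation} acts on the vector spaces $\CB_\fl$ and, being a natural transformation, lifts to operators on both $\CT_\CB$ and $\CT^-_\CB$ (the latter via multiplicative extension along the forest product) that commute with the twisted antipode: $\tilde\CA^-_\CB \circ T_\bu = T_\bu \circ \tilde\CA^-_\CB$, by the same functorial argument used for $\Ev_\bu$ in the proof of Lemma~\ref{lem:EvM}. Recalling~\eqref{eq:character}, it suffices to show $g^-(\PPi) \circ T_\bu = g^-(\PPi)$, i.e.\ $\E\bigl[(\PPi\, T_\bu \sigma)(0)\bigr] = \E\bigl[(\PPi \sigma)(0)\bigr]$ for every $\sigma \in \CT_\CB$.

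The key computation concerns the action of $T_\bu$ at a single noise edge. For $\fl \in \fL^i_-$ and $\mu \in \CB_\fl$, combining~\eqref{eq:canonical} with the identity $\scal{T_\bu\mu,\varphi} = \scal{\mu,\varphi(\bigcdot + \bu|_{\Eps_+(\fl)})}$ yields
\begin{equ}
\bigl(\PPi\,\J_{\fl}[T_\bu\mu \otimes \one]\bigr)(z) = \int_{\R^{\Eps_+(\fl)}} \xi_\fl\bigl(z + cz + \ba_\fl \cdot \bv + \bv^{(i)}\bigr)\,\mu(d\bv)\;,
\end{equ}
where $\bv^{(i)} \eqdef \ba^i \cdot \bu|_{\Eps_+^i} \in \R^{d+1}$ depends only on the group $i$ containing $\fl$. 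Writing $\PPi^{(\bu)}$ for the canonical lift of the shifted noise family $\xi^{(\bu)} \eqdef (\xi_\fl(\bigcdot + \bv^{(i(\fl))}))_{\fl \in \fL_-}$, this says $\PPi\,\J_{\fl}[T_\bu\mu \otimes \one] = \PPi^{(\bu)}\,\J_{\fl}[\mu \otimes \one]$ on elementary planted trees, and an induction on the tree structure via multiplicativity and the admissibility relations of Definition~\ref{def:admissible} extends this to $\PPi(T_\bu\sigma) = \PPi^{(\bu)}\sigma$ for every $\sigma \in \CT_\CB$.

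It remains to check that $\E\bigl[(\PPi^{(\bu)}\sigma)(0)\bigr] = \E\bigl[(\PPi\sigma)(0)\bigr]$. By the assumed independence of the families $(\xi_\fl)_{\fl \in \fL^i_-}$ across $i \in I$, the joint law of $\xi^{(\bu)}$ factorises as a product over the groups, and within each group $i$ every noise is translated by the same vector $\bv^{(i)}$. Joint stationarity of the noises within each group (already required for the $\bu = 0$ translation invariance that underlies the BPHZ construction) then gives $(\xi_\fl(\bigcdot + \bv^{(i)}))_{\fl \in \fL^i_-} \stackrel{d}{=} (\xi_\fl)_{\fl \in \fL^i_-}$, whence $\xi^{(\bu)} \stackrel{d}{=} \xi$ and therefore $\PPi^{(\bu)}\sigma \stackrel{d}{=} \PPi\sigma$ as random smooth functions, so the two expectations agree.

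The hard part is securing the first step: one needs to verify carefully that $T_\bu$ genuinely defines a natural transformation of the functor $\BF_V$ of~\cite[Sec.~5]{CCHS} with respect to the vector space assignment $V = \CB$, so that the recursive construction of $\tilde\CA^-_\CB$ automatically intertwines it on both trees and forests. Everything else is a routine combination of multiplicativity, stationarity and independence between groups.
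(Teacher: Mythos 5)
Your argument is correct, and it takes a genuinely different \emph{execution} of the same basic strategy as the paper. Both proofs reduce (implicitly or explicitly using that $T_\bu$ commutes with $\tilde\CA^-_\CB$, which does hold by the naturality argument you invoke) to showing $g^-(\PPi)T_\bu\sigma = g^-(\PPi)\sigma$, and both then use independence across groups plus stationarity within groups. The paper, however, does this by invoking the explicit integral representation of the BPHZ character from \cite{HC}: it writes $g^-(\PPi)T_\bu\sigma = C\bigl((\ba^i\cdot\bu)_{i\in I}\bigr)$, uses independence to factor $C$ over the groups as a convolution $\int \hat K\prod_i C_i(\ba^i\cdot\bu - z_i)\,dz_i$, and then uses stationarity to argue each $C_i$ is constant. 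Your route avoids the explicit cumulant formula entirely: you show the deterministic, pathwise identity $\PPi(T_\bu\sigma) = \PPi^{(\bu)}\sigma$, where $\PPi^{(\bu)}$ is the canonical lift of the group-wise translated noise $\xi^{(\bu)}$, by a computation at a single noise edge followed by induction via multiplicativity and admissibility. The remaining work is then purely probabilistic: independence across groups lets the law of $\xi^{(\bu)}$ factorise, joint stationarity within each group gives $\xi^{(\bu)}\stackrel{d}{=}\xi$, and equality of expectations follows. This is cleaner in that it localises all the analytic content into a single one-line computation at a noise leaf and decouples it from the combinatorics of the BPHZ character, at the price of needing the careful inductive argument that $T_\bu$ pushes through the full canonical lift (which you do supply). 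Both proofs depend on the same naturality of $T_\bu$ for the commutation with the twisted antipode; your flag of this as the delicate point is appropriate, but it is handled the same way as for $\Ev_\bu$ in the proof of Lemma~\ref{lem:EvM}, so there is no genuine gap.
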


\begin{proof} 
	By linearity of $\tilde\CA^-_\CD$ it is sufficient 
	to show that $g^-(\PPi) T_\bu \sigma = g^-(\PPi) \sigma$ for every $\sigma \in \scal{\tau}_\CB$,$\tau \in \fT_-(\Rule)$ and every $\bu \in \R^{\Eps_+}$. 
	Using explicit formulas for the BPHZ character~\cite[Eq.~3.4 \& Lem.~4.14]{HC} it is easy to see that 
	for each $\bu \in \R^{\Eps_+}$
	\begin{equ}
		g^-(\PPi)T_\bu \sigma = C \big((\ba^{i}\cdot \bu)_{i \in I}\big)\;,
	\end{equ} 
	for some function $C : (\R^{d+1})^{I} \to \R$. Using the assumption on independence of the noises from different sets $\fL^i_-$ we can further obtain
	\begin{equ}
		C \big((\ba^{i}\cdot \bu)_{i \in I}\big) = \int_{(\R^{d+1})^{I}} \hat{K} \big((z_i)_{i\in I}\big) \prod_{i \in I} C_i(\ba^{i}\cdot \bu - z_i)\, dz_i\;,
	\end{equ}
	for some kernel $\hat K : (\R^{d+1})^{I} \to \R$ and functions $C_i : \R^{d+1} \to \R$. Stationarity of the noises imply that $C_i(\ba^{i}\cdot \bu - z_i) = C_i(0 - z_i)$ and therefore
	\begin{equ}
		g^-(\PPi) T_\bu \sigma = C \big((\ba^{i}\cdot \bu)_{i \in I}\big) = C(0) = g^-(\PPi) T_\0 \sigma = g^-(\PPi)\sigma\;,
	\end{equ}
	thus finishing the proof.
\end{proof}

\begin{remark}\label{rem:bphz_abuse}
	For the noise as in the above proposition and the BPHZ character $\ren$ (or $\ren^\eps$) on $\CT_\CB$ we will use an abuse of notation and call $\ren$ (or $\ren^\eps$) to be also the BPHZ character $\CT_\CD$ by writing $\ren = \ren^\0$ (or $\ren^\eps= \ren^{\0,\eps}$) thanks to the Proposition~\ref{prop:translation}.
\end{remark}

\section{Renormalised equations}
\label{sec:renorm_SPDE}

In this section we apply the results from previous sections in order to derive a renormalised equation of a 
certain class of PDEs and show the analogue of~\cite[Thm.~5.7]{BCCH} for an inhomogeneous noise.

In order to write a renormalised equation, we cannot in general use the regularity structure $\CT_\CB$ introduced in Section~\ref{sec:spaces}. The reason is that the positive and negative coproducts $\Delta^+_\CB$ and $\Delta^-_\CB$, which define respectively the positive and negative renormalisations, do not interact correctly on $\CT_\CB$. In particular, the natural identity $\Pi_z^g = \Pi_z M$, for the models in \eqref{eq:RenormalisedPi}, does not always hold. To resolve this problem, we need to use an \textit{extended} regularity structure $\CT^\ex_\CB$ constructed in \cite{BHZ}.

We denote by $\fT^\ex$ be the set of rooted decorated combinatorial trees $\tau = (T, \ff, \fm, \fo)$, where the triple $(T, \ff, \fm)$ is as in Section~\ref{sec:rules} and $\fo : N_T \to \Z^{d+1} \oplus \Z(\fL)$ is an extra node decoration. Here, $\Z(\fL)$ is the free abelian group generated by $\fL$. 
We define the degree assignment, which takes into account the extended decorations $\fo$ (cf. \cite[Def.~5.3]{BHZ}), 
\begin{equ}[eq:degree-extended]
{\deg}^\ex \tau \eqdef \sum_{v \in N_T} (| \fm(v) |_\s + | \fo(v) |_\s) + \sum_{e \in E_T} \deg \ff(e)\;,
\end{equ}
where $| \fo(v) |_\s \eqdef |k|_\s + \sum_{\ft \in \fL} a_\ft \deg \ft$ for $\fo(v) = \bigl(k, \sum_{\ft \in \fL} a_\ft \ft\bigr) \in \Z^{d+1} \oplus \Z(\fL)$ with $a_\ft \in \Z$. In the notation of \cite[Def.~5.3]{BHZ}, the degree \eqref{eq:degree-extended} corresponds to $|\bigcdot|_+$, while \eqref{eq:degree-not-extended} corresponds to $| \bigcdot |_-$. Then we denote by $\CT^\ex_\CB$ and $\CT^\ex_\CD$ the extended regularity structures, defined in \cite{BHZ} and in Section~\ref{sec:VectorRS} for the respective vector assignments $\CB_\ft$ and $\CD_\ft$. Respectively, we can define the sets as in the beginning of Section~\ref{subsec:coherence} for the the extended regularity structure. Since we defined $\deg^\ex$ only on trees, we will keep using $\deg$ on $\fL$.

We note that many definitions from the previous sections can be readily formulated for the regularity structure $\CT^\ex_\CB$. Namely, we can define the projection $\CQ^\ex_{\le \gamma}$ using the degree ${\deg}^\ex$. Then the definitions of models and modelled distributions from Section~\ref{sec:models} hold also for the regularity structure $\CT^\ex_\CB$, where we use $\deg^\ex\tau$ and $\CQ^\ex_{\le \gamma}$ in place of $\deg \tau$ and $\CQ_{\le \gamma}$. Respectively, the rest of Section~\ref{sec:models} can be reformulated on $\CT^\ex_\CB$, since all the differences then lie in the definition of coproducts and twisted antipodes on extended and reduced regularity structures. We will denote an elements of renormalisation group by $M^\ex$ emphasizing that the negative coproduct that is used in $M^\ex$ is an extended negative coproduct.

\begin{remark}
	The main difference between the extended and reduced regularity structures is that the extended negative coproduct $\Delta^{-}_{V,\ex}$ on $\CT^\ex_V$ not only extracts negative trees but also assigns an extended decoration $\fo$ at the node to which the negative tree was contracted. This is done in such a way so that if $\Delta^{-}_{V,\ex} \sigma = \sum_i \sigma^1_i \otimes \sigma^2_i$ then $\deg^\ex \sigma = \deg^\ex \sigma^2_i$ even though $\deg \sigma \leq \deg \sigma^2_i$. Such an assignment of an extended decoration prevents an unnecessary positive renormalisation of the trees $\sigma^2_i$ which leads to a relation $\Pi^g_z = \Pi_z M^\ex$.
\end{remark}

Let $\PPi : \CT^\ex_\CB \to \SC^\infty$ be an admissible multiplicative map, and let $Z = \ZZ(\PPi)$ be the 
corresponding model on $\CT^\ex_\CB$. Let us furthermore write the Green's functions as $G_\ft = K_\ft + R_\ft$, 
where the kernels have the properties listed in Section~\ref{sec:admissible}. Given $\gamma = (\gamma_\ft)_{\ft \in \fL_+} \in \R_+^{\fL_+}$, we denote $\partial \gamma_\ft \eqdef \gamma_\ft - \deg \ft$ and $\partial \gamma \eqdef (\partial \gamma_\ft)_{\ft \in \fL_{+}} \in \R^{\fL_{+}}$.
Then for every $\ft \in \fL_+$ we introduce the integration operator $\CP_\ft$, which acts on modelled 
distributions on $\CT^\ex_\CB$ as
\begin{equ}[eq:P_t]
\CP_\ft  \eqdef  \CK^{\ft}_{\partial \gamma_\ft} + R^{\ft}_{\partial \gamma_\ft} \CR\;,
\end{equ}
where the abstract integral operator $\CK^{\ft}_{\partial\gamma_\ft}$ is defined in~\cite[Eq.~5.15]{Regularity} 
for the value $\partial\gamma_{\ft}$ and for the kernel $K_\ft$, $R^{\ft}_{\partial\gamma_\ft}$ is defined 
in~\cite[Eq.~7.7]{Regularity} for the kernel $R_\ft$, and $\CR$ is the reconstruction map associated to the 
model $Z$ (see~\cite[Sec.~6.1]{Regularity}). The above definition implies that $\CP_\ft$ depends on the 
choice of the model $Z$. Since the underlying model will be always specified, we prefer not to stress this 
dependence in the notation. Let us denote by $\one_+ : \R^{d+1} \to \{0,1\}$ the indicator function of the set $\{(t,x) \in 
\R^{d+1} : t > 0 \}$, which can be canonically identified with an element of $\CD_0^{\infty, \infty}$ (we recall 
that the subscript $0$ means the lowest degree of modelled distributions).

For a suitable choice of values $\gamma_\ft > 0$, we can formulate the abstract version of the 
system~\eqref{eq:system}:
\begin{equ}[eq:AbstractProblemB]
U_{\ft} = \CP_{\ft} \Bigl[\one_+ \CQ^\ex_{\leq \partial\gamma_{\ft}} \BF_{\CB, \ft}(U) \Bigr] + G_\ft u_\ft^0\;,
\end{equ}
where the nonlinearity $\BF_{\CB, \ft}$ is defined in~\eqref{eq:FofU_B}, and where $G_\ft u_\ft^0$ is the 
harmonic extension, given by $G_\ft u_\ft^0(t, x)  \eqdef  \int_{\T^d} G_\ft(t, x-y) u_\ft^0(y)\, dy$ and lifted to 
an abstract polynomial via its Taylor expansion as in~\cite[Eq.~2.6]{Regularity}.

We are not going to formulate a fixed point problem on the regularity structure $\CT^\ex_\CD$. However, given a 
solution $U$ of~\eqref{eq:AbstractProblemB} on $\CT^\ex_\CB$, we can construct a modelled distribution 
$U_\CD$ on $\CT^\ex_\CD$, which is coherent with the nonlinearity $\BF_{\CD, \ft}$ defined in~\eqref{eq:FofU}. 
More precisely, let us denote $\bu^U = (u^{U}_{o} : o \in \CO) \in \R^{\CO}$ with $u^{U}_{(\ft,p)} = \Proj_\one U_{(\ft,p)} = p!\, \Proj_{\X^p}U_\ft$. Then we define 
\begin{equ}[eq:U_on_D]
U_{\CD, \ft} = \sum_{p \in \N^{d+1}} \frac{1}{p!} u^{U}_{(\ft,p)} \X^p + \bUpsilon^{F}_\ft(\bu^U)\;,
\end{equ}
where we have used the map~\eqref{eq:bUpsilon}.

\begin{lemma}\label{lem:U=U_D}
Let $Z \in \M_0$ be an admissible model on $\CT^\ex_\CB$. Let $\gamma,\eta \in \R^{\fL_+}$ such that $\eta_\ft,\partial\eta_\ft > - \s_0$ and 
$\partial\gamma_\ft > 0$ for every $\ft \in \fL_{+}$. Assume that $U \in \SU^{\gamma,\eta}$ and $\one_+ \BF_{\CB, \ft}(U) \in \SU^{\partial\gamma,\partial\eta}$ with respect to $Z$. Then if $U$ is a solution 
of~\eqref{eq:AbstractProblemB} and $U_{\CD}$ is defined by~\eqref{eq:U_on_D}, then for every $\ft \in \fL_+$
\begin{equ}[eq:U=U_D]
	 U_\ft = \CQ^\ex_{\leq \gamma_\ft} \Ev_{\bu^U}\big[U_{\CD,\ft}\big]\,.
\end{equ}
\end{lemma}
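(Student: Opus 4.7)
The plan is to combine the fixed point equation~\eqref{eq:AbstractProblemB} with the coherence framework of Section~\ref{sec:coherence}, and to close~\eqref{eq:U=U_D} by an induction on the truncation parameter $L(\cdot)$.

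First, writing $\CP_\ft = \CK^\ft_{\partial\gamma_\ft} + R^\ft_{\partial\gamma_\ft}\CR$ and comparing with the canonical decomposition~\eqref{eq:UR}, uniqueness of~\eqref{eq:UR} will identify from~\eqref{eq:AbstractProblemB} the relation
\begin{equ}
U^R_\ft = \CQ^\ex_{\leq \partial\gamma_\ft}\,\BF_{\CB,\ft}(U)\qquad\text{on}\quad\{t>0\}\;,
\end{equ}
with the coefficients $\{u^U_{(\ft,p)}\}_p$ arising from the polynomial corrections of $\CK^\ft_{\partial\gamma_\ft}$, $R^\ft_{\partial\gamma_\ft}$ and from the harmonic extension $G_\ft u^0_\ft$.

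Next, setting $V \eqdef U_\CD$ as in~\eqref{eq:U_on_D} and unwinding~\eqref{eq:upsilon}--\eqref{eq:maps_bold_Upsilon}, I would check directly that $\bu^V = \bu^U$ and $V^R_\ft = \bar\bUpsilon^F_\ft(\bu^V)$, so that $V$ is coherent with $F$ to all orders in the sense of Definition~\ref{def:coherence}. Lemma~\ref{lem:twoFs} at $L=\infty$ will then give
\begin{equ}
\BF_{\CB,\ft}(W) = \Ev_{\bu^U}\bigl[\bar\bUpsilon^F_\ft(\bu^U)\bigr] = W^R_\ft\;,
\end{equ}
where $W(z) \eqdef \Ev_{\bu^U(z)}\bigl[V(z)\bigr]$ is defined pointwise in $z \in \Lambda\setminus P$.

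The core step will be to show that $\bp_{\leq L}U(z) = \bp_{\leq L}W(z)$ for every $z$ and every $L\in\N$, by induction on $L$. The base case $L=0$ reduces to $u^U_{(\ft,0)}=u^V_{(\ft,0)}$, which holds by construction. For the inductive step the crucial point is locality of $\BF_{\CB,\ft}$ with respect to $L$: in the Taylor series~\eqref{eq:FofU_B}, every factor $U_o - u^U_o\one$ with $o\in\CO$ has truncation at least one, and the $\alpha'$-th summand of $\hXi_{\CB,o}(U)$ from~\eqref{eq:Xi_hat_B} has truncation $\geq|\alpha'|$, the $\alpha'=0$ term being a single noise edge whose distributional content $\delta_{\bu^U|_{\Eps_+(o)}}$ depends on $U$ only through $\bu^U$. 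Hence $\bp_{\leq L-1}\BF_{\CB,\ft}(U)$ is determined by $\bp_{\leq L-1}U$. Combining the inductive hypothesis with the identity for $W$ will then give $\bp_{\leq L-1}U^R_\ft = \bp_{\leq L-1}W^R_\ft$, and since $L(\J_\ft[\tau]) = L(\tau)+1$ this upgrades to $\bp_{\leq L}\J_\ft[U^R_\ft] = \bp_{\leq L}\J_\ft[W^R_\ft]$; together with the matching polynomial parts this closes the induction.

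Finally, subcriticality via~\cite[Prop.~5.15]{BHZ} ensures that only finitely many trees have degree $\leq\gamma_\ft$, so $L(\cdot)$ is bounded on this set by some $L_\star$; applying $\CQ^\ex_{\leq\gamma_\ft}$ to the inductive identity at $L=L_\star$ will yield~\eqref{eq:U=U_D}. The hard part will be the locality statement for $\BF_{\CB,\ft}$ in the truncation $L$, which requires careful bookkeeping of how noise edges (which do not contribute to $L$) interact with the polynomial factors and the Dirac derivatives hidden inside $\hXi_{\CB,o}$.
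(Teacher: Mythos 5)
Your proof takes essentially the same route as the paper's: read off $U^R_\ft = \CQ^\ex_{\leq\partial\gamma_\ft}\BF_{\CB,\ft}(U)$ from the fixed-point equation, establish coherence of $U_\CD$ with $F$ via $\bu^{U_\CD}=\bu^U$, invoke Lemma~\ref{lem:twoFs} to obtain a self-consistent identity for $W\eqdef\Ev_{\bu^U}[U_\CD]$, and finish by an induction comparing $U$ with $W$ component by component. The one genuine difference is the induction variable: the paper inducts on the \emph{number of edges} of $\tau$, so that all abstract monomials $\X^m$ are $0$-edge trees handled in the base case, whereas you induct on the \emph{truncation} $L(\tau)$, so those monomials are spread across levels $|m|_\s$. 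Both variants close, but yours makes the ``locality'' step you flag at the end genuinely subtle: it is \emph{not} true that $\bp_{\leq L-1}\BF_{\CB,\ft}(U)$ is determined by $\bp_{\leq L-1}U$ alone. Both the Taylor coefficients $D^\alpha F_\ft(\bu^U,0)$ in~\eqref{eq:FofU_B} and the Dirac derivatives $\delta^{(\alpha)}_{\bu^U\restr_{\Eps_+(o)}}$ in~\eqref{eq:Xi_hat_B} depend on the scalar vector $\bu^U=(u^U_{(\ft,p)})$, whose entries live at truncation levels $|p|_\s$ not a priori bounded by $L-1$. The correct locality statement is that $\bp_{\leq L-1}\BF_{\CB,\ft}(U)$ is determined by $\bp_{\leq L-1}U$ \emph{together with} $\bu^U$; since you have already verified $\bu^{U_\CD}=\bu^U$ (hence $\bu^W=\bu^U$, as $\Ev$ preserves polynomial coefficients), the induction still closes, but the locality lemma should be stated with this additional input. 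The paper's induction on edges sidesteps the whole issue, since all polynomial data is then base-case data; this is the only thing the edge-count parameter buys you, so your approach is a valid, if marginally more delicate, alternative.
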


\begin{proof}
First, note that since $\Proj_{\X^p} U_{\CD, \ft} = \frac{1}{p!} u^U_{(\ft,p)}$ for $\ft \in \fL_+$ and $p \in \N^{d+1}$, we trivially have $\bu^U = \bu^{U_\CD}$ . This implies that, $U_\CD$ is coherent with $F$ 
to all orders and therefore by Lemma~\ref{lem:twoFs} 
\begin{equ}
	\BF_{\CB, \ft} \bigl( \Ev_{\bu^U}[U_\CD]\bigr) = \Ev_{\bu^U}\big[\bar \bUpsilon^{F}_\ft(\bu^{U_\CD})\big] = \Ev_{\bu^U}\big[\bar \bUpsilon^{F}_\ft(\bu^U)\big]\;.
\end{equ}
Using the above equality and applying evaluation map $\Ev_{\bu^U}$ to~\eqref{eq:U_on_D} yields
\begin{equ}[eq:U_Dcoherent]
\CQ^\ex_{\leq \gamma_\ft}\Ev_{\bu^U}\big[U_{\CD, \ft}\big] = \sum_{|p|_\s \leq \gamma_\ft} \frac{1}{p!} u^{U}_{(\ft,p)} \X^p + \CQ^\ex_{\leq \gamma_\ft}\J_{\ft} \bigl[\BF_{\CB, \ft} \bigl( \Ev_{\bu^U}[U_\CD]\bigr)\bigr]\,.
\end{equ}
Note that by the definition~\eqref{eq:P_t} and~\cite[Eq.~5.15]{Regularity}, 
\begin{equ}[eq:Ucoherent]
	U_\ft = \sum_{|p|_\s \leq \gamma_\ft} \frac{1}{p!} u^{U}_{(\ft,p)} \X^p + \J_{\ft} \bigl[\CQ^\ex_{\leq \partial\gamma_\ft}\BF_{\CB, \ft}( U)\bigr]\;.
\end{equ} 
Now, in order to prove~\eqref{eq:U=U_D} it suffices to show that for all $\ft \in \fL_+$, $\tau \in \fT^\ex$ with 
$\J_\ft[\tau] \in \fT^\ex(\Rule)$ and $\CQ^\ex_{\leq \gamma_\ft} \J_\ft[\tau] \neq 0$ one has $\Proj_{\J_\ft[\tau]} U_{\ft} = \Proj_{\J_\ft[\tau]} \CQ^\ex_{\leq \gamma_\ft} \Ev_{\bu^U}[U_{\CD, \ft}]$. We show this by induction over the 
number of edges in the tree $\tau$. 
It is therefore easy to see that the base case of induction (when $\tau$ has no edges) follows from the fact 
that the right-hand sides of~\eqref{eq:U_Dcoherent} and~\eqref{eq:Ucoherent} have the same polynomial 
parts. Assume that $\tau$ has $k\geq 1$ edges and that the statement holds true for all $\bar\ft \in \fL_{+}$ 
and trees with number of edges less or equal to $k-1$. Then using~\eqref{eq:Ucoherent} we get
\begin{equs}
	\Proj_{\J_\ft[\tau]} U_{\ft} = \Proj_\tau \BF_{\CB, \ft}( U) = \Proj_\tau \BF_{\CB, \ft}\big( \Ev_{\bu^U}[U_\CD]\big)
	=  \Proj_{\J_\ft[\tau]} \Ev_{\bu^U}\big[U_{\CD, \ft}\big]\,,
\end{equs}
where we used the induction hypothesis to justify the equality in the middle and~\eqref{eq:U_Dcoherent} for the last equality. 
\end{proof}

We can now exploit this self-consistent identity to show that if the underlying
model is obtained from a canonical lift by the action of a $\bu$-independent element 
of the renormalisation group, then the solution obeys a renormalised PDE.

\begin{theorem}\label{thm:renormalised_PDE}
Let $\xi = (\xi_\fl)_{\fl \in \fL_{-}}$ be a family of smooth functions satisfying the assumptions of 
Definition~\ref{def:canonical} and let 
$\gamma,\eta$ be as in Lemma~\ref{lem:U=U_D}. Let $\PPi$ be a canonical lift of $\xi$ on translated by 
$c \in L(\R^{d+1})$ and shifted by $\ba = (\ba_\fl)_{\fl \in \fL_-}$ (see Definition~\ref{def:canonical}). Let $g \in \CG^-_{\CB,\ex}$ be a translation invariant character and $(\Pi^g, \Gamma^g) = \ZZ(\PPi^g)$ be a renormalised model on $\CT^\ex_{\CB, \bar \gamma}$ where $\bar\gamma = \max_{\ft \in \fL_+} \gamma_\ft$, and let $\CR^g$ be the respective reconstruction map. Assume that $U \in \SU^{\gamma,\eta}_T$ and $\one_+ \BF_{\CB, \ft}(U) \in \SU^{\partial\gamma,\partial\eta}_T$ are such that $U$ is a local solution to the abstract 
problem~\eqref{eq:AbstractProblemB} on an interval $[0, T]$. For $(\ft,p) \in \CO$ set $u_{\ft} = \CR ^gU_\ft$, $u_{(\ft,p)}  \eqdef  \d^p u_\ft$ and let $(\bu,\xi^{\bu,c})$ be defined by~\eqref{eq:uxi}. Then the 
functions $u_\ft$ give a solution on $[0, T]$ of the following renormalised version of the system of 
PDEs~\eqref{eq:system}:
\begin{equ}[eq:reconstructed_PDE]
\d_t u_{\ft} = \SL_\ft u_{\ft} + F_{\ft} (\bu, \xi^{\bu,c})\; + \sum_{\tau \in \fT^\ex_-(\Rule)} g^\0 \bbUpsilon^{F}_\ft[\tau](\bu,\xi^{\bu,c})\;,
\end{equ}
with the initial condition $u_{\ft}(0, \bigcdot) = u^0_{\ft}(\bigcdot)$.
\end{theorem}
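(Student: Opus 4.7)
The plan is to apply the reconstruction map $\CR^g$ to the abstract fixed point equation \eqref{eq:AbstractProblemB} and carefully identify the resulting nonlinearity. By the Schauder-type properties of $\CP_\ft$ (see \cite[Thm.~5.12, Prop.~6.16]{Regularity}), reconstructing \eqref{eq:AbstractProblemB} yields the mild formulation $u_\ft = G_\ft * \bigl(\one_+ \CR^g \BF_{\CB, \ft}(U)\bigr) + G_\ft u_\ft^0$ on $[0,T]$, which is equivalent (in the distributional sense) to $\d_t u_\ft = \SL_\ft u_\ft + \CR^g \BF_{\CB,\ft}(U)$ with the correct initial condition. It remains to show that $\CR^g \BF_{\CB,\ft}(U)(z)$ equals the right-hand side of \eqref{eq:reconstructed_PDE}.

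The central identity in the extended regularity structure is that $\Pi^g_z = \Pi_z \circ M^\ex$ (where $M^\ex = (g \otimes \id)\Delta^-_{\CB,\ex}$), so that pointwise $\CR^g V(z) = \CR(M^\ex V)(z)$. I then combine this with the interplay between $\CT^\ex_\CB$ and $\CT^\ex_\CD$ via the evaluation map: by Lemma~\ref{lem:U=U_D}, $U_\ft$ agrees (up to degrees irrelevant for reconstruction) with $\Ev_{\bu^U} U_{\CD,\ft}$, where $U_\CD$ is coherent with $F$ of all orders. Using identity \eqref{eq:F_eval}, this lifts to $\BF_{\CB,\ft}(U) = \Ev_{\bu^U} \BF_{\CD,\ft}(U_\CD)$, and so
\begin{equ}
M^\ex \BF_{\CB,\ft}(U)(z) = M^\ex \Ev_{\bu^U(z)} \BF_{\CD,\ft}(U_\CD(z)) = \Ev_{\bu^U(z)} M^\ex_{\bu^U(z)} \BF_{\CD,\ft}(U_\CD(z))
\end{equ}
by Lemma~\ref{lem:EvM}. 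The translation invariance assumption on $g$ together with Proposition~\ref{prop:translation} collapses $M^\ex_{\bu^U(z)}$ to the fixed operator $M^\ex_\0$, making this factor completely independent of the (unknown) reconstruction.

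At this point, Proposition~\ref{prop:MU_coherent} (in its extended version) tells us that $M^\ex_\0 U_\CD$ is coherent with $\hat M_\0 F$, so by Lemma~\ref{lem:CoherenceEquivalence} one has $M^\ex_\0 \BF_{\CD,\ft}(U_\CD) = \BF_{\CD,\ft}^{\hat M_\0 F}(M^\ex_\0 U_\CD)$; re-applying \eqref{eq:F_eval} and Lemma~\ref{lem:EvM} gives $M^\ex \BF_{\CB,\ft}(U) = \BF_{\CB,\ft}^{\hat M_\0 F}(M^\ex U)$. Because $M^\ex$ fixes the polynomial part, $\bu^{M^\ex U} = \bu^U = \bu$, and therefore Lemma~\ref{lem:RF} (together with Remark~\ref{rem:reconstruction} to absorb the $\CQ^\ex_{\le 0}$ truncation) applied to the canonical lift $\PPi$ produces $\CR \bigl(\BF_{\CB,\ft}^{\hat M_\0 F}(M^\ex U)\bigr)(z) = (\hat M_\0 F)_\ft(\bu, \xi^{\bu,c})(z)$. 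The explicit formula from Lemma~\ref{lem:MF_u} (with $\fT_-(\Rule)$ replaced by its extended analogue $\fT^\ex_-(\Rule)$) expands the right-hand side as $F_\ft(\bu,\xi^{\bu,c}) + \sum_{\tau \in \fT^\ex_-(\Rule)} g^\0 \bbUpsilon^F_\ft[\tau](\bu,\xi^{\bu,c})$, which is precisely the counterterm structure in \eqref{eq:reconstructed_PDE}.

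The main obstacle is bookkeeping between the reduced and extended regularity structures: Lemmas~\ref{lem:MF}, \ref{lem:EvM}, \ref{lem:MF_u}, and Proposition~\ref{prop:MU_coherent} are stated on $\CT_\CD$/$\CT_\CB$, and one must verify that their proofs (which rest on the naturality of $\Ev$ together with the algebraic identities for the twisted antipode) extend verbatim to $\CT^\ex_\CD$/$\CT^\ex_\CB$, so that all summations over $\fT_-(\Rule)$ become sums over $\fT^\ex_-(\Rule)$. A secondary subtlety is justifying the pointwise identity $\CR^g = \CR \circ M^\ex$, which crucially requires the use of the extended coproduct (it would fail in the reduced setting) and ensures that the algebraic renormalisation of the nonlinearity coincides with the analytic action of $g$ on the reconstruction.
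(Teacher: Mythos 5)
Your overall route is the same as the paper's: reconstruct the fixed point equation, use $\Pi^g_z = \Pi_z \circ M^\ex$ on the extended structure, pass through the evaluation map via Lemma~\ref{lem:EvM}, invoke translation invariance and Proposition~\ref{prop:translation} to collapse $M^\ex_{\bu(z)}$ to a $z$-independent operator, then use coherence (Proposition~\ref{prop:MU_coherent} and Lemma~\ref{lem:CoherenceEquivalence}) plus Lemmas~\ref{lem:RF} and~\ref{lem:MF_u}. The secondary subtlety you flag (extended vs.\ reduced structure) is handled in the paper by Remark~\ref{rem:reduced_reg_str} and is not where the real work lies.

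There is, however, a genuine gap in the step where you write ``this lifts to $\BF_{\CB,\ft}(U) = \Ev_{\bu^U}\BF_{\CD,\ft}(U_\CD)$'' and then apply $M^\ex$ to both sides. Lemma~\ref{lem:U=U_D} only gives $U_\ft = \CQ^\ex_{\le \gamma_\ft}\Ev_{\bu^U}[U_{\CD,\ft}]$, i.e.\ $U$ is a \emph{truncated} version of the evaluated $U_\CD$. Because $\BF$ is a nonlinear map (it contains products of its arguments), the truncation does not pass through: $\BF_{\CB,\ft}(\CQ^\ex_{\le \gamma}V) \neq \CQ^\ex_{\le \gamma'}\BF_{\CB,\ft}(V)$ in general, so your stated identity is false without further qualification. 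Worse, $M^\ex$ does \emph{not} commute with the homogeneity projections $\CQ^\ex_{\le \gamma}$ (this is explicitly remarked on p.~\pageref{lab:projectionL} of the paper), so one cannot simply absorb the discrepancy into ``degrees irrelevant for reconstruction'' before applying $M^\ex$. Remark~\ref{rem:reconstruction}, which you invoke, only addresses the $\CQ_{\le 0}$ cutoff in the statement of Lemma~\ref{lem:RF} (i.e.\ at the very end of the chain, after $M^\ex$ has already been moved to $\CT_\CD$), not the $\CQ^\ex_{\le \gamma_\ft}$ cutoff on $U$ itself.

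The paper's fix is the alternative projection $\bp_{\le L}$, defined via the ``truncation parameter'' $L(\tau)$ (counting non-noise edges plus node decorations). This projection is multiplicative, $L(\tau_1\tau_2) = L(\tau_1)+L(\tau_2)$, so it interacts correctly with the nonlinear map $\BF$, and one has the key pointwise identities \eqref{eq:PLQ}: first $\Pi_z^g(\CQ^\ex_{\le \partial\gamma_\ft}\sigma)(z) = \Pi_z^g(\bp_{\le L}\sigma)(z)$ for $L$ chosen so that $\partial\gamma_\ft \ge \gamma_L$ (using \cite[Def.~3.15]{BCCH}), and second $\Pi_z^g(\bp_{\le L}\sigma)(z) = \Pi_z(\bp_{\le L}M^\ex\sigma)(z)$. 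All subsequent identities in the paper's proof, including the invocation of Lemmas~\ref{lem:CoherenceEquivalence}, \ref{lem:U=U_D}, \ref{lem:twoFs}, and \ref{lem:RF}, are stated only after applying $\bp_{\le L}$, precisely because the exact (untruncated) versions would not hold. To repair your sketch you need to insert this $\bp_{\le L}$ bookkeeping throughout, rather than treating the truncation as negligible.
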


\begin{proof}
Using the properties of the integration maps~\cite[Thm.~5.12, Eq.~7.7]{Regularity}, 
Remark~\ref{rem:reconstruction} and~\eqref{eq:reconstruction},  we obtain
from~\eqref{eq:AbstractProblemB} the identity
\begin{equ}
u_{\ft} = G_\ft * \CR^g  \bigl(\one_+ \CQ^\ex_{\leq \partial\gamma_{\ft}} \BF_{\CB, \ft}(U) \bigr) + G_\ft u_\ft^0\;.
\end{equ}
Note that model $(\Pi^g, \Gamma^g)$ is smooth, therefore we can apply~\cite[Rem.~3.15]{Regularity} and obtain
\begin{equs}[eq:expression]
\CR^g  \bigl(\one_+ \CQ^\ex_{\leq \partial\gamma_{\ft}} \BF_{\CB, \ft}(U) \bigr)(z) =  \one_+(z)\Pi^g_z \bigl(\one_+(z) \CQ^\ex_{\leq \partial\gamma_{\ft}} \BF_{\CB, \ft}(U)(z)\bigr)(z)\;.
\end{equs}
Denote $\frac{1}{p!}u^U_{(\ft,p)} = \Proj_{\X^p} U_\ft$ where $u^U_{(\ft,p)} = 0$ if $|p|_\s > \gamma_\ft$. As in 
the proof of Lemma~\ref{lem:RF} we see that $u^U_{(\ft,p)} = u_{(\ft,p)}$ for $(\ft,p) \in \Eps_+$. The 
evaluation map only depends on the components in $\R^{\Eps_+}$, thus $\Ev_{\bu^U(z)} = \Ev_{\bu(z)}$. Let 
$L \in \N$ be maximal such that for $\gamma_L$ defined in~\cite[Def. 3.15]{BCCH} we have 
$\partial\gamma_\ft \geq \gamma_L$ for all $\ft \in \fL_+$. In particular for such $L$ we have
\begin{equs}[eq:PLQ]
	\Pi_z^g (\CQ^\ex_{\leq \partial\gamma_{\ft}}  \sigma) (z) &= \Pi_z^g (\bp_{\leq L}  \sigma) (z)\,,\\
	\Pi_z^g (\bp_{\leq L}  \sigma) (z) &= \Pi_z (\bp_{\leq L} M^\ex \sigma) (z) 
\end{equs}
for all $\ft \in \fL_+$ and $\sigma \in \CT^\ex_{\CB, \bar \gamma}$, where $M^\ex = (g \otimes \id) \Delta^-_{\CB,\ex}$. First equality in follows from properties of $\gamma_L$ and second follows from the fact that $\Pi^g = \Pi M^\ex$ and the fact that $M^\ex$ preserves the $\deg^\ex$- degree (see proof of~\cite[Thm.~5.7]{BCCH} for more details). This together with Lemmas~\ref{lem:CoherenceEquivalence} and~\ref{lem:U=U_D} yields 
$\bp_{\leq L} \BF_{\CB, \ft}(U) = \bp_{\leq L} \Ev_{\bu} \bbUpsilon^F_\ft (\bu^U)$. Note that this together with Lemma~\ref{lem:EvM} 
suggests that the expression~\eqref{eq:expression}, without the multiplier $\one_+(z)$, is equal to
\begin{equs}[eq:R(MF)]
\Pi^g_z \bigl( \bp_{\leq L} \BF_{\CB, \ft}(U)(z)\bigr)(z) &= \Pi_z \bigl(\bp_{\leq L} M^\ex \Ev_{\bu(z)} \bbUpsilon^F_\ft (\bu^U)\bigr)(z)\\
 &= \Pi_z \bigl(\Ev_{\bu(z)}
  \bp_{\leq L} M^\ex_{\bu(z)} \bbUpsilon^F_\ft (\bu^U)\bigr)(z)\,.
\end{equs}
Let $U_\CD$ be given by~\eqref{eq:U_on_D}, which is coherent with $F$ to any order by definition. By 
Proposition~\ref{prop:MU_coherent}, we have that $M^\ex_{\bu(z)}U_\CD$ is coherent with $\hat M^\ex_{\bu(z)}F$ to 
all orders, which implies that $M^\ex_{\bu(z)} \bbUpsilon^F_\ft (\bu^U)(z) = (\hat M^\ex_{\bu(z)}F)_{\CD,\ft} (M^\ex_{\bu(z)}U_D(z))$, where $(\bigcdot)_{\CD,\ft}$ stands for lifting function in $\SQ(\Rule)$ to a function $\SH^\ex_\CD \to \bar\SH^\ex_\CD$ as defined in~\eqref{eq:FofU}. Putting this into~\eqref{eq:R(MF)} gives 
\begin{equs}
	\Pi^g_z \bigl( \bp_{\leq L} \BF_{\CB, \ft}(U)(z)\bigr)(z) &= \Pi_z \bigl(\Ev_{\bu(z)}
	\bp_{\leq L} (\hat M^\ex_{\bu(z)}F)_{\CD,\ft} \big(M^\ex_{\bu(z)}U_D(z))\big) \\
	&= \Pi_z \bigl(\bp_{\leq L} (\hat M^\ex_{\bu(z)}F)_{\CB,\ft} \big(\Ev_{\bu(z)} M^\ex_{\bu(z)}U_D(z))\big) \\
	&= \Pi_z \bigl(\bp_{\leq L} (\hat M^\ex_{\bu(z)}F)_{\CB,\ft} \big(M^\ex U(z))\big) \\
 	&= (\hat M^\ex_{\bu(z)} F)_\ft\big((\bu,\xi^{\bu,c})\big)(z)\\
 	&= F_{\ft} (\bu, \xi^{\bu,c})(z)\; + \sum_{\tau \in \fT^\ex_-(\Rule)} g^\0\bbUpsilon^{F}_\ft[\tau](\bu,\xi^{\bu,c})(z)\;,
\end{equs}
where in the second equality we have used~\eqref{eq:F_eval}, in the third equality we have used 
Lemma~\ref{lem:U=U_D} and~\eqref{eq:PLQ} to deduce $\bp_{\leq L}\Ev_{\bu(z)} M^\ex_{\bu(z)}U_\CD(z) = M^\ex \CQ^\ex_{\leq \gamma_\ft} \Ev_{\bu(z)} U_\CD(z) = \bp_{\leq L} M^\ex U(z)$, in 
the fourth equality we used Lemma~\ref{lem:RF} together with~\eqref{eq:PLQ} and the definition of 
$(\bu,\xi^{\bu,c})$, and finally in the last equality Lemma~\ref{lem:MF_u} was used together with translation invariance of $g$ and Proposition~\ref{prop:translation}.
\end{proof}

\begin{remark}\label{rem:reduced_reg_str}
	Given a canonical lift on a reduced regularity structure $\PPi \colon \CT_\CB \to \SC^\infty$ one can lift this map on an extended regularity structure simply by setting $\PPi^\ex = \PPi \circ Q$ where operator $Q$ simply ignores the extended decoration. This together with results of~\cite[Sec.~6.4]{BHZ} imply that the above theorem works on the reconstruction of the equation~\eqref{eq:AbstractProblemB} on the reduced regularity structure with $\fT^\ex_-(\Rule)$ changed to $\fT_-(\Rule)$ in~\eqref{eq:reconstructed_PDE}.
\end{remark}

\section{Fixed point problem}\label{sec:fixed_point}

We now turn our attention to solving the system of abstract equations with non-homogeneous noises. 
The abstract version of equation~\eqref{eq:main2} contains extra difficulties like (seemingly) 
quasilinear terms and the presence of nonlinearities that depend on the reconstruction. For this 
reason we will not consider the most general case of equation~\eqref{eq:AbstractProblemB}, but 
rather a slight generalisation of the abstract analogue of~\eqref{eq:main2}. Nevertheless, such 
a system will contain all the necessary ingredients to treat equations with non-homogeneous noises. 

We let $d = 1$, take the scaling $\s = (2,1)$, and let the rule $\Rule$ be as defined in Section~\ref{sec:first}. For some $\gamma_i,\eta_i$ with $\gamma_i > 0$ for $i = 0, \ldots, 3$, let us define $\TT_{\ft_{i}, \leq \gamma_i} \eqdef \CT_{\CB,\leq \gamma_i} \cap \TT_{\ft_{i}} = \CQ_{\leq \gamma_i} \TT_{\ft_{i}}$, where the spaces $\TT_{\ft_{i}}$ are given in~\eqref{eq:jets}. Note that since there is no symbol $\ft_3$ present in the rule then we simply have $\TT_{\ft_{3}, \leq \gamma_3} = \{\X^k\,\colon\, |k|_\s \leq \gamma_3\}$. Let $Z \in \SM_0(\CT_{\CB, \max\{\gamma_i\}})$ be a (possibly non smooth) admissible model and set $\CD^{\gamma_i, \eta_i} = \CD^{\gamma_i, \eta_i}(\TT_{\ft_{i}, \leq \gamma_i}, Z)$, for $i = 0,\ldots,3$, to be the spaces of modelled distributions on $\TT_{\ft_{i}, \leq \gamma_i}$ with respect to $Z$. We 
let $k_\star$ be as given in Lemma~\ref{lem:k_star}, which is used in the definition of the spaces $\CB_\fl$ in Section~\ref{sec:spaces}. Assume that we are given an element
\begin{equ}
	H = (H_0,H_1,H_2, H_3) \in \SU^{\gamma, \eta}\;,
\end{equ}
where $\SU^{\gamma, \eta}$ is defined in \eqref{eq:U-space}.

We would like to define the functions on the right-hand sides in \eqref{eq:main2} at 
the level of modelled distributions. For this, we look at the slightly more general 
class of equations given in \eqref{eq:abstract-functions} below. 
The reason is that since we will use the renormalised BPHZ model, we need some freedom to adjust
the equation in such a way that even with the renormalised model it reduces to \eqref{eq:main2} 
after reconstruction (see Proposition~\ref{prop:abstract_system}). More precisely, the functions $\BG$ and $\bar \BG$ in \eqref{eq:main2} are needed to cancel the terms which appear due to 
renormalisation. We gather in the following assumption all the ingredients which we need for 
our solution theory. We make it a standing assumption for this section. Let $m > 0$ and $p \in \N$ we write $\CC^\func_{m,p}$ for the space of functions $F\colon \R \to \R$ such that
\begin{equ}
\|F\|_{\CC^\func_{m,p}} \eqdef \sup_{\ell \le p} \sup_{x \in \R} e^{-m|x|} |F^{(\ell)}(x)| < \infty\;.
\end{equ}
Here, the exponential weight is somewhat arbitrary, any other weight growing faster 
than a sufficiently high power of $x$ would also do.

\begin{assumption}\label{assum:all-functions}
\begin{enumerate}
\item The functions $F_i$, for $i = 1, 2, 3$, satisfy Assumption~\ref{assum:functions}.
\item One has $\tilde{F}_1 \in \CC^\func_{m, 7}$ with $\| \tilde F_1''\|_{L^\infty(\R)}  < \infty$ and $\| \tilde F_1'''\|_{L^\infty(\R)} < \infty$.
\item One has $\tilde{F}_i \in \CC^\func_{m, 7}$ for $i = 2, 3$ and $G, \bar G \in \CC^\func_{\bar m, 5}$ for the some value $m, \bar m > 0$.\footnote{The values of the exponential weights for the functions $G$ and $\bar G$ are dictated by their definitions in the proof of Proposition~\ref{prop:abstract_system} for the mean curvature flow. For this section precise values of $m$ and $\bar m$ are irrelevant.}
\item\label{it:all-functions-xi} The function $(t,x,h) \mapsto \xi^{h}_{3}(t,x)$ satisfies $\sup_{h \in \R}(1+|h|)^{-1}\|\d^\ell_h \xi^{h}_{3}\|_{\CC^{1/2 - \kappa_\star}_{\s, 1}} < \infty$
 for $\ell = 0, 1, 2$ and for $\kappa_\star$ as in \eqref{eq:deg_main}.
\end{enumerate}
\end{assumption}

\begin{remark}
	Strictly speaking we need $\tilde F_{1} \in \CC^\func_{m, 7}$, $\tilde F_{2} \in \CC^\func_{m, 5}$, $\tilde F_{3} \in \CC^\func_{m, 3}$, $\bar G \in \CC^\func_{\bar m, 5}$, $G \in \CC^\func_{\bar m, 1}$ in order to make the corresponding terms of the equation of positive degree after Taylor expansion as in~\eqref{eq:abstract_functions}. We use a suboptimal assumption on derivatives just for less heavy notation and because these will be satisfied for the functions from Section~\ref{sec:intro}. Same applies to $F_i$.
\end{remark}

We lift the functions $\tilde{F}_i, \bar G$ to the respective functions $\tilde{\BF}_i, \bar \BG \colon \SH_\CB^4 \to \bar \SH_\CB$ via~\eqref{eq:FofU_B}.\footnote{In~\eqref{eq:FofU_B} functions are assumed to be smooth but this is not an issue since higher order derivatives will be ignored because of the truncation $\CQ_{\le \partial\gamma_\ft}$ in the abstract equation~\eqref{eq:AbstractProblemB}.}
We also denote by $\SD = \SD_1$ the abstract ``spatial'' derivative defined in Section~\ref{subsec:coherence}.
 More precisely, for any constants $\bar C_i$, $i = 1,2,3$, for $\eps > 0$ and for $H = (H_0,H_1,H_2, H_3) \in \SH_\CB^4$ we define functions $\BF_{\ft_i}\colon \SH_\CB^4 \times \R_+ \times \T \to \bar \SH_\CB$ by
\begin{equs}[eq:abstract-functions]
	\BF_{\ft_0} (H,t,x) &= \tilde \BF_{1}( \SD H_{1})\, \SD^2 H_{1} + \lambda\, (\SD H_0)^2 + \SD H_{0} \tilde \BF_{2}(\SD H_{1}) \\
	&\qquad + \SD H_{0} \bar \BG(\SD H_{1}) + G (\d_x h_1) + \sigma \hat\Xi_{\CB, \fl_0}(H) \\
	&\qquad + \sigma_1 (\SD H_1)^2 \hat\Xi_{\CB, \fl_1}(H) + \tilde \BF_{3}(\SD H_{1}) \hat\Xi_{\CB, \fl_2}(H)\;, \\[1.0em]
	\BF_{\ft_1} (H,t,x) &= \tilde \BF_{1}( \SD H_{1})\, \SD^2 H_{2} + \lambda\, \SD H_0 \SD H_1 + \d_x h_{1} \tilde F_{2}(\d_x h_{1}) - \bar C_{1}\\ 
	&\qquad+ \sigma \hat\Xi_{\CB, \fl_1}(H) + \sigma_1 (\SD H_1)^2 \hat\Xi_{\CB, \fl_2}(H) + \tilde F_{3}(\d_x h_1) \xi^{h_{0}}_{3}\;, \\[1em]
	\BF_{\ft_2} (H,t,x) &= \tilde F_{1}(\partial_x h_1)\, \partial^2_x h_3 + \lambda\, (\d_x h_1)^2 + \d_x h_{2} \tilde F_{2}(\d_x h_{1}) - \bar C_{2} \\ 
		&\qquad+ \sigma \hat\Xi_{\CB, \fl_2}(H) + \bigl( \sigma_1 (\d_x h_1)^2 + \eps^{\frac 23} \tilde F_{3}(\partial_x h_1)\bigr)\xi^{h_{0}}_{3}\;, \\[1.0em]
	\BF_{\ft_3}(H,t,x) &= \eps^{\frac{2}{3}} \tilde F_{1}(\partial_x h_1)\, \d_x^2 h_{3} + F_{2}\bigl(\eps^{\frac{1}{3}}\partial_x h_1 \bigr) \partial_x h_1 \partial_x h_2 - \bar C_{3} \\
	&\qquad + F_{3}\bigl(\eps^{\frac{1}{3}} \partial_x h_1\bigr)\xi^{h_{0}}_{3}\;,
\end{equs}
where the constants $\lambda$, $\sigma$ and $\sigma_1$ are as in \eqref{eq:main2}
and where $h_i$ denotes the  $\one$-component of $H_i$, 
$\d_x h_i$ its $\X_1$-component, etc. The functions $h_i$, $\d_x h_i$ and $\xi^{h_{0}}_{3}$ on the right-hand side are evaluated at $(t,x)$ which we don't write to simplify the notation.
The terms  in these 
functions only involving real numbers should be interpreted as being proportional to $\1$, which we 
prefer to leave implicit in the notation. (In particular, $\BF_{\ft_3}$ only takes values 
proportional to $\1$.) Note that the first three components of this equation take values in some spaces of modelled distribution, while the last component is a real-valued function. 

\begin{remark}\label{rem:local_functions}
	The functions $\BF_{\ft_i}$ are of the form
	\begin{equ}
		\BF_{\ft_i}(H, t, x) =: \BF^1_{\ft_i}(H) + F^2_{\ft_i} \big(\partial_x h_1(t,x), \partial_x h_2(t,x), \partial^2_x h_3(t,x), \xi^{h_{0}}_{3}(t,x)\big ) \one \;,
	\end{equ}
	where furthermore $F^2_{\ft_0} = G$, $F^2_{\ft_1} = \d_x h_{1} \tilde F_{2}(\d_x h_{1}) - \bar C_{1}  + \tilde F_{3}(\d_x h_1) \xi^{h_{0}}_{3}$, $\BF^1_{\ft_2} = \sigma \hat\Xi_{\CB, \fl_2}$, $\BF^1_{\ft_3} = 0$, 
	and the $\BF^1_{\ft_i}$ only depend on $H_i$ with $i \le 2$.
This decomposition into a function that is $(t,x)$-independent and a function that only depends on the polynomial components of $H$ allows to identify the $\bbUpsilon^{F}$ operator for $F = (F_{\ft_0}, F_{\ft_1}, F_{\ft_2})$ with the operator $\bbUpsilon^{\tilde F}$ for $\tilde F = (F^1_{\ft_0}, F^1_{\ft_1}, F^1_{\ft_2})$ as in Remark~\ref{rem:F_z}. Thus, Theorem~\ref{thm:renormalised_PDE} still applies for such functions and the functions $F^2_{\ft_i}$ have no effect on the counterterms in~\eqref{eq:reconstructed_PDE}. We shall see that one has
	\begin{equ}
		(t,x) \mapsto F^2_{\ft_i} \big(\partial_x h_1(t,x), \partial_x h_2(t,x), \partial^2_x h_3(t,x), \xi^{h_{0}}_{3}(t,x)\big) \one\; \in\; \CD^{\frac 16 - \delta, \eta}\;,
	\end{equ}
	for some small $\delta > 0$ and some $\eta > 0$ which will allow us to close a Picard iteration. Note that the nonlinearity \eqref{eq:abstract-functions} conforms to the rule presented in Section~\ref{sec:first}.
\end{remark}

The above nonlinearities then define the system of equations
\begin{equ}[eq:main4]
		H_{k} = \CP_{\ft_k} \bigl(\one_+ \BF_{\ft_k}(H)\bigr) + G h^0_{k}\;,\qquad k \le 3\;,
\end{equ}
where $G$ is the heat kernel and the integration maps $\CP_{\ft_k}$ are defined 
by~\eqref{eq:P_t} for the Green's function $G$ and the model $Z$. For $k = 3$, $\CP_{\ft_3}$ can be simply identified with the convolution with the heat kernel as $H_3$ has only polynomial components. We 
now prove that this system of equations has a (unique) solution. 
The following properties of the integration map $\CP_\ft$, which follow 
from~\cite[Prop.~6.16, Thm.~7.1]{Regularity}, are required to close our
Picard iteration.

\begin{lemma}\label{lem:abstract_integration}
	Let  $\ft \in \fL_+$ and $\alpha_\ft, \gamma_\ft, \eta_\ft \in \R$ such that $\gamma_\ft, \eta_\ft \notin \N$, 
	$\gamma_\ft > \deg \ft$ and $\alpha_\ft \wedge \eta_\ft > -\s_0 + \deg \ft$. Let $\kappa \geq 0$ and $\bar V_\kappa$ denote some sector in $\bar \CT^\ex_\ft$ of degree $(\alpha_\ft - \deg \ft + \kappa) \wedge 0$. Then the map~\eqref{eq:P_t} is locally Lipschitz from 
	$\CD^{\partial\gamma_\ft, \partial\eta_\ft}(\bar V_0)$ to $\CD^{\gamma_\ft, \eta_\ft \wedge \alpha_\ft \wedge \deg\ft}(V)$, where $\partial\eta_\ft \eqdef \eta_\ft - \deg \ft$, and $V$ is a sector in $\CT^\ex_\ft$ of degree $\alpha_\ft \wedge 0$. Moreover, for every $T \in (0,1]$ one has
	\begin{equ}
		\$ \CP_\ft (\one_+  f) \$_{\gamma_\ft, \eta_\ft \wedge \alpha_\ft; T} \lesssim T^{\kappa / \s_0} \$ f \$_{\partial\gamma_\ft, \d\eta_\ft + \kappa; T}\;,
	\end{equ}
	for all $f \in \CD^{\partial\gamma_\ft, \partial\eta_\ft+\kappa}(\bar V_\kappa)$, where the proportionality constant is affine in 
	$\$Z\$_{[-1,2] \times \T}$. The norms on spaces of modelled distributions are defined in Section~\ref{sec:models}.
\end{lemma}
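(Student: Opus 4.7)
The plan is to split $\CP_\ft = \CK^\ft_{\partial\gamma_\ft} + R^\ft_{\partial\gamma_\ft}\CR$ and invoke the classical Schauder-type estimates of \cite[Sec.~5--7]{Regularity} on each piece, then extract the $T^{\kappa/\s_0}$ contraction factor by tracking the extra vanishing at $t = 0$ encoded in the sector $\bar V_\kappa$ and in the exponent $\partial\eta_\ft + \kappa$.

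For the first claim, I would apply the extension theorem \cite[Prop.~6.16]{Regularity} to the compactly supported $\deg\ft$-regularising kernel $K_\ft$. This yields the local Lipschitz continuity of $\CK^\ft_{\partial\gamma_\ft}$ from $\CD^{\partial\gamma_\ft, \partial\eta_\ft}(\bar V_0)$ into $\CD^{\gamma_\ft, \eta_\ft \wedge \alpha_\ft \wedge \deg\ft}(V)$: the assumption $\gamma_\ft > \deg\ft$ ensures that the abstract Taylor polynomials of degree up to $\deg\ft$ freshly produced by the integration map fit inside the target sector $V$ of regularity $\alpha_\ft \wedge 0$, while $\alpha_\ft \wedge \eta_\ft > -\s_0 + \deg\ft$ guarantees that $\CR(\one_+ f)$ is a well-defined distribution. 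The three quantities in the wedge reflect, respectively, the pre-existing singularity of $f$ transformed by convolution (via $\eta_\ft$ and $\alpha_\ft$) and the freshly injected Taylor remainder at degree $\deg\ft$. The smooth remainder $R^\ft_{\partial\gamma_\ft}\CR(\one_+ f)$ is then handled by \cite[Thm.~7.1]{Regularity}, which shows that convolution with the smooth kernel $R_\ft$ extends $\CR(\one_+ f)$ to a modelled distribution of the required class.

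For the contraction factor, the additional vanishing of $f \in \CD^{\partial\gamma_\ft, \partial\eta_\ft+\kappa}(\bar V_\kappa)$ at $t = 0$ is converted into the prefactor $T^{\kappa/\s_0}$ via the elementary bound $\sqrt{|t|} \le T^{1/\s_0}$, by tracking the $\kappa$-dependence of the various constants in the proof of \cite[Prop.~6.16]{Regularity}. In each of the regimes arising in the decomposition of the $\CD^{\gamma_\ft, \eta_\ft \wedge \alpha_\ft}_T$-seminorm of the output, the improved input exponent $\partial\eta_\ft + \kappa$ either directly yields a factor $|t|^{\kappa/\s_0} \le T^{\kappa/\s_0}$ in the pointwise bounds on $\CP_\ft(\one_+ f)$ and its Taylor remainders, or compensates a more singular integration of $K_\ft$ against $\CR(\one_+ f)$ in a neighbourhood of $t = 0$. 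The main obstacle, insofar as there is one, is verifying that the scalar-valued arguments of \cite[Sec.~5--7]{Regularity} transfer to our vector-valued setting $\CT^\ex_\CB$; however, since those results are phrased purely in terms of the abstract norms on homogeneous subspaces and the algebraic operations of the structure --- both of which are furnished for $\CT^\ex_\CB$ by the functorial construction of \cite{CCHS} recalled in Section~\ref{sec:VectorRS}, together with \eqref{eq:admissible2} and Remark~\ref{rem:quotient} ensuring compatibility of abstract integration with noise edges --- the proofs carry over verbatim, and the affine dependence of the constant on $\$Z\$_{[-1,2] \times \T}$ is inherited directly from \cite{Regularity}.
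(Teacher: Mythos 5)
Your proposal is correct and matches the paper's approach: the paper gives no independent proof of Lemma~\ref{lem:abstract_integration}, stating only that it "follows from \cite[Prop.~6.16, Thm.~7.1]{Regularity}", which is exactly the decomposition $\CK^\ft_{\partial\gamma_\ft} + R^\ft_{\partial\gamma_\ft}\CR$ and the two citations you invoke, with the $T^{\kappa/\s_0}$ factor extracted as you describe from the improved singularity exponent and the short-time estimates in those results. Your remark that the scalar-valued proofs transfer to $\CT^\ex_\CB$ via the functorial construction of \cite{CCHS} (together with \eqref{eq:admissible2} and Remark~\ref{rem:quotient}) is the right justification, and is the only point where any checking beyond the cited statements is needed.
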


Now we can prove existence and uniqueness of the solution to \eqref{eq:main4}. For this, let us define the space $\CC^{\init} \eqdef \bigoplus_{i = 0, \ldots, 3} \CC^{(1 + 2i)/3 + \kappa}(\T)$ to contain the tuples $(h_{i})_{i = 0, \ldots, 3}$ such that $h_{i} \in \CC^{(1 + 2i)/3 + \kappa}$ for $i = 0, \ldots, 3$ for some $\kappa \in (0, \frac{1}{6})$. We denote the norm on this space by $\| \bigcdot \|_{\CC^{\init}}$. 

\begin{proposition}\label{prop:system_soln}
	Let Assumption~\ref{assum:all-functions} be satisfied. There exists $\gamma = (\gamma_i)_{i = 0,\ldots,3},\eta = (\eta_i)_{i = 0,\ldots,3} \in \R^4_+$ with $\gamma_i > 0$ such that the following holds true. For any fixed $M, L > 0$, assume that the quantities involved in \eqref{eq:abstract-functions}
	satisfy the following bounds:
	\begin{enumerate}
		\item One has $|\tilde F_1(0)| + |\tilde F_1'(0)| + \| \tilde F_1''\|_{L^\infty(\R)}  + \| \tilde F_1'''\|_{L^\infty(\R)}\leq M$.
		\item One has $\|F_i\|_{\CC^\func_{m, 7}} \leq L$ and $\|\tilde F_j\|_{\CC^\func_{m, 7}} \leq L$ for $i = 2,3,\, j = 1,2,3$, and similarly for $\|G\|_{\CC^\func_{\bar m, 5}}, \|\bar G\|_{\CC^\func_{\bar m, 5}} \leq L$ for $m, \bar m > 0$ as in Assumption~\ref{assum:all-functions}.
		\item One has $|\bar C_i| \le L$ for $i=1,2,3$.
		\item For $\ell = 0, 1, 2$ the function $\xi_{3}$ satisfies $\sup_{h \in \R}(1+|h|)^{-1}\|\d^\ell_h \xi^{h}_{3}\|_{\CC^{1/2 - \kappa_\star}_{\s, 1}} \leq L$.
		\item The model $Z \in \M_0(\CT_{\CB, \max\{\gamma_i\}})$ is admissible and satisfies $\$ Z \$_{[-1,2] \times \T} \leq L$.
	\end{enumerate}
	Then there exist $\bar \eps = \bar\eps(M) \in (0,1]$ and $T = T \bigl(L, \| h^0 \|_{\CC^{\init}}\bigr) \in (0,1]$, such that given $\eps \in [0, \bar \eps]$ and an initial condition $h^0 = (h_i^0)_{i = 0,\dots, 3} \in \CC^\init$ satisfying 
	\begin{equ}
		\| h^0 \|_{\CC^{\init}} \le \eps^{-\frac 14}\;,
	\end{equ}
the system~\eqref{eq:main4} has a unique local solution $H \in \SU^{\gamma, \eta}_{T}$, satisfying furthermore
	\begin{equ}[e:normh]
		\sup_{t \in [0,T]} \|(\CR H)(t,\cdot) \|_{\CC^\init} \leq 2 \| h^0 \|_{\CC^{\init}}  \vee 1\;.
	\end{equ}
	Setting $\SF \eqdef \bigl(F_2, F_3, G, \bar G, (\tilde F_i, \bar C_i)_{i=1}^3\bigr)$, the solution map $\CS \colon \bigl(\eps, h^0, \xi_{3}, Z, \SF \bigr) \mapsto (T, \CR H)$ is continuous in the topology defined by the various norms appearing in \eqref{e:normh} and the assumptions. 
Finally, $\bar \eps$ can be taken to be continuous decreasing in $M$, and $T$ can be taken continuous decreasing in $L$ and $\| h^0 \|_{\CC^{\init}}$.
\end{proposition}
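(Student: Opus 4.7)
The plan is to solve \eqref{eq:main4} for $k=0,1,2$ by a Picard iteration in $\SU^{\gamma,\eta}_T$, and then treat the equation for $H_3$ separately as a classical quasilinear heat equation (this is why $H_3$ has only polynomial components). I would choose the exponents so that $\gamma_i$ is slightly larger than $\deg\fl_i+2 = \frac12+\frac{2i}{3}-\kappa_\star$ and $\eta_i$ slightly below $\frac{1+2i}{3}+\kappa$, matching the regularity of the initial data in $\CC^{\init}$. With this choice, the harmonic extension $G h^0_k$ lifts as an element of $\CD^{\gamma_k,\eta_k}$ by standard Schauder estimates, which gives us a starting point for the iteration.

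The main work is to verify that for each $k \in \{0,1,2\}$ the map $H \mapsto \BF_{\ft_k}(H)$ sends $\SU^{\gamma,\eta}_T$ into $\CD^{\partial\gamma_k+\kappa,\partial\eta_k+\kappa}$ for some small $\kappa > 0$. This is done term by term. The inhomogeneous noise contributions $\hat\Xi_{\CB,\fl_j}(H)$ are handled by Lemma~\ref{lem:Xi}. Products like $\SD H_0 \cdot \tilde\BF_2(\SD H_1)$ use the multiplication rule for modelled distributions \cite[Prop.~4.10]{Regularity} combined with the composition result \cite[Prop.~6.13]{Regularity}, where the required smoothness and growth of $\tilde F_2$ come from Assumption~\ref{assum:all-functions} through the exponential weight in $\CC^{\func}_{m,7}$. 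The terms involving $\xi^{h_0}_3(t,x)$ are scalar multiplications by H\"older continuous functions controlled via Assumption~\ref{assum:all-functions}\eqref{it:all-functions-xi}; here one uses the a priori bound on $\CR H_0$ coming from \eqref{e:normh} to convert the linear growth in $h$ into a constant. One then applies Lemma~\ref{lem:abstract_integration} with a strictly positive $\kappa$ to recover elements of $\SU^{\gamma,\eta}_T$ with a factor $T^{\kappa/2}$.

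For the fourth component, $H_3$ consists only of a function $h_3$ multiplying $\one$, so \eqref{eq:main4} with $k=3$ reduces to the PDE
\begin{equ}
\d_t h_3 = \bigl(1 + \eps^{\frac{2}{3}}\tilde F_1(\d_x h_1)\bigr)\d_x^2 h_3 + F_2(\eps^{\frac13}\d_x h_1)\d_x h_1 \d_x h_2 - \bar C_3 + F_3(\eps^{\frac13}\d_x h_1)\xi^{h_0}_3\;.
\end{equ}
Here $\d_x h_1 = \CR(\SD H_1)$ and $\d_x h_2 = \CR(\SD H_2)$ are H\"older continuous uniformly in the Picard iteration by the choice of $\gamma_1,\gamma_2$, and $\xi^{h_0}_3$ is H\"older by assumption. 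Choosing $\bar\eps$ so small that $\eps^{2/3} \|\tilde F_1\|_{L^\infty(\R)}\le \frac12$ keeps the leading coefficient uniformly elliptic, so \cite[Ch.~V]{Ladyzenskaja} gives existence and Lipschitz dependence on the data of a classical solution $h_3$ on $[0,T]$ with $\|h_3\|_{\CC^{7/3+\kappa}}\lesssim 1$.

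The contraction argument then follows in the standard way: by the $T^{\kappa/2}$ factor from Lemma~\ref{lem:abstract_integration}, the map $H\mapsto(\CP_{\ft_k}(\one_+\BF_{\ft_k}(H))+Gh^0_k)_{k\le 3}$ is a contraction on a ball of sufficiently large radius in $\SU^{\gamma,\eta}_T$ once $T$ is small enough, with $T$ controlled only through $L$ and $\|h^0\|_{\CC^{\init}}$. Continuity of the solution map in all its arguments follows from the local Lipschitz estimates used to prove contraction, treating $\eps,h^0,\xi_3,Z,\SF$ as parameters. The main obstacle I anticipate is keeping track of the coupling between the first three components (which live on $\CT_\CB$) and $h_3$ (which is a classical function) while maintaining $\eps$-uniform bounds; this is resolved by observing that the bad quasilinear $\d_x^2 h_3$ term in the first three equations is multiplied by the function $\tilde F_1(\SD H_1)$ with $\tilde F_1(0)=\tilde F_1'(0)=0$, so Taylor-expanding gives at least two copies of $\SD H_1$, each of strictly positive regularity, and the composition is controlled by the exponential-weighted norm of $\tilde F_1''$.
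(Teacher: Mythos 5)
Your overall plan—choose $\gamma_i$ just above $\deg\fl_i + 2$ and $\eta_i$ matching $\CC^{\init}$, bound each term of \eqref{eq:abstract-functions} via Lemma~\ref{lem:Xi} and \cite[Props.~6.12, 6.13]{Regularity}, then run a Picard iteration on $\SU^{\gamma,\eta}_T$—is the same as the paper's. The paper makes the specific choices $\gamma_0=-\alpha_0+\bar\kappa$, $\gamma_k=2+2^{k-1}\bar\kappa$ for $k=1,2,3$, $\eta_k=\frac{1+2k}{3}+\kappa$ for $k\le 2$, $\eta_3=\eta_1+1-\bar\kappa$, and tabulates the regularities of all products (Table~\ref{tab:spaces}) before closing the contraction.

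The genuine divergence is your treatment of $H_3$, and it contains an error. You propose to offload the $h_3$-equation to classical Schauder theory by making the leading coefficient uniformly elliptic via ``choosing $\bar\eps$ so small that $\eps^{2/3}\|\tilde F_1\|_{L^\infty(\R)}\le \frac12$.'' But $\tilde F_1$ is not in $L^\infty$: Assumption~\ref{assum:all-functions}(2) only puts $\tilde F_1\in\CC^{\func}_{m,7}$ (exponential growth allowed) while $\tilde F_1''$ and $\tilde F_1'''$ are bounded. So the quantity $\|\tilde F_1\|_{L^\infty(\R)}$ is infinite in general and the proposed smallness condition is vacuous. The correct mechanism is quadratic smallness at the origin: since $\tilde F_1(0)=\tilde F_1'(0)=0$ and $\|\tilde F_1''\|_{L^\infty}\le M$, one has $|\tilde F_1(\d_x h_1)|\le \frac{M}{2}|\d_x h_1|^2$, and the a priori ball constraint together with $\|h^0\|_{\CC^{\init}}\le\eps^{-1/4}$ gives $|\d_x h_1|\lesssim\bar L\lesssim\eps^{-1/4}$, so $\eps^{2/3}|\tilde F_1(\d_x h_1)|\lesssim M\eps^{1/6}$. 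This is exactly how the paper gets $\bar\eps$ to depend only on $M$ (see the bound $\bar C\eps^{2/3}M\bar L^2\le 4\bar CM\eps^{1/6}$ in the proof and Remark~\ref{rem:general_fixed}). Worth noting also: the paper does \emph{not} solve the $h_3$-equation by invoking \cite[Ch.~V]{Ladyzenskaja} inside the Picard argument. It keeps the $k=3$ component inside the same modelled-distribution iteration (with $H_3$ purely polynomial), writes $\BF_{\ft_3}=\BF^{(1)}_{\ft_3}+\BF^{(2)}_{\ft_3}$, and uses the $\eps^{1/6}$-smallness of $\BF^{(1)}_{\ft_3}$ as part of the contraction estimate — Remark~\ref{rem:general_fixed} says explicitly that the point of taking $\bar\eps$ small is to \emph{avoid} having to treat the $h_3$-equation as a quasilinear PDE. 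Your route via classical quasilinear theory can be made to work, but you would need to replace the erroneous $L^\infty$ bound on $\tilde F_1$ by the quadratic vanishing argument above, and then verify that the Lipschitz dependence of the classical solution meshes with the contraction for the other three components, since $\SD^2 H_3$ reappears in $\BF_{\ft_2}$.
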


\begin{remark}
In order to compare solutions with different existence times $T$, we extend them to the whole
time interval $[0,1]$ by setting them to be constant on $[T,1]$.
\end{remark}

\begin{proof}
	For $T > 0$, let us consider the system~\eqref{eq:main4} on the time interval $[0, T]$. Let then 
	$\CM_{k, \eps}(T, H)$ be the right-hand side of the $k^{\text{th}}$ equation of~\eqref{eq:main4}, and set $\CM_{\eps}(T, H) = (\CM_{k, \eps}(T, H))_{k = 0, \ldots, 3}$. 
	It is then sufficient to show that for some $T < 1$ and for all $\eps > 0$ small enough as in the statement of the proposition the map $H \mapsto \CM_{\eps}(T, H)$ is a contraction on a ball in $\SU^{\gamma, \eta}_{T}$, with suitable $\gamma$ and $\eta$. While this is a standard 
	procedure in the framework of regularity structures, which can be found in~\cite{Regularity}, our 
	system~\eqref{eq:main4} is non-standard, because the equation for $h_{3}$ should be solved in the 
	classical sense. Moreover, the $\eps$-dependence requires some care.
	
	Let's start with finding the image spaces of the nonlinearities~\eqref{eq:abstract-functions}. Let $\alpha_k = \deg \Xi_k = - \frac 32 + \frac{2k}{3} - \kappa_\star$, for $k = 0, 1,2$, be the minimal degree of the elements on the 
	right-hand side of the $k^{\text{th}}$ nonlinearity. Then Lemma~\ref{lem:abstract_integration} implies that 
	$H_k$, $\SD H_k$ and $\SD^2 H_k$ take values in sectors with minimal degrees $(\alpha_k + 2) \wedge 0$, 
	$(\alpha_k + 1) \wedge 0$ and $\alpha_k \wedge 0$ respectively. This implies that the term with the 
	smallest regularity in the last function in~\eqref{eq:abstract-functions} is $\partial_x h_1$.
	
	These minimal degrees of the modelled distributions suggest that it is sufficient to view the function $\tilde \BF_1$ in~\eqref{eq:abstract-functions} as its Taylor expansion of order $6$.
	This is because we would like to guarantee that $\tilde \BF_1(\SD H_1) \in \CD^\gamma$ for some
	$\gamma$ such that $\gamma + \alpha_1 > 0$ in order for the first term in $\BF_{\ft_0}$ to
	have a well-defined reconstruction.
Analogously, the functions $\tilde \BF_2$, $\tilde \BF_3$ and $\bar \BG$ can be 
expanded to orders $4$, $2$ and $4$ respectively.
	
	If we take $H \in \SU^{\gamma, \eta}_{T}$ for all $\gamma_i > 0$, then $\SD H_1 \in \CD^{\gamma_1 - 1, \eta_1 - 1}_{T}$. Moreover, the restriction $\kappa_\star \leq \frac{1}{42}$ in~\eqref{eq:deg_main} yields 
	$\alpha_1 + 1 \geq 0$, so that $\SD H_1$ takes values in a function-like sector. By~\cite[Prop.~6.13]{Regularity},
	we conclude that $\tilde \BF_{i}( \SD H_{1}), \bar \BG(\SD H_{1}) \in \CD^{\gamma_1 - 1, \eta_1 - 1}_{T}$ and these modelled distributions are function-like. 
	
	We also need to bound the elements $\hat\Xi_{\CB, \fl_k}(H)$. The definition of the rule in Section~\ref{sec:first} yields $\Eps_+(\fl_k) = \{\ft_0\}$, for $k = 0, 1, 2$. 
	Then, using the notation of Lemma~\ref{lem:Xi}, we have $\hat \gamma_{\fl_k} = \gamma_{0}$ and $\hat \eta_{\fl_k} = \eta_{0}$, which are both positive. Then for $\eta_0 \leq \gamma_0$ Lemma~\ref{lem:Xi} yields $\Xi_{\CB, \fl_k}(H) \in \CD_T^{\gamma_0 + \alpha_k, \eta_0 + \alpha_k}$ with the minimal degree $\alpha_k$.
	
	Now, we make precise values of $\gamma$ and $\eta$. The values of $\eta$ are dictated by the regularities of the initial states in $\CC^{\init}$ (see \cite[Lem.~7.5]{Regularity}) and are taken to be $\eta_0 = \frac{1}{3} +\kappa$ and $\eta_k = \eta_0 + \frac{2k}{3}$, for $k = 1,2$, where $\kappa > 0$ is the same as in the definition of $\CC^{\init}$. The values of $\gamma$ should be such that the right-hand sides in \eqref{eq:abstract-functions} are modelled distributions of strictly positive regularities. Applying \cite[Prop.~6.12]{Regularity} we can find out which spaces
 the terms in \eqref{eq:abstract-functions} belong to. We provide these spaces in Table~\ref{tab:spaces}, where `Regularity' and `Order of singularity' denote the exponents 
 $\tilde\gamma$ and $\tilde\eta$ respectively such that the term
 belongs to $\CD_T^{\tilde\gamma,\tilde\eta}$ whenever $H \in \SU^{\gamma, \eta}_{T}$.
	\begin{table}[ht]
	\centering
	\begin{tabular}{ccc} \toprule
		Term & Regularity & Order of singularity \\
		\midrule
		$\tilde \BF_{1}( \SD H_{1})\, \SD^2 H_{1}$ & $\gamma_1 - 2$ & $\eta_1 - 2$ \\[0.3em]
		$(\SD H_0)^2$ & $\gamma_0 + \alpha_0$ & $(\eta_0 + \alpha_0) \wedge (2 \eta_0 - 2)$ \\[0.3em]
		$\SD H_{0} \tilde \BF_{2}(\SD H_{1})$ & $(\gamma_0 - 1) \wedge (\gamma_1 + \alpha_0)$ & $\eta_0 - 1$ \\[0.3em]
		$\SD H_{0} \bar \BG(\SD H_{1})$ & $(\gamma_0 - 1) \wedge (\gamma_1 + \alpha_0)$ & $\eta_0 - 1$ \\[0.3em]
		$\tilde \BF_{1}( \SD H_{1})\, \SD^2 H_{2}$ & $(\gamma_1 - 1 + \alpha_2) \wedge (\gamma_2 - 2)$ & $\eta_2 - 2$ \\[0.3em]
		$\SD H_0 \SD H_1$ & $(\gamma_0 - 1) \wedge (\gamma_1 + \alpha_0)$ & $\eta_0 - 1$ \\[0.3em]
		$(\SD H_1)^2 \hat\Xi_{\CB, \fl_1}(H)$ & $(\gamma_1 - 1 + \alpha_1) \wedge (\gamma_0 + \alpha_1)$ & $\eta_1 - 1 + \alpha_1$ \\[0.3em]
		$\tilde \BF_{3}(\SD H_{1}) \hat\Xi_{\CB, \fl_2}(H)$ & $(\gamma_1 - 1 + \alpha_2) \wedge (\gamma_0 + \alpha_2)$ & $\eta_1 - 1 + \alpha_2$ \\[0.3em]
		$(\SD H_1)^2 \hat\Xi_{\CB, \fl_2}(H)$ & $(\gamma_1 - 1 + \alpha_2) \wedge (\gamma_0 + \alpha_2)$ & $\eta_1 - 1 + \alpha_2$ \\
		\bottomrule
	\end{tabular}
	\caption{For each term $f$ appearing in \eqref{eq:abstract-functions}, we 
have $f \in \CD^{\tilde \gamma, \tilde \eta}_{T}$ with ``Regularity'' $\tilde \gamma$ and 
``Order of singularity'' $\tilde \eta$.}
	\label{tab:spaces}
\end{table} 
	
	To have all the regularities in Table~\ref{tab:spaces} strictly positive, we need to take $\gamma_0 > - \alpha_0$ and $\gamma_1 > 2$. To be precise, we take $\gamma_0 = - \alpha_0 + \bar \kappa$ and $\gamma_1 = 2 + \bar \kappa$, for $\bar \kappa > 0$ whose value is given below. Moreover, all the singularities in Table~\ref{tab:spaces} are of orders strictly greater than $-2$, which allows to apply Lemma~\ref{lem:abstract_integration}.
	
	The values of $\gamma_2$ and $\gamma_3$ should be such that $\d_x^2 h_2$ and $\d_x^2 h_{3}$ are well-defined continuous functions, i.e.\ $\gamma_2, \gamma_3 > 2$. Again we can take the precise values $\gamma_2 = 2 + 2 \bar  \kappa$ and $\gamma_3 = 2 + 4 \bar  \kappa$ (as we show below, we need to have $\gamma_1 < \gamma_2 < \gamma_3$ to be able to close our Picard iteration). 
	Moreover, we fix $\eta_3 = \eta_1 + 1 - \bar \kappa$.  This value of $\eta_3$ will be convenient in the bounds \eqref{eq:F3-bounds} below. 
	
	Let us fix $L, M > 0$ and let the assumptions of this proposition be satisfied for these values. In what follows we consider modelled distributions $H, \bar H \in \SU^{\gamma, \eta}_{T}$ satisfying $\$H\$_{\gamma, \eta} < \bar L$ and $\$\bar H\$_{\gamma, \eta} < \bar L$, for $\bar L = 2 \| h^0 \|_{\CC^{\init}} \vee 1$.
	
	As was stated above, Lemma~\ref{lem:Xi} can be applied and we get
	\begin{equs}[eq:Xi_bounds-fixedpoint]
		\$ \hat\Xi_{\CB, \fl_k}(H)\$_{\gamma_0 + \alpha_k, \eta_0 + \alpha_k} &\leq C \bar L\;,\\
		\$ \hat\Xi_{\CB, \fl_k}(H) - \hat\Xi_{\CB, \fl_k} (\bar H)\$_{\gamma_0 + \alpha_k, \eta_0 + \alpha_k} &\leq C \$H - \bar H\$_{\gamma, \eta}\;,
	\end{equs}
	for a constant $C \geq 0$. We note that our assumptions imply $\eta_0 + \alpha_k > - 2$, which allows to apply the 
	integration map to $\hat\Xi_{\CB, \fl_k}(H)$ and to use Lemma~\ref{lem:abstract_integration} to bound it. 
	
	We define $\bxi_3(H_0, t,x)$ to be the composition of $H_0$ with the $\CC^2$ function $h \mapsto \xi^{h}_{3}(t,x)$, defined as in \cite[Sec.~4.2]{Regularity}. This gives a modelled distribution $(t,x) \mapsto \bxi_3(H_0(t,x), t,x)$, for which the bounds \eqref{eq:Xi_bounds-fixedpoint} hold with $\alpha_3 = 0$ (see \cite[Prop.~6.13]{Regularity}). We will simply write $\bxi_3(H_0)$ for this modelled distribution. 
	
 Observe that $\gamma_0 + \alpha_k + 2 > \gamma_k$, for $k = 0,1,2$, which will allow us to show that the solution map leaves the space $\SU^{\gamma, \eta}_{T}$ invariant.
	
	Now, we will bound the first three functions in \eqref{eq:abstract-functions}. For this, we write $\BF_{\ft_i} (H, t,x) = \BF_{\ft_i}^1 (H) + \BF_{\ft_i}^2 (H, t, x)$ for $i = 0,1,2$ as in Remark~\ref{rem:local_functions}, where $\BF_{\ft_i}^2 (H, t, x)$ is a multiple of $\1$. Using Table~\ref{tab:spaces}, the bounds~\eqref{eq:Xi_bounds-fixedpoint} and our choice of $\gamma$ and $\eta$, we get 
	\begin{equs}[eq:F_bounds]
		\$ \BF_{\ft_i}^1 (H)\$_{\bar{\gamma}_i, \bar \eta_i} &\leq C \bigl(L + \bar L + e^{m \bar L} + e^{\bar m \bar L}\bigr)^7\;,\\
		\$ \BF^1_{\ft_i} (H) - \BF^{\eps, 1}_{\ft_i} (\bar H)\$_{\bar{\gamma}_i, \bar \eta_i} &\leq C \bigl(L + \bar L + e^{m \bar L} + e^{\bar m \bar L}\bigr)^6 \$H - \bar H\$_{\gamma, \eta}\;,
	\end{equs}
	where $\bar \eta_0 = (\eta_1 - 2) \wedge (\eta_0 + \alpha_0) \wedge (2 \eta_0 - 2)$, $\bar \eta_1 = \eta_0 - 1$, $\bar \eta_2 = \eta_1 - 1 - 2 \bar \kappa$ and $\bar{\gamma}_i = (i+1) \bar \kappa$ for $i = 0,1,2$. We note that according to Table~\ref{tab:spaces}, we could take $\bar \gamma_2 = \gamma_0 + \alpha_2$. However, we prefer to take a smaller value $\bar{\gamma}_2 = 3 \bar \kappa$ (for this we need to assume $\bar \kappa < \frac{4}{9}$) which does not play any role for the Picard iteration but will be convenient when analysing the functions \eqref{eq:abstract-functions-extra} below. We similarly could have taken $\bar \eta_2 = \eta_0 + \alpha_2$, but we prefer to take a smaller value $\bar \eta_2 = \eta_1 - 1 - 2 \bar \kappa$. The right-hand sides of \eqref{eq:F_bounds} can be made smaller by using sharper bounds on each term in \eqref{eq:abstract-functions}, but we prefer to write them in this form to have shorter formulas. In order to perform a Picard iteration by applying Lemma~\ref{lem:abstract_integration} to these functions, we need to have $\bar{\gamma}_i + 2 > \gamma_i$ and $\bar{\eta}_i + 2 > \eta_i$ for $i = 0, 1, 2$. These inequalities hold for our choice of $\gamma$ and $\eta$ if we take $\bar \kappa < \frac{1}{6}$.
	
	To bound the functions $\BF_{\ft_i}^2$, we note that, provided that 
	the constant $\bar \kappa > 0$ appearing in the definition of $\gamma$ is small enough, 
	they can be written as
	\begin{equs}[eq:abstract-functions-extra]
		\BF^2_{\ft_0}(H) &= \CQ_{< \bar \gamma_0} \BG (\SD H_1)\;, \\[0.5em]
		\BF^2_{\ft_1}(H) &= \CQ_{< \bar \gamma_1} \Bigl(\SD H_1 \tilde \BF_{2}(\SD H_1) - \bar C_{1} + \tilde \BF_{3}(\SD H_1)\, \bxi_3(H_0) \Bigr)\;, \\[0.5em]
		\BF^2_{\ft_2} (H) &= \CQ_{< \bar \gamma_2} \Bigl( \tilde \BF_{1}(\SD H_1)\, \SD^2 H_3 + \lambda\, (\SD H_1)^2 \\
		&\qquad + \tilde \BF_{2}(\SD H_1)\, \SD H_2 - \bar C_{2} + \bigl( \sigma_1 (\SD H_1)^2 + \eps^{\frac 23} \tilde \BF_{3}(\SD H_1)\bigr)\, \bxi_3(H_0) \Bigr)\;.
	\end{equs}
This is because, as $\bar \kappa$ decreases to $0$, the same happens to the values $\bar \gamma_i$, so that for $\bar \kappa$ small enough the projections on the right simply yield the correct multiples of $\1$. As we did in Table~\ref{tab:spaces}, we can use \cite[Props.~6.12, 6.13]{Regularity} and compute the spaces to which the terms in \eqref{eq:abstract-functions-extra} belong. All of the terms except $\tilde \BF_{1}(\SD H_1)\, \SD^2 H_3$ belong to $\CD^{\gamma_1 - 1, \eta_1 - 1}_T$, while the latter belongs to $\CD^{\gamma_3 - 2, \eta_3 - 2}_T$. Because of our choice of the values $\gamma$ we see that the regularities of the functions in \eqref{eq:abstract-functions-extra} are strictly higher than the applied projections. Moreover, $\bar \eta_i < \eta_1 - 1$ for $i = 0,1$, and $\bar \eta_2 < \eta_3 - 2$. Then the bounds \eqref{eq:F_bounds} on the functions \eqref{eq:abstract-functions-extra} readily follow. 
	
	The operator $\CP_{\ft_k}$ ``improves'' regularity by $2$ (see~\cite[Thm.~7.1, Lem.~7.3]{Regularity}), and 
	a bound on the term $G h^0_{k,\eps}$ is provided in~\cite[Lem.~7.5]{Regularity}. Since we have $\alpha_0 > -2$, then $\bar \gamma_k + 2 > \gamma_k$ and $\bar \eta_k + 2 > \eta_k$. Combining the just cited results 
	with~\eqref{eq:F_bounds},we get for $k = 0,1,2$
	\begin{equs}[eq:M_bounds]
		\$ \CM_{k, \eps}(T, H)\$_{\gamma_k, \eta_k} \leq \| h^0 \|_{\CC^{\init}} &+ \bar C T^\delta \bigl(L + \bar L + e^{m \bar L} + e^{\bar m \bar L}\bigr)^7\;,\\
		\$ \CM_{k, \eps} (T, H) - \CM_{k, \eps} (T, \bar H)\$_{\gamma_k, \eta_k} &\leq \bar C T^\delta \bigl(L + \bar L + e^{m \bar L} + e^{\bar m \bar L}\bigr)^6 \\
		&\hspace{4cm}\times \$H - \bar H\$_{\gamma, \eta}\;,
	\end{equs}
	for some $\delta > 0$ and for a new constant $\bar C \geq 0$.

In order to bound the right-hand side of the last equation 
in~\eqref{eq:main4}, we proceed similarly to above. More precisely, we 
set $\BF_{\ft_3}(H) = \BF^{(1)}_{\ft_3}(H) + \BF^{(2)}_{\ft_3}(H)$, where
	\begin{equs}
		\BF^{(1)}_{\ft_3}(H) &= \eps^{\frac{2}{3}} \tilde \BF_{1}(\SD H_1)\, \SD^2 H_{3}\;,\\
		\BF^{(2)}_{\ft_3}(H) &= \BF_{2}\bigl( \eps^{\frac{1}{3}} \SD H_1 \bigr) \SD H_1 \SD H_2 - \bar C_{3} + \BF_{3}\bigl(\eps^{\frac{1}{3}}\SD H_1\bigr) \bxi_3(H_0)\,,
	\end{equs}
	whose range lies again in a function-like sector. As before, we then have
	\begin{equ}
		\CM_{3, \eps}(T, H) = \CR \Bigl( \CP_{\ft_3} \bigl[\one_+ \CQ_{< \bar \gamma_{3}} \BF_{\ft_3}(H)\bigr] + G h^0_{3} \Bigr)\;.
	\end{equ}
	Using the assumptions on boundedness of second and third derivative of $\tilde F_{1}$ and the fact that $F_2,F_3 \in \CC^\func_{m,7}$ we obtain $\$ \BF^{(1)}_{\ft_3} (H)\$_{\bar{\gamma}_3, \bar \eta_3} \leq \eps^{\frac{2}{3}} C M \bar L^3$, $\$ \BF^{(2)}_{\ft_3} (H)\$_{\tilde{\gamma}_3, \tilde \eta_3} \leq C \bigl(L + \bar L + e^{m \bar L}\bigr)^7$, and
	\begin{equs}[eq:F3-bounds]
		\$ \BF^{(1)}_{\ft_3} (H) - \BF^{(1)}_{\ft_3} (\bar H)\$_{\bar{\gamma}_3, \bar \eta_3} &\leq \eps^{\frac{2}{3}} C M \bar L^2 \$H - \bar H\$_{\gamma, \eta}\;,\\
		\$ \BF^{(2)}_{\ft_3} (H) - \BF^{(2)}_{\ft_3} (\bar H)\$_{\tilde{\gamma}_3, \tilde \eta_3} &\leq C \bigl(L + \bar L + e^{m \bar L}\bigr)^6 \$H - \bar H\$_{\gamma, \eta}\;,
	\end{equs}
	for $\bar{\gamma}_3 = \gamma_3 - 2$, $\bar \eta_3 = \eta_3 - 2$, $\tilde \gamma_3 = \gamma_1 - 1$ and 
	$\tilde \eta_3 = \eta_1 - 1$. Furthermore, we have $\tilde \gamma_3 + 2 > \gamma_3$ and $\tilde \eta_3 + 2 > \eta_3$. Similarly to~\eqref{eq:M_bounds}, we then use the properties of $\CP_{\ft_3}$ and the 
	reconstruction operator $\CR$ to get
	\begin{equs}[eq:M_bounds2]
		\| \CM_{3, \eps}(T, H)\|_{\gamma_3, \eta_3} &\leq \| h^0 \|_{\CC^{\init}} + \bar C \Bigl(\eps^{\frac{2}{3}} M \bar L^3 + T^\delta \bigl(L + \bar L + e^{m \bar L}\bigr)^7 \Bigr)\;,\\
		\| \CM_{3, \eps} (T, H) - \CM_{3, \eps} &(T, \bar H)\|_{\gamma_3, \eta_3} \\
		&\leq \bar C \Bigl(\eps^{\frac{2}{3}} M \bar L^2 + T^\delta \bigl(L + \bar L + e^{m \bar L}\bigr)^6 \Bigr) \$H - \bar H\$_{\gamma, \eta}\;,
	\end{equs}
	for some $\delta > 0$.
	
	Since $\bar L \leq 2 \eps^{-\frac{1}{4}} \vee 1 = 2 \eps^{-\frac{1}{4}}$, one has $\bar C \eps^{\frac 23} M \bar L^2 \leq 4 \bar C M \eps^{\frac 16}$ thus we can take $\bar\eps(M)$ small enough in order to make the term $\bar C \eps^{\frac 23} M \bar L^2$ sufficiently small for all $\eps \leq \bar \eps(M)$. Similarly, we can choose $T < 1$ and $\eps < 1$ sufficiently small and as in the statement of this proposition, such that the bounds~\eqref{eq:M_bounds} and~\eqref{eq:M_bounds2} imply that the map $\CM_{\eps}(T, H)$ is a contraction on the ball in $\SU^{\gamma, \eta}_{T}$, containing the modelled distributions satisfying $\$H\$_{\gamma, \eta} \leq \bar L$. Hence, its fixed point yields a solution map $\CS \colon \bigl(\eps, (h^0_{i})_{i = 0, \ldots, 3}, \xi_{3}, Z, \SF\bigr) \mapsto (T,H)$. The continuity of this solution map with respect to all data can be proved analogously (see \cite{Regularity}).
\end{proof}

\begin{remark}\label{rem:general_fixed}
	Note that in the above fixed point argument a requirement of a small enough $\bar \eps$ is needed to  avoid having to view the equation for $h_3$ as a quasilinear equation. The requirement $\|h^0\|_{\CC^\init} < \eps^{- \frac 14}$ and at most quadratic growth of $\tilde F_1$ ensures that this term is bounded by $M \eps^{\frac 23} (1 + \eps^{- \frac 12}) \lesssim M\bar\eps^{\frac 16}$. This allows in particular to take $\bar\eps$ independent of the initial condition and the model and thus deterministic even when applied to a random model.
\end{remark}
	
Now, we will construct a maximal (in time) solution. To consider functions, which can explode to infinity, we set $\widehat{\CC^{\init}} \eqdef \CC^{\init} \sqcup \{\infty\}$ and equip it with the topology generated by open balls in $\CC^{\init}$ and the sets $\{h \in \CC^{\init} : \| h \|_{\CC^{\init}} > N\} \sqcup \{\infty\}$ for any $N \geq 0$. We also set $\| \infty \|_{\CC^{\init}} = +\infty$ by convention.

We then recall the definition of $\CC^\sol$ given in \cite[Sec.~2.7.2]{BCCH} (the arXiv version). Given $h \in \CC(\R_+, \widehat{\CC^{\init}})$, we set
\begin{equ}
T^L[h] \eqdef \inf\{t \in \R_+ : \| h(t) \|_{\CC^{\init}} \geq L\}\;, \qquad T[h] \eqdef T^\infty[h]\;,
\end{equ}
and we define a space of solutions with potential blow-up by 
\begin{equ}
\CC^{\sol} \eqdef \Bigl\{h \in \CC(\R_+, \widehat{\CC^{\init}})\; :\; \begin{aligned} &h(t) = \infty ~~ \forall\, t > T[h] \\[-0.1cm] &h \restr_{[0, T]} \in \CC([0, T], \CC^{\init}) ~~ \forall\, T < T[h] \end{aligned} \Bigr\}\;.
\end{equ}
Unfortunately, we cannot simply stop the solution when it reaches certain level, because such stopped solution is not continuous with respect to the initial state. Instead we need to make a smoother stopping. We fix a smooth decreasing function $\chi : \R \to \R$ which is identical to $1$ on $(-\infty, 0]$ and identical to $0$ on $[1, \infty)$. For any $L \in \N$ we define the map $\Theta_L : \CC^{\sol} \to \CC(\R_+, \CC^{\init})$ by
\begin{equ}
\Theta_L(h)(t) \eqdef \chi \Bigl(\frac{t - T^{L/2}[h]}{T^{L}[h] - T^{L/2}[h]}\Bigr) h(t)\;.
\end{equ}
Then we equip $\CC^{\sol}$ with a metric $d (\bigcdot, \bigcdot) \eqdef \sum_{L = 1}^\infty 2^{-L} d_L (\bigcdot, \bigcdot)$, where for $h, \bar h \in \CC^{\sol}$
\begin{equ}
d_L (h, \bar h) \eqdef 1 \wedge \Bigl[ \sup_{t \in [0, L]} \| \Theta_L(h)(t) - \Theta_L(\bar h)(t) \|_{\CC^{\init}} \Bigr]\;.
\end{equ}
Finally, given $\bar\eps \in (0,1]$, we equip 
$[0,\bar\eps]\times \CC^\sol$ with the topology induced by the pseudo-metric
\begin{equ}[eq:pseudo_metric]
	\bar d \big( (\eps_1, h_1), (\eps_2, h_2) \big) = d \bigl(\Theta_{\eps_1^{-1 / 4}} (h_1), \Theta_{\eps_2^{-1 / 4}} (h_2)\bigr) + |\eps_1 - \eps_2|\;,
\end{equ}
where $\Theta_\infty$ is interpreted as being the identity.
(In particular, we identify elements $(\eps,h)$ and $(\eps,\bar h)$ if $h(s) = \bar h(s)$ for all
$s \le T_\star = \inf\{t>0\,:\, \|h(t)\| \le \eps^{-1/4}\}$.)  

\begin{remark}
For a sequence $(\eps_n, h_n)$ with $\eps_n \to 0$, convergence to $(0,h)$ with 
respect to $\bar d$ is equivalent
to simply having convergence $h_n \to h$ in $\CC^\sol$. This is because
$\Theta_L \circ \Theta_M = \Theta_L$ for $L \le M/2$, so that 
$|\bar d((\eps,\bar h), (0,h)) - d(\bar h,h)| \lesssim \eps + 2^{-\eps^{-1/4}/2}$.
The reason for introducing the pseudo-metric $\bar d$ is that our local solution
theory breaks down when solutions get too large. Simply stopping solutions when they
exit some large ball is unfortunately not a continuous operation, which is why we use
$\Theta_{\eps^{-1 / 4}}$ instead.
\end{remark}

Using these definitions, we can construct the maximal solution in the space $\CC^{\sol}$. 

\begin{proposition}\label{prop:maximal_sol}
Let $Z \in \M_0(\CT_{\CB, \max\{\gamma_i\}})$ be admissible model and $M, L > 0$ and $\gamma, \eta, \bar\eps$ be as in Proposition~\ref{prop:system_soln}. Let Assumption~\ref{assum:all-functions} be satisfied, and let $F_i, \tilde F_{j}, G, \bar G$, satisfy bounds in $M$ and $L$ as in Proposition~\ref{prop:system_soln}. Let $\eps \in [0,\bar\eps]$ and $h^0 \in \CC^\init$, then there exists a unique $T_\star \geq 0$ such that following holds
\begin{enumerate}
	\item If $\|h^0\|_{\CC^\init} \geq \eps^{-\frac 14}$ then $T_\star = 0$.
	\item If $\|h^0\|_{\CC^\init} < \eps^{-\frac 14}$ then $T_\star > 0$ and there exists a unique $H : [0, T_\star) \times \T \to \CT_{\CB, \max \gamma_i}$ such that $H\restr_{[0, T]}  \in \SU^{\gamma, \eta}_{T}$ and solves~\eqref{eq:main4} on $[0,T]$ for all $T < T_\star$. Furthermore, either $T_\star = \infty$ or $\lim_{t \to T_\star} \|h_t\|_{\CC^\init} = \eps^{-\frac 14}$ for $h = \CR H \in \CC^\sol$. 
\end{enumerate}
Moreover, $(\eps, h) \in [0, \bar\eps] \times \CC^\sol$ depends continuously on $h^0, \eps, Z, \xi_3, F_i, \tilde F_i, G, \bar G$ and $\bar C_i$ with respect to the topology induced by~\eqref{eq:pseudo_metric}.
\end{proposition}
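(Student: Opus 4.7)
The plan is to construct $T_\star$ by iterating the local solution from Proposition~\ref{prop:system_soln} as long as the $\CC^\init$-norm of the reconstruction stays strictly below $\eps^{-1/4}$, and then to reduce continuity in the topology of $\bar d$ to the continuity of the local solution map on finite intervals where this norm remains bounded away from the cut-off.

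\textbf{Construction of $T_\star$ and uniqueness.} If $\|h^0\|_{\CC^\init} \ge \eps^{-1/4}$ we set $T_\star = 0$ and let $h(t) = \infty$ for all $t > 0$. Otherwise, set
\begin{equ}
T_\star \eqdef \sup\bigl\{T \ge 0 : \exists\, H \in \SU^{\gamma,\eta}_T \text{ solving \eqref{eq:main4} on } [0,T] \text{ with } \sup_{t \le T}\|(\CR H)(t,\cdot)\|_{\CC^\init} < \eps^{-1/4}\bigr\}\;,
\end{equ}
which is strictly positive by Proposition~\ref{prop:system_soln} applied to $h^0$. Uniqueness on every interval $[0,T] \subset [0,T_\star)$ follows from the uniqueness part of Proposition~\ref{prop:system_soln} and a standard gluing argument: whenever $H$ and $\bar H$ agree on $[0,t_1]$ and both take value $h(t_1) = (\CR H)(t_1,\cdot)$ with $\|h(t_1)\|_{\CC^\init} < \eps^{-1/4}$ as initial condition, reapplying the local theorem at time $t_1$ forces them to coincide on some interval $[t_1, t_1 + \delta]$, and a connectedness argument extends this to $[0,T]$. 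The blow-up dichotomy is obtained by contradiction: if $T_\star < \infty$ and $\limsup_{t \uparrow T_\star}\|h(t)\|_{\CC^\init} < \eps^{-1/4}$, pick $t_1 < T_\star$ with $\|h(t_1)\|_{\CC^\init}$ strictly below the $\limsup$; then the time of existence $T(L, \|h(t_1)\|_{\CC^\init}) > 0$ provided by Proposition~\ref{prop:system_soln} (which is continuous decreasing in its arguments and hence uniformly positive along the sequence) allows to extend $H$ past $T_\star$, contradicting the definition of $T_\star$.

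\textbf{Continuity.} Let $(\eps_n, Z_n, h^0_n, \xi_{3,n}, \SF_n) \to (\eps, Z, h^0, \xi_3, \SF)$ in the topologies from Proposition~\ref{prop:system_soln}, and let $h_n$, $h$ be the corresponding maximal solutions. Fix $L \in \N$; we must show $d_L(\Theta_{\eps_n^{-1/4}}(h_n), \Theta_{\eps^{-1/4}}(h)) \to 0$ if $\eps>0$, with the obvious modification when $\eps = 0$. By definition of $\Theta$, only the restriction of $h_n$, $h$ to $[0, T^{\eps_n^{-1/4}}[h_n]]$ and $[0, T^{\eps^{-1/4}}[h]]$ enters, and on these intervals the $\CC^\init$-norm is uniformly bounded by $\eps_n^{-1/4}$, resp.\ $\eps^{-1/4}$. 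One then iterates the continuous local solution map of Proposition~\ref{prop:system_soln}: each iteration produces a solution on a time interval whose length is bounded below by a continuous function of the size $L$ of the data and the current $\CC^\init$-norm of the initial state, which is uniformly bounded by our cut-off, so only finitely many iterations are needed to cover $[0,L]$. At each iteration step the solution at the restarting time depends continuously on the data by Proposition~\ref{prop:system_soln} and by continuity of the evaluation $H \mapsto (\CR H)(t,\cdot) \in \CC^\init$. This gives convergence of $h_n \restr_{[0, T^{\eps_n^{-1/4}/2}[h_n] \wedge L]}$ to $h\restr_{[0, T^{\eps^{-1/4}/2}[h] \wedge L]}$ in $\CC([0,L],\CC^\init)$; the smooth cut-off in $\Theta_L$ turns this, together with convergence of the stopping times $T^{\eps_n^{-1/4}}[h_n], T^{\eps_n^{-1/4}/2}[h_n]$ (which holds because $\|h_n(t)\|_{\CC^\init}$ depends continuously on $n$ and $t$ on the region of uniform boundedness), into convergence of $\Theta_{\eps_n^{-1/4}}(h_n)$ to $\Theta_{\eps^{-1/4}}(h)$ in the $d_L$-metric. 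Summing the geometric series yields convergence in $\bar d$. For $\eps = 0$ the cut-off level $\eps_n^{-1/4} \to \infty$, so $\Theta_{\eps_n^{-1/4}}(h_n)$ and $h$ are compared only through $\Theta_L$ for finite $L$, and the same iteration argument applies with the $\eps$-dependent smallness requirement in Proposition~\ref{prop:system_soln} being satisfied for all large $n$ since $\bar\eps$ is continuous decreasing in $M$.

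\textbf{Main obstacle.} The delicate point is the behaviour of the stopping times $T^{L/2}[h_n]$ and $T^L[h_n]$, which in general are only lower semi-continuous in the data. This is precisely why the pseudo-metric $\bar d$ is defined via the smooth cut-off $\Theta_L$ rather than a hard stopping: the function $t \mapsto \chi\bigl((t - T^{L/2}[h])/(T^L[h] - T^{L/2}[h])\bigr)$ interpolates continuously between $1$ and $0$, and its continuity in $h$ follows from continuity of $t \mapsto \|h(t)\|_{\CC^\init}$ on the region where $\|h(t)\|_{\CC^\init} \le L$, which is exactly what the iteration of Proposition~\ref{prop:system_soln} provides. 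Together with Remark~\ref{rem:general_fixed}, which guarantees that $\bar\eps$ can be chosen independently of the initial state and of the model, this produces a uniform control over the finitely many iteration steps needed and closes the continuity argument.
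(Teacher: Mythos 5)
Your proposal is correct and follows essentially the same strategy as the paper: patch together local solutions from Proposition~\ref{prop:system_soln}, use the uniform lower bound on local existence times (and its continuity with respect to the data) to characterise $T_\star$ as the first time the $\CC^\init$-norm reaches $\eps^{-1/4}$, and derive continuity of the stopped solution from continuity of the local solution map combined with the smooth cut-off $\Theta$. One place where you are slightly loose is the blow-up dichotomy: your contradiction argument only directly shows $\limsup_{t\uparrow T_\star}\|h(t)\|_{\CC^\init} = \eps^{-1/4}$, and the phrasing ``pick $t_1$ with $\|h(t_1)\|$ strictly below the $\limsup$'' doesn't quite deliver the claim that the full limit exists. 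To close this, note that if $\liminf < \eps^{-1/4}$, the uniform existence time from points with small norm lets you continue the solution to $[0,T_\star+\delta]$ for some $\delta > 0$; continuity of $t\mapsto\|h(t)\|_{\CC^\init}$ on that larger interval then forces $\lim_{t\uparrow T_\star}\|h(t)\|_{\CC^\init} = \|h(T_\star)\|_{\CC^\init}$ to exist, and it must equal $\eps^{-1/4}$ by the definition of $T_\star$. The paper's own proof is equally terse on this point (it defines $T_\star$ directly as a first hitting time and appeals to lower-semicontinuity of the patching times and to \cite[Prop.~7.11, Cor.~7.12]{Regularity}), so your proposal is neither more nor less rigorous than the published argument here.
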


\begin{proof}
	Let $\eps \in [0,\bar\eps]$ and assume that $\|h^0\|_{\CC^\init} < \eps^{- \frac 14}$. Patching together the local solutions, constructed in Proposition~\ref{prop:system_soln}, yields a sequence of times $0 < T_1 < T_2 < \cdots$ where solution $H$ of~\eqref{eq:main4} exists. Each of these times $T_i$ is lower-semicontinuous with respect to $\|h^0\|_{\CC^\init} $ and corresponding norms on $\xi_3$ and $Z$ restricted on $[0,T_{i-1}+1]$. Now let $h = \CR H$. By continuity of the reconstruction map $\CR$ we have that either $\|h_t\|_{\CC^\init} < \eps^{-\frac 14}$ for all $t \geq 0$ and thus we take $T_\star = \infty$ or $T_\star$ is defined to be the first time such that $\|h_{T_\star}\|_{\CC^\init} = \eps^{-\frac 14}$. The fact that we can restart the solution from $h_{T_i}$ and obtain continuity of the resulting $h$ follows from the continuity of $\Theta_{\eps^{-1/4}}$ with respect to $\eps$, continuity of the solution map from Proposition~\ref{prop:system_soln} as well as~\cite[Prop.~7.11, Cor.~7.12]{Regularity}.
\end{proof}

\section{Application to the mean curvature flow}
\label{sec:application}

In this section we apply the algebraic\slash analytic  framework developed above to the system of SPDEs~\eqref{eq:main2}, 
which we remember is an equivalent way to write~\eqref{eq:main} (provided of course that initial conditions are 
chosen in a consistent way). The rule and nonlinearity for the 
system~\eqref{eq:main2} were described in Section~\ref{sec:first} and Example~\ref{ex:Examplenonlin} with the 
scaling $\s = (2,1)$. In particular, this system falls into the framework developed in the previous sections
to solve general systems of SPDEs of the form~\eqref{eq:system}.

\subsection{Convergence of the noises}
\label{sec:noises}

Recall that in our directed mean curvature equation~\eqref{eq:main_before_scaling} the noisy environment was given by $\eta_\eps = \rho * \zeta_\eps$ and $\zeta_\eps$ is a space-time white noise on $\R\times (\eps^{-1} \T)$ (so it only depends on 
$\eps$ through the size of its domain).  If the noise $\xi_\eps$ is given by~\eqref{eq:noise}, then thanks to the scaling properties of the 
white noise one actually has
\begin{equ}[eq:noise2]
	\xi_\eps(t,x) = (\rho_\eps*\xi)(t, x)\;,
\end{equ}
where $\xi$ is a space-time white noise on $\R \times \T$. Therefore, we shall see that in order to be able to 
consider equation~\eqref{eq:main2}, we need to define the BPHZ model whose approximation is constructed 
via the canonical lift of noises $\xi_{i,\eps}$, translated by the $2 \times 2$ diagonal matrix $c = \text{diag} (\eps^2 C_\eps , 0)$ as in~\eqref{eq:canonical}. At a first glance such definition seems to be circular since renormalisation constant $C_\eps$ needs to be computed using the noises $\xi_{i,\eps}$, which themselves 
are translated by $\eps^2C_\eps$ in the time variable. To resolve this we shall be more general and consider a 
shift $c = \text{diag} (c_\eps , 0)$ with arbitrary $c_\eps$ such that $\lim_{\eps\to0}c_\eps = 0$. This leads to a translated noise
\begin{equ}[eq:noise_translated]
	\xi_\eps^{c_\eps}(t,x) = \xi_\eps(t + c_\eps t, x)\;.
\end{equ}
Obviously, such translated noise converges to a space-time white noise $\xi$. We will also show 
 some other properties of these translated noises that will be needed for the construction of the BPHZ lift.

Moreover, we will see in the last part of the proof of Proposition~\ref{prop:abstract_system} that it will  
always be possible to find for every small enough $\eps$ such $c_\eps  = \CO(\eps)$ that the renormalisation 
constant $C_\eps$, computed using~\eqref{eq:ren_const_def} with this choice of the translation $c_\eps$, satisfies $\eps^2C_\eps = c_\eps$.

We start by studying limits of the random field~\eqref{eq:noise_translated}. Since our noise is actually smooth, multiplication of $\xi^{c_\eps}_\eps$ by 
$\eps^{\frac{2 k}{3}}$ ``increases its regularity'' by 
$\frac{2 k}{3}$ in the sense that while $\xi^{c_\eps}_\eps$ is uniformly (as $\eps \to 0$) bounded in $\CC^\alpha_\s$
only if $\alpha < - \frac{3}{2}$, $\eps^\beta \xi^{c_\eps}_\eps$ is uniformly bounded in $\CC^\alpha_\s$ as soon as $\alpha < \beta - \frac{3}{2}$ for $\beta \ge 0$.

\begin{lemma}\label{lem:noise_convergence}
Let the random field $\xi_\eps^{c_\eps}$ be given by~\eqref{eq:noise_translated} for some constants 
$c_\eps$ with $\lim_{\eps\to0}c_\eps = 0$, and let $\xi^{c_\eps}_{k, \eps} = \eps^{\frac{2 k}{3}} \xi^{c_\eps}_\eps$. Then for any $\alpha < -\frac{3}{2}$ the following results hold:
\begin{enumerate}
	\item\label{it:first_limit} one has the limit $\xi^{c_\eps}_{0, \eps} \to \xi$ in probability in $\CC^\alpha_{\s}$,
	\item\label{it:second_limit} for any $k, m \in \N$ such that $k + m \geq 1$ and $\alpha + \frac{2k}{3} \notin \N$ one has $\eps^{2 m} \partial_t^{m} \xi^{c_\eps}_{k, \eps} \to 0$ in probability in $\CC^{\alpha + 2k / 3}_\s$.
\end{enumerate}
\end{lemma}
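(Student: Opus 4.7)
The plan is to exploit the fact that every random field at play is Gaussian, so that by equivalence of Gaussian moments and the Besov version of Kolmogorov's criterion (see \cite[Thm.~10.7]{Regularity}) the problem reduces to second-moment estimates of the form $\E|\langle \zeta_\eps, \phi^\lambda_z\rangle|^2 \lesssim \lambda^{-3}$ (uniformly in $\eps$) and $\E|\langle \zeta_\eps - \zeta, \phi^\lambda_z\rangle|^2 \lesssim o_\eps(1)\,\lambda^{-3}$, uniformly over $\phi \in \SB^r_\s$, $\lambda \in (0,1]$ and $z \in \R\times\T$, together with the corresponding two-point differences in $z$. The starting point is the explicit covariance
\begin{equ}
\E\bigl[\xi^{c_\eps}_\eps(t,x)\,\xi^{c_\eps}_\eps(s,y)\bigr] = (\rho * \bar\rho)_\eps\bigl((1+c_\eps)(t-s),\,x-y\bigr)\;,
\end{equ}
with $\bar\rho(t,x) = \rho(-t,-x)$, which converges to the Dirac mass at the origin as $\eps \to 0$.

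For item (1), plugging this covariance into the second-moment bounds above reduces the problem to the standard convergence of mollified white noise, which then yields $\xi^{c_\eps}_{0,\eps} \to \xi$ in probability in $\CC^\alpha_\s$ for any $\alpha < -\f32$. The only novelty compared to the usual setting is the time dilation by $1+c_\eps$, but since $c_\eps \to 0$ this factor is harmless at the level of covariance estimates.

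The key input for item (2) is the parabolic scaling identity $\eps^{2m}\partial_t^m \rho_\eps = (\partial_t^m \rho)_\eps$, from which
\begin{equ}
\eps^{2m}\partial_t^m \xi^{c_\eps}_{k,\eps} = \eps^{2k/3}(1+c_\eps)^m\, T_{c_\eps}\bigl[(\partial_t^m \rho)_\eps * \xi\bigr]\;,
\end{equ}
where $T_{c_\eps}$ is the time-dilation operator $f(t,x) \mapsto f((1+c_\eps)t,x)$. If $m \ge 1$ then $\int \partial_t^m \rho = 0$ by integration by parts (using compact support of $\rho$), so $(\partial_t^m \rho)_\eps * \xi$ is a mollification of white noise against a kernel with vanishing integral; a second-moment computation parallel to item (1) shows convergence to $0$ in $\CC^\alpha_\s$ for $\alpha < -\f32$, and the extra factor $\eps^{2k/3}$ only helps. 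In the complementary case $m = 0$ and $k \ge 1$, I would instead use the standard moment bound $\|\xi^{c_\eps}_\eps\|_{L^p(\Omega;\CC^\beta_\s)} \lesssim \eps^{-\f32 - \beta - \kappa}$ for $\beta > -\f32$ and any $\kappa > 0$, so that multiplication by $\eps^{2k/3}$ produces a positive power of $\eps$ in $\CC^{\alpha + 2k/3}_\s$, yielding the claimed convergence (the hypothesis $\alpha + \f{2k}{3} \notin \N$ is exactly what ensures the Besov bound is applicable at the target exponent).

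The main obstacle is purely bookkeeping: tracking the time-dilation $(1+c_\eps)$ uniformly in $\eps$ throughout the covariance computations and verifying that the pointwise convergence of the covariance kernels to the Dirac mass is not spoiled by it. Since $c_\eps \to 0$, every such perturbation is controlled by an arbitrarily small factor, and the proof amounts to a careful application of standard Gaussian moment bounds combined with Kolmogorov's criterion, with no novel analytic input required beyond what is already contained in \cite[Ch.~10]{Regularity}.
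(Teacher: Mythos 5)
Your treatment of item (1) matches the paper's: both reduce to the explicit second-moment covariance estimate for $\xi^{c_\eps}_{0,\eps}$ and for the difference $\xi^{c_\eps}_{0,\eps} - \xi$, together with Kolmogorov-type arguments for mollified Gaussian fields (the paper invokes \cite[Prop.~9.5]{Regularity}); the time-dilation by $1+c_\eps$ is indeed absorbed harmlessly.

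For item (2), however, your case split introduces a gap. Your $m \ge 1$ argument via $\int \partial_t^m \rho = 0$ shows that $(\partial_t^m\rho)_\eps * \xi \to 0$ in $\CC^\alpha_\s$, $\alpha < -\f32$, which finishes the case $k = 0$. But when $k \ge 1$ and $m \ge 1$ the claimed target space $\CC^{\alpha+2k/3}_\s$ is \emph{strictly more regular} than $\CC^\alpha_\s$, and ``the extra factor $\eps^{2k/3}$ only helps'' is not a valid inference: multiplying a sequence converging to $0$ in $\CC^\alpha_\s$ by a positive power of $\eps$ again gives convergence in $\CC^\alpha_\s$, not in the stronger space. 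To land in $\CC^{\alpha+2k/3}_\s$ you still need the quantitative moment scaling $\|\d_t^m\xi^{c_\eps}_\eps\|_{L^p(\Omega;\CC^\beta_\s)} \lesssim \eps^{-3/2 - \beta - 2m - \kappa}$, which you invoke in the $m=0$ case but not here. The fix is cheap: apply that same moment bound with $\beta = \alpha + 2k/3$ uniformly across all $(k,m)$ with $k+m \ge 1$; multiplying by $\eps^{2m+2k/3}$ then yields a norm of order $\eps^{-3/2 - \alpha - \kappa} \to 0$ since $\alpha < -\f32$. This is precisely what the paper does, citing \cite[Lem.~A.5]{PAMPreprint} to absorb the $\kappa$-loss, so the vanishing-integral detour is not needed and does not by itself cover all cases.
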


\begin{proof}
Since $\xi$ is a space-time white noise, from~\eqref{eq:noise2} and~\eqref{eq:noise_translated} we get 
\begin{equ}[eq:xi_L2]
	\E | \xi^{c_\eps}_{0, \eps}(\phi_z^\lambda)|^2 = \int_{\R^2} \biggl( \int_{\R^2} \phi_z^\lambda(z_1) \rho_\eps(z^{c_\eps}_1 - z_2) d z_1 \biggr)^2 d z_2\;,
\end{equ}
where $(t,x)^{c_\eps} = (t + c_\eps t, x)$. Here, the function $\phi$ is smooth and supported in the ball $B(0, 1)$ with respect to the parabolic scaling $\s$. We consider only $z \in \fK$ for a fixed compact subset $\fK$ of $\R^2$. The function inside the brackets in~\eqref{eq:xi_L2} is supported in $\| z_2 - z^{c_\eps}\|_{\s} \lesssim \lambda + \eps$ and is bounded by a constant times $(\lambda + \eps)^{-3}$, which yields the bound
\begin{equ}[eq:xi_simple1]
	\sup_{z \in \fK} \E | \xi^{c_\eps}_{0, \eps}(\phi_z^\lambda)|^2 \lesssim (\lambda + \eps)^{- 3}\;,
\end{equ}
where the proportionality constant depends linearly on the diameter of $\fK$. 

Similarly, the definitions~\eqref{eq:noise2} and~\eqref{eq:noise_translated} yield
\begin{equ}
	\E | (\xi^{c_\eps}_{0, \eps} - \xi)(\phi_z^\lambda)|^2 = \int_{\R^2} \biggl( \int_{\R^2} \Bigl( \phi_z^\lambda(z_1) - \frac{1}{1 + c_\eps} \phi_z^\lambda(z_2) \Bigr) \rho_\eps(z^{c_\eps}_1 - z_2) d z_1 \biggr)^2 d z_2\;,
\end{equ}
where we used the property that $\rho$ integrates to $1$. As before, we consider only $z \in \fK$. Since $c_\eps = \CO(\eps)$, we can bound 
\begin{equs}
	 \Bigl| \phi_z^\lambda(z_1) &- \frac{1}{1 + c_\eps} \phi_z^\lambda(z_2) \Bigr| \leq \bigl| \phi_z^\lambda(z_1) - \phi_z^\lambda(z_2) \bigr| + \Bigl| \frac{c_\eps}{1 + c_\eps} \phi_z^\lambda(z_2) \Bigr|\\
	 &\quad \lesssim \lambda^{-3 - \delta} \|z_2 - z_1\|^\delta_\s \1_{\|z_1 - z\|_\s \wedge \|z_2 - z\|_\s \leq \lambda} + \eps \lambda^{-3} \1_{\|z_2 - z\|_\s \leq \lambda}\;,
\end{equs}
for any $\delta \in (0,1]$. From this, similarly to~\eqref{eq:xi_simple1}, we readily obtain the bound
\begin{equ}[eq:xi_simple2]
	\sup_{z \in \fK} \E | (\xi^{c_\eps}_{0, \eps} - \xi)(\phi_z^\lambda)|^2 \lesssim \eps^{2\delta} (\lambda + \eps)^{- 3 - 2 \delta} \;, \qquad 0 < \delta \leq 1\;.
\end{equ}
From the bounds \eqref{eq:xi_simple1} and \eqref{eq:xi_simple2} we get the convergence of $\xi^{c_\eps}_{0, \eps}$, described in the statement of this lemma, as in \cite[Prop.~9.5]{Regularity}. Similarly we get $\partial_t^{m} \xi^{c_\eps}_{0, \eps} \in \CC^{\alpha - 2 m}_\s$. Then from \cite[Lem.~A.5]{PAMPreprint} we conclude that $\eps^{2 m - \kappa} \partial_t^{m} \xi^{c_\eps}_{k, \eps} \in \CC^{\alpha + \frac{2k}{3}}_\s$ for all $\kappa \geq 0$ small enough. From this the statement \ref{it:second_limit} of this lemma follows.
\end{proof}

\subsection{Convergence of smooth models}
\label{sec:smooth_models}

In this section we construct the BPHZ model for the system of locally subcritical 
equations~\eqref{eq:main2}. The relevant sets and the rule for the system~\eqref{eq:main2} were defined in 
Section~\ref{sec:first}, which we are going to use throughout the rest of this section. For simplicity, we 
prefer to use the shorthand $\Xi_k$ in place of $\J_{\fl_k}[\one]$. Recall from Section~\ref{sec:first} that 
$\deg \fl_k = - \frac 32 + \frac{2k}{3} - \kappa_\star$ and $\deg \ft_k = 2$ for $k = 0, 1, 2$ and for a fixed 
value $0  < \kappa_\star < \frac{1}{42}$. Our choice of $\kappa_\star$ is governed by an intent to minimise the number 
of trees of negative degrees. In particular, we want the trees  
$\J_{\fl_0}[\J_{\ft_0}[\Xi_0]^4], \J_{(\ft_0,1)}[\Xi_0]\J_{(\ft_1,1)}[\Xi_1]^4 ,\J_{(\ft_1,2)}[\Xi_1]\J_{(\ft_1,1)}[\Xi_1]^6$, with respective degrees $\frac 12 - 5\kappa_\star, \frac 16 - 5\kappa_\star, \frac 16 - 7\kappa_\star$, to be of positive 
degree. Let $\gamma > 0$ to be determined later and let $\CT_{\CB, \gamma}$ be the corresponding 
truncated vector-valued regularity structure introduced in Sections~\ref{sec:VectorRS}, \ref{sec:spaces} with a choice of 
$k_\star$ for the spaces $\CB_\fl$ to be given by Lemma~\ref{lem:k_star}. 

The aim of this section is to construct smooth admissible models $Z^{\eps, \BPHZ}$ for $\CT_{\CB, \gamma}$, 
which are lifts of the noises $\xi^{c_\eps}_{k,\eps}$ with $c_\eps = \CO(\eps)$\footnote{At the 
moment we will show convergence of the models for any translation $c_\eps = \CO(\eps)$ and make a 
specific choice of $c_\eps$ later in the proof of Proposition~\ref{prop:abstract_system}.} and such that the $Z^{\eps,\BPHZ}$ converge to an admissible model as $\eps \to 0$. Unfortunately, we cannot use directly the lift 
defined in~\cite{HC,Rhys} because the spaces of distributions $\CB$ are infinite-dimensional. We will first define the 
lifts on certain finite-dimensional subspaces of $\CB^\fl$, and then construct their extension to the whole 
space. 

Another problem when trying to apply the results from \cite{HC,Rhys} is that
our rule allows for trees with noise edges 
that aren't ``terminal'', i.e.\ trees that contain subtrees of the form $\J_{(\fl,p)}[\tau]$ for $\tau \neq \one$ 
and $\fl \in \fL_-$.
Nevertheless, one can use the results from~\cite{HC} thanks to the
multiplicative structure of the underlying admissible maps $\PPi$. 
To see this, consider the rule $\Rule'$ given by setting $\Rule'(\fl) = \{()\}$ for every $\fl \in \fL_-$ and, for $\fl \in \fL_+$, we set
\begin{equ}[eq:down_rule]
\Rule'(\fl) = \bigcup_{\CN \in \Rule(\fl)} \downop\CN\;,
\end{equ}
where the collection of node types $\downop\CN$ is defined as follows. If there
exists $\fl_- \in \fL_-$ such that $\fl_- \in \CN$ (if such a type exists it is necessarily unique by the definition of the rule $\Rule$), then we set
\begin{equ}
\downop\CN = \{\CN \sqcup \bar \CN \,:\, \bar \CN \in \Rule(\fl_-)\}\;,
\end{equ}
otherwise we simply set $\downop \CN = \{\CN\}$. 
 It is not hard to see that if $\Rule$ is the rule defined in Section~\ref{sec:first}, 
then $\Rule'$ is normal, subcritical and complete and satisfies the assumptions of~\cite{HC}.

Given any vector space assignment $V$, we recursively define an operator $\downop \colon \CT_V(\Rule) \to \CT_V(\Rule')$ for $\sigma$ of the form~\eqref{eq:recursive_tree_V} by
\begin{equ}[eq:down]
	\downop[\sigma] = \X^m \prod_{\substack{1 \leq i \leq n \\ o_i \in \CO}} \J_{o_i} \bigl[v_i \otimes \downop[\sigma_i]\bigr] \prod_{\substack{1 \leq i \leq n \\ o_i \notin \CO}} \J_{o_i} [v_i \otimes \one] \downop[\sigma_i]\;.
\end{equ}
(Recall the definition of $\CO$ on p.~\pageref{p:defCO}.) In words, $\downop$ takes any 
tree located atop of a noise type edge and moves it down to the root of that edge.

Since the convergence results of~\cite{HC} can be applied to models of $\CT_V(\Rule')$,
we aim to show that these can be transferred to models on $\CT_V(\Rule)$ using $\downop$. 
This is the case provided that we can show that $\downop$
preserves both the renormalisation and recentering procedures.
Using the notations of \cite[Sec.~6]{BHZ}
, we have the following.

\begin{lemma}\label{lem:down}
	Let $\Rule$ be a rule satisfying Assumption~\ref{ass:rule} and assume that for every $\tau = T^\fm_\ff \in \fT(\Rule)$ one has 
	\begin{equ}[eq:sub_trees]
		\deg(\tau) - \max_{e \in E_T : \ff(e) \in \fL_{-}} \deg(\ff(e)) \geq 0\,.
	\end{equ}
	For a vector space assignment $V = (V_\ft)_{\ft \in \fL}$ we recall that $\tilde\CA^-_V, \tilde\CA^+_V, \Deltam_V, \Deltap_V$ denote the negative and positive twisted antipodes and coproducts on $\CT_V$ (see Section~\ref{sec:VectorRS}). Then one has
	\begin{equ}[e:commuteDown]
		\tilde\CA^-_V \downop \,=\, \downop \tilde\CA^-_V \qquad\text{and}\qquad (\downop \otimes \downop) \Deltam_V = \Deltam_V \downop\;,
	\end{equ}
	and the same holds true for $\tilde\CA^+_V$ and $\Deltap_V$.
\end{lemma}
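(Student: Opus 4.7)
The plan is to prove both commutativity relations in \eqref{e:commuteDown} by induction on the number of edges in the tree, exploiting the recursive structure of $\downop$ together with the recursive/multiplicative nature of the twisted antipodes and coproducts. The central structural fact that I would extract from assumption \eqref{eq:sub_trees} is the following: for any $\tau \in \fT(\Rule)$ and any noise edge $e$ of $\tau$ of type $(\fl,p) \in \fL_- \times \N^{d+1}$, the subtree $\sigma'$ of $\tau$ lying strictly above $e$ satisfies $\deg \sigma' \geq 0$, with strict positivity as soon as $\sigma'$ contains further noise edges. Indeed, the planted subtree $\tau^+$ consisting of $e$ together with everything above still belongs to $\fT(\Rule)$ (using the normality assumption \ref{R1}), and applying \eqref{eq:sub_trees} to $\tau^+$ with the maximum over noise edges bounded from below by $\deg(\fl,p)$ gives $\deg \sigma' = \deg \tau^+ - \deg(\fl,p) \geq 0$ at once.

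For $\Deltam_V$ and $\tilde\CA^-_V$, which are defined by extraction of subforests whose components have strictly negative degree, this observation has the following crucial consequence. When one extracts from $\tau$ a complete subtree rooted at some internal node $v$ that contains a noise edge $e$, that subtree must automatically contain the entire material above $e$ (since extracted pieces are complete subtrees). Under $\downop$, this material is relocated to become a sibling of $e$ at the base of $e$, which still lies inside the complete subtree rooted at $v$ of $\downop \tau$. Conversely, because the subtree above any noise edge has non-negative degree, there are no "new" negative subforests of $\downop \sigma$ that separate a noise edge from the material that was originally above it. This yields a natural bijection $A \leftrightarrow \downop A$ between negative subforests of $\sigma$ and of $\downop\sigma$ preserving degrees, vector decorations, and combinatorial factors, and the identity $(\downop \otimes \downop)\Deltam_V = \Deltam_V \downop$ then follows by matching the two sides term by term in this bijection. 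The commutation $\tilde\CA^-_V \downop = \downop \tilde\CA^-_V$ is then a direct consequence, using the inductive/recursive characterisation of the twisted antipode in terms of $\Deltam_V$ (cf.\ \cite[Prop.~6.6]{BHZ}).

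For $\Deltap_V$ and $\tilde\CA^+_V$, the argument is in fact simpler. Because of the quotient taken in Remark~\ref{rem:quotient}, the positive coproduct on our regularity structures produces only cuts through kernel edges, never through noise edges. Since $\downop$ leaves kernel edges entirely untouched and only rearranges the tree around noise edges, the commutativity $(\id \otimes \downop)\Deltap_V = \Deltap_V \downop$ would follow essentially by inspection of the recursive definitions, and commutation with $\tilde\CA^+_V$ follows by the analogous recursion.

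The main obstacle will be the careful bookkeeping in the bijection of negative subforests of $\sigma$ and $\downop\sigma$: one has to track simultaneously the vector decorations $v_i \in V_{\ft_i}$ carried by noise edges, the symmetry factors introduced by repeated subtrees, and the extended decorations that a noise edge inherits after the material above it is moved to a sibling position. The hypothesis \eqref{eq:sub_trees} will be essential precisely at the point where one needs to exclude, in $\downop\sigma$, the extraction of subforests that would contain a noise edge but not the corresponding displaced subtree — such configurations have non-negative degree by the structural fact above, and therefore do not contribute to $\Deltam_V$.
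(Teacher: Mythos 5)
Your overall strategy matches the paper's: both arguments show that $\downop$ preserves the relevant algebraic structure, treating $\Deltap_V$ by noting that the quotient in Remark~\ref{rem:quotient} forces $\Deltap_V$ to commute with $\J_o[v\otimes\bigcdot]$ for noise-type $o$, and treating $\Deltam_V$ by arguing that the edge bijection $\iota_\sigma$ between $\sigma$ and $\downop\sigma$ induces a bijection of negative subtrees, with \eqref{eq:sub_trees} ruling out the potential extra ones. The $\Deltap_V$ part of your argument is essentially the paper's.

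For $\Deltam_V$, however, there is a genuine gap. Your claim that ``extracted pieces are complete subtrees'' is false for the BHZ negative coproduct: $\Deltam_V$ extracts arbitrary subforests of negative degree, not only subtrees that are closed under taking descendants. Indeed, the paper's own proof is devoted precisely to the case where a negative subtree $\hat\tau$ of $\downop\sigma$ has a \emph{disconnected} preimage $\iota_\sigma^{-1}\hat\tau$ in $\sigma$ --- a situation which simply cannot occur for descendant-closed subtrees, since $\downop$ moves the material above a noise edge to a sibling position at the \emph{same} node. Consequently, your structural fact (that the full material $M$ above a noise edge $e$ has $\deg M \geq 0$) does not directly close the argument: the problematic $\hat\tau$ may contain only a part $M' \subsetneq M$ together with some edges outside $M$, and $\deg M'$ is not controlled by $\deg M$. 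You also state the obstruction backwards --- a subtree containing $e$ but not (all of) $M$ has connected preimage in $\sigma$ and poses no problem; the dangerous configuration is the one containing part of $M$ but \emph{not} $e$. The paper's fix is to take the minimal subtree $\tau\subset\sigma$ containing $\iota_\sigma^{-1}\hat\tau$, observe that $\tau\setminus\iota_\sigma^{-1}\hat\tau$ consists only of noise edges, and apply \eqref{eq:sub_trees} directly to $\tau$: removing noise edges from a rule-conforming tree cannot produce a remainder of negative degree, contradicting $\deg\hat\tau<0$. Your argument can be repaired along these lines (e.g., apply \eqref{eq:sub_trees} to $e\cup M'$ rather than to $e\cup M$), but as written the key step relies on a false premise about $\Deltam_V$.
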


\begin{proof}
It is immediate from the definition that $\downop$ is multiplicative. Furthermore, since
$\Deltap_V \J_{o} \bigl[v \otimes \sigma\bigr]
= (\J_{o}[v \otimes \bigcdot] \otimes \id) \Deltap_V \sigma$ for $o \not \in \CO$ (see Remark~\ref{rem:quotient}),
it follows from the multiplicativity of $\Deltap_V$ and $\downop$ that the required commutation
relation holds for $\Deltap_V$. This in turn implies that $\tilde\CA^+_V$ and $\downop$ commute, since 
$\downop$ commutes with all the operations appearing in the definition of $\tilde\CA^+_V$.

Regarding commutation with $\Deltam_V$, we note that there is a natural type-preserving
bijection $\iota_\sigma$ between the edges of any given tree $\sigma$ and those of $\downop\sigma$.
In particular, for every occurrence of a tree $\tau$ as
a subtree of $\sigma$, there is a corresponding occurrence of $\downop\tau$ (the image of $\tau$ under
$\iota_\sigma$) as a 
subtree of $\downop\sigma$. Since furthermore $\downop$ preserves degrees and $\Deltam_V$ acts
by contracting \slash extracting subtrees of negative degree, the only obstruction to 
\eqref{e:commuteDown} is the possibility of having a negative subtree $\hat\tau$ of $\downop\sigma$ 
which does not come from a corresponding subtree of $\sigma$, i.e.\ such that $\iota_\sigma^{-1}\hat\tau$
is disconnected. This in turn can only happen if there is a subtree $ \tau$ of $\sigma$
containing $\iota_\sigma^{-1}\hat\tau$ and such that $\tau \setminus \iota_\sigma^{-1}\hat\tau$ 
only consists of noise-type edges. 

Note now that \eqref{eq:sub_trees} states that, given any tree $\tau$ conforming to the rule
$\Rule$, it is not possible to delete any noise edges from $\tau$ in such a way that the
remainder is of negative degree.
However, the tree $\tau$ we have just constructed would precisely be of that kind
since $\deg(\hat \tau) < 0$, thus yielding the desired contradiction.
Again, this implies that $\tilde\CA^-_V$ and $\downop$ commute in the same way as above, thus
completing the proof.
\end{proof}

Fix now a finite-dimensional vector space assignment $V$ as well as some stationary 
random smooth noise assignment $\xi^\eps$. We then denote by $\PPi^\eps$ the canonical lift
to $\CT_V(\Rule)$ and by  $\PPi^{\eps\prime}$ the canonical lift
to $\CT_V(\Rule')$. It follows immediately from \eqref{eq:admissible2} that we have 
the identity $\PPi^\eps  = \PPi^{\eps\prime} \circ \downop$. As a consequence of Lemma~\ref{lem:down}
and the definition of BPHZ renormalisation in Section~\ref{sec:renorm-models}, we deduce the following.

\begin{corollary}\label{cor:idemMod}
Let $\PPi^{\eps,\BPHZ}$ and $\PPi^{\eps,\BPHZ\prime}$ denote the BPHZ renormalisations of the 
models described above. Then, one has the identity
$\Pi^{\eps,\BPHZ}_x = \Pi_x^{\eps,\BPHZ\prime} \circ \downop$.\qed
\end{corollary}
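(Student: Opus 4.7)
The strategy is to factor out $\downop$ through the BPHZ construction step by step: first through the expectation-defined character, then through the negative twisted antipode, then through the negative coproduct that builds $M^{\eps,\BPHZ}$, and finally through the centering procedure $\PPi \mapsto \Pi_x$. Each stage relies on Lemma~\ref{lem:down}, whose hypothesis \eqref{eq:sub_trees} for our rule needs a quick check (it follows from the fact that any subtree of $\tau \in \fT(\Rule)$ obtained by dropping only noise edges is itself a tree conforming to the completed rule $\Rule$, hence of non-negative degree whenever $\tau$ is).

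The first step is the identity $g^-(\PPi^\eps) = g^-(\PPi^{\eps\prime}) \circ \downop$, which is immediate from the defining formula \eqref{eq:character} and the factorisation $\PPi^\eps = \PPi^{\eps\prime}\circ\downop$ already noted in the excerpt. Composing with the negative twisted antipode and using the commutation $\tilde\CA^-_V \downop = \downop \tilde\CA^-_V$ from Lemma~\ref{lem:down} then yields $\ren^\eps = \ren^{\eps\prime}\circ\downop$. For the full renormalisation map $M^{\eps,\BPHZ} = (\ren^\eps \otimes \id)\Deltam$, I would next write, using $(\downop\otimes\downop)\Deltam = \Deltam\downop$,
\begin{equs}
M^{\eps,\BPHZ\prime}\,\downop
&= (\ren^{\eps\prime}\otimes\id)\,\Deltam\,\downop
= (\ren^{\eps\prime}\otimes\id)(\downop\otimes\downop)\,\Deltam \\
&= (\ren^{\eps\prime}\circ\downop \otimes \downop)\,\Deltam
= (\ren^\eps \otimes \downop)\,\Deltam
= \downop\,M^{\eps,\BPHZ}\;,
\end{equs}
so that $\PPi^{\eps,\BPHZ} = \PPi^\eps M^{\eps,\BPHZ} = \PPi^{\eps\prime}\downop M^{\eps,\BPHZ} = \PPi^{\eps\prime} M^{\eps,\BPHZ\prime}\downop = \PPi^{\eps,\BPHZ\prime}\circ\downop$.

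The last step is to pass from this identity of admissible maps to the claimed identity of centered models. Recall that the map $\ZZ$ from admissible maps to models is built, on each $\sigma$, from a formula involving $\Deltap$, the positive twisted antipode $\tilde\CA^+$, and the evaluation of characters at points $x$ (nothing else enters). By the second half of Lemma~\ref{lem:down}, both $\Deltap$ and $\tilde\CA^+$ commute with $\downop$, and since the positive characters depend on the admissible map only through its values on the Taylor polynomials (which are fixed by $\downop$), the entire $\ZZ$-construction intertwines with $\downop$. Applied to the identity $\PPi^{\eps,\BPHZ} = \PPi^{\eps,\BPHZ\prime}\circ\downop$ this delivers $\Pi^{\eps,\BPHZ}_x = \Pi^{\eps,\BPHZ\prime}_x\circ\downop$ as required.

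The only mildly delicate point is this last paragraph: one has to verify that the positive part of the construction, as set up on the quotient regularity structure described in Remark~\ref{rem:quotient}, is well defined on the image of $\downop$ and that the intertwining goes through edge by edge in the recursive formula for $\Pi_x$. Everything else is algebraic manipulation using Lemma~\ref{lem:down} together with the multiplicativity of the canonical lifts.
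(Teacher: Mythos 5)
Your proof is correct and follows the route the paper implicitly intends: the $\qed$ in the statement signals that the corollary follows immediately from the identity $\PPi^\eps = \PPi^{\eps\prime}\circ\downop$ together with the commutation relations of Lemma~\ref{lem:down}, and your step-by-step unpacking (character, then twisted antipode, then $(\ren\otimes\id)\Deltam$, then canonical lift, then the $\ZZ$-construction via $\Deltap$ and $\tilde\CA^+$) is exactly that argument made explicit.

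One small imprecision worth flagging: your parenthetical verification of \eqref{eq:sub_trees} only addresses trees $\tau$ of non-negative degree (``hence of non-negative degree whenever $\tau$ is''), whereas the hypothesis of Lemma~\ref{lem:down} must hold for \emph{every} $\tau \in \fT(\Rule)$, in particular for those of negative degree. For such $\tau$ the argument ``the remainder conforms to the rule'' is not enough since conforming trees can themselves have negative degree; what one needs is that $\deg\tau - \deg\ff(e)\ge 0$ for the noise edge $e$ of highest degree, i.e.\ that deleting the least negative noise edge still leaves a non-negative degree. This does hold for the rule in Section~\ref{sec:first} (one can check it directly from Table~\ref{table:trees}; the binding case is $\Xi_2$, whose degree $\approx -\tfrac16$ coincides with the worst-case $\deg\fl_2$), but the justification you give does not by itself cover it. This does not affect the validity of the corollary.
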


We rely on the following construction. Given $\ell \ge 1$, we consider the 
space assignment $\CD^{(\ell)}$ given by
\begin{equ}
\CD^{(\ell)}_{\ft} = 
\left\{\begin{array}{cl}
\big(\CD_{\ft}\big)^\ell	 & \text{if $\ft \in \fL_-$\;,} \\
	\R & \text{otherwise\;.}
\end{array}\right.
\end{equ}
Given $\bu \in \R^\ell$ and $\ft \in \fL_-$, 
and writing $\fd_i^{\alpha}$
for the basis vectors of the $i^{\text{th}}$ copy of $\CD_{\ft}$ in $\CD_{\ft}^{(\ell)}$, 
we then define $\Ev_\bu^\ell \colon \CD^{(\ell)}_{\ft} \to \CB_\ft$ by
\begin{equ}[eq:Evalell]
	\Ev_\bu^\ell[\fd_i^{\alpha}] = \delta^{(\alpha)}_{\bu_i} \;,
\end{equ}
similarly to \eqref{eq:Eval}. As before, we also set $\Ev_\bu^\ell$ to be the identity
on $\CD^{(\ell)}_{\ft}$ with $\ft \in \fL_+$, and we write again $\Ev_\bu^\ell$ for
the map $\CT_{\CD^{(\ell)}} \to \CT_{\CB}$ built as in \cite[Rem.~5.18]{CCHS}.

If $\PPi^\eps$ denotes the (untranslated) canonical lift as in Definition~\ref{def:canonical} 
of the noise $\xi^\eps$ given by
\begin{equ}
\xi^\eps_{\fl_k}(t,x) = \xi^{c_\eps}_{k, \eps}(t,x)
\end{equ}
(the translated noises $\xi^{c_\eps}_{k, \eps}$ are defined in Lemma~\ref{lem:noise_convergence}),
to an admissible model on $\CT_\CB$, 
we then obtain an admissible model $\PPi^{(\eps,\ell,\bu)}$ on $\CT_{\CD^{(\ell)}}$ by setting
$\PPi^{(\eps,\ell,\bu)} = \PPi^\eps \circ \Ev_\bu^\ell$. 
Also, setting $\bar \fL = \fL_+ \cup \{(\fl,i,\alpha)\,:\, \fl \in \fL_-, i \le \ell, |\alpha| \le k_\star\}$
we note that we have a canonical identification
\begin{equ}
\CD_\ft^{(\ell)} \simeq \R^{\bar \fL}\;,
\end{equ}
so that \cite[Sec.~5.6]{CCHS} allows to identify $\CT_{\CD^{(\ell)}}$ with the regularity structure $\CT^{(\ell)}$
generated by the rule $\Rule^{(\ell)}$ defined just like $\Rule$, with each of the 
$(\fl,i,\alpha)$ playing the same role in $\Rule^{(\ell)}$ as $\fl$ does in $\Rule$.

With this identification, it follows immediately from Definition~\ref{def:canonical} that
$\PPi^{(\eps,\ell,\bu)}$ is nothing but the canonical lift to $\CT_{\CD^{(\ell)}} \simeq \CT^{(\ell)}$
of the noise $\xi^{\eps,\bu}$ given by
\begin{equ}[e:shiftedNoise]
\xi^{\eps,\bu}_{(\fl_k,i,\alpha)}(t,x) = 
\bigl(\eps^{2 \alpha + \f{2k}3}\d_t^\alpha \xi_{\eps}\bigr)\bigl(t + c_\eps t + \eps^2 \bu_i, x\bigr)\;.
\end{equ}
(It follows from \eqref{eq:non-hom_noises} that the constants $\ba_{\fl, o}$ from Definition~\ref{def:canonical} are given by $\ba_{\fl, o} = \eps^2$ for $o = (\ft_0, 0)$ and $\ba_{\fl, o} = 0$ otherwise.) Similarly, we set 
$\hPPi^{(\eps,\ell,\bu)} = \PPi^{\eps,\BPHZ} \circ \Ev_\bu^\ell$ and recall that, given any 
admissible (smooth) stationary random model $\PPi$ in the sense of 
\cite[Def.~6.17]{BHZ}, the associated 
recentering and BPHZ renormalisation characters $g_x(\PPi)$ and $g_-(\PPi)$ are given by
\begin{equ}
g_x(\PPi)(\sigma_+) = (\PPi \sigma_+)(x)\;,\qquad 
g_-(\PPi)(\sigma_-) = \E (\PPi \sigma_-)(0)\;,
\end{equ}
where $\sigma_\pm \in \scal{\tau_\pm}$ for $\tau_\pm \in \fT_\pm(\Rule)$, see also \cite[Sec.~5.7.2]{CCHS}. These are then extended multiplicatively and we recall that
\begin{equ}
\hat \Pi_x = (g_-(\PPi^\ex)\tilde \CA_- \otimes \PPi^\ex \otimes g_x(\PPi^\ex)\tilde \CA_+)(\Deltam_\ex\otimes \id)\Deltap_\ex\;,
\end{equ} 
where $\PPi^\ex$ denotes the lift of $\PPi$ to the extended regularity structure as in
\cite[Thm.~6.33]{BHZ}, and where $\tilde \CA_-$ and $\tilde \CA_+$ are the twisted antipodes for the new vector space assignment (see \cite{BHZ} for the definitions).
Since
$\Ev_\bu^\ell$ is a natural transformation, we conclude that $g_x(\hPPi^{(\eps,\ell,\bu)}) = g_x(\PPi^{\eps,\BPHZ}) \circ \Ev_\bu^\ell$, and similarly for the renormalisation characters.
In particular, we conclude that $\hPPi^{(\eps,\ell,\bu)}$ coincides with the BPHZ lift of \eqref{e:shiftedNoise}.
We can therefore apply \cite[Thm.~2.15]{HC} to conclude that $\hPPi^{(\eps,\ell,\bu)}$ converges
in probability in the space of admissible models to a limit $\hPPi^{(\ell,\bu)}$,
which is the BPHZ lift of the limiting noise assignment $\bar \xi$ given by
\begin{equ}[e:defxibar]
\bar \xi_{\fl_k,i,\alpha} = 
\left\{\begin{array}{cl}
	\xi & \text{if $k=0$ and $\alpha = 0$\;,} \\
	0 & \text{otherwise\;.}
\end{array}\right.
\end{equ}
Actually, this is true for a much larger class of approximations to white noise by the continuity 
of the BPHZ lift under weak convergence of the driving noises, provided that one has uniform bounds.

We do however need the following more quantitative bound, which will be crucial for
showing that one has convergence of models on $\CT_\CB$.

\begin{proposition}\label{prop:boundModels}
For every $\gamma$ there exist $M >0$ and $\theta > 0$ such that, for every compact set $\fK$, the bounds
\begin{equ}[e:boundPPieps]
\E \$\hPPi^{(\eps,\ell,\bu)}; \hPPi^{(\ell,\bu)}\$_\fK^q \lesssim \eps^{q\theta} |\bu|^{qM}\;,\qquad
\E \$\hPPi^{(\ell,\bu)}\$_\fK^q \lesssim |\bu|^{qM}\;,
\end{equ}
hold for every $\ell$ and every $q$, uniformly over $\eps \in (0,1]$ and $\bu \in \R^\ell$.
Here, we restrict the relevant models to $\CT_{\CD^{(\ell)},\le \gamma}$.
\end{proposition}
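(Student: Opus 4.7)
The plan is to apply the general BPHZ convergence machinery of \cite{HC,Rhys} to the regularity structure $\CT^{(\ell)} \simeq \CT_{\CD^{(\ell)}}$ generated by the rule $\Rule^{(\ell)}$ identified just above the statement, with the stationary Gaussian noise assignment $\xi^{\eps,\bu}$ from \eqref{e:shiftedNoise}. The main work is to track precisely how the constants depend on $\bu$ and to ensure that the bound is uniform in $\ell$.

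First, using Corollary~\ref{cor:idemMod} together with the fact that $\downop$ is a bounded natural transformation of regularity structures, I would reduce the claim to the analogous bound for the BPHZ model associated to the downgraded rule $\Rule^{(\ell)\prime}$ in which noise edges are terminal; that rule fits exactly the setting of \cite[Thm.~2.15]{HC}. Since $\xi^{\eps,\bu}$ is Gaussian, the standard Nelson\slash hypercontractivity argument reduces every $L^q$-estimate to $L^2$-estimates, which are in turn controlled by graph sums obtained from Wick's theorem. The basic object is the covariance
\begin{equ}
\E\bigl[\xi^{\eps,\bu}_{(\fl_k,i,\alpha)}(z_1)\xi^{\eps,\bu}_{(\fl_{k'},i',\alpha')}(z_2)\bigr]
= \eps^{\tfrac{2(k+k')}{3}+2(\alpha+\alpha')}\,\d_t^{\alpha+\alpha'}C_\eps\bigl(z_1-z_2+\eps^{2}(\bu_i-\bu_{i'})\bigr)\;,
\end{equ}
where $C_\eps$ is the covariance of $\xi_\eps^{c_\eps}$. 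The prefactors $\eps^{2k/3}$ and $\eps^{2\alpha}$ are tailored exactly so that, as an $\eps$-dependent kernel on $\R^2$, this covariance satisfies the scaling estimates of \cite[Assum.~2.7]{HC} with effective degree $\deg \fl_k + \deg \fl_{k'}$, uniformly in $\eps\in(0,1]$ and in the shift parameters $\eps^2(\bu_i-\bu_{i'})$; the only place where $\bu$ enters is a polynomial growth factor that appears when one wants bounds uniform over $\bu$ rather than $\bu$ in a compact set, produced by derivatives of $\rho_\eps*\tilde\rho_\eps$ evaluated at shifted arguments.

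Given these scaling estimates, \cite[Thm.~2.15]{HC} together with the refinements of \cite{Rhys} yields for each individual $\tau \in \fT_-(\Rule^{(\ell)})$ of degree less than $\gamma$ the bound
\begin{equ}
\E\,\bigl|\hat\Pi^{(\eps,\ell,\bu)}_z \sigma(\phi^\lambda_z) - \hat\Pi^{(\ell,\bu)}_z \sigma(\phi^\lambda_z)\bigr|^2
\lesssim \eps^{2\theta}\,\lambda^{2\deg\sigma + \kappa}\,K(\bu)\;,
\end{equ}
for every $\sigma\in \scal{\tau}_{\CD^{(\ell)}}$, and an analogous bound (without the $\eps^{2\theta}$) for $\hat\Pi^{(\ell,\bu)}_z$ itself; the convergence of the noise assignment to $\bar\xi$ is what provides the positive power of $\eps$. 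Here $K(\bu)$ collects the contributions coming from the shifts $\eps^2\bu_i$ that enter the covariance of pairs of noises appearing in $\sigma$. Since the number of noise edges of $\sigma$ is bounded by some $N(\gamma)$, and the mollifier and all its derivatives are integrable, one obtains $K(\bu)\lesssim (1+|\bu|)^{M}$ for some $M=M(\gamma)$.

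Uniformity in $\ell$ is the key point and the main obstacle. The issue is that $\fT(\Rule^{(\ell)})$ grows with $\ell$ because of the additional decoration by the index $i\in\{1,\dots,\ell\}$ on each noise edge. However, since $\Rule$ is subcritical and $\gamma$ is fixed, the number of edges in any tree of degree less than $\gamma$ is bounded, hence every such tree involves at most finitely many distinct labels $(\fl_k,i,\alpha)$; moreover, up to permutations of the index $i$, there are only finitely many distinct trees to consider, and this finite number depends on $\gamma$ but not on $\ell$. The bound for each tree depends on the $\bu_i$'s only through the finitely many indices actually appearing, and after maximising over all permutations the final bound depends only on $|\bu|_\infty$ (hence $|\bu|$) in a polynomial way. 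This allows us to sum over the finitely many ``shapes'' of trees and obtain the desired bound \eqref{e:boundPPieps} with constants independent of $\ell$. The final step is then a standard Kolmogorov argument to promote pointwise\slash second-moment estimates into the model seminorm $\$\cdot\$_\fK$.
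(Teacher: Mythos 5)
Your overall plan is the right one and matches the paper's: reduce via $\downop$ and Corollary~\ref{cor:idemMod} to a rule with terminal noise edges, then apply the general Gaussian machinery of \cite{HC} to the noise assignment \eqref{e:shiftedNoise} and track the $\bu$- and $\eps$-dependence. However, the proposal stays at the level of ``the hypotheses of \cite{HC} should be satisfied'' and omits the two pieces of actual work in the paper's proof, both of which are needed to make the application of \cite{HC} go through and to get the stated $\eps^{q\theta}|\bu|^{qM}$ bound.

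The first missing piece is the choice of a \emph{reduced homogeneity assignment}. The covariance of two noises from \eqref{e:shiftedNoise} is $\lesssim \eps^{2k/3-|\s|}$ in $L^\infty$ and supported on a ball of radius $\eps\sqrt{1+|u|}$; it does not obey a pure power bound $|z|_\s^{-\text{deg}}$ uniformly in $\eps$ at the natural homogeneity $2k/3-\f32-\kappa_\star$. The paper trades the compact support for a slack exponent by choosing homogeneity $\f{2k}3-\f32-\f\theta2$: on the support of the covariance one has $\eps\gtrsim|z|_\s/\sqrt{1+|u|}$, whence $\eps^{2k/3-3}\le \eps^\theta\bigl((1+\sqrt{|u|})/|z|_\s\bigr)^{3+\theta-2k/3}$. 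This is exactly where the factor $\eps^\theta$ \emph{and} the polynomial $|\bu|$-growth in \eqref{e:boundPPieps} come from, and it is why the proposition holds for any compact $\fK$ uniformly over $\bu\in\R^\ell$, not just $\bu$ in a bounded set. Your remark that ``the prefactors are tailored exactly so that the covariance satisfies the scaling estimates with effective degree $\deg\fl_k+\deg\fl_{k'}$'' is not quite right: at that degree the estimate fails, and it is precisely the strict loss of $\theta$ in the homogeneity that produces the positive power of $\eps$.

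The second missing piece is the \emph{moment cancellation conditions}. The noise components $\eps^{2\alpha+2k/3}\d_t^\alpha\xi_\eps$ with $(k,\alpha)\neq(0,0)$ do not converge to white noise but must converge to $0$, and the covariance between any pair with $\alpha+\beta\neq 0$ integrates to zero; moreover the ``difference'' covariance $\bar C^{k,u,\alpha,\beta}_\eps(z)=\E[\zeta^{u,\alpha}_{k,\eps}(0)(\xi(z)-\zeta^{v,\beta}_{0,\eps}(z))]$ integrates to zero even when $\alpha+\beta=0$. These vanishing-integral conditions are what allow the second covariance norm $\|\bar\xi,\xi^{\eps,\bu}\|_{2N,|\cdot|_\s}$ to pick up a factor $\eps^{\theta N}$ and so deliver the \emph{rate} in \eqref{e:boundPPieps}, not merely convergence. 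Without verifying them, an appeal to \cite[Thm.~2.15]{HC} (which in any case gives qualitative convergence) would not produce the quantitative estimate the statement requires; the paper instead uses the quantitative version \cite[Thm.~2.31]{HC} in terms of the cumulant norms $\|\cdot\|_{2N,|\cdot|_\s}$. Your explicit uniformity-in-$\ell$ argument is correct, but once one uses the quantitative bound in terms of cumulant norms this uniformity comes for free, since only boundedly many index labels $i\le\ell$ appear in any tree of degree $\le\gamma$.
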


\begin{proof}
We aim to apply \cite[Thm.~2.31]{HC} with noises given by \eqref{e:shiftedNoise}. 
These are far from being independent in our case, but have covariances given by
\begin{equ}
\E \bigl[\zeta^{u,\alpha}_{k,\eps}(0,0)\zeta^{v,\beta}_{\ell,\eps}(t,x)\bigr]
= \eps^{2(k+\ell) \over 3} \rho^{\alpha,\beta}_\eps(t+\eps^2(v-u),x)\eqdef C^{k+\ell,v-u,\alpha,\beta}_\eps(t,x)\;,
\end{equ} 
for some smooth compactly supported functions $\rho^{\alpha,\beta}$.
These functions satisfy
the bound
\begin{equ}
|C^{k,u,\alpha,\beta}_\eps(z)| \lesssim \eps^{{2k \over 3}-|\s|} \one\{|z|_\s \le \eps \sqrt{1+|u|}\}
\lesssim \eps^\theta \Big({1+\sqrt{|u|} \over |z|_\s}\Big)^{3+\theta-{2k \over 3}}\;,
\end{equ}
for every $\theta \ge {2k \over 3}-3$. Furthermore, one has
$\int C^{k,u,\alpha,\beta}_\eps(z)\,dz = 0$
whenever $\alpha + \beta \neq 0$.

A similar bound holds for 
$\E [\zeta^{u,\alpha}_{k,\eps}(0)\xi(z)]$
as well as for
\begin{equ}
\E \bigl[\zeta^{u,\alpha}_{k,\eps}(0)\big(\xi(z) - \zeta^{v,\beta}_{0,\eps}(z)\big)\bigr]
\eqdef \bar C^{k,u,\alpha,\beta}_\eps(z)\;,
\end{equ}
the difference being that the integral of this function vanishes even when
$\alpha + \beta = 0$.

It follows in particular that, in the notations of \cite[Def.~2.21 \&\ 2.23]{HC} (and recalling that
all cumulants except for the covariances vanish for jointly Gaussian processes), one has
for every $N > 0$ the bounds 
\begin{equ}[e:boundxiCov]
\|\xi^{\eps,\bu}\|_{2N,|\cdot|_\s} \lesssim (1+|\bu|)^{\f{3N}2+\theta N}\;, \qquad 
\|\bar \xi,\xi^{\eps,\bu}\|_{2N,|\cdot|_\s} \lesssim \eps^{\theta N} (1+|\bu|)^{\f{3N}2+\theta N}\;,
\end{equ}
uniformly over $\eps \in (0,1]$ and $\bu \in \R^\ell$, and with $\bar \xi$ given in 
\eqref{e:defxibar}. Here, the quantity $\|\cdot\|_{2N,|\cdot|_\s}$ is computed for the whole collection of noises \eqref{e:shiftedNoise}. The homogeneity assignment $|\cdot|_\s$ used for the norms in \eqref{e:boundxiCov}
is given by assigning homogeneity $\f{2k}3 - \f32 - \f\theta2$ to the noises of type $\fl_k$.
The claim then follows at once by combining \cite[Thm.~2.31]{HC} with 
Corollary~\ref{cor:idemMod} and \cite[Thm.~10.7]{Regularity}.
\end{proof}

\begin{remark}
In fact, the law of the left-hand side of \eqref{e:boundPPieps} only depends on the differences 
$\bu_i-\bu_j$, but this is not something we exploit.
\end{remark}

In order to formulate our convergence result, we introduce the following notation.
Given  $\tau \in \fT(\Rule)$, we write $\delta_0 \tau \in \scal{\tau}_\CB$ for the
element assigning the distribution $\delta_0 \in \CB$ to each edge of noise type.

\begin{proposition}\label{prop:BPHZlift}
Fix $\gamma > 0$ and a sequence of translations $c_\eps$ with $\lim_{\eps \to 0}c_\eps = 0$, 
and let $\PPi^{\eps,\BPHZ}$ be as in Corollary~\ref{cor:idemMod}, but restricted to $\CT_{\CB,\gamma}$.

Then, for any $p \geq 2$ and any compact set $\fK$,
\begin{equ}
	\E \$\PPi^{\eps,\BPHZ} \$_\fK^p \lesssim 1\;,\label{eq:lift_bound}
\end{equ}
uniformly in $\eps \in (0,1]$ and, as $\eps \to 0$, $\PPi^{\eps,\BPHZ}$ converges in probability 
in $\SM_0(\CT_{\CB,\gamma})$ to a model $\PPi^{\BPHZ}$ with the following properties:
\begin{enumerate}
	\item $\PPi^{\BPHZ}$ is independent of the choice of translations $c_\eps$.
	\item\label{item:vanish_noise1} $\PPi^{\BPHZ} \sigma = 0$ for any  element $\sigma \in \langle \tau \rangle_{\CB}$ such that the tree $\tau \in \fT(\Rule)$ contains at least one edge 
	of type $\J_{\fl_k}$ for some $k = 1, 2$.
	\item\label{item:vanish_noise2} For any $z \in \R^2$, any $\tau \in \fT(\Rule)$ and any $\sigma \in \langle \tau \rangle_{\CB}$, one has
	\begin{equ}[eq:models_limit]
		\Pi^{\BPHZ}_z \sigma = \langle \sigma, 1 \rangle\, \Pi_z^{\BPHZ} \delta_0\tau\;,
	\end{equ}
	where $\scal{\sigma,1}$ denotes the duality pairing between $\sigma$, viewed as
	a distribution on $\R^{E_-(\tau)}$, where $E_-(\tau)$ denotes the set of 
	edges of noise type (i.e.\ with a type in $\fL_-$), and the constant function $1$.
\end{enumerate}
\end{proposition}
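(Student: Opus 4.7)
The strategy is to lift the uniform bounds and convergence statement from the finite-dimensional setting of Proposition~\ref{prop:boundModels} to the infinite-dimensional regularity structure $\CT_{\CB,\gamma}$ by exploiting the weighted dual structure of the spaces $\CB^{\otimes\ell}$. Fix $\tau\in\fT(\Rule)$ with $\ell = |E_-(\tau)|$ noise edges $e_1,\dots,e_\ell$, and a corresponding tree $\tau'\in\CT^{(\ell)}$ whose $i$-th noise edge carries the basis vector $\one_i$ of the $i$-th copy of $\CD_{\fl}$. Any $\sigma\in\langle\tau\rangle_\CB$ is a distribution on $\R^\ell$, and the naturality of $\Ev_\bu^\ell$ combined with admissibility of $\PPi^{\eps,\BPHZ}$ yields the key representation
\begin{equ}[e:planId]
(\Pi^{\eps,\BPHZ}_z\sigma)(\phi_z^\lambda) = \sigma\bigl[\,\bu\mapsto (\hat\Pi^{(\eps,\ell,\bu)}_z\tau')(\phi_z^\lambda)\,\bigr]\;,
\end{equ}
which reduces everything to testing $\sigma\in\CB^{\otimes\ell}$ against the smooth random function $f_\eps(\bu) \eqdef (\hat\Pi^{(\eps,\ell,\bu)}_z\tau')(\phi_z^\lambda)$.

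\textbf{Moment bounds and convergence.} To deduce~\eqref{eq:lift_bound} and the Cauchy property in $\SM_0(\CT_{\CB,\gamma})$, I would apply Corollary~\ref{cor:useful} to $f_\eps$, which gives
\begin{equ}
\E\|f_\eps\|_{\CB_\star^{(\ell)}}^p \lesssim \sup_{\bu\in\R^\ell}\sup_{|\alpha|\le\ell k_\star+1}\frac{\E|\d_\bu^\alpha f_\eps(\bu)|^p}{1+|\bu|^{\eta_\star p}}\;.
\end{equ}
A direct computation based on~\eqref{e:shiftedNoise} shows that each derivative $\d_{\bu_i}$ produces, up to sign and a factor $\eps^2$, the same functional with the noise at edge $e_i$ replaced by an additional time-derivative decoration---equivalently, a relabelling within the enriched noise set $\bar\fL$ of Proposition~\ref{prop:boundModels}. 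Every such derivative is therefore again a BPHZ lift to which that proposition applies, giving the pointwise bound $\E|\d_\bu^\alpha f_\eps(\bu)|^p \lesssim \lambda^{p\deg\tau}(1+|\bu|)^{pM}$ uniformly in $\eps$. Choosing $\eta_\star$ strictly larger than the exponent $M$ (and $k_\star$ by Lemma~\ref{lem:k_star}) makes the right-hand side summable in $\bu$, and pairing with $\|\sigma\|_{\CB^{\otimes\ell}}$ produces moment estimates uniform in $\eps$, $z$, $\lambda$ and $\sigma$ in the unit ball of $\CB^{\otimes\ell}$. A Kolmogorov-type argument on the joint parameter $(z,\lambda)$ as in~\cite[Thm.~10.7]{Regularity} then upgrades these to~\eqref{eq:lift_bound}. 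Applying exactly the same reasoning to the differences $f_\eps-f_{\bar\eps}$, the convergence part of Proposition~\ref{prop:boundModels} supplies an extra factor $\eps^{p\theta}$, showing that $\{\PPi^{\eps,\BPHZ}\}$ is Cauchy in probability and hence converges in $\SM_0(\CT_{\CB,\gamma})$ to some limit $\PPi^\BPHZ$.

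\textbf{Properties of the limit.} Claims (1)--(3) all follow from the structure of the limiting noise $\bar\xi$ in~\eqref{e:defxibar}. For (1), $\bar\xi$ makes no reference to $c_\eps$, so neither does $\hat\Pi^{(\ell,\bu)}_z$ and hence, through~\eqref{e:planId}, neither does $\PPi^\BPHZ$. For (2), if $\tau$ contains an edge $\J_{\fl_k}$ with $k\in\{1,2\}$ then $\bar\xi_{\fl_k,i,\alpha}\equiv 0$ for every $i,\alpha$, so the canonical lift of any such tree vanishes; the negative coproduct $\Deltam_\CB$ decomposes the tree as a sum of pairs $\sigma_i^1\otimes\sigma_i^2$ in which the offending edge survives in exactly one factor, and in either case the corresponding term of the BPHZ product ($\ren\circ\tilde\CA_-$ or $\PPi^\BPHZ$) vanishes, so the full renormalised lift also vanishes. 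For (3), the noise $\bar\xi$ itself is independent of $\bu$ (the $\eps^2\bu_i$ shifts collapse in the limit, and the components with $\alpha\neq 0$ or $k\neq 0$ are killed by the $\eps$-prefactors), so $\hat\Pi^{(\ell,\bu)}_z\tau'$ is constant in $\bu$ and equal to $\hat\Pi^{(\ell,0)}_z\tau' = \Pi^\BPHZ_z\delta_0\tau$; substituting this constant into~\eqref{e:planId} evaluated at the limit gives $(\Pi^\BPHZ_z\sigma)(\phi) = \scal{\sigma,1}\,(\Pi^\BPHZ_z\delta_0\tau)(\phi)$, which is~\eqref{eq:models_limit}.

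\textbf{Main obstacle.} The most delicate point is a clean bookkeeping of the global parameters: the exponent $\eta_\star$ in the weight~\eqref{eq:norm_weighted} must be fixed \emph{after} the integer $M$ produced by Proposition~\ref{prop:boundModels} (which in turn depends on $\gamma$ through the complexity of trees in $\fT_-(\Rule)$), and $k_\star$ must then be chosen large enough both for Lemma~\ref{lem:k_star} and for the combinatorial factor $\ell k_\star+1$ appearing in Corollary~\ref{cor:useful} to remain under control. In parallel, one has to verify that $\bu$-differentiation genuinely commutes with the BPHZ procedure in the sense that the derivatives of $f_\eps$ really do coincide with BPHZ lifts of relabelled noises—this is an extension of Lemma~\ref{lem:EvM} from $\Ev_\bu^\ell$ to its derivatives, and is the main algebraic step that prevents one from invoking the results of~\cite{HC} as a pure black box.
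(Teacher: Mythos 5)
Your proof follows essentially the same strategy as the paper's: write $(\Pi^{\eps,\BPHZ}_z\sigma)(\phi_z^\lambda)$ as the pairing of $\sigma$ with $\bu\mapsto(\hat\Pi^{(\eps,\ell,\bu)}_z\hat\tau)(\phi_z^\lambda)$, invoke Corollary~\ref{cor:useful} to reduce the $\CB^{\otimes\ell}$-dual estimate to pointwise moment bounds on the $\bu$-derivatives, and feed in Proposition~\ref{prop:boundModels}; convergence comes from the same scheme applied to $\hPPi^{(\eps,\ell,\bu)}-\hPPi^{(\ell,\bu)}$, and properties (1)--(3) follow from the explicit form of the limit noise $\bar\xi$ in~\eqref{e:defxibar}. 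One remark on what you flag as the ``main obstacle'': the commutativity of $\d_{\bu}$ with the BPHZ construction is not really an extension of Lemma~\ref{lem:EvM} requiring a separate algebraic verification. In the paper this is a one-line consequence of the identity $\hPPi^{(\eps,\ell,\bu)} = \PPi^{\eps,\BPHZ}\circ\Ev^\ell_\bu$: since the $\CT_\CB$-level map $\PPi^{\eps,\BPHZ}$ carries no $\bu$-dependence, differentiating $\hPPi^{(\eps,\ell,\bu)}\hat\tau$ in $\bu_i$ simply differentiates the Dirac mass $\delta_{\bu_i}$ attached to the $i$-th noise edge, which is exactly the action of $\fd_i$ in $\CD^{(\ell)}$; this yields \eqref{eq:usetauhat} without the BPHZ renormalisation constants ever entering via a Leibniz rule. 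So [HC] really is used as a black box for the finite-dimensional lifts, and the lift to $\CT_\CB$ is pure naturality of $\Ev^\ell_\bu$. Apart from this shift of emphasis, the two proofs are the same argument.
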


\begin{remark}
In particular, $\Pi^\BPHZ_z \J_{\fl_{k}}[\mu \otimes \one] = \scal{\mu,1} \xi_{k}$, where $\xi_k = 0$ for $k > 0$ and $\xi_0$ is a space-time white noise. 
\end{remark}

\begin{proof}
We fix a tree $\tau = (T,\ff,\fm) \in \fT(\Rule)$. Regarding the first statement,
it is sufficient to exhibit a candidate limit model $\PPi^\BPHZ$ and to show that, for
every $z \in \R^2$, one has the bound
\begin{equ}
\bigl|\bigl(\bigl(\Pi_z^{\BPHZ} - \Pi_z^{\eps,\BPHZ}\bigr)\sigma\bigr)(\phi_z^\lambda)\bigr|
\lesssim \eps^\theta \lambda^{\deg \tau} \|\sigma\|_{\CB^{\otimes \tau}}\;,
\end{equ}
uniformly over $\sigma \in \scal{\tau}_\CB$, $\eps,\lambda \in (0,1]$, and test functions 
$\phi \in \SB^r_\s$ for some fixed large enough $r > 0$.

Let now $\ell$ be the number  of edges of $\tau$ of noise type. 
Writing as before $E_-$ for the set of these edges,
$\scal{\tau}_\CB$ is canonically isomorphic to the subspace of $\CB^{\otimes E_-}$
invariant under the permutations of $E_-$ arising from the symmetries of $\tau$.
Any ordering $E_- = \{e_1,\ldots,e_\ell\}$ furthermore yields an isomorphism 
$\CB^{\otimes E_-} \approx \CB^{\otimes \ell}$. Let then 
$\hat \tau \in \scal{\tau}_{\CD^{(\ell)}}$ be the element given by 
$\hat \tau = \pi(\one_{\ff(e_1)} \otimes \ldots \otimes \one_{\ff(e_\ell)})$, 
where we use the identification
of $\scal{\tau}_{\CD^{(\ell)}}$ with the subspace of $\CD_{\ff(e_1)}\otimes\ldots\otimes \CD_{\ff(e_\ell)}$ invariant under the symmetries of $\tau$ and write $\pi$ for the projection onto
that subspace given by symmetrisation. 
As a consequence of
the definition of the map $\Ev_\bu^\ell$, one has for every smooth element
$\mu \in \scal{\tau}_\CB$, viewed on one hand as an element of $\CT_\CB$ and
on the other hand as a function on $\R^\ell$, the identity
\begin{equ}
\big(\PPi^{\eps,\BPHZ} \mu\big)(\phi)
= \int \big(\PPi^{\eps,\BPHZ}  \pi(\delta_{\bu})\big)(\phi)\,\mu(d\bu)\
= \int \big(\hPPi^{(\eps,\ell,\bu)}  \hat \tau\big)(\phi)\,\mu(d\bu)\;,
\end{equ}
with all expressions clearly independent of the choice of ordering.
Similarly to above, the same identity also holds if $\PPi^{\eps,\BPHZ}$ and $\hPPi^{(\eps,\ell,\bu)}$
are replaced by $\Pi^{\eps,\BPHZ}_x$ and $\hat\Pi_x^{(\eps,\ell,\bu)}$ respectively.

Given any multiindex $\alpha$ of $\R^\ell$ we furthermore define
$\d^\alpha \hat \tau = \pi(\fd_1^{\alpha_1} \otimes \ldots \otimes \fd_\ell^{\alpha_\ell})$
and we note that, by \eqref{e:shiftedNoise},
\begin{equ}[eq:usetauhat]
\d_\bu^\alpha\big(\hat\Pi_x^{(\eps,\ell,\bu)}  \hat \tau\big)(\phi)
= \big(\hat\Pi_x^{(\eps,\ell,\bu)}  \d^\alpha \hat \tau\big)(\phi)\;.
\end{equ}
This is true for any fixed value of $k_\star$ and of the exponent $\eta$ in the definition of $\CB$.

This formula allows us to apply Corollary~\ref{cor:useful} in the following way. Consider first a wavelet basis
on $\R^2$ compatible with the parabolic scaling and such that its wavelets are of class $\CC^2$.
We write $\phi$ for the ``father wavelet'' and $\Psi$ for the finite collection of ``mother wavelets'' as in \cite[Sec.~3.1]{Regularity}. Fix a bounded open set
$U \subset \R^2$ such that $\bigcup_{z \in \fK} B(z,C) \subset U$ for a large enough constant 
$C$ (depending on the size of the supports of the wavelets) and 
write $\Lambda_n = \{(2^{-2n}k,2^{-n}m)\,:\, (k,m) \in \Z^2\}\cap U$. 
Given a map $x \mapsto \Pi_x$ with
$\Pi_x \colon \CT_{\CD^{(\ell)},<\gamma} \to \CC^{-2}(\R^2)$, we then define
the seminorm 
\begin{equ}
\|\Pi\|_E =  \sup_{\alpha < \gamma} \sup_{\deg\tau = \alpha \atop \|\tau\| = 1} \Big(\sup_{n \ge 0} \sup_{x \in \Lambda_n}\sup_{\psi \in \Psi} 2^{\alpha n} |(\Pi_x \tau)(\psi_x^{2^{-n}})|
+ \sup_{x \in \Lambda_0} |(\Pi_x \tau)(\phi_x^{1})|\Big)\;.
\end{equ}
The space $E$ then consists of (equivalence classes of) functions $\Pi$ such that $\|\Pi\|_E$
is finite.
(Note that this \textit{is} a linear space since we do not impose any algebraic relations.
The actual space of admissible models is then naturally identified with a closed subset 
of $E$.)

The point of this construction is that there exists $M>0$ (depending on $\gamma$)
such that if $Z = (\Pi,\Gamma)$ is an admissible model, then as a consequence of
\cite[Prop.~3.32]{Regularity} (and the admissibility of the model) one has
\begin{equ}
\$Z\$_\fK \lesssim \|\Pi\|_E \bigl(1+ \|\Pi\|_E\bigr)^M\;,
\end{equ}
for some proportionality constant independent of $Z$. Similarly, given two such models,
one has  
\begin{equ}
\$Z; \bar Z\$_\fK \lesssim \|\Pi-\bar \Pi\|_E\bigl(1+ \|\Pi\|_E+\|\bar \Pi\|_E\bigr)^M\;.
\end{equ}
Conversely, we have
\begin{equ}
\|\Pi\|_E \lesssim \$Z\$_{\bar\fK}\;,\qquad
\|\Pi-\bar \Pi\|_E \lesssim \$Z; \bar Z\$_{\bar\fK}\;,
\end{equ}
for $\bar \fK$ given by the closure of $U$.

Combining this with Proposition~\ref{prop:boundModels} and Corollary~\ref{cor:useful} together with~\eqref{eq:usetauhat},
we immediately conclude that $\PPi^{\eps,\BPHZ}$ converges
to a limit $\PPi^{\BPHZ}$ provided that the exponent $\eta$ in the definition of $\CB$ is
chosen sufficiently large.
\end{proof}

\subsection{Renormalisation constants}
\label{sec:renorm_constants}

In this section we consider the renormalisation constants for the BPHZ model constructed above that appear in 
the renormalised mean curvature system. 

For noises that satisfy the assumptions of Proposition~\ref{prop:stationarity}, the renormalisation 
of the nonlinearity is given by $\hat{M}^\eps F = F + \sum_{\tau \in \fT_-(\Rule)} \ren^\eps \bbUpsilon^F[\tau]$ 
(see Lemma~\ref{lem:MF_u} and Proposition~\ref{prop:translation}),
where $\ren^\eps = (g^-(\PPi^\eps)\Ev_\0 \tilde\CA^-_\CD \otimes \id) \Delta^-_\CD$ is the BPHZ character on $\CT_\CD$ defined 
via Definition~\ref{def:BPHZ} and Remark~\ref{rem:bphz_abuse}.

In our case the noises trivially satisfy the assumptions of Proposition~\ref{prop:stationarity}
since $\Eps_+(\fl) = \{\ft_0\}$ for all $\fl \in \fL_{-}$. We now turn to the analysis of the counterterms $\ren^\eps \bbUpsilon^F[\tau]$. 

For this we will first describe a graphical representation of the trees in $\fT_-(\Rule)$. Once 
again we recall that the rule $\Rule$ is given in Section~\ref{sec:first} and that $\kappa_\star < \frac{1}{42}$. One can generalise the 
$\downop$ operator from~\eqref{eq:down} to the trees in $\fT$ in an obvious way by just taking $V_\ft = \R$ for every $\ft \in \fL$ 
in~\eqref{eq:down}. We will first describe the graphical representation of the trees that do not contain polynomials. For 
simplicity of presentation and because it is easier to see the true subdivergencies of the trees, we will represent each $\tau \in \fT_-(\Rule)$ that does not contain polynomials by a graphical representation of $\downop[\tau]$. This is harmless since 
Lemma~\ref{lem:down} implies that $\ren^\eps\bbUpsilon^F[\tau] = \ren^\eps\downop[\bbUpsilon^F[\tau]]$.\footnote{Note though that $\bbUpsilon$ and $\downop$ do not commute, which is the whole 
point of introducing the trees that can be drawn above noises.} Noises are represented by edges $\J_{\fl_0} = \<Jl>$, $\J_{\fl_1} = \<Jl1>$, $\J_{\fl_2} = 
\<Jl2>$ when these noises are coming from $\J_{\fl}[\tau]$ for $\tau \neq \one$, but in this case we represent graphically $\J_{\fl}[\tau]$ by 
gluing $\tau$ to the \textit{root} of $\J_{\fl}$ if $\tau$ does not contain polynomials. Noise symbols without anything drawn above them are denoted by $\Xi_0 = \<Xi>$, $\Xi_1 = \<Xi1>$, 
$\Xi_2 = \<Xi2>$. The tree product is represented by gluing trees at their roots (which is always the node at the bottom).  Edges that represent kernels are $\J_{\ft_0} = \<Jt0>, \J_{(\ft_0, 1)} = \<J1t0>, \J_{(\ft_1, 1)} = 
\<J1t1>,  \J_{(\ft_1, 2)} = \<J2t1>, \J_{(\ft_2, 2)} = \<J2t2>$ and in this case $\J_{(\ft,p)}[\tau]$ is represented by drawing an edge $\J_{(\ft,p)}$
below $\tau$ and joining it at the root of $\tau$. When the multiplication by the noise is meant, i.e.\ we have $\tau\Xi_k$, then such a tree is represented by drawing a large node of the colour corresponding to the noise $\Xi_k$ at the root of $\tau$. For instance  $\J_{(\ft_1, 1)}[\Xi_1]\Xi_2$ is drawn as $\<t1Xi2>$ while $\J_{\fl_0}[\J_{\ft_0}[\J_{\fl_0}[\J_{\ft_0}[\Xi_0]]]]$ is drawn as $\<J[HXi]Xi>$.

Now the multiplication by polynomial $\X_1$ will simply be drawn with an index $\tiny{1}$ at the root. Subtrees of the form $\J_{\fl}[\X_1\tau]$ (with $\tau$ not having any polynomials) will be graphically represented by a graph of $\J_{\fl}[\X_1]\tau$. This is because there are trees $\btau \in \fT_-(\Rule)$ with polynomials inside such that $\downop[\btau] \in \fT_-(\Rule)$. Thus, graphically $\<Jl>$, $\<Jl1>$, $\<Jl2>$ can only join edges $\J_{\ft_0} =  \<Jt0>$ or can have polynomial label at the root as well as have a polynomial label above them. For instance,
\begin{equ}
	\X_1 \J_{\fl_0}[\J_{\ft_0}[\Xi_0]] = \<XHXi>\qquad\text{and}\quad \J_{\fl_0}[\X_1 \J_{\ft_0}[\Xi_0]] = \<XHXia>.
\end{equ}

It turns out that for every tree in $\tau \in \fT_-(\Rule)$ for the rule $\Rule$ from Section~\ref{sec:first} there exists a unique $\alpha \in \N^\ell$, where $\ell$ is the number of noise edges in $\tau$, such that
\begin{equ}[eq:unique_multiindex]
	\Ev_0 \bbUpsilon_\ft^F[\tau] = a \Ev^\ell_\0 \partial^\alpha \hat \tau \otimes F \;\in\; \scal{\tau}_\CB \otimes \SP\;,
\end{equ}
for some $a \in \R$, $F \in \SP$ and $\partial^\alpha \hat \tau = \pi (\fd_1^{\alpha_1} \otimes \dots \otimes \fd_\ell^{\alpha_\ell})$ as in \eqref{eq:usetauhat}. Table~\ref{table:trees} lists all the trees in $\fT_-(\Rule)$ for the rule from Section~\ref{sec:first}. 

First consider $\tau$ which do not contain any polynomials inside. It is clear that $\one_{\fl_k}$ will be 
attached on a noise edge $\Xi_k$, with nothing on top (i.e.\ noises that are graphically represented by $\<Xi>$, $\<Xi1>$,  $\<Xi2>$). Other noise edges $\<Jl>$, $\<Jl1>$, $\<Jl2>$ can only be joined at the root with 
$n$ edges $\<Jt0>$ of type $(\ft_0,0)$ for some $n \in \N$ (in our case $n\leq 3$). In this case $\fd^n_{\ft_0}$ will be attached on $\<Jl>$, $\<Jl1>$ or $\<Jl2>$ viewing $n$ as an element of $\N^{\Eps_+(\fl_k)} = \N^{\{\ft_0\}}$.

Inspecting Table~\ref{table:trees}, we see that $\tau \in \fT_-(\Rule)$ can contain only at most one polynomial $\X_1$. Thanks to~\eqref{eq:LeibnizFhat} and because $\Eps_+(\fl_k) = \{\ft_0\}$ for $k = 0,1,2$ we have
\begin{equ}
	\partial_1 \one_{\fl_k} = \fd_{\ft_0} \otimes \fX_{(\ft_0, 1)}.
\end{equ}
Therefore, whenever $\X_1$ is above the noise, i.e.\ if we have $\<XiX>$, $\<bXiX>$ or $\<bbXiX>$, then $\fd_{\ft_0}$ will be attached to this noise edge.

The fact that each tree in $\fT_-(\Rule)$ gives rise to a unique attachment of the $\fd^n_{\ft_0}$ at each noise edge (for some $n \in \N^{\{\ft_0\}}$) allows us to define a renormalisation constant as
\begin{equ}[eq:ren_const_def]
	C_\eps[\tau] \eqdef \ren^\eps \partial^\alpha \hat \tau\;,
\end{equ}
where $\alpha \in \N^\ell$ is as in \eqref{eq:unique_multiindex}.
We also introduce the notion of homogeneity of a tree, which should be thought of
as its ``real'' degree while
$\deg\tau$ is used for analytical purposes to give ourselves a bit of ``wiggle room''.

\begin{definition}\label{def:homo}
	For $\tau = (T,\rho,\ff,\fm) \in \fT$ the \emph{homogeneity} $|\tau|$ is defined in a similar way to its degree by setting
$
		|\tau| = \sum_{v \in N_T} | \fm(v) |_\s + \sum_{e \in E_T} |\ff(e)|
$
	and $|\ff(e)| = \deg(\ff(e))$ if $\ff(e) \in \CO$. We also set $|\fl_k| = -\frac 32 + \frac{2k}{3}$ for $k = 0,1,2$.
\end{definition}

One heuristically then has $\deg \tau = |\tau|-\kappa$ for some ``small'' $\kappa$, in any case we always have 
$\deg \tau < |\tau|$. 
The next lemma concerns the rate of divergence of the renormalisation constants. 
Its proof is the subject of Appendix~\ref{app:constants}.

\begin{lemma}\label{lem:constants}
	Let $c_\eps = \CO(\eps)$ be a choice of translations of the noises from Section~\ref{sec:noises}. For each $\tau \in \fT_-(\Rule)$ we have the following limits as $\eps \to 0$:
	\minilab{eq:constants}
	\begin{equs}[2]
		\eps^{- |\tau|} C_\eps[\tau] &\quad \to\quad c[\tau]\;,\qquad\qquad &&\text{if ~~$|\tau| \leq -\frac{1}{3}$}\;, \label{eq:constants1}\\
		\eps^{\frac 13} C_\eps[\tau] &\quad\to\quad 0\;,\qquad\qquad &&\text{if ~~$|\tau| > -\frac{1}{3}$}\;, \label{eq:constants2}\\
		C_\eps\bigl[\,\<XiJ[H']+>\,\bigr] &\quad\to\quad C\bigl[\,\<XiJ[H']+>\,\bigr]\;,\label{eq:constants3} \\
		C_\eps\bigl[\,\<H'J1[XiX]>\!\bigr] &\quad\to\quad C\bigl[\,\<H'J1[XiX]>\!\bigr]\;,\label{eq:constants4}
	\end{equs}
	where $c[\tau]$ are real numbers independent of $c_\eps$, and
	\minilab{eq:special-constants}
	\begin{equs}
		C\bigl[\,\<XiJ[H']+>\,\bigr] &= - \int \int  \d_t \rho(z_1)\, (G * G')(z_1 - z_2)\, \rho(z_2)\, d z_1 d z_2\;, \label{eq:special-constants1}\\
		C\bigl[\,\<H'J1[XiX]>\!\bigr] &= -\int \int G'(-z_1) x_1 \d_t \rho(z_1 - z_2)\, \bigl(G' * \rho\bigr)(-z_2)\, dz_1 dz_2\;, \label{eq:special-constants2}
	\end{equs}
	with $G$ the heat kernel, $G' = \d_x G$, and $\rho$ the mollifier used to define the noise in~\eqref{eq:noise2}.
\end{lemma}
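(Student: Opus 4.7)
The plan is to reduce each constant $C_\eps[\tau] = \ren^\eps \partial^\alpha \hat\tau$ to an explicit Feynman-type integral and then perform a scaling analysis in $\eps$. The actual verification requires case-by-case treatment of the $72$ trees in $\fT_-(\Rule)$ (deferred to Appendix~\ref{app:constants}); what I outline here is the uniform mechanism underlying each case.

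First I would unfold the definition \eqref{eq:character} of the BPHZ character, writing $\ren^\eps \partial^\alpha \hat\tau = g^-(\PPi^\eps)\Ev_\0\tilde\CA^-_\CD\partial^\alpha \hat\tau$. The twisted antipode $\tilde\CA^-_\CD$ produces a sum over extractions of negative-degree subforests with BPHZ signs; evaluating the canonical admissible lift $\PPi^\eps$ at the origin and taking expectation, Wick's theorem expresses each surviving term as a multidimensional integral in which (a) each Wick pairing of noises $\Xi_k, \Xi_\ell$ becomes a covariance $\eps^{2(k+\ell)/3}(\rho_\eps * \tilde\rho_\eps)(z_i^{c_\eps} - z_j^{c_\eps})$, (b) each kernel edge $\J_{(\ft,p)}$ becomes $D^p K_\ft(z_i - z_j)$, (c) each internal vertex is integrated over $\R^2$, and (d) each $\fd_{\ft_0}^\alpha$ decoration on a noise edge contributes $\eps^{2\alpha}\d_t^\alpha$ per \eqref{e:shiftedNoise}.

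Second, dilating every integration variable by $z \mapsto \eps z$ simultaneously converts $\rho_\eps$ back into $\rho$, rescales the kernels via the exact parabolic scaling of $G$, absorbs the translation $c_\eps = \CO(\eps)$, and, after an edge-by-edge count, extracts an overall factor $\eps^{|\tau|}$ where $|\tau|$ is the homogeneity of Definition~\ref{def:homo}; the far-field remainder $R_\ft$ in the decomposition $G_\ft = K_\ft + R_\ft$ only contributes strictly higher-order terms in $\eps$. The rescaled integrand admits a pointwise limit as $\eps \to 0$ because $c_\eps \to 0$, and dominated convergence applies thanks to compact support of $\rho$, algebraic decay of $G$ and its derivatives, and the fact that the BPHZ subtractions from $\tilde\CA^-_\CD$ precisely cancel the potential short-distance singularities on the diagonal. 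This gives $C_\eps[\tau] = \eps^{|\tau|}(c[\tau] + o(1))$ with $c[\tau]$ an explicit integral independent of $c_\eps$: \eqref{eq:constants1} follows for $|\tau| \le -\tfrac13$, and \eqref{eq:constants2} follows for $|\tau| > -\tfrac13$ since $\eps^{1/3 + |\tau|} \to 0$.

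The main obstacle is to pin down the explicit limits \eqref{eq:special-constants1} and \eqref{eq:special-constants2} for the two borderline trees of homogeneity $|\tau| = 0$, where the above scheme only yields boundedness. For $\<XiJ[H']+>$, the unique Wick contraction pairs the two instances of $\Xi_0$; together with the $\fd_{\ft_0}$ decoration forced by the rule (which inserts one factor of $\eps^2 \d_t$ that cancels the $\eps^{-2}$ Jacobian associated with one vertex), this collapses one integration and leaves the two-variable integral \eqref{eq:special-constants1} involving $\d_t\rho$ against $(G*G')$. For $\<H'J1[XiX]>$, the polynomial decoration $\X_1$ is handled by the positive-renormalisation Taylor subtraction at the root, which isolates the linear-in-$x$ part of the corresponding kernel and produces the $x_1$ factor in \eqref{eq:special-constants2}, while the remaining $G' * \rho$ reflects the integration over the middle vertex. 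Independence of both limits from the choice of $c_\eps$ is immediate from $c_\eps \to 0$ and the smoothness of $\rho$ and $G$.
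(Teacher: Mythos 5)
Your overall strategy (unfold the BPHZ character, apply Wick, rescale $z \mapsto \eps^\s z$ to extract the homogeneity factor $\eps^{|\tau|}$, and then analyse the rescaled integral) is the same mechanism the paper uses in Appendix~\ref{app:constants}. However, there are two genuine gaps.

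First, the claim that after rescaling ``dominated convergence applies \ldots\ This gives $C_\eps[\tau] = \eps^{|\tau|}(c[\tau]+o(1))$'' is too strong, and fails precisely for the trees with $|\tau|=0$. After the change of variables $z_i \mapsto \eps^\s z_i$, the mollifiers $\rho_\eps$ do become $\rho$ at scale one, but the compactly supported kernels $K$ turn into $K(\eps^\s \bigcdot)$, whose supports now have parabolic radius of order $\eps^{-1}$. For a tree of homogeneity $0$ the exponents of decay are exactly critical (in the paper's notation for the shape $\<LKPZs>$, one has $\alpha_1+\alpha_2+\alpha_3=-6$ against an effective parabolic dimension $2|\s|=6$), so the rescaled integral grows like $\log\eps$ and no integrable majorant exists. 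The BPHZ subtractions from $\tilde\CA^-_\CD$ cure short-distance singularities of proper subtrees but do nothing at the infrared end, which is where the log enters. The paper therefore does not assert a finite limit for those trees; it only bounds $C_\eps[\tau]$ by $\log \eps$ via the kernel estimates of \cite[Sec.~10]{Regularity}, which is exactly what is needed for $\eps^{1/3}C_\eps[\tau]\to 0$. Your argument should be weakened accordingly: the dilation isolates $\eps^{|\tau|}$, and then for $|\tau|\le -1/3$ dominated convergence (or direct convergence of the rescaled convolutions of $\rho_\eps$ to $\rho$, as in \eqref{eq:constant_HXi}) gives the limit $c[\tau]$, while for $|\tau|>-1/3$ one needs only a polylogarithmic bound, not convergence.

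Second, the explanation of the $x_1$ factor in \eqref{eq:special-constants2} misattributes it to a ``positive-renormalisation Taylor subtraction at the root.'' In the paper's computation, no recentering at the root contributes: the twisted antipode acts trivially and the constant is just $-\E[(\PPi^\eps\tau)(0)]$. The $x_1$ appears directly from the polynomial decoration $\X_1$ sitting on the noise vertex inside the tree, through the admissibility rule $\PPi \X^m\sigma = z^m\PPi\sigma$ in \eqref{eqs:admissible}. A minor third point is that a large fraction of the $72$ trees in $\fT_-(\Rule)$ have $C_\eps[\tau]=0$ identically (odd number of Gaussian noises, odd spatial parity, polynomial at the root, or products of three or more independent Wick pairs); your proposal defers to the appendix for ``case-by-case'' work but does not name these vanishing mechanisms, which form the bulk of the actual verification.
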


\subsection{A renormalised equation}\label{sec:abstract_system}

In this section we apply the machinery of renormalisation of singular SPDEs developed in the
previous sections in order to obtain 
system~\eqref{eq:main2}. At this point the main question is what function $\BF_\CB$ on the right-hand side of~\eqref{eq:AbstractProblemB} 
will give us the system~\eqref{eq:main2} after the reconstruction with respect to the BPHZ model. Unfortunately the trivial choice of $\BF$ 
given by taking the right-hand side of~\eqref{eq:main2} (with the constants $C_{k,\eps}$ set to $0$) and applying~\eqref{eq:FofU_B} is not going to 
work. This is because such a system when renormalised and reconstructed 
produces renormalisation functions rather than 
renormalisation constants and therefore cannot give rise to~\eqref{eq:main2}. Nevertheless, we will show in this section that it is possible to 
start from a different system with coefficients of order $1$ 
that does renormalise to~\eqref{eq:main2}.

We consider the setting of Section~\ref{sec:intro} and assume that the 
functions $F_i$ satisfy Assumption~\ref{assum:functions}, i.e. they belong to $\CC^\func_{m,7}$ for some $m > 0$. We then define the constants 
\begin{equ}[eq:F_constants]
\lambda = F_2(0)\;, \qquad \sigma = F_3(0)\;, \qquad \sigma_1 = \frac 12 F''_3(0)\;,
\end{equ}
which will be used throughout this section, and let the functions $F_{i,\eps}$ be defined 
by~\eqref{eq:F_eps} for $i = 1,2,3$.

We view the $F_{i,\eps}$ as elements of $\SP$ by taking their argument to be $\fX_{(\ft_1, 1)}$ and
we lift them to functions $\BF_{i,\eps} \colon \SH_\CB \to \bar \SH_\CB$ via~\eqref{eq:FofU_B}. Assume that we are also given a collection of constants 
$\bar C_{i,\eps}$ as well as $\CC^\func_{\bar m, 5}$ functions $G_\eps$, $\bar G_{\eps} : \R \to \R$, for some $\bar m > 0$.
We will view these as functions in $\SP$ 
 as well and lift $\bar G_\eps$ to functions $\bar \BG_{\eps}\colon \SH_\CB \to \bar\SH_\CB$ as in~\eqref{eq:abstract-functions}. 

Let $Z^{\eps, \BPHZ} = \ZZ(\PPi^{\eps, \BPHZ})$ for $\eps \in [0,1]$ be the BPHZ model obtained in Section~\ref{sec:smooth_models} for a translation $c_\eps = \CO(\eps)$ and let $\SU^{\gamma, \eta}_{\eps, T} = \SU^{\gamma, \eta}_{T}(Z^{\eps, \BPHZ})$ be a space defined in Section~\ref{sec:fixed_point} for some $\gamma_i,\eta_i$ with $\gamma_i > 2$ for $i = 0, \ldots, 3$.

We also use the shorthand $\SG_\eps = (G_\eps,\bar G_\eps, \bar{C}_{1,\eps}, \bar C_{2,\eps}, \bar C_{3,\eps})$ for constants $\bar{C}_{i,\eps}$, $i = 1, 2, 3$, and for functions $G_\eps$, $\bar G_\eps \colon \R \to \R$. We make the following assumption on these objects. 

\begin{assumption}\label{assum:SG}
The tuple $\SG_\eps$ given above is such that 
	$(G_\eps, \bar G_{\eps})$ converge in $\CC^\func_{\bar m, 5}$ to some functions $(G_0, \bar G_{0})$ for some $\bar m > 0$ and such that $\lim_{\eps \to 0} \max_i |\bar{C}_{i,\eps}|=0$.
\end{assumption}

\begin{remark}\label{rem:bound_in_ML}
	We can readily see that Assumption~\ref{assum:all-functions} is satisfied for our choice of functions. Indeed, the fact that $F_{1,\eps}(u) = \eps^{-\frac 23} F_1 (\eps^{\frac 13} u)$ and $F_1(0) = F'_1(0) = 0$ implies
	\begin{equ}
		|F_{1,\eps}(0)| + |F_{1,\eps}'(0)| + \| F_{1,\eps}''\|_{L^\infty(\R)} + \| F_{1,\eps}'''\|_{L^\infty(\R)} \leq \| F_{1}''\|_{L^\infty(\R)} + \| F_{1}'''\|_{L^\infty(\R)} =: M\;,
	\end{equ}
	if $\eps\leq 1$.
	It is easy to see that $F_i \in \CC^\func_{m,7}$ for some $m > 0$ implies that $\tilde F_i = F_{i,\eps} \in \CC^\func_{m,7}$. In order to make the $\CC^\func_{m,7}$ norm of $F_{i,\eps}$ uniformly bounded in $\eps$ we just need to restrict it to $\eps \in [0,\tilde \eps]$ for some $\tilde \eps < 1$. For instance, $F_{2,\eps}$ has the form
	\begin{equ}
			F_{2,\eps} (u) = \eps^{-1} R(\eps^{\frac 13}u)\,,\quad\text{for}\quad R(u) = u(F_2(u) - u)\;.
	\end{equ}
	By Assumption~\ref{assum:functions} function $R$ satisfies $R(0) = R'(0) = R''(0) = 0$ thus implying $|R^{(\ell)}(u)|  \lesssim e^{m|u|} \|F_2\|_{\CC^{\func}_{m,7}} (|u|^{(3-\ell)\wedge 0} \vee 1)$. Hence, using $\eps \leq \tilde\eps < 1$
	\begin{equ}
		e^{-m|u|} |F^{(\ell)}_{2,\eps} (u)| \lesssim e^{-m(1 - \tilde \eps^{\frac 13})|u|} (|u|^{(3-\ell)\wedge 0} \vee 1)\|F_2\|_{\CC^{\func}_{m,7}}\;.
	\end{equ}
	Thus, since $1 - \tilde \eps^{\frac 13} > 0$ we see that $\|F_{2,\eps}\|_{\CC^{\func}_{m,7}}$ is uniformly bounded in $\eps \in [0,\tilde \eps]$. 
	Moreover, from Lemma~\ref{lem:noise_convergence} we readily conclude that the function $(t,x,h) \mapsto \xi^{h}_{3, \eps}(t,x) \eqdef \xi_{3, \eps} (t + c_\eps t + \eps^2 h, x)$ satisfies Assumption~\ref{assum:all-functions}(\ref{it:all-functions-xi}) on every time interval, uniformly in $\eps \in [0,1]$. For this, we use the fact that each derivative with respect to $h$ 
	gives rise to a factor $\eps^2$. 
	Thus, for every $\tilde \eps < 1$ there exist $M,L > 0$ such that bounds on $F_i, F_{i,\eps}, \xi_{3, \eps}$ from Proposition~\ref{prop:maximal_sol} hold uniformly in $\eps \in [0,\tilde\eps]$. We will address the functions $G_\eps$ and $\bar G_\eps$ in Remark~\ref{rem:Gs}.
\end{remark}

 Using the functions $F_i, F_{j,\eps}, G, \bar G$ and the constants $\bar{C}_{i, \eps}$, we define the nonlinearities $\BF^\eps_{\ft_i}$ for $i \le 3$ by \eqref{eq:abstract-functions}.
Given a tuple $\SG_\eps$, we write $\bbUpsilon^{\SG_\eps}$ and $\bUpsilon^{\SG_\eps}$ 
for the maps defined as in \eqref{eq:maps_bold_Upsilon}, but using the functions 
$\BF^\eps_{\ft_i} \in \SQ(\Rule)$. In the case $\SG_\eps = (0,0,0,0,0)$ we simply write $\bbUpsilon$ and $\bUpsilon$. 

Let $\mathbf{h} = (h_{(i,p)}) \in \R^{\{0,\ldots,3\}\times \N}$, we will now heuristically describe how a counterterm that depends on a 
specific variable $h_{(i,p)}$ can be generated by $\bbUpsilon^{\SG_\eps}$. Given a tree $\tau$, we
view each node of $\tau$ as a differential operator on $\SP$, with 
each incoming edge of type $(j,q)$ yielding a differentiation with respect to
$\SD^q H_j$. A necessary condition is then that at least one of these differential operators,
when applied to the right-hand side of \eqref{eq:abstract-functions}, yields a function that still
depends on $\SD^p H_i$.

For instance, in order to get an $h_{(1,2)}$-dependent counterterm from $\bbUpsilon^{\SG_\eps}_\ft[\tau]$, the tree $\tau$ must contain a node of type $(\partial \ft_1)^n$ for some $n \geq 1$ (using the notation of Section~\ref{sec:first}). This is because $h_{(1,2)}$ correspond to $\SD^2H_1$ in~\eqref{eq:abstract-functions} and $\SD^2H_1$ is multiplied by $\BF_{1,\eps}(\SD H_1)$. On the other hand, in order to get an $h_{(0,1)}$-dependent counterterm, $\tau$ should contain either a node of type $(\partial \ft_0)$ or a node of type $(\partial \ft_1)^n$ for $n \geq 1$ because $\SD H_0$ is only multiplied by $\SD H_0$ itself or by a function of $\SD H_1$.

The next lemma uses these considerations to describe restrictions on the form of 
the counterterms arising in renormalisation.

\begin{lemma}\label{lem:counterterms}
	Let $\gamma = (\gamma_i)_{i=0,\dots,3}, \eta = (\eta_i)_{i=0,\dots,3} \in \R_+^4$ with all $\gamma_i > 2$ 
	and let $F_{i}$ satisfy Assumption~\ref{assum:functions} for $i=1,2,3$. Let $\SG_\eps$ satisfy Assumption~\ref{assum:SG} and $Z^{\eps, \BPHZ} = \ZZ(\PPi^{\eps, \BPHZ})$ be the BPHZ model constructed in Section~\ref{sec:smooth_models}.
	Then for every $\ft \in \fL_{+}$ and $\tau \in \fT_-(\Rule)\backslash\{\<XiJ[H']+s>, \<H'J1[XiX]s>\!\}$ with 
	$\J_\ft[\tau] \in \fT_\ft(\Rule)$ and $C_\eps[\tau] \neq 0$ there exists a function $F^\tau_{\ft, \eps} : \R \to \R$ independent of $G_\eps$ and the constants $\bar C_{i,\eps}$ such that 
	\begin{equ}[eq:F^tau]
		\ren^\eps \bbUpsilon^{\SG_\eps}_{\ft}[\tau](\mathbf{h}) = C_\eps[\tau] F^\tau_{\ft, \eps} (h_{(1,1)})\;,
	\end{equ}
	where $\mathbf{h} = (h_{(i,p)}) \in \R^{\{0,\ldots,3\}\times \N}$.
	In addition, there exists $F^{\<XiJ[H']+>}_{\ft_0,\eps}$ independent of $G_\eps$ and the constants $\bar C_{i,\eps}$ such that
\begin{equs}
	\ren^\eps \bbUpsilon^{\SG_\eps}_{\ft_0} [\<XiJ[H']+>](\mathbf{h}) &= C_\eps[\<XiJ[H']+>]  \big(2 \lambda \sigma^2 h_{(0,1)} +  F^{\<XiJ[H']+>}_{\ft_0,\eps} (h_{(1,1)})\big)\;, \\
	\ren^\eps \bbUpsilon^{\SG_\eps}_{\ft_0} [\<H'J1[XiX]>\!](\mathbf{h}) &= C_\eps[\<H'J1[XiX]>\!] 2 \lambda \sigma^2 h_{(0,1)}\;,
\end{equs}
	where $\lambda$ is defined in~\eqref{eq:F_constants}. 
	
	Finally, the $F^\tau_{\ft,\eps}$ are constant functions independent of $\eps$ for every $\tau \in \fT_-(\Rule)$ with $|\tau| < -\frac{1}{3}$ and $C_\eps[\tau] \neq 0$.
\end{lemma}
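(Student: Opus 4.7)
The plan is to combine the explicit Taylor structure of the functions $F_{i,\eps}$ from \eqref{eq:F_eps} with a homogeneity counting argument on $\tau$.

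First, a direct computation from \eqref{eq:F_eps} using Assumption~\ref{assum:functions} gives
\begin{equs}
F_{1,\eps}^{(n)}(0) &= F_1^{(n)}(0)\,\eps^{(n-2)/3} \quad (n \ge 2)\;, \\
F_{2,\eps}^{(n)}(0) &= n\, F_2^{(n-1)}(0)\,\eps^{(n-3)/3} \quad (n \ge 3)\;, \\
F_{3,\eps}^{(n)}(0) &= F_3^{(n)}(0)\,\eps^{(n-4)/3} \quad (n \ge 4)\;,
\end{equs}
and these vanish for smaller $n$. Writing $k_i \in \{2,3,4\}$ for the leading order, the $j$-th Taylor coefficient of $F_{i,\eps}^{(n)}(h_{(1,1)})$ at $0$ carries a factor $\eps^{(n+j-k_i)/3}$; this is simultaneously $\eps$-independent and constant in $h_{(1,1)}$ only in the borderline case $n+j = k_i$ with $j = 0$, and vanishes whenever $n+j < k_i$.

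Second, by unwinding the recursion \eqref{eq:upsilon} and appealing to the first parts of the lemma, $F^\tau_{\ft,\eps}(h_{(1,1)})$ is, up to an $\eps$-independent combinatorial constant, a product of factors of the form $F_{i_v,\eps}^{(n_v)}(h_{(1,1)})$ indexed by those internal nodes $v$ of $\tau$ where a $\tilde F_{i_v}$-term in $\hat F^{1,\eps}$ is selected. The indices $n_v$ are determined by the number of edges of type $(\ft_1,1)$ incident to $v$. Constancy and $\eps$-independence of $F^\tau_{\ft,\eps}$ are then equivalent to the statement that at every such node only the leading Taylor term $n_v = k_{i_v}$ with $j_v = 0$ contributes.

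Third, this selection is enforced by the bound $|\tau| < -1/3$ through a degree argument. Any non-leading Taylor contribution, i.e.\ $n_v + j_v > k_{i_v}$ at some node (producing a surplus power of $\eps^{1/3}$ in $F^\tau_{\ft,\eps}$), can be traded via the recursion \eqref{eq:upsilon} and the Leibniz rule \eqref{eq:LeibnizFhat} for a virtual extension of $\tau$ by a conforming subtree of homogeneity at least $1/3$. This $1/3$-increment reflects the natural scaling $\eps^{1/3}\fX_{(\ft_1,1)}$ built into \eqref{eq:F_eps}, itself arising from the rescaling $h_{1,\eps} = \eps^{2/3} h_\eps$ used in \eqref{eq:new_functions}. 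Since $|\tau| < -1/3$, such an extension pushes the total homogeneity to a value that cannot be accommodated inside $\fT_-(\Rule)$ at the relevant order, ruling out any surplus contribution. With $n_v = k_{i_v}$ and $j_v = 0$ forced at each node, $F^\tau_{\ft,\eps}$ reduces to an explicit product of $F_1''(0)$, $F_2''(0)$, $F_3^{(4)}(0)$, which is manifestly $\eps$-independent.

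The hard part will be turning the correspondence between a surplus $\eps^{1/3}$-factor and a virtual conforming subtree of homogeneity at least $1/3$ into a rigorous bijection, tracking the interaction of the Leibniz rule, the recursive structure of $\bbUpsilon$, and the conformity conditions imposed by $\Rule$. A more pragmatic alternative is a case-by-case verification on the list of trees in $\fT_-(\Rule)$ with $|\tau| < -1/3$ enumerated in Appendix~\ref{app:constants}: for each such $\tau$, one computes $\bbUpsilon^{\SG_\eps}_\ft[\tau](\mathbf{h})$ explicitly from \eqref{eq:upsilon} and checks directly that the outcome collapses to an $\eps$-independent constant involving only the leading derivatives of the $F_i$ at $0$.
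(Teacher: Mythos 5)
Your proposal addresses only the final claim of the lemma. By writing ``appealing to the first parts of the lemma'' in step two, you are invoking the assertion that the counterterm depends only on $h_{(1,1)}$ — but that is precisely what must be proved, and it is the main content of the lemma. The paper establishes it by a subtree/degree-counting argument that is entirely absent from your proposal: for instance, any $h_{(1,2)}$-dependence forces $\tau$ to contain a subtree with root node of type $(\d\ft_1)^n$ (hence degree $\geq \tfrac16 - \kappa_\star$), and propagating this up through a planting edge produces a subtree of degree $\geq \tfrac76 - \kappa_\star$, so that $\deg\tau<0$ would force what remains after deletion to have degree below $-\tfrac76+\kappa_\star$; only $\Xi_0$ satisfies this, and the resulting tree is then too positive. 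The analogous exclusion of $h_{(0,1)}$-dependence is what singles out the two exceptional trees $\<XiJ[H']+s>$ and $\<H'J1[XiX]s>$, whose counterterms are then computed by hand. Your proposal also does not address the claimed independence of $F^\tau_{\ft,\eps}$ from $G_\eps$ and the constants $\bar C_{i,\eps}$, which the paper handles via Remark~\ref{rem:F_z}.

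Your step three is built on an equivalence between a surplus factor of $\eps^{1/3}$ in a Taylor coefficient of $F_{i,\eps}$ and a ``virtual'' extension of $\tau$ by a conforming subtree of homogeneity at least $\tfrac13$. This equivalence is not accurate: the natural subtree accounting for one extra power of $\fX_{(\ft_1,(0,1))}$ is $\J_{(\ft_1,(0,1))}[\Xi_1]$ of homogeneity $\approx\tfrac16$, not $\tfrac13$, and in any case the $\eps^{1/3}$ is a property of the rescaled nonlinearity, not a shift in $\deg\tau$ — the degree of the tree $\tau$ that indexes the counterterm is fixed and the homogeneity bound is a hypothesis on $\tau$, not something produced by the Taylor expansion. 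You already acknowledge that turning this into a rigorous argument is the hard part and offer case-by-case verification as a fallback. That fallback is in fact the paper's entire proof of the final claim: the list in Appendix~\ref{app:constants} contains exactly $15$ trees with $|\tau| < -\tfrac13$, of which only $\<Ch>$ and $\<HXi>$ have $C_\eps[\tau]\neq 0$, and for those $F^{\<Ch>}_{\ft,\eps} = 2\lambda\sigma^2\one_{\ft=\ft_0}$ and $F^{\<HXi>}_{\ft,\eps} = \sigma^2\one_{\ft=\ft_0}$ read off immediately. Your Taylor computation in step one is correct and the observation in step two that only the leading coefficients are simultaneously $\eps$-independent and constant in $h_{(1,1)}$ is sound, but step three's degree heuristic would need substantial repair to replace the direct enumeration.
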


\begin{proof}
	Potentially $\bbUpsilon^{\SG_\eps}_\ft[\tau]$ can produce counterterms that are functions of $h_{(i,1)}$ ($i = 0,1,2$)  or $h_{(j,2)}$ ($j = 1,2,3$). We now show that if $\deg \tau < 0$ and $C_\eps[\tau] \neq 0$ then 
	$\bbUpsilon^{\SG_\eps}_{\ft}[\tau](\mathbf{h})$ cannot depend on either of $h_{(0,1)}, h_{(2,1)}$ or $h_{(j,2)}$ ($j = 1,2,3$) unless $\tau \in  \{\<XiJ[H']+s>, \<H'J1[XiX]s>\!\}$ where it will depend on $h_{(0,1)}$. We shall 
	see in Appendix~\ref{app:constants} that $\tau = \<H'J1[XiX]s>$ is the only tree $\tau \in \fT_-(\Rule)$ that contains polynomials $\X^k$ (with $k \neq 0$) and such that $C_\eps[\tau] \neq 0$. Using Example~\ref{ex:derivatives} one finds that
	\begin{equ}[eq:nasty_tree2]
		\ren^\eps \bbUpsilon^{\SG_\eps}_{\ft}[\,\<H'J1[XiX]>\!](\mathbf{h}) = 
		\begin{cases}
			C_\eps[\,\<H'J1[XiX]>\!] 2\lambda \sigma^2 h_{(0,1)}\;&\text{if $\ft = \ft_0$,}\\
			0&\text{if $\ft \neq \ft_0$,}
		\end{cases} 
	\end{equ}
	therefore showing that all the trees containing polynomials agree with the statement of the 
	lemma. In the rest of the proof we will therefore assume that $\tau$ does not contain any polynomials.
	
	If $\bbUpsilon^{\SG_\eps}_{\ft}[\tau](\mathbf{h})$ depends on $h_{(1,2)}$, it means that $\tau$ 
	contains a subtree $\bar\tau$ with a root of node of type $(\partial\ft_1)^n$ for $n\geq 1$ and therefore $\deg\btau \geq \frac 16-\kappa_\star$. Moreover, since $\deg \tau < 0$ we can't 
	have $\tau = \btau$ and therefore $\tau$ must have a subtree either of the form $\tilde{\tau} = \J_{(\ft_i, \ell)}[\btau]$ or $\tilde{\tau} = \J_{(\ft_i, \ell)}[\J_{(\ft_j, 2)}[\btau]]$ for $i \in \{0,1,2\}$, $j \in \{1,2\}$ and 
	$\ell \in \{0,1\}$. In either case $\deg\tilde{\tau} \geq \frac 76 - \kappa_\star$ and as $\deg \tau < 0$ we see 
	that the tree obtained from deleting $\tilde{\tau}$ from $\tau$ is of degree smaller than $-\frac 76 + \kappa_\star$. There is only one tree of degree smaller than $-\frac 76 + \kappa_\star$ namely $\Xi_0$, 
	since $\kappa_\star < \frac {1}{42}$. Therefore, the only option left is if $\tau = \J_{\fl_0}[\J_{\ft_0}[\btau]]$ 
	but in this case $\tau \geq \frac 23 - \kappa_\star$ which is a contradiction with $\tau \in \fT_-(\Rule)$. 
	Similar considerations allow us to rule out the possibility of $\bbUpsilon^{\SG_\eps}_{\ft}[\tau](\mathbf{h})$ being dependent 
	on $h_{(2,1)}, h_{(2,2)}$ and $h_{(3,2)}$.
	We now consider the case when the counterterm $\bbUpsilon^{\SG_\eps}_{\ft}[\tau](\mathbf{h})$ depends on 
	$h_{(0,1)}$. Since we already know that $\tau$ cannot contain a subtree 
	$\btau$ with a root of node of type $(\partial\ft_1)^n$ for $n\geq 1$, it must contain a 
	subtree $\tilde\tau = \J_{(\ft_0, 1)}[\btau]$ with $\deg \btau < 0$. (One can 
	show that $\deg \btau > 0$ implies $\deg\tau > 0$ by a reasoning similar to above.) The right-hand side of~\eqref{eq:abstract-functions} is at most quadratic in $\SD H_0$ thus nodes types $(\partial \ft_0)^2$ do not produce $h_{(0,1)}$ counterterms.  
	Moreover, $\tau \neq \tilde\tau$ since $\tilde\tau$ is 
	planted, so $\tau$ also must contain a subtree of the form $\J_{(\ft_i, \ell)}[\tilde \tau]$ for $i,\ell \in \{0,1\}$. Cases when $i = 1$ can be ruled out easily since these will produce trees which are too 
	positive for our rule. Now the only tree of negative degree that contains a subtree 
	of the form $\J_{(\ft_0, 1)}[\J_{(\ft_0, 1)}[\btau]]$ is $\<H'J1[H']+s>$ but it is not difficult to see that $C_\eps[\<H'J1[H']+s>] = 0$. 
	The only tree of negative degree that contains a subtree of the form $\J_{(\ft_0, 0)}[\J_{(\ft_0, 1)}[\btau]]$ is 
	$\<XiJ[H']+s>$. Note that $\<XiJ[H']+s> \in \fT_\ft$ only if $\ft = \ft_0$ and we have
	\begin{equ}[eq:nasty_tree]
		\ren^\eps \bbUpsilon^{\SG_\eps}_{\ft_0}[\<XiJ[H']+>](\mathbf{h}) = C_\eps[\<XiJ[H']+>] \big(2 \lambda \sigma^2  h_{(0,1)} + F_{2,\eps}(\partial_x h_{(1,1)}) + \bar G_\eps(h_{(1,1)})\big)\;.
	\end{equ}
	Therefore, $F^{\<XiJ[H']+>}_{\ft_0, \eps} = F_{2,\eps} + \bar G_\eps$ which converges in $\CC^\func_{\tilde m, 5}$ for $\tilde m > 0$.
	
	The fact that counterterms $F^\tau_{\ft, \eps}$ do not depend on $G_{\eps}$ or constants $\bar C_{i,\eps}$ follows simply from Remark~\eqref{rem:F_z}.
	
	We now turn to the counterterms produced by trees $\tau \in \fT_-$ with $|\tau|< - \frac{1}{3}$. There are $15$ such trees (recall that these must be unplanted, i.e.\ cannot be of the form 
	$\J_{(\ft_i, p)}[\tau]$ for $i=0,\ldots,3$ and $p \in \N$):
	\begin{equs}
		\<Xi>,\;\<Ch>,\;\<HXi>,\;\<Xi1>,\; \<t2t1>,\; \<t1Xi1>\;, \<HXiH>\;, \<J[Ch]Xi> ,\;\<J[HXi]Xi>, \<J[Ch]H>,\; \<J[HXi]H>,\;  \<t1t2t1>\;, \<bt1Xi1bt1>, \<XiX>, \<Xi+1>.
	\end{equs}
	Only two out of these $15$ trees have non-zero renormalisation constant (see 
	Appendix~\ref{app:constants}) namely $\<Ch>$and $\<HXi>$ and one has 
	$F^{\<Ch>}_{\ft,\eps} = 2\lambda\sigma^2 \one_{\ft = \ft_0}$ and $F^{\<HXi>}_{\ft,\eps} = \sigma^2\one_{\ft = \ft_0}$.
\end{proof}

\begin{proposition}\label{prop:abstract_system}
	Let $\gamma, \eta \in \R_+^4$ be as in Lemma~\ref{lem:U=U_D} and $F_i$ satisfy Assumption~\ref{assum:functions} for $i=1,2,3$ and let $h^0_{k,\eps} \in \CC(\T)$ such that 
	$h^0_{k,\eps} = \eps^{\frac{2k}{3}} h^0_{0,\eps}$ for $k=1,2,3$. Let $Z^{\eps, \BPHZ} = \ZZ(\PPi^{\eps, \BPHZ})$ be the BPHZ model constructed in Section~\ref{sec:smooth_models}. There exist $\hat \eps > 0$ and constants $C_\eps \sim \eps^{-1}$ as 
	well as a tuple $\SG_\eps$ satisfying Assumption~\ref{assum:SG} such that taking $c_\eps = \eps^2 C_\eps$ and $C_{k,\eps} = 
	\eps^{\frac{2k}{3}} C_\eps$ for $k = 0,\ldots,3$ the following holds: for $\eps \in 
	[0,\hat\eps]$ if $H_\eps \in \SU^{\gamma, \eta}_\eps$ solves~\eqref{eq:main4} then $h_{i,\eps} = \CR^{\eps, \BPHZ} H_{i,\eps}$ 
	solve~\eqref{eq:main2} and $h_{i,\eps} = \eps^{\frac{2i}{3}}h_{0,\eps}$. 
	
	One furthermore has $\lim_{\eps \to 0} \bar G_\eps(u) = -2\lambda\sigma^2 (C[\<XiJ[H']+s>] + C[\<H'J1[XiX]s>\!])$.
\end{proposition}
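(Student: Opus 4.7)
My plan is to apply Theorem~\ref{thm:renormalised_PDE} to the abstract equation~\eqref{eq:main4} with the nonlinearity~\eqref{eq:abstract-functions}, and then choose the unknowns $\SG_\eps=(G_\eps,\bar G_\eps,\bar C_{1,\eps},\bar C_{2,\eps},\bar C_{3,\eps})$ together with the renormalisation constant $C_\eps$ so that the reconstructed PDE coincides with~\eqref{eq:main2}. Since $\Eps_+(\fl)=\{\ft_0\}$ for every $\fl\in\fL_-$, the hypothesis of Proposition~\ref{prop:stationarity} is trivially satisfied and the BPHZ character $\ren^\eps$ is translation-invariant, so by Lemma~\ref{lem:MF_u} the reconstructed counterterms take the form $\sum_{\tau\in\fT_-(\Rule)} \ren^\eps\bbUpsilon^{\SG_\eps}_{\ft_i}[\tau]$, whose structure is described by Lemma~\ref{lem:counterterms}.

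The first step is to fix $\bar G_\eps$ to cancel all $h_{(0,1)}$-dependent counterterms in the $\ft_0$ equation. By Lemma~\ref{lem:counterterms}, only the trees $\<XiJ[H']+s>$ and $\<H'J1[XiX]s>\!$ contribute to this, yielding $2\lambda\sigma^2(C_\eps[\<XiJ[H']+s>]+C_\eps[\<H'J1[XiX]s>\!])\,h_{(0,1)}$. The sole term in~\eqref{eq:abstract-functions} whose reconstruction produces a $\partial_x h_0$ factor not already present in~\eqref{eq:main2} is $\SD H_0\,\bar\BG(\SD H_1)$, which forces $\bar G_\eps$ to be the constant function
\[
\bar G_\eps\;\equiv\;-2\lambda\sigma^2\bigl(C_\eps[\<XiJ[H']+s>]+C_\eps[\<H'J1[XiX]s>\!]\bigr)\;,
\]
and its asserted limit follows from~\eqref{eq:constants3}--\eqref{eq:constants4} in Lemma~\ref{lem:constants}. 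Next, collecting all remaining $h_{(1,1)}$-dependent counterterms (including the $\bar G_\eps$-feedback in~\eqref{eq:nasty_tree}) gives a smooth function of $h_{(1,1)}$ whose negative we take as $G_\eps$; convergence of $G_\eps$ in $\CC^\func_{\bar m,5}$ follows from Remark~\ref{rem:bound_in_ML} together with convergence of the finitely many $C_\eps[\tau]$ furnished by Lemma~\ref{lem:constants}.

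After these cancellations, each $\ft_i$ equation differs from~\eqref{eq:main2} only through \emph{constant} counterterms coming from trees with $|\tau|<-\tfrac{1}{3}$ whose $F^\tau_{\ft_i,\eps}$ is constant. Define $C_\eps$ as minus the net constant counterterm in the $\ft_0$ equation: Lemma~\ref{lem:constants}\eqref{eq:constants1}, applied to the dominant trees $\<Ch>$ and $\<HXi>$, yields $C_\eps\sim\eps^{-1}$. For $i=1,2,3$ we then let $\bar C_{i,\eps}$ be the discrepancy needed for the $\ft_i$ equation to carry exactly the constant $-\eps^{2i/3}C_\eps$. The vanishing $\bar C_{i,\eps}\to 0$ amounts to a tree-by-tree correspondence: each constant-producing tree for the $\ft_0$ equation admits a ``noise-shifted'' counterpart for the $\ft_i$ equation whose BPHZ constant, via the identity $\deg\fl_{k+1}-\deg\fl_k=\tfrac{2}{3}$ in~\eqref{eq:deg_main} and Lemma~\ref{lem:constants}, equals $\eps^{2i/3}$ times the original at leading order.

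To resolve the implicit definition $c_\eps=\eps^2C_\eps(c_\eps)$ (since the BPHZ constants depend on the translation $c_\eps$ of the noise), one views $c\mapsto\eps^2 C_\eps(c)$ on the interval $\{|c|\le K\eps\}$; by Lemma~\ref{lem:constants} this map sends the interval into itself for $K$ large enough, and continuity in $c$ (inherited from smoothness of the mollified noise covariance in the translation parameter) together with the intermediate value theorem produces the desired fixed point $c_\eps$. Finally, the identity $h_{i,\eps}=\eps^{2i/3}h_{0,\eps}$ follows from uniqueness of the Picard fixed point in Proposition~\ref{prop:system_soln}: a direct algebraic check using $h_{i,\eps}=\eps^{2i/3}h_{0,\eps}$, $\xi_{k,\eps}=\eps^{2k/3}\xi_{0,\eps}$ and $C_{k,\eps}=\eps^{2k/3}C_\eps$ shows that the $\ft_i$ equation of~\eqref{eq:main2} is obtained by multiplying the $\ft_0$ equation through by $\eps^{2i/3}$, so both sides solve the same Picard iteration with initial data $\eps^{2i/3}h^0_{0,\eps}$. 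The main obstacle is the bookkeeping required for $\bar C_{i,\eps}\to 0$: this requires precise matching between constant-producing trees of the $\ft_0$ and $\ft_i$ equations using the rule from Section~\ref{sec:first}, combined with the sharp asymptotics in Lemma~\ref{lem:constants} and the precise form of the rescaling~\eqref{eq:new_functions}.
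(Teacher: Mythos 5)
Your overall plan follows the paper's strategy closely, and several pieces (translation invariance of $\ren^\eps$, the fixed-point argument for $c_\eps$, the uniqueness-based identification $h_{i,\eps} = \eps^{2i/3}h_{0,\eps}$) are correct. However, there is a genuine gap in the way you cancel the non-constant, $h_{(1,1)}$-dependent counterterms.

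You claim that $\bar G_\eps$ can be taken to be the \emph{constant} function $-2\lambda\sigma^2\bigl(C_\eps[\<XiJ[H']+s>]+C_\eps[\<H'J1[XiX]s>\!]\bigr)$, reserved solely for the $h_{(0,1)}$-dependent terms, and that all remaining $h_{(1,1)}$-dependent counterterms can be absorbed into $G_\eps$. This cannot work. For trees $\tau$ with $|\tau|\geq -\tfrac13$ other than $\<XiJ[H']+s>$ and $\<H'J1[XiX]s>$, Lemma~\ref{lem:counterterms} gives a counterterm $C_\eps[\tau]\,F^\tau_{\ft_0,\eps}(\partial_x h_{1,\eps})$, and the constants $C_\eps[\tau]$ are \emph{not} convergent: Lemma~\ref{lem:constants} only gives $\eps^{1/3}C_\eps[\tau]\to c[\tau]$ when $|\tau|=-\tfrac13$ (so $C_\eps[\tau]\sim\eps^{-1/3}$ for, say, $\<Ch1>$ and $\<HXi1>$), and $\eps^{1/3}C_\eps[\tau]\to 0$ when $|\tau|>-\tfrac13$, which still allows logarithmic divergence. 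If you put the non-constant part $C_\eps[\tau]\bigl(F^\tau_{\ft_0,\eps}-F^\tau_{\ft_0,\eps}(0)\bigr)$ into $G_\eps$, the resulting function diverges and Assumption~\ref{assum:SG} is violated.

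The missing mechanism is precisely the reason the $\SD H_0\,\bar\BG(\SD H_1)$ term is present in \eqref{eq:abstract-functions}: one writes
\begin{equ}
C_\eps[\tau]\,F^\tau_{\ft_0,\eps}(\partial_x h_{1,\eps}) = C_\eps[\tau]F^\tau_{\ft_0,\eps}(0) + C_\eps[\tau]\,\eps^{2/3}\,\partial_x h_{0,\eps}\,\bar F^\tau_{\ft_0,\eps}(\partial_x h_{1,\eps})\;,
\end{equ}
using $\partial_x h_{1,\eps} = \eps^{2/3}\partial_x h_{0,\eps}$, and absorbs the second summand into $\partial_x h_{0,\eps}\bar G_\eps(\partial_x h_{1,\eps})$. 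The explicit gain of $\eps^{2/3}$ against the at-most-$\eps^{-1/3}$ divergence of $C_\eps[\tau]$ is exactly what makes $\bar G_\eps$ converge. Consequently, $\bar G_\eps$ for finite $\eps$ is \emph{not} a constant; it is the constant you wrote minus $\eps^{2/3}\sum C_\eps[\tau]\bar F^\tau_{\ft_0,\eps}$ over such $\tau$, which only tends to that constant in the limit. The role of $G_\eps$ is then the (bounded) correction needed because changing $\bar G_\eps$ feeds back into the $\tilde F^\tau_{\ft_0,\eps}$, not to absorb divergent-coefficient non-constant counterterms. Your appeal to ``convergence of the finitely many $C_\eps[\tau]$'' for the convergence of $G_\eps$ is therefore the concrete place where the argument breaks: those constants do not converge, and the $\eps^{2/3}$ gain that rescues the argument is unavailable once you commit to placing the non-constant counterterms in $G_\eps$ instead of $\bar G_\eps$.
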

\begin{proof}
	In this proof we combine algebraic relations between the counterterms and renormalisation constants derived in the above lemma together with Theorem~\ref{thm:renormalised_PDE} and Remark~\ref{rem:reduced_reg_str}. Moreover, as explained at the beginning of Section~\ref{sec:renorm_constants} the BPHZ character produced by the model constructed in Proposition~\ref{prop:BPHZlift} is translation invariant and thus Theorem~\ref{thm:renormalised_PDE} can be applied. In the reconstructed system of equations for $h_{i,\eps}$ all counterterms will be evaluated at $(\mathbf{h}_\eps, \bxi^{\mathbf{h}_\eps, c_\eps})$ which is defined via~\eqref{eq:uxi}. The fact that the functions in~\eqref{eq:abstract-functions} are affine in the noise variables and that noises are multiplied by functions from $\SQ$ which only depend on the $\Eps_+$ variables implies in particular that counterterms which appear in $\bbUpsilon^{\SG_\eps}[\tau]$ are independent of $\bxi^{\mathbf{h}_\eps, c_\eps}$,
	so we write $\bbUpsilon^{\SG_\eps}[\tau](\mathbf{h}_\eps)$ instead of $\bbUpsilon^{\SG_\eps}[\tau](\mathbf{h}_\eps,  \bxi^{\mathbf{h}_\eps, c_\eps})$. Thus, applying Lemma~\ref{lem:counterterms} in combination with Theorem~\ref{thm:renormalised_PDE} we will see $\d_x^p h_{i,\eps}$ in the reconstructed equation in place of $h_{(i,p)}$ variables.
	
	First observe that there is only one tree $\tau \in \fT_-(\Rule) \cap \bar\fT_{\ft_2}(\Rule)$ namely $\tau = \<Xi2>$. The 
	counterterm produced by this tree is $\ren^\eps \bbUpsilon^{\SG_\eps}_{\ft_2}[\<Xi2>] = 0$ for every choice of $\SG_\eps$. Therefore, both 
	equations for $H_{2,\eps}$ and $h_{3,\eps}$ do not need to be renormalised, and we can set 
	$\bar C_{2,\eps}$ and $\bar C_{3,\eps}$ to any asymptotically vanishing sequence
	of constants to be determined later.
	
	There are $7$ trees that belong to $\fT_-(\Rule) \cap \bar\fT_{\ft_1}(\Rule)$:
	\begin{equs}
		\<Xi1>,\;\<Xi2>,\; \<Ch1>,\;\<bt2t1>,\; \<H'J1[H']>,\; \<HXi1>,\; \<t1Xi2>.
	\end{equs}
	Only two of these trees have a non-zero renormalisation constant namely $\<Ch1>$ and $\<HXi1>$. It is 
	easy to see thanks to $\PPi^\eps \J_{\fl_1}[\mu \otimes \one] = \eps^{\frac 23} \PPi^\eps \J_{\fl_0}[\mu \otimes \one]$ that
	\begin{equs}[eq:12components]
		\ren^\eps \bbUpsilon^{\SG_\eps}_{\ft_1}[\<Ch1>] &=  C_\eps[\<Ch1>] 2\lambda \sigma^2 = \eps^{\frac 23} C_\eps[\<Ch>] 2\lambda \sigma^2 = \ren^\eps \bbUpsilon^{\SG_\eps}_{\ft_0}[\<Ch>]\;,\\
		 \ren^\eps \bbUpsilon^{\SG_\eps}_{\ft_1}[\<HXi1>] &= C_\eps[\<HXi1>] \sigma^2 =\eps^{\frac 23} C_\eps[\<HXi>] \sigma^2 = \ren^\eps \bbUpsilon^{\SG_\eps}_{\ft_0}[\<HXi>]\;,
	\end{equs}
	 for every choice of $\SG_\eps$.
	
	The above computations also mean that the only non constant counterterms may appear in the $\ft_{0}$ 
	component. For the rest of the proof we will only consider $\tau\in \fT_-(\Rule)$ such that $C_\eps[\tau] \neq 0$ because otherwise the counterterm produced by $\ren^\eps \bbUpsilon^{\SG_\eps}_{\ft_0}[\tau]$ is zero 
	independently of the choice of $\SG_\eps$. 
	
	For each tree $\tau \in \fT_-(\Rule)  \cap \bar\fT_{\ft_0}(\Rule)$ we want to 
	find a set of constants and functions $C^\tau_\eps$, $g^\tau_\eps$, $\bar g^{\tau}_{\eps}$ such 
	that using the fact that $h_{1,\eps} = \eps^{\frac{2}{3}}h_{0,\eps}$
	\begin{equ}[eq:constant_counter]
		\ren^\eps \bbUpsilon^{\SG_\eps}_{\ft_0}[\tau](\partial_x h_{1,\eps}) +  \partial_x h_{0,\eps}\, \bar g^\tau_\eps(\partial_x h_{1,\eps}) + g^{\tau}_{\eps}(\partial_x h_{1,\eps}) = C^\tau_\eps\;.
	\end{equ} 
	The point is that we allow $C^\tau_\eps$ to diverge, but the functions $g^\tau_\eps$ and $\bar g^{\tau}_{\eps}$
	should converge to finite limits.
	If for a given choice of $\SG_\eps$ we have~\eqref{eq:constant_counter} for a tree $\tau$ we say 
	that $\tau$ contributes a constant counterterm. The aim is then to show that we can find such a choice of $\SG_\eps$ so that simultaneously all $\tau \in \fT_-(\Rule) \cap \bar \fT_{\ft_0}(\Rule)$ contribute a constant 
	counterterm, while still having $h_{i,\eps} = \eps^{\frac{2i}{3}}h_{0,\eps}$. 
	
	By Lemma~\ref{lem:counterterms} we know that the counterterms $\ren^\eps \bbUpsilon^{\SG_\eps}_{\ft_0}[\tau](\partial_x h_{1,\eps})$ from~\eqref{eq:F^tau} appearing in the renormalisation
	do not depend on $G_\eps$, i.e.\ they only depend on $F_{i,\eps}$ for $i = 1,2,3$ and on 
	$\bar G_\eps$. Therefore, we shall first find an appropriate function $\bar G_\eps$ to obtain 
	necessary cancellations for divergent non constant counterterms appearing in $\ren^\eps \bbUpsilon^{\SG_\eps}_{\ft_i}[\tau](\partial_x h_{1,\eps})$ and then correct with a suitable choice 
	of $G_\eps$ to cancel the remaining counterterms. Moreover, we shall see that it is possible to choose the constants $\bar C_{i,\eps}$ in such a way that the relation $h_{k,\eps} = \eps^{\frac{2k}{3}}h_{0,\eps}$ holds true. Therefore, without loss of 
	generality we will assume such a relation throughout the proof and show that it is indeed the case in the 
	end.	
	
	Recall from Lemma~\ref{lem:counterterms} that for every choice of $\SG_\eps$, trees $\tau$ with $|\tau| < -\frac{1}{3}$ can produce only constant counterterms. In order to choose suitable
	functions $G_\eps$ and $\bar G_\eps$ we therefore first focus on those $\tau$ with 
	$|\tau| \geq -\frac{1}{3}$. We start with $\SG^0_\eps = (0,0,0,0,0)$ and will gradually build 
	the functions $G_\eps$ and $\bar G_\eps$ by adding more
	terms. First we consider $\tau = \<XiJ[H']+s>$ and $\tau = \<H'J1[XiX]s>$ and by~\eqref{eq:nasty_tree} and \eqref{eq:nasty_tree2} we explicitly 
	have
	\begin{equs}
		\ren^\eps \bbUpsilon_{\ft_0}[\<XiJ[H']+>](\mathbf{h}_\eps) &= 2 \lambda \sigma^2 C_\eps[\<XiJ[H']+>]  \partial_x h_{0,\eps} + C_\eps[\<XiJ[H']+>] F_{2,\eps}(\partial_x h_{1,\eps})\;, \\
		\ren^\eps \bbUpsilon_{\ft_0}[\<H'J1[XiX]>\!](\mathbf{h}_\eps) &= 2 \lambda \sigma^2 C_\eps[\<H'J1[XiX]>\!]  \partial_x h_{0,\eps}\;,
	\end{equs}
	which prompts us to set $\SG^1_\eps = (-C_\eps[\,\<XiJ[H']+s>\,] F_{2,\eps}, -2\lambda \sigma^2 (C_\eps[\<XiJ[H']+s>] + C_\eps[\<H'J1[XiX]s>\!]),0,0,0)$ in order to cancel both of these counterterms. It is important to note that $\bbUpsilon_{\ft_0}^{\SG}[\<XiJ[H']+s>]$ and $\bbUpsilon_{\ft_0}^{\SG}[\<H'J1[XiX]s>\!]$ are independent of $\SG$, so this choice remains valid once we start modifying $\SG$.
	The choice $\SG^1_\eps$
	satisfies Assumption~\ref{assum:SG} by Lemma~\ref{lem:constants} and Remark~\ref{rem:bound_in_ML}. Now we note that for every $\tau \in \check{\fT} \eqdef \fT_-(\Rule)\cap \bar \fT_{\ft_0}(\Rule) \setminus \{\<XiJ[H']+s>, \<H'J1[XiX]s>\!\}$, we obtain a counterterm of the form
	\begin{equs}
		\ren^\eps \bbUpsilon^{\SG^1_\eps}_{\ft_0}[\tau](\partial_x h_{1,\eps}) &= C_\eps[\tau] F^{\tau}_{\ft_0,\eps}(\partial_x h_{1,\eps}) \label{e:counterterm1}\\
		&= C_\eps[\tau] F^{\tau}_{\ft_0,\eps} (0) + C_\eps[\tau] \partial_x h_{1,\eps} \bar F^{\tau}_{\ft_0,\eps} (\partial_x h_{1,\eps})\\
		&= C_\eps[\tau] F^{\tau}_{\ft_0,\eps} (0) + C_\eps[\tau] \eps^{2/3} \partial_x h_{0,\eps} \bar F^{\tau}_{\ft_0,\eps} (\partial_x h_{1,\eps})\;,
	\end{equs}
	where $\bar F^{\tau}_{\ft_0,\eps}(x) = x^{-1} (F^{\tau}_{\ft_0,\eps}(x) - F(0))$, which is smooth and Cauchy in $\eps$. 
	This suggests that we should further add $-C_\eps[\tau] \eps^{\frac 23} \bar F^{\tau}_{\ft_0,\eps}$ to $\bar G_\eps$ for each such $\tau$ with $|\tau| \geq -\frac{1}{3}$, so we set
 $\SG^2_\eps = (-C_\eps[\<XiJ[H']+s>] F_{2,\eps}, \bar G_\eps,0,0,0)$ for
	\begin{equ}[eq:choice_G]
	 \bar G_\eps =	-2\lambda \sigma^2 \bigl(C_\eps[\<XiJ[H']+>] + C_\eps[\<H'J1[XiX]>\!]\bigr) - \eps^{\f13} \sum \Big\{\eps^{\f13}C_\eps[\tau]  \bar F^{\tau}_{\ft_0,\eps} \,:\, \tau \in \check\fT \;\text{with}\; |\tau| \geq -\frac{1}{3} \Big\} \;.
	\end{equ}
	(The reason for writing it in this way is that the summands themselves already converge
	in $\CC^\func_{\bar m, 5}$ for some $\bar m > 0$.)
	 This is going to be our final choice for the function $\bar G_\eps$. Note that by Lemma~\ref{lem:constants} $C_\eps[\tau] \eps^{\frac 23} \to 0$ since $\eps^{\frac 13} C_\eps[\tau]$ is a convergent sequence for trees $\tau$ with 
	$|\tau| = -\frac{1}{3}$  and is a vanishing sequence if $|\tau| > -\frac{1}{3}$. This in particular
	ensures that the very last claim of the proposition holds. For an explanation why $\bar F^{\tau}_{\ft_0,\eps}$ converge in $\CC^\func_{\bar m, 5}$ see Remark~\ref{rem:Gs}.

	One problem one encounters at this stage is that modifying the function $\bar G_\eps$ 
	itself changes the values of $\ren^\eps \bbUpsilon^{\SG^2_\eps}_{\ft_0}[\tau]$, but we now
	argue that this change can be taken care of by a suitable choice of $G_\eps$ (which itself only affects constant counterterms and is therefore ``harmless'') and doesn't require
	modifying $\bar G_\eps$. 
	With $\bar G_\eps$ given by~\eqref{eq:choice_G}, we write $\ren^\eps \bbUpsilon^{\SG^2_\eps}_{\ft_0}[\tau](\partial_x h_{1,\eps}) = C_\eps[\tau] \tilde F^{\tau}_{\ft_0,\eps}(\partial_x h_{1,\eps})$ for $\tau \notin \{\<XiJ[H']+s>, \<H'J1[XiX]s>\!\}$. It is easy to see from the recursive definition of $\bbUpsilon$ that 
	for such $\tau$ the function $x \mapsto F^{\tau}_{\ft_0,\eps}(x)$ (which was given in \eqref{e:counterterm1} and corresponds to $\SG^1_\eps$) is 
	a polynomial of the functions $F_{i,\eps}$ and their derivatives, as well as possibly $x$ (which comes from the term $\sigma_1 (\SD H_1)^2 \hat\Xi_{\CB,\fl_1}$), i.e.\ there exist coefficients $a^\tau_{k,m}$, only finitely many of which are non-vanishing for any given tree $\tau$, 
such that
	\begin{equ}[eq:Ftua]
		F^{\tau}_{\ft_0,\eps} (x) = \sum_{k\in \N^{\N_3}}\sum_{m \ge 0}  a_{k,m}^\tau(\lambda,\sigma,\sigma_1) x^{m} \prod_{(i,j) \in \N_3} \bigl(F^{(j)}_{i,\eps}(x) \bigr)^{k_{i,j}}\;,
	\end{equ}
	where $\N_3 = \{1,2,3\} \times \N$
	and such that each $a_{k,m}^\tau$ is some monomial of $\lambda,\sigma,\sigma_1$.
	$\tilde F^{\tau}_{\ft_0,\eps}$ is then explicitly given by 
	\begin{equ}[eq:tFtua]
		\tilde F^{\tau}_{\ft_0,\eps}(x)  = 
		\sum_{k\in \N^{\N_3}}\sum_{m \ge 0}  a_{k,m}^\tau(\lambda,\sigma,\sigma_1) x^{m} \prod_{(i,j) \in \N_3} \bigl(F^{(j)}_{i,\eps}(x)+ \1_{i=2} \bar G_\eps^{(j)}(x) \bigr)^{k_{i,j}}\;.
	\end{equ}
An important remark is that if $\tau \neq \<XiJ[H']+s>$ and $a_{k,m}^\tau \neq 0$, then $k_{2,0} = 0$.
This is because there is no $\tau \in \fT_-(\Rule)$ with $C_\eps[\tau] \neq 0$ containing a subtree of the form 
	$\J_{(\ft_0,p)}[\J_{(\ft_0,1)}[\btau]]$ apart from $\<XiJ[H']+s>$.
As a consequence, setting $G^{\tau}_\eps = \tilde F^{\tau}_{\ft_0,\eps} - F^{\tau}_{\ft_0,\eps}$,
this is of the form $G^{\tau}_\eps = \sum_{j \ge 1} \bar G^{(j)}_\eps \bar G^{\tau,j}_\eps$ for 
	finitely many $\bar G^{\tau,j}_\eps$ that converges in $\CC^\func_{\bar m, 5}$ for $\bar m > 0$.
	
This in particular implies that the constant term proportional to $C_\eps[\<XiJ[H']+s>] + C_\eps[\<H'J1[XiX]s>\!]$ in \eqref{eq:choice_G} 
does not contribute, so that by Lemma~\ref{lem:constants} $\eps^{-\frac 13} 
	\bar G^{(j)}_\eps$ converges to a finite limit in $\CC^\func_{\bar m, 5}$. This in turn implies that $C_{\eps}[\tau] G^{\tau}_{\eps} = \eps^{\frac 13 }C_\eps[\tau] \sum_{j \ge 1} \eps^{-\frac 13 }\bar G^{(j)}_\eps \bar G^{\tau,j}_\eps$ converges in $\CC^\func_{\bar m, 5}$ as $\eps \to 0$. We therefore define 
	\begin{equ}[eq:G_eps]
		G_\eps = - C_\eps[\<XiJ[H']+>] F_{2,\eps}
		-  \sum \Big\{C_\eps[\tau] G^{\tau}_\eps \,:\, \tau \in \check\fT \;\text{with}\; |\tau| \geq -\frac{1}{3} \Big\}\;,
	\end{equ}
	and $\SG^3_\eps$ with this choice of $G_\eps$ satisfies Assumption~\ref{assum:SG}.
	Finally by~\eqref{eq:nasty_tree} we have 
	\begin{equ}
		\ren^\eps \bbUpsilon^{\SG^3_\eps}_{\ft_0}[\<XiJ[H']+>](\mathbf{h}_\eps) = 2 \lambda \sigma^2 C_\eps[\<XiJ[H']+>]  \partial_x h_{0,\eps} + C_\eps[\<XiJ[H']+>] ( F_{2,\eps}(\partial_x h_{1,\eps}) + \bar G_\eps(\partial_x h_{1,\eps}))\;,
	\end{equ}
	and we add $- C_\eps[\<XiJ[H']+s>]\bar G_\eps$ to $G_\eps$ from~\eqref{eq:G_eps} to obtain our final choice of the function $G_\eps$. This final choice of $G_\eps$ also satisfies Assumption~\ref{assum:SG} because the constant $C_\eps[\<XiJ[H']+s>]$ converges.
	 Thus, we showed that with such choice of $G_\eps$ and $\bar G_\eps$ each $\tau \in \fT_-(\Rule)\cup \bar\fT_{\ft_0}(\Rule)$ contributes a constant counterterm $C^\tau_\eps$ from~\eqref{eq:constant_counter}.\footnote{With notation from~\eqref{eq:constant_counter} we in fact have $C^\tau_\eps = C_\eps[\tau]F^\tau(0)$, $\bar g^\tau_\eps = - C_\eps[\tau] \eps^{\frac 23} \bar F^\tau_{\ft_0,\eps}$ and $g^\tau_\eps = - C_\eps[\tau] G^\tau_\eps$.}
	
	It remains to show that we can guarantee that there exists $C_\eps = C_{0,\eps}$ such that 
	the constants $\bar C_{i,\eps}$ appearing in our final choice of $\SG$ can 
	be chosen such that the renormalised equation is indeed given by \eqref{eq:main2} with $C_{i,\eps} = \eps^{\frac{2i}{3}} C_{\eps}$. We also need to show that the translation in~\eqref{eq:noise_translated} can be chosen as $c_\eps = \eps^2 C_\eps$. This then implies that if $(h_{i,\eps})_{i = 0,\ldots,3}$ is a solution to the 
	system~\eqref{eq:main2} then so is $(\eps^{\frac{2i}{3}}h_{0,\eps})_{i = 0,\ldots,3}$, thus
	guaranteeing that $h_{i,\eps} = \eps^{\frac{2i}{3}}h_{0,\eps}$ for $i = 1,2,3$. With the above choice of $G_\eps$ and $\bar G_\eps$ 
	we have $C_{0,\eps} = -\sum_{\tau \in \fT_-(\Rule)\cap \bar\fT_{\ft_0}(\Rule)} C^\tau_\eps$ and therefore $C_{i,\eps} = 
	\eps^{\frac{2i}{3}} C_{0,\eps} \to 0$ as $\eps \to 0$ for $i = 2,3$. Because the $i^{\text{th}}$ component won't be renormalised for $i = 2,3$, this leads us to choosing $\bar C_{i,\eps} = C_{i,\eps} = \eps^{\frac{2i}{3}}C_{0,\eps}$ for $i = 2,3$.
	Regarding $i=1$, we note that the only two trees $\tau$ such that $\eps^{\frac 23} C^\tau_\eps$ diverges are $\<Ch>$ and $\<HXi>$. Nevertheless, thanks to~\eqref{eq:12components} we can set
	\begin{equ}
		\bar C_{1,\eps} = - \eps^{\frac 23} \sum\big\{C^\tau_\eps\,:\,\tau \in \fT_-(\Rule) \cap \bar\fT_{\ft_0}(\Rule)\setminus\{\<Ch>, \<HXi>\}\big\}  
	\end{equ}
	in order to guarantee that $C_{1,\eps} = - \ren^\eps \bar \bUpsilon^{\SG_\eps}_{\ft_1} [\<Ch1>] - \ren^\eps \bar 
	\bUpsilon^{\SG_\eps}_{\ft_1} [\<HXi1>] + \bar C_{1,\eps} = \eps^{\frac 23} C_{0,\eps}$. 
	
	Regarding the translation 
	constant $c_\eps$ in~\eqref{eq:noise_translated} we want $c_\eps = \eps^2C_\eps$. Note that 
	thanks to the computations in Appendix~\ref{app:constants} we can see that each renormalisation 
	constant $C_\eps[\tau]$ and therefore also $C_\eps$ itself are functions of $c_\eps$. Moreover, the noise $\xi^{c_\eps}_\eps$, defined in \eqref{eq:noise_translated}, depends continuously on $c_\eps$ as a $\CC^{\alpha}_\s$-values function for any $\alpha < -\frac{3}{2}$. Then \cite[Thm.~2.15]{HC} implies that the BPHZ lift depends continuously on $c_\eps$ and thanks to the definition of the renormalisation constants, $C_\eps(c_\eps)$ also depends continuously on $c_\eps$. We thus want to find a fixed 
	point $c_\eps$ of the function $c_\eps \mapsto f(c_\eps) = \eps^2C_\eps(c_\eps)$. By 
	the bounds of Appendix~\ref{app:constants}, the bound $\eps^2|C_\eps(\eps)| \lesssim \eps$ holds 
	uniformly in $c_\eps \in [-1,1]$. Therefore, for $\eps$ small enough, $f$ is a continuous function $[-1, 1] \mapsto [-1,1]$ and thus must have a fixed 
	point. For such a fixed point, one has $c_\eps = \eps^2C_\eps = \CO(\eps)$, thus 
	concluding the proof.
\end{proof}

\begin{remark}
	Note that in the above proof we have a choice of constants $C_\eps$ such that $C_\eps$ is a 
	renormalisation constant computed using the noise $\xi_\eps(t+\eps^2C_\eps t, x)$. This does not rule 
	out a possibility of having several such constants but because limits from 
	Lemma~\ref{lem:constants} and the limiting model from Proposition~\ref{prop:BPHZlift} do not depend 
	on the choice of translation $c_\eps$ from~\eqref{eq:noise_translated} we would have a convergence 
	to the same equation as $\eps \to 0$ for any such choice of $C_\eps$.
\end{remark}

\begin{remark}\label{rem:Gs}
	The above definition of $G_\eps$ and $\bar G_\eps$ imply that functions $G_\eps$ and $\bar G_\eps$ are polynomials of the functions $F_{i,\eps}$ and their derivatives, as well as possibly $x$. Moreover, Table~\ref{table:trees} implies that this derivative of $F_{i,\eps}$ in $G_\eps$ and $\bar G_\eps$ cannot be greater than $2$. Indeed, there are no trees with non-vanishing renormalisation constant that have a node of type $(\partial\ft_1)^n$ for $n > 2$. This implies that for every $\tilde \eps < 1$ there exist $L>0$ such that $\|G_\eps\|_{\CC_{\bar m,5}^\func}, \|\bar G_\eps\|_{\CC_{\bar m,5}^\func} \leq L$ uniformly in $\eps \in [0,\tilde \eps]$ and where $\bar m = k m$ for $k$ being maximal number of products of $F_{i,\eps}$ and their derivatives in $G_\eps, \bar G_\eps$. For simplicity, we will take $\tilde\eps = \frac 12$ in the next section. 
\end{remark}

\subsection{Convergence to the KPZ equation}\label{sec:KPZ}

For $\eps \geq 0$, let $H_\eps$ be the solution of the system \eqref{eq:main4}, defined for a model $Z^\eps$ and for the functions from Assumption~\ref{assum:SG} and below it. From now on we will call $H \eqdef H_\eps \big|_{\eps = 0}$ to avoid confusion with the $0^{\text{th}}$ component and write $H = (H_0, H_1, H_2, H_3)$. Next lemma is about the convergence of the solution of 
the system~\eqref{eq:main4}.

\begin{lemma}\label{lem:convergence}
	Let $C_\eps, \hat \eps, \SG_\eps$ and $Z^{\eps, \BPHZ}$ be as in Proposition~\ref{prop:abstract_system} and $\bar \eps$ as in Proposition~\ref{prop:system_soln} with $M,L > 0$ given by Remarks~\ref{rem:bound_in_ML} and~\ref{rem:Gs} for $\tilde \eps = \frac 12$. Consider 
	$h^0_{\eps} \in \CC^{7 / 3 + \kappa}(\T)$, for some $\kappa \in (0, \frac{1}{6})$, such that $h^0_{\eps} \to h^0_{0}$ in $\CC^{1/3 + \kappa}(\T)$ and 
	$\eps^{\frac{2i}{3}}\|h^0_{\eps}\|_{\CC^{(1 + 2i)/3 + \kappa}}$ are uniformly bounded in $\eps \in [0,1]$ for $i = 1,2,3$. Set $h^0_{i,\eps} = \eps^{\frac{2i}{3}} h^0_{\eps}$ for $i = 0, \ldots,3$. Let $H_{\eps} \in \SU^{\gamma, \eta}_\eps$ be the maximal solution in the sense of Proposition~\ref{prop:maximal_sol} to the system~\eqref{eq:main4} with this choice of $\SG_\eps$ for 
	$\eps \in [0, \hat \eps \wedge \bar \eps \wedge \frac 12]$, with the initial states $h^0_{i,\eps}$ and with respect to the model $Z^{\eps, \BPHZ}$.
	We extend $\CR^{\eps,\BPHZ} H_\eps$ to all of $\R_+$ by setting it equal to $\CR^{\eps,\BPHZ} H_\eps(T_\star,\cdot)$
	for times after $T_\star$.

	Then $\lim_{\eps \to 0} \CR^{\eps,\BPHZ} H_\eps = \CR^{\BPHZ} H$ in probability in $\CC^\sol$, where $H$ solves~\eqref{eq:main4} with the initial states $(h^0_{0}, 0, 0, 0)$, with respect to the model $Z^\BPHZ$ and with right-hand side determined by the functions
	\begin{equs}[eq:main5]
	\BF_{\ft_0} (H) &= \frac{1}{2} F_1''(0) ( \SD H_{1})^2 \SD^2 H_{1} + \lambda\, (\SD H_0)^2 + \nu \SD H_0 + \frac 12 F''_2(0) \SD H_{0} (\SD H_{1})^3 \\
	&\qquad + G_0 (0) + \sigma \hat\Xi_{0}(H_0) + \sigma_1 (\SD H_1)^2 \hat\Xi_{1}(H_0)\\
	&\qquad  + \frac{1}{24} F_3''(0) (\SD H_{1})^4 \hat\Xi_{2}(H_0)\;, \\
	\BF_{\ft_1} (H) &= \frac{1}{2} F_1''(0) ( \SD H_{1})^2 \SD^2 H_{2} + \lambda\, \SD H_0 \SD H_1\\ 
	&\qquad+ \sigma \hat\Xi_{1}(H_0) + \sigma_1 (\SD H_1)^2 \hat\Xi_{2}(H_0)\;, \\
	\BF_{\ft_2} (H) &= \sigma \hat\Xi_{2}(H_0)\;, \qquad \BF_{\ft_3}(H) = 0 \;,
	\end{equs}	
	where $\nu = - 2\lambda \sigma^2 (C[\<XiJ[H']+s>] + C[\<H'J1[XiX]s>\!]) =  - 2\lambda \sigma^2 \lim_{\eps \to 0} (C_\eps[\<XiJ[H']+s>] + C_\eps[\<H'J1[XiX]s>\!])$
	$\CR^{\BPHZ} H_i = 0$ for $i = 1,2,3$.
\end{lemma}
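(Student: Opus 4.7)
The proof strategy is a direct application of the joint continuity of the maximal solution map established in Propositions~\ref{prop:system_soln} and~\ref{prop:maximal_sol}, combined with Proposition~\ref{prop:BPHZlift}. The uniform bounds required to fit the problem into the hypothesis of that continuity statement (namely Assumption~\ref{assum:all-functions} with $\eps$-independent constants $M, L$) are already provided by Remarks~\ref{rem:bound_in_ML} and~\ref{rem:Gs}, valid for $\eps \in [0, 1/2]$. It therefore suffices to identify the limit of each piece of input data of the abstract fixed-point problem \eqref{eq:main4} in the topology required by the continuity statement.

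I would first check the convergence of the individual ingredients. The initial conditions $h^0_{i,\eps} = \eps^{2i/3} h^0_\eps$ converge in $\CC^{\init}$ to $(h^0_0, 0, 0, 0)$: for $i=0$ this is the hypothesis, while for $i \ge 1$ the uniform bound on $\eps^{2i/3}\|h^0_\eps\|_{\CC^{(1+2i)/3+\kappa}}$ combined with the vanishing prefactor $\eps^{2i/3}$ gives convergence to $0$. The BPHZ models $Z^{\eps,\BPHZ}$ converge in probability to $Z^\BPHZ$ in $\SM_0$ by Proposition~\ref{prop:BPHZlift}. The auxiliary noise $\xi^h_{3,\eps}(t,x)=\xi_{3,\eps}(t+c_\eps t+\eps^2 h,x)$ and its $h$-derivatives converge to $0$ in the space required by Assumption~\ref{assum:all-functions}(iv): Lemma~\ref{lem:noise_convergence}(ii) applied to $\xi_{3,\eps}=\eps^{2/3}\xi_\eps$ gives $\xi^{c_\eps}_{3,\eps} \to 0$ in $\CC^{1/2-\kappa_\star}_\s$, each $h$-derivative generating an additional factor of $\eps^2$ that absorbs any divergence. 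The constants $\bar C_{i,\eps}\to 0$ by Assumption~\ref{assum:SG}. Taylor-expanding the $F_{i,\eps}$ of \eqref{eq:F_eps} using the vanishing relations in Assumption~\ref{assum:functions} yields $F_{1,\eps}(u)\to\tfrac12 F_1''(0)u^2$, $F_{2,\eps}(u)\to\tfrac12 F_2''(0)u^3$, and $F_{3,\eps}(u)\to\tfrac{1}{24}F_3^{(4)}(0)u^4$, all in $\CC^\func_{m,7}$. The auxiliary functions satisfy $G_\eps\to G_0$ in $\CC^\func_{\bar m,5}$ by assumption, while the last assertion of Proposition~\ref{prop:abstract_system} identifies $\bar G_\eps\to\bar G_0\equiv\nu$ as a constant. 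Feeding these limits into \eqref{eq:abstract-functions} and noting that, in the limit, $\SD H_0\,\bar\BG_0(\SD H_1)=\nu\,\SD H_0$ (since $\bar G_0$ is constant) and $G_0(\d_x h_1)=G_0(0)$ (since $\CR^\BPHZ H_1=0$, established below), one recovers the limiting nonlinearities \eqref{eq:main5}. Proposition~\ref{prop:maximal_sol} then yields the desired convergence $\CR^{\eps,\BPHZ}H_\eps\to \CR^\BPHZ H$ in probability in $\CC^\sol$.

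It remains to show $\CR^{\BPHZ} H_i = 0$ for $i = 1, 2, 3$, which follows from item~(\ref{item:vanish_noise1}) of Proposition~\ref{prop:BPHZlift}: the BPHZ reconstruction annihilates any element $\sigma \in \langle \tau \rangle_\CB$ with $\tau$ containing an edge of type $\J_{\fl_k}$ for $k \in \{1,2\}$. Since the limiting initial condition for $H_3$ vanishes and $\BF_{\ft_3}(H)=0$ by \eqref{eq:main5}, one has $H_3\equiv 0$ at the abstract level. For $H_2$, the identity $\BF_{\ft_2}(H)=\sigma\,\hXi_2(H_0)$ implies that every tree appearing in its modelled-distribution expansion contains at least one $\fl_2$-edge, hence reconstructs to zero. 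A Picard iteration on $H_1$ starting from $0$ then shows inductively that every tree in its expansion carries at least one edge of type $\fl_1$ or $\fl_2$: each summand of $\BF_{\ft_1}(H)$ in \eqref{eq:main5} either explicitly involves $\hXi_1$ or $\hXi_2$, or involves $\SD H_1$ or $\SD^2 H_2$ recursively.

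The main technical obstacle is the bookkeeping around the pseudometric \eqref{eq:pseudo_metric} on $[0,\bar\eps]\times\CC^\sol$, in particular the $\eps$-dependent stopping time $T^{\eps^{-1/4}}$ entering the definition of $\Theta_{\eps^{-1/4}}$. However, for sequences with $\eps_n\to 0$ convergence in $\bar d$ reduces to convergence in $\CC^\sol$, so once the uniform bounds of Remarks~\ref{rem:bound_in_ML} and~\ref{rem:Gs} are in hand, the argument reduces to a mechanical verification of the input convergences listed above.
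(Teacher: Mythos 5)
Your overall strategy matches the paper's: continuity of the solution map from Proposition~\ref{prop:maximal_sol}, uniform bounds from Remarks~\ref{rem:bound_in_ML} and~\ref{rem:Gs}, convergence of models from Proposition~\ref{prop:BPHZlift}, and Taylor expansion of the $F_{i,\eps}$ to identify the limit nonlinearities.

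Where you genuinely diverge is the argument for $\CR^{\BPHZ}H_i=0$, $i=1,2,3$. The paper's proof is much more economical here: by Proposition~\ref{prop:abstract_system}, the reconstructions satisfy the identity $\CR^{\eps,\BPHZ}H_{i,\eps}=\eps^{2i/3}\,\CR^{\eps,\BPHZ}H_{0,\eps}$ for every $\eps>0$, so once convergence of $\CR^{\eps,\BPHZ}H_{0,\eps}$ is known the vanishing of the higher components is immediate upon letting $\eps\to 0$, with no analysis of the limit model beyond its mere existence. Your route---invoking item~(\ref{item:vanish_noise1}) of Proposition~\ref{prop:BPHZlift} and running a Picard iteration to show that every tree in the expansion of $H_1$ and $H_2$ carries an $\fl_1$- or $\fl_2$-edge---is viable, but it carries extra burdens. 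To close the induction you must track not just the $\one$-component but every polynomial component of $H_1$ and $H_2$ (a non-vanishing $\X^k$-coefficient in $\SD H_1$ would produce pure polynomial terms in $(\SD H_1)^2\SD^2 H_2$ that do not carry any noise edge, breaking the inductive claim). This requires showing that $\Pi^{\BPHZ}_z\sigma\equiv 0$, not merely $\PPi^{\BPHZ}\sigma=0$, for all $\sigma$ with such edges, so that the Taylor-jet contributions $J(z)$ and $\CN_\gamma$ in the abstract integration $\CP_{\ft_i}$ vanish as well; while this does follow from admissibility, it is a point your sketch elides. You also present $H_3\equiv 0$ first (citing \eqref{eq:main5}, which is what is being proved) even though its justification---$\d_x h_1=\d_x h_2=0$ plus $\xi^{h_0}_3\to 0$---logically comes after the $H_1,H_2$ argument; reordering fixes this. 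Finally, a small slip: $\xi_{3,\eps}=\eps^{2}\xi_\eps$, not $\eps^{2/3}\xi_\eps$ (it does not affect the conclusion since Lemma~\ref{lem:noise_convergence}(ii) still applies). On the other hand, you correctly write $\frac{1}{24}F_3^{(4)}(0)u^4$ where the paper's displayed formula has a typo.
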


\begin{proof}
	This result follows from Proposition~\ref{prop:maximal_sol} by continuity of the solution map with respect to the input data. Remarks~\ref{rem:bound_in_ML} and~\ref{rem:Gs} show that $F_i, F_{i,\eps} , G, \bar G$ satisfy uniform bounds of Proposition~\ref{prop:maximal_sol}. Convergence in probability comes from the corresponding convergence of the models $Z^{\eps, \BPHZ}$ to $Z^\BPHZ$. $\CR^\BPHZ H_i = 0$ for $i = 1,2,3$ simply follows from the fact that $\CR^{\eps,\BPHZ} H_i = \eps^{\frac{2i}{3}} \CR^{\eps,\BPHZ} H_0$ by Proposition~\ref{prop:system_soln}. This together with Lemma~\ref{lem:noise_convergence} implies that $\BF_{\ft_3}(H) = 0$. The definition of the functions~\eqref{eq:F_eps} and Assumption~\ref{assum:functions} imply that 
	\begin{equ}
		F_{1,\eps}(u) \to \frac 12 F''_1(0) u^2\,,\quad F_{2,\eps}(u) \to \frac 12 F''_2(0) u^3\,,\quad F_{3,\eps}(u) \to \frac {1}{4!} F''_3(0) u^4\,,
	\end{equ}
	in $\CC^\func_{m, 7}$ as $\eps \to 0$. Moreover, by Propostion~\ref{prop:abstract_system} $\bar G_\eps \to \nu$ in $\CC^\func_{m,7}$. This together with $\d_x h_1 = 0$ shows convergence of other terms, thus completing the proof.
\end{proof}

We are finally ready to state the main result of this paper, whose particular cases are 
Theorems~\ref{thm:main_intro} and~\ref{thm:qKPZ_intro}. 

\begin{theorem}\label{thm:main}
Let the random field $\eta_\eps : \R^2 \to \R$ be as in Theorem~\ref{thm:main_intro} and let functions 
$F_i$, $i = 1,2,3$, satisfy Assumption~\ref{assum:functions}. Let furthermore $h_\eps^0 \in \CC^{7/3 + \kappa}$, for some $\kappa \in (0, \frac{1}{6})$, be 
such that $h^0_{\eps} \to h^0$ in $\CC^{1/3 + \kappa}(\T)$ and $\eps^{\frac{2i}{3}}\|h^0_{\eps}\|_{\CC^{(1 + 2i)/3 + \kappa}}$ are 
uniformly bounded in $\eps \in [0,1]$ for $i = 1,2,3$. Let $h_\eps$ be the solution of~\eqref{eq:main} with the 
initial state $h^0_\eps$ and with the driving noise defined via $\eta_\eps$ in~\eqref{eq:rescaled_noise}. Let 
$\nu = -2\lambda \sigma^2 (C[\<XiJ[H']+s>] + C[\<H'J1[XiX]s>\!])$. Then there is a sequence of renormalisation constants $C_\eps \sim \eps^{-1}$, such that for any $\alpha \leq \frac{1}{3} + \kappa$ the random functions $h_\eps$ 
converge in probability as $\eps \to 0$ in the space $\CC(\R_+, \CC^\alpha(\T))$ to $h$ where $(t,x) \to h(t,x - \nu t)$ is the Cole--Hopf solution of the KPZ equation~\eqref{eq:KPZ} with the initial state $h^0$.
\end{theorem}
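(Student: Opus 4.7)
The plan is to recognise \eqref{eq:main} as the $0^{\text{th}}$ component of the system \eqref{eq:main2}, feed this into the abstract machinery of Sections~\ref{sec:coherence}--\ref{sec:renorm_SPDE} via Proposition~\ref{prop:abstract_system}, and identify the limit using Lemma~\ref{lem:convergence}. Concretely, first the substitutions \eqref{eq:new_functions}--\eqref{eq:F_eps} turn \eqref{eq:main} into the system \eqref{eq:main2} for $(h_{i,\eps})_{i=0,\dots,3}$ with $h_{0,\eps}=h_\eps$ and initial data $h^0_{i,\eps}=\eps^{2i/3}h^0_\eps$; the assumptions on $h^0_\eps$ are exactly what is needed so that these initial states satisfy the hypotheses of Lemma~\ref{lem:convergence}.

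Next, Proposition~\ref{prop:abstract_system} provides divergent constants $C_\eps\sim\eps^{-1}$, a noise translation $c_\eps=\eps^2 C_\eps$, constants $\bar C_{i,\eps}$ and functions $G_\eps,\bar G_\eps$ (satisfying Assumption~\ref{assum:SG}, with $\bar G_\eps \to \nu$) such that reconstruction under the BPHZ model $Z^{\eps,\BPHZ}$ of the solution $H_\eps\in\SU^{\gamma,\eta}_\eps$ to the abstract problem \eqref{eq:main4} with nonlinearity \eqref{eq:abstract-functions} yields precisely \eqref{eq:main2} with $C_{i,\eps}=\eps^{2i/3}C_\eps$ and, crucially, $h_{i,\eps}=\eps^{2i/3}h_{0,\eps}$. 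Lemma~\ref{lem:convergence} then delivers $\CR^{\eps,\BPHZ}H_\eps\to\CR^\BPHZ H$ in probability in $\CC^\sol$, where $H$ solves \eqref{eq:main4} with model $Z^\BPHZ$, initial states $(h^0,0,0,0)$ and limiting nonlinearity \eqref{eq:main5}, and where $\CR^\BPHZ H_i=0$ for $i=1,2,3$. Setting $h=\CR^\BPHZ H_0$, the identities $\partial_x h_i=0$ for $i\geq 1$, together with item~\ref{item:vanish_noise1} of Proposition~\ref{prop:BPHZlift} which kills every occurrence of $\hat\Xi_{\CB,\fl_k}$ for $k\geq 1$, make all remaining terms in \eqref{eq:main5} except $\lambda(\SD H_0)^2$, $\nu\SD H_0$ and $\sigma\hat\Xi_{\CB,\fl_0}(H_0)$ vanish upon reconstruction. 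The cancellation of all divergent constants built into the proof of Proposition~\ref{prop:abstract_system} (the relation $C_{0,\eps}=-\sum_\tau C^\tau_\eps$) then shows that no spurious constant term survives, and applying Theorem~\ref{thm:renormalised_PDE} to smooth approximations and passing to the limit yields that $h$ is a Cole--Hopf solution of $\partial_t h=\partial_x^2 h+\lambda(\partial_x h)^2+\nu\partial_x h+\sigma\xi$.

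The final step is to remove the drift: setting $\tilde h(t,x)=h(t,x-\nu t)$, a direct computation gives that $\tilde h$ solves \eqref{eq:KPZ} with initial state $h^0$. Since the Cole--Hopf solution of KPZ exists globally in $\CC(\R_+,\CC^\alpha(\T))$ for $\alpha<1/2$, the limiting blow-up time is infinite, so for each fixed $T>0$ and every $\eps$ small enough the stopping function $\Theta_{\eps^{-1/4}}$ leaves both $h$ and $h_\eps$ (with high probability) unchanged on $[0,T]$; convergence in the pseudo-metric \eqref{eq:pseudo_metric} therefore upgrades to convergence in $\CC([0,T],\CC^\alpha(\T))$ for every $\alpha\leq 1/3+\kappa$, and hence in $\CC(\R_+,\CC^\alpha(\T))$. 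The main obstacle is verifying that the drift $\nu$ obtained from the abstract analysis agrees with the formula \eqref{eq:speed}: by Lemma~\ref{lem:counterterms} and the list in Appendix~\ref{app:constants}, the only trees contributing a drift rather than a constant are $\<XiJ[H']+s>$ and $\<H'J1[XiX]s>$, each producing $2\lambda\sigma^2\partial_x h$ times the respective renormalisation constant; combining \eqref{eq:special-constants1}--\eqref{eq:special-constants2} with the heat-kernel identity $\partial_t G=\partial_x^2 G$ (away from $t=0$) and an integration by parts recovers exactly \eqref{eq:speed}. For Theorem~\ref{thm:main_intro} one further substitutes the explicit functions \eqref{eq:functions_main}, for which $\lambda=1/2$ and $\sigma=1$, while for Theorem~\ref{thm:qKPZ_intro} the argument simplifies since $F_1\equiv 0$, so no regularisation of the initial data beyond $\CC^{1/3+\kappa}$ is required.
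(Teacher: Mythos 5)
Your proof plan is structurally aligned with the paper's proof: rewrite \eqref{eq:main} as the system \eqref{eq:main2}, use Proposition~\ref{prop:abstract_system} to obtain $C_\eps$ and $\SG_\eps$, invoke Lemma~\ref{lem:convergence} to get $\CR^{\eps,\BPHZ}H_\eps \to \CR^\BPHZ H$ in $\CC^\sol$, and then remove the drift by a change of frame. The ``stopping and upgrading'' discussion at the end is also right in spirit. However, the middle step \dash identifying $h = \CR^\BPHZ H_0$ as the Cole--Hopf solution \dash is where your argument has a genuine gap.

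The phrase ``applying Theorem~\ref{thm:renormalised_PDE} to smooth approximations and passing to the limit'' does not in itself pin down which smooth approximations, and this choice matters. Applying Theorem~\ref{thm:renormalised_PDE} to the BPHZ models $Z^{\eps,\BPHZ}$ reproduces \eqref{eq:main2}, which is \emph{not} a mollified KPZ equation (the noise is inhomogeneous, there are extra noises $\xi_{1,\eps}, \xi_{2,\eps}$, and the counterterm comes from the full $72$-element set $\fT_-(\Rule)$). One must instead construct a \emph{different} sequence of smooth admissible models $Z^\eps$ directly approximating $Z^\BPHZ$: in the paper this is $Z^\eps = \ZZ(\PPi^\eps \circ M^\eps)$ with $\PPi^\eps\J_{\fl_0}[\mu\otimes\one]=\scal{\mu,1}\,\rho^\eps*\xi$ and $\PPi^\eps\J_{\fl_i}[\mu\otimes\one]=0$ for $i=1,2$. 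This specific choice has two crucial consequences that you do not supply: (a) any tree containing an $\fl_1$ or $\fl_2$ edge has vanishing model, so it contributes no counterterm; (b) any tree containing a subtree $\J_{\fl_0}[\btau]$ with $\btau\neq\one$ is evaluated on $\delta^{(k)}_0$ with $k>0$, and since $\scal{\delta^{(k)}_0,1}=0$ these also contribute nothing. These two observations are what shrink the counterterm set from $\fT_-(\Rule)$ down to $\fT_-(\Rule^\KPZ)$, turning the reconstructed equation into a \emph{standard} mollified KPZ equation with constant renormalisation \dash which is exactly the Cole--Hopf approximation. Without this construction, the identification of the limit with the Cole--Hopf solution does not follow from Proposition~\ref{prop:BPHZlift}(\ref{item:vanish_noise1}) alone, since that proposition only controls the limiting model, not the smooth models to which Theorem~\ref{thm:renormalised_PDE} can be applied.

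A smaller point: you describe ``verifying that $\nu$ agrees with \eqref{eq:speed}'' as an obstacle for Theorem~\ref{thm:main}, but in the statement of Theorem~\ref{thm:main} $\nu$ is \emph{defined} as $-2\lambda\sigma^2\bigl(C[\<XiJ[H']+s>]+C[\<H'J1[XiX]s>\!]\bigr)$; the reconciliation with \eqref{eq:speed} is only needed in the specialisation to Theorem~\ref{thm:main_intro}, where one fixes $\lambda=\frac12$, $\sigma=1$ and integrates by parts in \eqref{eq:special-constants2}. That part of your write-up belongs in the proof of the corollary, not of Theorem~\ref{thm:main}.
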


Before we turn to the proof of this theorem, we state a few of its immediate consequences.
First, it allows us to prove Theorems~\ref{thm:main_intro} and~\ref{thm:qKPZ_intro}, which are the main 
results of this article.

\begin{proof}[of Theorem~\ref{thm:main_intro}]
We claim that this is a particular case of Theorem~\ref{thm:main} for $\lambda = \frac{1}{2}$, $\sigma = 1$ and for the functions~\eqref{eq:functions_main}. 

	Verifying that functions from~\eqref{eq:functions_main} satisfy Assumption~\ref{assum:functions} and belong to $\CC_{m,7}^\func$ is a trivial task. Now note that a convergence of $(t,x) \mapsto h_\eps(t, x+\nu t)$ for $h_\eps$ 
	from~\eqref{eq:rescaled_solution} to a function $h$ is equivalent to convergence of $h_\eps$ to a 
	function $(t,x) \mapsto h(t, x-\nu t)$. The rest of the proof is immediate once we notice that the functions 
	from~\eqref{eq:functions_main} satisfy Assumption~\ref{assum:functions}. To see that $\nu$ 
	from~\eqref{eq:speed} is indeed equal to $-2\lambda \sigma^2 (C[\<XiJ[H']+s>] + C[\<H'J1[XiX]s>\!])$ for $\lambda = \frac{1}{2}$ and for the constant \eqref{eq:special-constants}, we apply integration by parts in \eqref{eq:special-constants2} and change the integration variables.
\end{proof}

\begin{proof}[of Theorem~\ref{thm:qKPZ_intro}]\label{cor:qKPZ}
	The proof is again just a simple application of Theorem~\ref{thm:main}, this time with nonlinearities in~\eqref{eq:main} 
	given by $F_1 \equiv 0$, $F_2 \equiv \lambda$ and $F_3 \equiv \sigma$. The reason why one does not need 
	to assume in Theorem~\ref{thm:qKPZ_intro} the uniform $\CC^{(1+2k) / 3 + \kappa}$ bounds of $h^0_\eps$ in 
	comparison to Theorem~\ref{thm:main}, is because for Theorem~\ref{thm:qKPZ_intro} one does not really 
	need to split equation~\eqref{eq:qKPZ} into system of four equations $h_{k,\eps}$ which resulted in 
	assumption $h^0_{k,\eps} \in \CC^{(1+2k)/3 + \kappa}$ in Lemma~\ref{prop:system_soln}.
\end{proof}

\begin{proof}[of Theorem~\ref{thm:main}]
	First we show that for almost every realisation of the noise we have convergence of $h_\eps$ on the 
	interval of existence of $h_\eps$. From the definition of the system of equations~\eqref{eq:main2} it is 
	evident that choosing $C_\eps = C_{0,\eps}$ and $h^0_{0,\eps} = h^0_\eps$ we have $h_{0,\eps} = h_\eps$. Therefore, $h_\eps \to h\eqdef \CR^\BPHZ(H_0)$ so it is enough to show that $\CR^\BPHZ(H_0)$ is the 
	solution to the KPZ equation in a moving frame.
	
	For this let $\rho$ be a smooth compactly supported function that integrates to $1$ and denote 
	$\rho^\eps(t,x) = \eps^{-\frac{3}{2}} \rho(\eps^{-2}t, \eps^{-1} x)$. We construct a sequence of 
	smooth models $Z^\eps$ approximating the model $Z^\BPHZ$ from Proposition~\ref{prop:BPHZlift} in the 
	following way: $Z^\eps = \ZZ(\PPi^\eps \circ M^\eps)$ where $M^\eps$ is given by~\eqref{eq:Mu} and 
	$\PPi^\eps$ is admissible, multiplicative in the sense of Definitions~\ref{def:admissible} 
	and~\ref{def:canonical}, and such that 
	\begin{equ}
		\PPi^\eps \J_{\fl_0}[\mu \otimes \one] = \scal{\mu , 1} \xi^\eps\;,\qquad 	\PPi^\eps \J_{\fl_i}[\mu \otimes \one] = 0\;,\quad i = 1,2\;,
	\end{equ}
	where $\xi^\eps = \rho^\eps * \xi$. From the characterisation of the model $Z^\BPHZ$ in 
	Proposition~\ref{prop:BPHZlift} it is clear that $Z^\eps \to Z^\BPHZ$. Now let $\tilde{H}_\eps = (\tilde{H}_{0,\eps},\tilde{H}_{1,\eps},\tilde{H}_{2,\eps},\tilde{H}_{3,\eps})$ be the solution to an abstract 
	equation whose right-hand side is given by~\eqref{eq:main5} with initial conditions $(h^0_\eps,0,0,0)$. 
	
	It is easy to see that with such model we have $\tilde h_{i,\eps} \eqdef \CR(\tilde{H}_{i,\eps}) = 0$ for $i = 1,2,3$. Moreover,
	\begin{equs}
		\CR \bigl(\hat\Xi_0(\tilde{H}_{0,\eps})\bigr) (z) &= \Pi^\eps_z \big( \hat\Xi_0(\tilde{H}_{0,\eps})(z) \big) (z) = \PPi^\eps \J_{\fl_0} [\delta_{\tilde h_{0,\eps}} \otimes \one] = \scal{\delta_{\tilde h_{0,\eps}} , 1} \xi^\eps = \xi^\eps\;,\\
		\CR \bigl(\hat\Xi_i(\tilde{H}_{0,\eps})\bigr) (z) &= 0\;,\quad i = 1,2\;.
	\end{equs}
	We will show that above reconstructions and Theorem~\ref{thm:renormalised_PDE} (together with Remark~\ref{rem:reduced_reg_str}) imply that $\tilde h_{0,\eps}$ solves 
	\begin{equ}\label{eq:KPZeps}
		(\partial_t - \partial^2_x) \tilde h_{0,\eps} = \lambda(\partial_x \tilde h_{0,\eps})^2 + \nu \partial_x \tilde h_{0,\eps} + G_0(0) + \sum_{\tau \in \fT_-(\Rule^{\KPZ}) } \tilde{\ell}^\eps_{\BPHZ} \bbUpsilon^F_{\ft_0}[\tau] + \sigma \xi^\eps\;,
	\end{equ}
	where $\tilde{\ell}^\eps_\BPHZ$ is the BPHZ character for the model $Z^\eps$ as in Definition~\ref{def:BPHZ} and $-G_0(0) - \sum_{\tau \in \fT_-(\Rule^{\KPZ}) } \tilde{\ell}_\BPHZ^\eps \bbUpsilon^F_{\ft_0}[\tau]$ is the renormalisation constant that appears in the renormalisation of the 
	KPZ equation. One can observe that
	\begin{equ}
		\fT_-(\Rule^{\KPZ}) = \Big\{\<Xi>, \<Xi+1>, \<Ch>,  \<J[Ch]H>,  \<Ch+1>, \<Ch+1a>, \<H'J1[H']+>, \<LKPZ>, \<Ch2>\Big\}\;.
	\end{equ}
	It is easy to see $\tilde{\ell}^\eps_\BPHZ \sigma = \ren^\eps \sigma$ for $\sigma \in \scal{\tau}_\CD$ and $\tau \in \fT_-(\Rule^{\KPZ}) $. The fact that renormalisation constants in~\eqref{eq:KPZeps} are indeed the correct constants follows by 
	an easy computation as trees from $\fT_-(\Rule^{\KPZ})$ give constant counterterms.
	
	It remains to show that $\tilde{\ell}^\eps_\BPHZ \bbUpsilon^F_{\ft_0}[\tau] = 0$ for every tree $\tau \notin \fT_-(\Rule^{\KPZ})$. This in turn is a consequence of the fact that we have two possibilities. First is that 
	$\tau$ contains an edge $\J_{\fl_i}$ for $i = 1,2$ and therefore it is easy to see that $\PPi^\eps \sigma = 0$ for every $\sigma \in \scal{\tau}_\CB$ and thus $\tilde{\ell}^\eps_\BPHZ \bbUpsilon^F_{\ft_0}[\tau] = 0$. Second is 
	that $\tau$ contains a subtree $ \J_{\fl_0}[\btau]$ with $\btau \neq \one$. In this case $\Ev_{\tilde h_{0,\eps}} \bar \bUpsilon^F_{\ft_0} [\tau]$ contains a subtree $\J_{\fl_0}[\delta^{(k)}_{\tilde h_{0,\eps}} \otimes \bar\sigma]$ with $k > 0$, $\bar\sigma \in \scal{\btau}_\CB$ and thus $\tilde{\ell}^\eps_\BPHZ \bbUpsilon^F_{\ft_0}[\tau] = 0$ because $\scal{\delta^{(k)}_{\tilde h_{0,\eps}} , 1} = 0$. 
	
	It is clear now that $\tilde h_{0,\eps}$ is both an approximation of the solution to the KPZ equation in a 
	moving frame and that $\CR^\BPHZ(H_0) = h$. It remains just to perform a translation $(t,x) \mapsto (t,x - \nu t)$ to get rid of the $\nu\partial_x\tilde h_{0,\eps}$ term in equation~\eqref{eq:KPZeps}. Therefore, we 
	indeed have shown that $(t,x) \mapsto h(t,x-\nu t)$ solves the KPZ equation.
	
	The fact that convergence holds on every time interval $[0, T]$ follows from the fact that the solution of the KPZ equation is almost surely global in time \cite{KPZ} and because of the continuity of the solution map obtained in Proposition~\ref{prop:maximal_sol}.
\end{proof}

\section{Convergence of the qEW model}
\label{sec:qEW}

In this section we prove Theorem~\ref{thm:qEW}. For this, we use the rule and the nonlinearity defined 
in Section~\ref{sec:qEW_rule} and Example~\ref{ex:qEWnonlin}. 

If $d = 1$, then equation~\eqref{eq:QuenchedEW_rescaled} is a particular example of the 
equation~\eqref{eq:main}, with $F_1 \equiv F_2 \equiv 0$ and $F_3 \equiv 1$. In particular, the analysis of Section~\ref{sec:application} can be repeated for this 
equation, in which divergences of the renormalisation constants are different. One important difference 
is that in this case one has $\PPi^\eps \J_{\fl_0}[\delta^{(n)}_h \otimes \1] (t,x) = \eps^{(2+\beta)n} \d^n_t\xi_{0,\eps}(t + \eps^{2+\beta} C_\eps + \eps^{2+\beta} h, x)$ and thus in order for 
Lemma~\ref{lem:noise_convergence} to be true one must have $2+\beta \geq 2$ which in 
turn implies $0\leq\beta = \alpha + \frac{d-2}{2} = \alpha - \frac 12$, thus the requirement for $\alpha \geq \frac 12$ in Theorem~\ref{thm:qEW}.

In the case $d \geq 2$, the proof goes in a similar way, however now we need to consider the 
system~\eqref{eq:QuenchedEW_system} and the number of trees grows when $d$ increases. The restriction $\delta + \beta + 2$ in the statement of Theorem~\ref{thm:qEW} implies 
that $\delta+2 > -\beta = -\alpha - \frac{d-2}{2}$. Note that the spatial regularity of the solution to the 
stochastic heat equation for every fixed time is expected to be $- \frac{d-2}{2}-\kappa$ for any $\kappa > 0$. This allows to obtain a flow of the solution map to the qEW equation. Setting $h^0_{i,\eps} = \eps^{\beta(i-1)} u^0_{\eps}(\eps^{-1}\bigcdot)$ also implies that $G h^0_{1,\eps}$ converges to $0$ in a 
H\"older space $\CC^{\delta + \beta + 2, \delta + \beta}_\s$ and that the blow up in time is integrable 
since $\delta + \beta > -2 = -\s_0$. 

Note that for $d \geq 2$ we again have $\PPi^\eps \J_{\fl_i}[\delta^{(n)}_h \otimes \1] (t,x) = \eps^{2n} \d^n_t\xi_{i,\eps}(t + \eps^{2+\beta} C_\eps + \eps^{2} h, x)$ for $i=0,1$, therefore results of Section~\ref{sec:noises} can be repeated \textit{mutatis mutandis} in $d \geq 2$ dimensions, since we shall see below that $\eps^{2+\beta} C_\eps$ vanishes as $\eps \to 0$.
 
For a notational convenience we again use $\Xi_0 = \<Xi>$, $\Xi_1 = \<Xi1>$, $\J_{\fl_0} = \<Jl>$, and $\J_{\ft_1} = \<Jt1>$. Similarly to Section~\ref{sec:renorm_constants}, one can see that the renormalisation constant with the worst 
divergence is $\<H1Xi>$ with $\deg (\<H1Xi>) = \beta - d - 2\kappa_\star$ (many renormalisation 
constants from Section~\ref{sec:renorm_constants} do not show up in our case, because the 
nonlinearity only comes from the inhomogeneous noise). In particular, note that if $\beta > d$, i.e.\ if 
$\alpha > \frac{d-2}{2}$, then $\kappa_\star$ can be taken small enough so that 
$\fT_-(\Rule^{\text{qEW}})$ is empty which implies that no renormalisation is required. More precisely, 
let the rescaled and shifted noise $\xi_\eps^{c_\eps}$ be as in~\eqref{eq:noise_translated}, then 
repeating computation similar to~\eqref{eq:constant_HXi}, we conclude that 
\begin{equ}
	C_\eps\bigl[\<H1Xi>\bigr] = -\E\bigl[\eps^{2 + \beta} \d_t \xi^{c_\eps}_{\eps}(0) \bigl(K*\xi^{c_\eps}_{\eps}\bigr)(0)\bigr] \sim \eps^{\beta - d} = \eps^{\alpha - \frac{d-2}{2}}\;,
\end{equ}
as soon as the constant $c_\eps$ is bounded uniformly in $\eps$. This implies that the renormalisation 
constant $C_\eps$ in~\eqref{eq:QuenchedEW_rescaled} diverges as $\eps^{\alpha - \frac{d-2}{2}}$. Then the choice~\eqref{eq:alpha_and_beta} of $\beta$ yields $c_\eps = \eps^{2 + \beta} C_\eps \sim \eps^{2\alpha}$, which vanishes as $\eps \to 0$.

One big difference with Section~\ref{sec:KPZ} is that we cannot solve the abstract version 
of~\eqref{eq:QuenchedEW_system} in the space of modelled distributions directly. Indeed, let $Z^{\eps,\BPHZ}$ be a model similar to the one constructed in Section~\ref{sec:smooth_models} but adapted to 
the rule for qEW. Let's consider the abstract system
\begin{equ}[eq:qEW_abstract]
	H_{i,\eps} = \CP_{\ft_i} \bigl(\one_+ \CQ_{\leq \gamma_i} \hat\Xi_{\CB, \fl_i}(H_{1,\eps})\bigr) + G_{\ft_i}h^0_{i,\eps}\;,\qquad i = 0,1\,,
\end{equ}
where $h^0_{1,\eps} = \eps^{\beta} h^0_{0,\eps}$ and $\gamma_1 = \gamma_0+\beta$ for some 
$\gamma_0>2$. Then $H_0$ lives in a sector of regularity $\alpha_0 = -\frac{d-2}{2} - \kappa_\star < 0 = -\s_0 +\deg \ft_0$ for $d \geq 2$ thus conditions on integrability from 
Lemma~\eqref{lem:abstract_integration} fail to be satisfied for the $\ft_0$ component.

On the other hand these conditions are satisfied for the $\ft_1$ component $(\alpha_1 = \beta + \alpha_0 = \alpha - \kappa_\star)$ and since the equation for 
$H_{1,\eps}$ is completely independent of $H_{0,\eps}$ we are able to apply Picard's 
iteration~\cite[Thm.~2.21]{BCCH} to construct the maximal solution to $H_{1,\eps}$ first. We then 
simply apply~\cite[Thm.~5.12]{Regularity} to define $H_{0,\eps}$ through the $\ft_0^{\text{th}}$ component 
of~\eqref{eq:qEW_abstract}.\footnote{\cite[Thm.~5.12]{Regularity} simply allows us to integrate the 
modelled distribution $\one_+ \CQ_{\leq \gamma_i} \hat\Xi_{\CB, \fl_0}(H_{1,\eps})$. Integrability 
assumptions from Lemma~\eqref{lem:abstract_integration} are needed for the $\ft_1$ component to 
patch local solutions together to obtain the maximal solution.} Showing that reconstructed solutions 
$h_{i,\eps} \eqdef \hat\CR^\eps H_{i,\eps}$ still satisfy $h_{1,\eps} = \eps^{\beta}h_{0,\eps}$ can be done 
similarly to Proposition~\ref{prop:system_soln} since Theorem~\ref{thm:renormalised_PDE} 
can still be applied because if $(H_{i,\eps})_{i = 0,1}$ solves the system~\eqref{eq:qEW_abstract} then 
by definition of $H_{0,\eps}$, $(H_{i,\eps})_{i = 0,1}$ is coherent to $(\hat\Xi_{\CB, \fl_i}(H_{1,\eps}))_{i = 0,1}$ according to the Definition~\ref{def:coherence}.

Finally, to show that $h_{0,\eps}$ converges to the solution of the stochastic heat equation \eqref{eq:heat_qEW} we first use coherence again and Proposition~\ref{prop:MU_coherent} to observe
\begin{equ}
	\hat\CR^\eps\bigl(\one_+ \CQ_{\leq \gamma_i} \hat\Xi_{\CB, \fl_0}(H_{1,\eps})\bigr) = \xi^{h_{1,\eps}}_{0,\eps} + \sum_{\tau \in \fT_-(\Rule^{\qEW})} \ren^\eps(\bar\Upsilon^F_{\ft_0}[\tau]) = \xi^{h_{1,\eps}}_{0,\eps} - C_\eps\;.
\end{equ}
Then since $h_{1,\eps} \to 0$ as $\eps \to 0$ the arguments among the same lines as in the first part of the proof of Theorem~\ref{thm:main} allow us to conclude that $\xi^{h_{1,\eps}}_{0,\eps} - C_\eps$ converges to white noise $\xi$ in $\CC^{-\frac{2+d}{2}-\kappa_\star}$ as $\eps \to 0$. 

\begin{appendices}

\section{Estimates on the renormalisation constants}\label{app:constants}
We present all the unplanted trees of negative degrees in Table~\ref{table:trees}. The first 
column lists the homogeneity of the trees. If a tree belongs to one of the columns ``Gauss'', ``Odd'', 
``X'' or ``$\prod$'', this means that the corresponding renormalisation constant vanishes 
(more explanations for each column are provided below). The column ``$\infty$'' means that 
the renormalisation constant is in general non-zero (and might diverge).

\begin{table}[ht]
	\centering
	\begin{tabular}{c c c c c c} \toprule
		$|\tau|$ & Gauss & Odd & X & $\prod$ & $\infty$\\
		\midrule
		$ - \frac{3}{2}$ & \<Xi> & - & - & - & -
		\\ \rowcolor{symbols!5!pagebackground}
		$-1$ & - & - & - & - & \<Ch>, \<HXi>
		\\
		$ - \frac{5}{6}$ & \<Xi1> & - & - & - & -
		\\ \rowcolor{symbols!5!pagebackground}
		$ - \frac{2}{3}$ & - & \<t2t1>, \<t1Xi1> & - & - & -
		\\
		$ -\frac{1}{2}$ & \<t1t2t1>, \<J[HXi]Xi>, \<J[Ch]H>, \<bt1Xi1bt1> & - & \<XiX> & - & - 
		\\
		& \<J[HXi]H>, \<HXiH>, \<J[Ch]Xi> & - & \<Xi+1> & - & - 
		\\  \rowcolor{symbols!5!pagebackground} 
		$-\frac{1}{3}$ & \<J[t1t1]Xi> & - & - & \<4Xi1> & \<Ch1>, \<Ch1a>, \<HXi1>
		\\\rowcolor{symbols!5!pagebackground} 
		& - & - & - & - & \<t2J1[H']>, \<t1J2[H']>, \<H'J1[t2]>
		\\
		$ -\frac{1}{6}$ & \<Xi2>, \<t1J2[H']t1>, \<J1[HXi1]t2>, \<t1t2J1[H']> & - & - & - & - 
		\\
		& \<J[t1t2]Xi>, \<J1[Ch1]t2>, \<t1H't1>, \<J2[Ch1]t1>& - & - & - & - 
		\\
		& \<J2[Ch1]t1>, \<J1[t1t2]H'>, \<J2[HXi1]t1>, \<5Xi1> & - & - & - & - 
		\\
		& \<bt1Xi1H>, \<XiJ[Xi1bt1]>,  \<H'J1[Xi1bt1]> & - & - & - & - 
		\\  \rowcolor{symbols!5!pagebackground} 
		$ 0$ & - & \<Ch+1a>, \<bt2t1> & \<Ch+1> & \<3HXi> & \<Ch2>, \<J1[HXi]J1[HXi]>, \<ChHXi>, \<J[HHXi]Xi>
		\\  \rowcolor{symbols!5!pagebackground} 
		& - & \<H'J1[H']>, \<t2[Xi2]t1> & \<XHXi> & \<t2t1t1J1[H']> & \<J1[HXiH]H'>,  \<H'J1[XiX]>, \<XiJ[H']+>, \<J[t2[H']]Xi>
		\\  \rowcolor{symbols!5!pagebackground} 
		& - & \<H'J1[H']+>, \<t2t1[Xi2]> & \<XHXia> & \<H't1^3> & \<t2[HXi1]t1t1>, \<J1[HXi]t2t1>, \<J1[H't1]t2t1>, \<t2[H't1]t1t1>
		\\  \rowcolor{symbols!5!pagebackground} 
		& - &  \<t2[H']t1[H']>, \<t2J1[bt2]> & - & \<t1t1t1J2[H']> & \<J[Ch]XiH>, \<J[HXi]XiH>, \<LKPZ>, \<LHXiXiXi>
		\\  \rowcolor{symbols!5!pagebackground} 
		& - &  \<J1[t2[H']]H'>, \<t1t2[bt2]> & - & \<6Xi1> & \<LHXiXiH'>, \<LH'H'XiXi>, \<LH'H'XiH'>
		\\  \rowcolor{symbols!5!pagebackground} 
		& - &  \<XiJ[XiX]>, \<t1Xi2> & - & \<2bt1Xi1H> & \<LHXiH'Xi>, \<LH'H'H'Xi>, \<LHXiH'H'>
		\\ \rowcolor{symbols!5!pagebackground} 
		& - & - & - & - & \<J1[t1t2t1]Xi>, \<J1[2bt1Xi1]H'>, \<J1[t1t2t1]H'>
		\\
		\bottomrule
	\end{tabular}
	\caption{Trees of negative degree. \label{table:trees}}
\end{table} 

We now explain how to compute a renormalisation constant corresponding to each tree in the table. As 
explained in Section~\ref{sec:renorm_constants}, we view trees as graphical 
representations of $\downop \!\tau$ rather than of $\tau$. Moreover, we have also shown in the beginning of Section~\ref{sec:renorm_constants} that each tree $\tau \in \fT_-(\Rule)$ gives rise to a unique assignment of  $\delta^{(n)}_0$ at each noise in $\CT_\CB$. In short, $\delta_0$ will be attached to noises with nothing on top: $ \<Xi>, \<Xi1>$ and $\<Xi2>$. If noises $\<Jl>$, $\<Jl1>$, $\<Jl2>$ are joined at the root with $n$ number of $\<Jt0>$ edges and have $k$ powers of $\X_1$ above them ($k \in \{0,1\}$) then $\delta^{(n+k)}_0$ will be attached to such noises.

In what follows let $K$ be the singular part of the heat kernel (see Section~\ref{sec:admissible}) and let $K' \eqdef \d_x K$, $K'' \eqdef \d^2_x K$. 

\subsection{Zero renormalisation constants}

We now explain why the renormalisation constants assigned to the trees
in each of the columns ``Gauss'', ``Odd'', ``X'' and ``$\prod$'' vanish. 

For trees in the column ``Gauss'', the constant is zero because the Gaussianity of 
the noises implies that the expectation of a product of an odd number of noises is zero. 

The column ``Odd'' in Table~\ref{table:trees} is such that the respective renormalisation 
constant vanishes because the resulting kernel, that is used in computation of the constant, is an odd function and because the noise is stationary. We give a 
couple of examples of the renormalisation constant for the trees in the ``Odd'' column. For instance,
 \begin{equs}
 	C_\eps &\bigl[\,\<t1Xi2>\,\bigr] = - \E \bigl[ \bigl(\PPi^\eps \<t1Xi2>\bigr)(0)\bigr] = - \E \bigl[\eps^{2/3} \bigl(K'* \xi^{c_\eps}_{\eps}\bigr) (0)\, \eps^{4/3} \xi^{c_\eps}_{\eps}(0)\bigr] \\
	&= -\eps^2\int K'(-z) \E \bigl[\xi^{c_\eps}_{\eps}(z) \xi^{c_\eps}_{\eps}(0)\bigr] dz\\
 	&= \eps^2\int K'(z) \E\bigl[\xi^{c_\eps}_{\eps}(0) \xi^{c_\eps}_{\eps}(-z)\bigr] dz = \eps^2\int K'(-z) \E\bigl[ \xi^{c_\eps}_{\eps}(0) \xi^{c_\eps}_{\eps}(z)\bigr] dz = - C_\eps\bigl[\,\<t1Xi2>\,\bigr]\;,
 \end{equs}
where we used stationarity of the noise and oddness of $K'$ in the fourth equality, and change of variables $z \to -z$ in the fifth equality. Similarly,
 \begin{equs} 
	C_\eps \bigl[\;\<XiJ[XiX]>\!\bigr] &= - \E\bigl[\bigl(\PPi^\eps \<XiJ[XiX]>\!\bigr)(0)\bigr] = - \E\bigl[ \eps^2\d_t \xi^{c_\eps}_{\eps}(0) \eps^2 (K*(\d_t \xi^{c_\eps}_{\eps} y_1))(0)\bigr]\\
	& = -\eps^4\int K(-z)z_1 \E\bigl[\d_t \xi^{c_\eps}_{\eps}(z)\d_t\xi^{c_\eps}_{\eps}(0)\bigr] dz \\
	&= -\eps^4\int K(-z)z_1 \E\bigl[\d_t \xi^{c_\eps}_{\eps}(0)\d_t \xi^{c_\eps}_{\eps}(-z)\bigr] dz \\
	&= \eps^4\int K(-z)z_1 \E\bigl[\d_t\xi^{c_\eps}_{\eps}(0) \d_t \xi^{c_\eps}_{\eps}(z)\bigr] dz = - C_\eps\bigl[\;\<XiJ[XiX]>\!\bigr]\;,
\end{equs}
where $z_1$ is the spatial coordinate of $z$ and we use that $K(z)z_1$ is an odd function in the spatial variable. In both cases $C_\eps[\tau] = - C_\eps[\tau]$, which implies $C_\eps[\tau] = 0$.

The column ``X'' means that the constant is zero because the tree is multiplied by a polynomial at the root. 
Showing that the renormalisation constants vanish in this case is rather trivial, since $(\PPi^\eps \X_1) (0) = 0$. 
 
The column ``$\prod$'' means that the renormalisation constant is zero because the tree yields a product
of three or more Gaussian random variables. This can be seen in the example
 \begin{equ}
 	C_\eps\bigl[\,\<H't1^3ss>\,\bigr] = -\E\bigl[\bigl(\PPi^\eps \<H't1^3ss>\bigr) (0)\bigr] + 3 \E\bigl[\bigl(\PPi^\eps \<Ch1>\bigr)(0)\bigr] \E\bigl[\bigl(\PPi^\eps \<t1t1>\bigr)(0)\bigr] =  0\;,
 \end{equ}
where the first identity follows from the definition of $\Deltam$ (and the fact that odd products of Gaussians
have vanishing expectation) and the second identity is Wick's formula.
 
 In the following section we consider the trees from the last column in Table~\ref{table:trees}. For this we 
 recall that our models are defined via the shifted noises
\begin{equ}[eq:rescaled_noise_B]
	\xi^{c_\eps}_{\eps}(t,x) = \xi_{\eps}(t+c_\eps t,x)\;,
\end{equ}
for some constant $c_\eps = \CO(\eps)$. That's why it will be convenient to define the function 
\begin{equ}[eq:rescaled_rho_B]
	\tilde\rho_\eps(t,x) \eqdef \sqrt{1 + c_\eps} \,\rho_\eps \bigl((1 + c_\eps)t,x\bigr)\;,
\end{equ}
which is of course well-defined for all $\eps$ small enough since then $c_\eps > -1$. Then we can write 
$\xi^{c_\eps}_{\eps} = \tilde{\rho}_\eps * \tilde{\xi}$,
for a new space-time white noise. 
The noise $\tilde \xi$ depends on $\eps$ through the constant $c_\eps$, but we prefer to suppress it in our 
notation, since it will not play any role in our analysis. We will also use the function 
\begin{equ}
	\bar \rho_\eps(t,x) \eqdef \eps^3 \tilde\rho_\eps(\eps^2 t, \eps x) = \sqrt{1 + c_\eps} \,\rho \bigl((1 + c_\eps)t,x\bigr)\;.
\end{equ}

\subsection{Trees with homogeneity not exceeding $-\frac{1}{3}$.}

Here we will consider the trees $\<Ch>, \<HXi>, \<Ch1>, \<HXi1>, \<t2J1[H']>, \<t1J2[H']>, \<H'J1[t2]>$ and prove 
the convergence result~\eqref{eq:constants1} in Lemma~\ref{lem:constants}. These are the only trees with 
homogeneity smaller than$-\frac{1}{3}$, whose renormalisation constants are not equal to zero. 

The fact that $\eps C_\eps[\<Chs>]$ converges is proved in~\cite[Lem.~6.3]{KPZ}. Recall that we write the 
heat kernel as $G = K + R$, where $R$ is a smooth function and $K$ is a singular part. We will use 
extensively throughout the rest of this appendix the scaling properties of the heat kernel for $z = (t,x) \in \R^2$
\begin{equ}[eq:scaling]
	G(\eps^\s z) = \eps^{-1} G(z)\;.
\end{equ}
Then for the tree $\<HXis>$ we have
\begin{equs}[eq:constant_HXi]
	\eps C_\eps\bigl[\,\<HXi>\,\bigr] &= -\eps \E\bigl[\eps^2 \d_t \xi^{c_\eps}_{\eps}(0) \bigl(K*\xi^{c_\eps}_{\eps}\bigr)(0)\bigr] = -\eps^3 \int \d_t \tilde \rho_\eps(z)\, (K * \tilde \rho_\eps)(z)\, dz \\
	&\approx -\eps^3 \int \d_t \tilde \rho_\eps(z)\, (G * \tilde \rho_\eps)(z)\, dz =  - \int\d_t \bar \rho_\eps(z)\, (G * \bar \rho_\eps)(z)\, dz\;,
\end{equs}
where $\approx$ holds up to an error of order $\CO(\eps^3)$ and the last equality holds 
thanks to the scaling 
properties of the heat kernel~\eqref{eq:scaling}. Now it is easy to see that $\eps^{1/3}C_\eps[\<Ch1s>] = \eps C_\eps[\<Chs>]$ and $\eps^{1/3}C_\eps[\<HXi1s>] = \eps C_\eps[\<HXis>]$, so convergence of these terms is 
automatic. 

For the tree $\<t2J1[H']s>$ we have
\begin{equs}
	\eps^{\frac 13} C_\eps\bigl[\,\<t2J1[H']>\,\bigr] &= -\eps^{\frac 13} \E\bigl[\eps^{\frac 23} (K''* \xi^{c_\eps}_{\eps})(0) (K'*K'* \xi^{c_\eps}_{\eps})(0)\bigr] \\
	&= -\eps \int (K''*\tilde\rho_\eps)(z)\, (K'*K' *\tilde \rho_\eps)(z)\, dz \\
	&\approx -\eps \int\int (G''*G'*G')(z_1-z_2) \tilde\rho_\eps(z_1) \tilde\rho_\eps(z_2)\, dz_1dz_2 \\
	&= - \int\int (G''*G'*G')(z_1-z_2) \bar \rho_\eps(z_1) \bar\rho_\eps(z_2)\, dz_1dz_2\;,
\end{equs}
where the error term is of order $\CO(\eps)$ and where in the last equality we have used $G''*G'*G'(\eps^\s z) = \eps^{-1} G''*G'*G'(z)$, which is easy to show using~\eqref{eq:scaling}. 

Using similar computations, one can show that both $\eps^{\frac 13}C_\eps[\<t1J2[H']s>]$ and 
$\eps^{\frac 13}C_\eps[\<H'J1[t2]s>]$ converge to $0$.

\subsection{Trees of degree strictly larger than $-\frac{1}{3}$}

We prove now convergence~\eqref{eq:constants2}, for what we need to consider the $23$ trees of 
homogeneity $0$ in the column ``$\infty$'' of Table~\ref{table:trees} (we exclude the trees $\<XiJ[H']+s>$ and $\<H'J1[XiX]s>$, 
whose renormalisation constants we compute in Section~\ref{sec:special_tree}).

Among these $22$ trees, we will treat separately $\<J[t2[H']]Xi>$; the other $21$ trees 
can be split into $4$ groups, depending on whether their shape is that of
\<LKPZs>, \<Ch2s>, \<J1[t1t2t1]H's>, or \<J1[H't1]t2t1s>.
(Two trees are said to have the same shape if only differ by the decorations of some of their edges.)
The renormalisation constants of the trees in the same group can be estimated in a similar way. In what follows we are going to use the following bounds on the kernels 
\begin{equ}[eq:K_bounds]
	|D^k K(z)| \lesssim \|z\|^{-1-|k|_\s}_{\s}\;, \qquad |D^k (K * \bar\rho_\eps)(z)| \lesssim (\|z\| + \eps)^{-1-|k|_\s}_{\s}\;,
\end{equ}
for any $k \in \N^2$, and that the function $\bar\rho_\eps$ is smooth, whose derivatives are bounded 
uniformly in $\eps \in (0, \eps_0)$, for some $\eps_0 > 0$. Then we will use the results 
from~\cite[Sec.~10]{Regularity} to bound products and convolutions of such kernels. 

Let us first compute the renormalisation constant of the tree $\tau_1 = \<J[t2[H']]Xi>$. We have 
\begin{equs}
	C_\eps[\tau_1] &= -\E\bigl[ \bigl(\PPi^\eps \tau_1\bigr)(0)\bigr] = -\E\bigl[\eps^2 \d_t \xi^{c_\eps}_{\eps}(0)\, \bigl(K * K'' * K' * \xi^{c_\eps}_{\eps}\bigr)(0)\bigr]\\
	&= -\eps^2 \int \d_t \tilde \rho_\eps(z)\, \bigl(K * K'' * K' * \tilde \rho_\eps\bigr)(z)\, dz\;.
\end{equs}
Although the kernel $K''$ is not integrable in $\R^2$, we can use integration by parts to move all derivatives 
on the right most function $\tilde\rho_\eps$. Furthermore, we use $K = G-R$ for the heat kernel $G$ and a 
smooth function $R$, to get
\begin{equ}
	C_\eps[\tau_1] \approx -\eps^2 \int \d_t  \tilde\rho_\eps(z)\, \bigl(G * G * G * \tilde\rho'''_\eps\bigr)(z)\, dz\;,
\end{equ}
with an error term of order $\eps^2$. Changing the integration variable $z \mapsto \eps^{\s} z$ and using the 
scaling property~\eqref{eq:scaling} of the heat kernel, we get
\begin{equ}
	C_\eps[\tau_1] \approx - \int \d_t \bar\rho_\eps(z)\, \bigl(G * G * G * \bar\rho_\eps'''\bigr)(z)\, dz\;.
\end{equ}
This immediately implies convergence~\eqref{eq:constants2} for this tree.

Let us consider a tree $\tau_2$ from the first group, which has the same shape as $\<LKPZs>$. Then it 
follows from~\eqref{eq:K_bounds} and the results in~\cite[Sec.~10]{Regularity}, that the renormalisation 
constant $C_\eps[\tau_2]$ is a sum of terms of the form
\begin{equ}[eq:C3]
\int \int A^{(1)}_\eps(-z_1) A^{(2)}_\eps(z_1 - z_2) A^{(3)}_\eps(-z_2)\, dz_1 dz_2\;,
\end{equ}
with the kernels satisfying the bounds $|A^{(1)}_\eps(z)| \lesssim \|z\|^{\alpha_1}_{\s}$, $|A^{(2)}_\eps(z)| \lesssim (\|z\|_{\s} + \eps)^{\alpha_2}$ and $|A^{(3)}_\eps(z)| \lesssim \|z\|^{\alpha_3}_{\s}$, for some 
exponents $-3 \leq \alpha_i \leq 0$, where only $\alpha_2$ can equal $-3$ and where $\alpha_1 + \alpha_2 + \alpha_3 = -6$. Then~\eqref{eq:C3} can diverge with the speed at most $\log \eps$, which yields $\eps^{\frac 13} C_\eps[\tau_2] \to 0$ as required.

Let us now consider a tree $\tau_3$ in the second group of trees, which have the shape of $\<Ch2s>$. The 
bounds~\eqref{eq:K_bounds} and the results in~\cite[Sec.~10]{Regularity} imply that the renormalisation 
constant $C_\eps[\tau_3]$ is a sum of terms of the form 
\begin{equ}
 \int \int B^{(1)}(-z_1) B^{(2)}_\eps(z_1 - z_2) B^{(3)}(-z_2)\, dz_1 dz_2\;,
\end{equ}
where $|B^{(i)}_\eps(z)| \lesssim \|z\|^{\beta_i}_{\s}$ for some exponentials $-3 < \beta \leq 0$, such that $\beta_1 + \beta_2 + \beta_3 = -6$. Then the renormalisation constant $C_\eps[\tau_3]$ can diverge with the speed at most $\log \eps$, which yields $\eps^{\frac 13} C_\eps[\tau_3] \to 0$.

Let $\tau_4$ be a tree from the third group, i.e.\ with a shape similar to $\<J1[t1t2t1]H's>$. In this case the 
renormalisation constant factorizes  $C_\eps[\tau_4] = C_\eps[\bar \tau_1] C_\eps[\bar \tau_2]$, where the 
divergences of $C_\eps[\bar \tau_i]$ follow readily from~\eqref{eq:K_bounds}. This gives the 
limit~\eqref{eq:constants2}.

The renormalisation constant of a tree $\tau_5$ in the last group is a sum of terms of the form $\int E_\eps(-z) dz$, with a kernel satisfying $|E_\eps(z)| \lesssim \|z\|^{\delta}_{\s}$ for $\delta > - 3$. This readily 
yields $\eps^{\frac 13} C_\eps[\tau_5] \to 0$.

\subsection{Constants for special trees}
\label{sec:special_tree}

We treat separately the trees $\<XiJ[H']+s>$ and $\<H'J1[XiX]s>$. The renormalisation constants of these trees is 
the reason for obtaining a KPZ equation in a moving frame in the limit and it is 
important to show that $C_\eps[\<XiJ[H']+s>]$ and $C_\eps[\<H'J1[XiX]s>\!]$ converge to a finite limit. Now
\begin{equs}
	C_\eps\bigl[\,\<XiJ[H']+>\,\bigr] = -\E\bigl[\PPi^\eps \<XiJ[H']+> (0)\bigr] &= -\E\bigl[\eps^2 \d_t \xi^{c_\eps}_\eps(0)\, (K * K' * \xi^{c_\eps}_\eps)(0)\bigr]\\
	&= -\eps^2 \int \d_t  \tilde\rho_\eps(z)\, (K * K' * \tilde\rho_\eps)(z)\, dz\\
	&= -\eps^2 \int \int \d_t \tilde \rho_\eps(z_1)\, (K * K')(z_1 - z_2)\, \tilde \rho_\eps(z_2)\, d z_1 d z_2\;,
\end{equs}
where $\tilde \rho_\eps$ is a mollifier. Then using $K = G-R$ for a smooth function $R$
\begin{equ}
	C_\eps\bigl[\,\<XiJ[H']+>\,\bigr] \approx - \eps^2 \int \int  \d_t \tilde\rho_\eps(z_1)\, (G * G')(z_1 - z_2)\, \tilde\rho_\eps(z_2)\, d z_1 d z_2\;,
\end{equ}
where the error term is a multiple of $\eps^2$. From the scaling property of the heat kernel~\eqref{eq:scaling} 
we can obtain that $G*G'(\eps^\s z) = G*G'(z)$. Using such a scaling, recalling that $\tilde \rho_\eps(z) = \eps^{-3} \bar \rho_\eps(\eps^{-\s} z)$, and changing the variables $z_i \mapsto \eps^{\s} z_i$ yields
\begin{equ}[eq:mu_tree]
	C_\eps\bigl[\,\<XiJ[H']+>\,\bigr] \approx - \int \int  \d_t \bar \rho_\eps(z_1)\, (G * G')(z_1 - z_2)\, \bar \rho_\eps(z_2)\, d z_1 d z_2\;,
\end{equ}
from which convergence \eqref{eq:constants3} follows with the limiting constant \eqref{eq:special-constants1}.

Note that if $\rho$ is taken to be even in space, then since $K * K'$ is an odd function in space, we have $C_\eps[\<XiJ[H']+s>] = 0$.

Now, we turn to the tree $\<H'J1[XiX]s>$, for which we have 
\begin{equs}
	C_\eps[\<H'J1[XiX]>\!] &= -\E\bigl[ \bigl(\PPi^\eps \<H'J1[XiX]>\!\bigr)(0)\bigr] = -\E\biggl[\eps^2 \int K'(-z_1) x_1 \d_t \xi^{c_\eps}_\eps(z_1) d z_1\, \bigl(K' * \xi^{c_\eps}_\eps\bigr)(0)\biggr]\\
	&= -\eps^2 \int \int K'(-z_1) x_1 \d_t \tilde \rho_\eps(z_1 - z_2)\, \bigl(K' * \tilde \rho_\eps\bigr)(-z_2)\, dz_1 dz_2\;,
\end{equs}
where $x_1$ is the spatial component of $z_1$. As above, we replace the kernel $K$ by $G$, change the 
integration variables, and use the scaling property of the heat kernel to get
\begin{equ}
	C_\eps[\tau_2] \approx -\int \int G'(-z_1) x_1 \d_t \bar\rho_\eps(z_1 - z_2)\, \bigl(G' * \bar\rho_\eps\bigr)(-z_2)\, dz_1 dz_2\;,
\end{equ}
where the error term is or order $\eps^2$. Then convergence ~\eqref{eq:constants4} readily follows for this tree with the limiting constant \eqref{eq:special-constants2}.

We note that if $\rho$ is taken to be even in space, then we have $C_\eps[\<H'J1[XiX]s>\!] = 0$.

\section{Index of frequently used notations}
\label{app:notation}

Here, we list various constants, norms and other objects used in this article together 
with their meanings and references to definitions.

\begin{center}
\begin{longtable}{p{.12\textwidth}p{.65\textwidth}p{.12\textwidth}}
\toprule
Object & Meaning & Ref. \\
\midrule
\endhead
\bottomrule
\endfoot
$\CB_\ft$, $\CD_\ft$ & Spaces assigned to edges of trees & p.~\pageref{lab:Bell} \\
$\fd_o$ & Generators of the spaces $\CD_\ft$ & p.~\pageref{lab:fd} \\
$\deg$ & Degrees associated to the labels in $\fL$ & p.~\pageref{lab:index_set} \\
$D_o$, $\d_i$ & Differentiation operators on $\SP$ & p.~\pageref{lab:Do} \\
$\Eps$, $\Eps_{\pm}$ & The sets of labels of all components of the system & p.~\pageref{lab:CO} \\
$\Eps(\fl)$ & The labels of components presented in the $\fl^{\text{th}}$ noise & p.~\pageref{lab:Eell} \\
$\Ev_\bu$ & The evaluation map & p.~\pageref{lab:Eval} \\
$\hat F$ & Extension of the nonlinearity $F$ & Eq.~\ref{eq:hatF} \\
$\BF_{\CD}$, $\BF_{\CB}$ & Lifts of the nonlinearities $F$ & pp.~\pageref{eq:FofU}+\pageref{eq:FofU_B} \\
$g^-(\PPi)$ & A character on $\CT^-_\CB$ associated to $\PPi$ & Eq.~\ref{eq:character} \\
$\SH_\CD$, $\bar \SH_\CD$ & The structure spaces describing the system of equations & p.~\pageref{lab:H-spaces} \\
$\hXi_{\CD}$, $\hXi_{\CB}$ & The abstract noises & pp.~\pageref{eq:Xi_hat_new}+\pageref{eq:Xi_hat_B} \\
$\fL$, $\fL_{\pm}$ & The sets of labels of equations and noises in the system & p.~\pageref{lab:index_set} \\
$\Lambda$ & The time-space domain $\R_+ \times \T^d$ & p.~\pageref{lab:Lambda} \\
$\ren^{\,\bu} (\sigma)$ & The BPHZ renormalisation constant associated with $\sigma$ & p.~\pageref{lab:BPHZ} \\
$\hat M$ & A generic renormalisation map & Eq.~\ref{eq:MF} \\
$M$, $M_\bu$ & The BPHZ renormalisation maps & Eq.~\ref{eq:Mu} \\
$\M_\infty$, $\M_0$ & Spaces of models on $\CT_\CB$ & p.~\pageref{lab:models} \\
$\CO$ & The set of labels of solutions to the SPDEs and their derivatives  & p.~\pageref{lab:CO} \\
$\Poly$ & The set of abstract polynomials & p.~\pageref{lab:Poly} \\
$\bp_{\leq L}$ & A projection on $\CT_V$ & p.~\pageref{lab:projectionL} \\
$\Proj_\tau$ & A projection on $\CT_V$ to the $\tau$ component & p.~\pageref{lab:Proj_Q} \\
$\SP$ & The real algebra of smooth functions on $\R^\Eps$ & p.~\pageref{lab:SP}\\
$\CP_\ft$ & An integration map & Eq.~\ref{eq:P_t} \\
$\CQ_{\leq \gamma}$ & A projection map on $\CT$ & p.~\pageref{lab:projectionQ} \\
$\SQ_+$ & The $\fL_+$ components of the system of SPDEs & p.~\pageref{lab:SP}\\
$\SQ(\Rule) $ & The set of all functions that conform to the rule $\Rule$ & p.~\pageref{lab:QRule}\\
$\Rule$ & A rule & p.~\pageref{lab:Rule} \\
$S(\tau)$ & The symmetry factor of a tree $\tau$ & Eq.~\ref{eq:symmetry_factor} \\
$\fT$ & The set of decorated trees & p.~\pageref{lab:trees} \\
$\fT(\Rule)$ & The set of trees conforming to the rule $\Rule$ & p.~\pageref{lab:trees_rule} \\
$\fT_-(\Rule)$ & The set of unplanted trees of negative degree & Eq.~\ref{eq:unplanted} \\
$\CT_V$ & $V$-valued regularity structure & p.~\pageref{lab:T_V} \\
$\CT_\CD$, $\CT_\CB$ & Two regularity structures, which we use to renormalise and solve the equation & p.~\pageref{eq:norm_weighted} \\
$\TT_{\Poly}$ & The sector of polynomials & p.~\pageref{lab:Poly} \\
$\TT_o$, $\bar\TT_o$ & The structure spaces describing the $o$-equation & p.~\pageref{eq:jets} \\
$\TT_{\ft, \leq \gamma}$, $\bar\TT_{\ft, \leq \gamma}$ & Projected structure spaces & p.~\pageref{lab:projected-T} \\
$U^R$ & The non-polynomial part of a modelled distribution $U$ & Eq.~\ref{eq:UR} \\
$\SU^{\gamma,\eta}$ & A graded space of modelled distributions & Eq.~\ref{eq:U-space} \\
$\w$ & Weight discounting large values of $h$ & p.~\pageref{lab:TD_TB} \\
$\Upsilon^F$, $\bar\Upsilon^F$ & Coherence maps & Eq.~\ref{eq:upsilon} \\
$\bUpsilon^F$, $\bbUpsilon^F$ & Coherence maps with symmetry factor & Eq.~\ref{eq:maps_bold_Upsilon} \\
\end{longtable}
\end{center}

\end{appendices}

\bibliographystyle{Martin}\
 \bibliography{refs}

\end{document}